\numberwithin{equation}{section}
\let\al=\alpha
\let\d=\delta
\let\la=\lambda
\let\f=\frac
\let\om=\omega
\let\na=\nabla
\let\pa=\partial
\def\cN{{\mathcal N}}
\def\R{\mathbb R}
\def\T{\mathbb T}
\def\no{\noindent}
\def\bbD{\mathbb{D}}
\def\rmA{{\mathrm{A}}}
\def\rmB{\mathrm{B}}
\def\rmE{\mathrm{E}}
\def\Int{\mathrm{Int}}
\newcommand{\beq}{\begin{equation}}
\newcommand{\eeq}{\end{equation}}
\newcommand{\ben}{\begin{eqnarray}}
\newcommand{\een}{\end{eqnarray}}
\newcommand{\beno}{\begin{eqnarray*}}
\newcommand{\eeno}{\end{eqnarray*}}
\newtheorem{theorem}{Theorem}[section]
\newtheorem{lemma}[theorem]{Lemma}
\newtheorem{proposition}[theorem]{Proposition}
\begin{document}
%\begin{CJK*}{UTF8}{gkai}

\title[Pseudospectra and stability for 3D Kolmogorov flow]
{Pseudospectral bound and transition threshold for the  3D Kolmogorov flow}

\author{Te Li}
\address{School of Mathematical Science, Peking University, 100871, Beijing, P. R. China}
\email{little329@163.com}

\author{Dongyi Wei}
\address{School of Mathematical Science, Peking University, 100871, Beijing, P. R. China}
\email{jnwdyi@163.com}

\author{Zhifei Zhang}
\address{School of Mathematical Science, Peking University, 100871, Beijing, P. R. China}
\email{zfzhang@math.pku.edu.cn}

\date{\today}%2017.5.15

\maketitle

\begin{abstract}
In this paper, we establish the pseudospectral bound for the linearized operator of the Navier-Stokes equations around the 3D Kolmogorov flow.
Using the pseudospectral bound and the wave operator method introduced in \cite{LWZ}, we prove the sharp enhanced dissipation rate for the linearized Navier-Stokes equations. As an application, we prove that if the initial velocity satisfies $\big\|U_0-\big(k_f^{-2}\sin(k_fy),0,0\big)\big\|_{H^2}\le c\nu^{\f 74}$($\nu$  the viscosity coefficient) and $k_f\in (0,1)$, then the solution does not transition away from the Kolmogorov flow.
\end{abstract}

\section{Introduction}

In this paper, we consider the 3-D incompressible Navier-Stokes equations on the torus $\Omega=\big\{(x,y,z): y\in\mathbb{T}_{2\pi/k_f},x,z\in\mathbb{T}_{2\pi}\big\}$:
\begin{align}
\left\{
\begin{aligned}
&\partial_t U-\nu\Delta U+U\cdot\nabla U+\na P=F,\\
&\nabla\cdot U=0,\\
&U(0,x,y,z)=U_0(x,y,z),
\end{aligned}
\right.\label{eq:NS}
\end{align}
where $\nu>0$ is the viscosity coefficient, $U(t,x,y,z)$ is the velocity, $P(t,x,y,z)$ is the pressure, and $F$ is the Kolmogorov force given by
\beno
F=\big(\gamma \sin(k_fy),0,0\big).
\eeno
Here $\gamma\in R$ is the amplitude, and $k_f$ is the wave number.
This force was introduced by Kolmogorov to study the transition problem at high Reynolds number($\nu\to 0$) in a setting
neglecting material boundaries. An exact solution of (\ref{eq:NS}) is given by
\begin{align}
U^{*}=\Big(\f{\gamma}{\nu k_f^2}\sin(k_fy),0,0\Big), \quad P^*=C,\label{LF}
\end{align}
which is called the Kolmogorov flow.

Beginning with Reynolds's famous paper \cite{Rey} in 1883, the stability and transition to turbulence of the laminar flows at high Reynolds number has been an active field in the fluid mechanics \cite{DR, Sch, Tre, Yag}. In this paper, we are concerned with the stability and transition of the Kolmogorov flow $U^*$.
To this end, we introduce the perturbation $V=(v_1,v_2,v_3)=U-U^{*}$, which satisfies
\begin{align}
\label{eq:NS-V}
\left\{
\begin{aligned}
&\Big(\partial_t+\f{\gamma}{\nu k_f^2}\sin(k_fy)\partial_x-\nu\Delta\Big)v_1+\f{\gamma\cos(k_fy)}{\nu k_f}v_2=-\partial_xP-V\cdot \nabla v_1,\\
&\Big(\partial_t+\f{\gamma}{\nu k_f^2}\sin(k_fy)\partial_x-\nu\Delta\Big)v_2=-\partial_yP-V\cdot \nabla v_2,\\
&\Big(\partial_t+\f{\gamma}{\nu k_f^2}\sin(k_fy)\partial_x-\nu\Delta\Big)v_3=-\partial_zP-V\cdot \nabla v_3,\\
&\nabla\cdot V=0,\\
&V(0)=V_0(x,y,z).
\end{aligned}
\right.
\end{align}

The formulation  in terms of the shear wise velocity $v_2$ and vorticity $\om_2=\pa_zv_1-\pa_xv_3$ plays an important role in this paper:
\begin{align}
\label{eq:NS-shear}\left\{
\begin{aligned}
&\Big(\partial_t+\f{\gamma}{\nu k_f^2}\sin(k_fy)\partial_x-\nu\Delta\Big){\Delta v_2}+\f{\gamma}{\nu}\sin (k_fy)\partial_xv_2=-\Delta(V\cdot \nabla v_2)-\partial_2(\Delta p),\\
&\Big(\partial_t+\f{\gamma}{\nu k_f^2}\sin(k_fy)\partial_x-\nu\Delta\Big)\omega_2+\f{\gamma}{\nu k_f}\cos(k_fy)\partial_zv_2=\nabla\cdot(-v \omega_2+\omega v_2).
\end{aligned}
\right.
\end{align}
Here $\omega=\nabla\times V=(\omega_1,\omega_2,\omega_3)$ and $p=-\Delta^{-1}\Big(\sum\limits_{i,j=1}^3\partial_iv_j\partial_jv_i\Big)$.
Recently, this formulation was used to study the regularity criterion in terms of one velocity component for the Navier-Stokes equations \cite{CZ, CZZ}.

The flow $U^*$ is linearly stable for any $\nu\ge 0$ if $k_f\le 1$  \cite{Chen, Vee}. However, it could be unstable and transition to turbulence to small perturbations at high Reynolds number \cite{Chen, She, Vee},
which is referred to as subcritical transition. Up to now, we are still lacking a good understanding of this transition. As suggested by Kelvin \cite{Kel}, the following stability threshold problem may be more accessible from a mathematical point of view: \smallskip

{\it
Given a norm $\|\cdot\|_X$, find a $\beta=\beta(X)$ so that
\beno
&&\|V_0\|_X\le \nu^\beta\Longrightarrow  {stability},\\
&&\|V_0\|_X\gg \nu^\beta\Longrightarrow  {instability}.
\eeno
}
Here the exponent $\beta$ is referred to as the transition threshold in the applied literatures. \smallskip

There are a lot of works \cite{Du, LK, LHR, RSB}  in applied mathematics and physics devoted to estimating $\beta$. Recently, Bedrossian, Germain, Masmoudi et al.  made an important progress on the stability threshold problem for the 3-D Couette flow $(y,0,0)$ in  $\mathbb{T}^2\times \R$ in a series of works \cite{BGM1, BGM2, BGM3, BMV, BWV}. Roughly speaking, their results could be summarized as follows:
\begin{itemize}

\item if the perturbation is in Gevrey class, then $\beta\le 1$ \cite{BGM1};

\item if the perturbation is in Sobolev space, then $\beta\le \f32$ \cite{BGM3}.

\end{itemize}
While in $\mathbb{T}\times \R$, the transition threshold is smaller:
\begin{itemize}

\item if the perturbation is in Gevrey class, then $\beta=0$ \cite{BMV};

\item if the perturbation is in Sobolev space, then $\beta\le \f12$\cite{BWV}.

\end{itemize}

The stability and transition for general shear flows are a challenging problem. One of main difficulties is that the linearized operator around general shear flow is non-selfadjoint and nonlocal so that the spectral analysis becomes very difficult and the spectral properties are very different from the selfadjoint operators.

 In the ideal case(i.e., $\nu=0$), the analysis for the 2-D linearized operator around shear flows can be reduced to solve the Rayleigh equation \cite{WZZ1, WZZ2, WZZ3}. In the viscous case, the problem is more difficult, since it involves solving a fourth order
ODE(Orr-Sommerfeld equation). Recently, there are some important progress on the enhanced dissipation for the 2-D linearized Navier-Stokes equations around the Lamb-Oseen vortex and Kolmogorov flow \cite{BW, DW, Ga, GGN, IMM,   LWZ, LX, WZZ3}.  To our knowledge, there are few results in 3-D except for the Couette flow. We refer to the survey paper \cite{BGM} for more recent results and open questions.\smallskip

The first goal of this paper is to study the pseudospectral bound for the linearized operator around the 3-D kolmogorov flow.  Pseudospectra has been an important concept in understanding the hydrodynamic stability due to the non-normality of the linearized operators \cite{Tre, Tre-SIAM}. We introduce the linearized operators
 \ben
&&\mathcal{L}v=\f{\gamma}{\nu k_f^2}\sin(k_fy)\partial_x v-\nu\Delta v+\f{\gamma}{\nu}\sin (k_fy)\partial_x\Delta^{-1}v,\\
&&\mathcal{H}v=\f{\gamma}{\nu k_f^2}\sin(k_fy)\partial_x v-\nu\Delta v.
\een
The linearized Navier-Stokes equations of \eqref{eq:NS-shear} take as follows
\begin{align}\label{eq:NS-L}
\left\{
\begin{array}{l}
(\partial_t+\mathcal{L})\Delta v_2=0,\\
(\partial_t+\mathcal{H})\omega_2=-\f{\gamma}{\nu k_f}\cos(k_fy)\partial_zv_2.
\end{array}\right.
\end{align}
Let $\Delta v_2=\varphi$ and $\hat{\phi}(k_1,y,k_3)$ denote the Fourier transform of $\phi(x,y,z)$ with respect to the directions $(x,z)$.
Using the pseudospectral bound and wave operator method introduced in \cite{LWZ}, we prove the enhanced decay estimates  of the solution for the linearized system \eqref{eq:NS-L}.

\begin{theorem} \label{thm:decay-L}
Given $k_f\in (0,1],$ there exist constants $c_2,\ c_3\in(0,1),$ such that if $0<\nu<c_2|\gamma|^{\frac{1}{2}},$ then for any $\ (k_1,k_3)\in\mathbb{Z}^2,\ k_1\neq 0$, the solution of \eqref{eq:NS-L} satisfies
\begin{itemize}
 \item[1.] if $k_1^2+k_3^2>k_f^2$, then for any $t>0$
\beno
&&\|\hat{\varphi}(t,k_1,\cdot,k_3)\|_{L^2}\leq Ce^{-at}\|\hat{\varphi}(0,k_1,\cdot,k_3)\|_{L^2},\\ &&\|\hat{\omega}_2(t,k_1,\cdot,k_3)\|_{L^2}\leq Ce^{-at}\|\hat{\omega}_2(0,k_1,\cdot,k_3)\|_{L^2}+C|k_1|^{-1}e^{-at}(1+at)\|\hat{\varphi}(0,k_1,\cdot,k_3)\|_{L^2};
\eeno
\item[2.] if $k_1^2+k_3^2=k_f^2=1$, then for any $t>0$
\beno
&&\|\mathbb{Q}_1\hat{\varphi}(t,k_1,\cdot,k_3)\|_{L^2}\leq Ce^{-c_3t|k_1\gamma|^{\frac{1}{2}}-\nu t}\|\mathbb{Q}_1\hat{\varphi}(0,k_1,\cdot,k_3)\|_{L^2},\\
&&\|\mathbb{P}_1\hat{\varphi}(t,k_1,\cdot,k_3)\|_{L^2}\leq e^{-\nu t}\|\hat{\varphi}(0,k_1,\cdot,k_3)\|_{L^2}+Ce^{-\nu t}|\gamma|^{\frac{1}{6}}\nu^{-\frac{1}{3}}\|\mathbb{Q}_1\hat{\varphi}(0,k_1,\cdot,k_3)\|_{L^2},\\ &&\|\hat{\omega}_2(t,k_1,\cdot,k_3)\|_{L^2}\leq Ce^{-c_3t|k_1\gamma|^{\frac{1}{2}}-\nu t}\|\hat{\omega}_2(0,k_1,\cdot,k_3)\|_{L^2}.
\eeno
\end{itemize}
Here $a=c_3|k_1\gamma|^{\frac{1}{2}}+\nu(k_1^2+k_3^2)$ is the decay rate, and $ \mathbb{Q}_1$ is the orthogonal projection from $L^2(\mathbb{T}_{2\pi})$ to $L_0^2(\mathbb{T}_{2\pi})=\big\{f\in L^2(\mathbb{T}_{2\pi});\int_0^{2\pi} f(y)dy=0\big\},$ $ \mathbb{P}_1=I-\mathbb{Q}_1.$
\end{theorem}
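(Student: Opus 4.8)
The plan is to treat the two equations in \eqref{eq:NS-L} as a triangular system driven by the scalar evolution $(\partial_t+\mathcal{L})\varphi=0$ and then feed the resulting decay estimate for $\varphi$ into the forced equation for $\omega_2$. The heart of the matter is therefore a resolvent/semigroup estimate for $\mathcal{L}$ (and for $\mathcal{H}$) acting on each Fourier mode $(k_1,k_3)$ with $k_1\neq 0$. Writing $\alpha=k_1\gamma/(\nu k_f^2)$, on the slice $\widehat{\varphi}(k_1,y,k_3)$ the operator $\mathcal{L}$ becomes a one-dimensional nonlocal operator of the form $i\alpha\sin(k_f y)(1-k_f^{-2}\nu^2\alpha^{-1}(\cdots))$ plus $\nu(k_1^2+k_3^2)-\nu\partial_y^2$; the first step is to invoke the pseudospectral bound for $\mathcal{L}$ established earlier in the paper, i.e. a lower bound of the shape $\|(\mathcal{L}-\lambda)^{-1}\|^{-1}\gtrsim |k_1\gamma|^{1/2}$ (up to the $\nu(k_1^2+k_3^2)$ shift) uniformly in $\lambda$ in a suitable region. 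I would then convert this pseudospectral (resolvent) bound into an exponential decay rate $e^{-at}$ for the semigroup $e^{-t\mathcal{L}}$ via the Gearhart–Prüss type theorem in the quantitative form used in \cite{LWZ} (this is exactly the ``wave operator method'' alluded to in the statement): the resolvent lower bound on the imaginary axis, together with the dissipative nature of the real part $\nu(k_1^2+k_3^2)-\nu\partial_y^2\ge \nu(k_1^2+k_3^2)$, yields $\|e^{-t\mathcal{L}}\|\le Ce^{-at}$ with $a=c_3|k_1\gamma|^{1/2}+\nu(k_1^2+k_3^2)$. That gives the first estimate in part~1 directly.

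For the $\omega_2$ equation, I would use Duhamel's formula:
\beno
\widehat{\omega}_2(t)=e^{-t\mathcal{H}}\widehat{\omega}_2(0)-\f{\gamma}{\nu k_f}\int_0^t e^{-(t-\tau)\mathcal{H}}\big(\cos(k_fy)\partial_z v_2\big)(\tau)\,d\tau.
\eeno
Since $\mathcal{H}$ is the same transport–diffusion operator without the nonlocal Biot–Savart term, its pseudospectral bound is at least as good, so $\|e^{-t\mathcal{H}}\|\le Ce^{-at}$ as well, handling the homogeneous term. For the forcing, I need to control $\partial_z v_2=\partial_z\Delta^{-1}\varphi$ in terms of $\varphi$; on the mode $(k_1,k_3)$ one has $\|\partial_z\Delta^{-1}\widehat\varphi\|_{L^2}\le |k_3|(k_1^2+k_3^2)^{-1}\|\widehat\varphi\|_{L^2}\lesssim |k_1|^{-1}\|\widehat\varphi\|_{L^2}$ when $k_1^2+k_3^2>k_f^2$ (so the denominator is bounded below), and the prefactor $\gamma/(\nu k_f)$ combined with $e^{-(t-\tau)a}$ and the decay of $\widehat\varphi(\tau)$ produces, after the time integral $\int_0^t e^{-(t-\tau)a}e^{-\tau a}\,d\tau= te^{-at}$, exactly the factor $C|k_1|^{-1}e^{-at}(1+at)$ in the second estimate of part~1 (one extra power of $(1+at)$ absorbing the $\gamma/\nu$ versus $|k_1\gamma|^{1/2}$ mismatch, using $\gamma/\nu \lesssim a\cdot|k_1\gamma|^{-1/2}\cdot(\nu)^{-1}|\gamma|^{1/2}\lesssim a/\nu$ and $\nu\lesssim|\gamma|^{1/2}$).

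Part~2 is the degenerate resonant case $k_1^2+k_3^2=k_f^2=1$, where $\Delta^{-1}$ has a kernel on the zero mode in $y$ and the nonlocal term in $\mathcal{L}$ no longer helps on $\mathbb{P}_1\widehat\varphi$ (the $y$-average). Here I would split $\widehat\varphi=\mathbb{Q}_1\widehat\varphi+\mathbb{P}_1\widehat\varphi$. On the range of $\mathbb{Q}_1$ the operator $\mathcal{L}$ still satisfies a good pseudospectral bound — but now only at the weaker rate $|k_1\gamma|^{1/2}$ plus $\nu$ (no $k_3$ gain, since $k_1^2+k_3^2-k_f^2$ can vanish after projection), giving $\|e^{-t\mathcal{L}}\mathbb{Q}_1\|\le Ce^{-c_3t|k_1\gamma|^{1/2}-\nu t}$; that is the first line. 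For $\mathbb{P}_1\widehat\varphi$ the equation degenerates to $(\partial_t-\nu\partial_y^2+\nu)\mathbb{P}_1\widehat\varphi = -\,(\text{coupling from }\mathbb{Q}_1)$ — more precisely the nonlocal term maps $\mathbb{Q}_1\widehat\varphi$ into $\mathbb{P}_1$ — so by Duhamel and the heat-semigroup bound $\|e^{-t(-\nu\partial_y^2+\nu)}\|\le e^{-\nu t}$ one gets $\|\mathbb{P}_1\widehat\varphi(t)\|\le e^{-\nu t}\|\widehat\varphi(0)\|+Ce^{-\nu t}\|\mathbb{Q}_1\widehat\varphi(0)\|\cdot \sup_{t}\big(|\gamma|/\nu\big)\!\!\int_0^t e^{-c_3\tau|k_1\gamma|^{1/2}}d\tau$, and the last integral is $\lesssim |k_1\gamma|^{-1/2}$, so the product is $\lesssim |\gamma|^{1/2}\nu^{-1}|k_1\gamma|^{-1/2}$; tightening the bookkeeping (the coupling coefficient is actually of size $\nu$ from $\nu\Delta^{-1}$-type scaling, not $|\gamma|/\nu$) yields the claimed $|\gamma|^{1/6}\nu^{-1/3}$. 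Finally $\widehat\omega_2$ in this regime is forced by $\partial_z v_2$, but when $k_1^2+k_3^2=1$ we may have $k_3=0$ (then the forcing vanishes) or $k_3\neq 0,k_1=0$ (excluded); in the remaining cases $\partial_z\Delta^{-1}$ applied to $\mathbb{P}_1\widehat\varphi$ is zero because $\partial_z$ kills... more carefully, $\cos(k_fy)\partial_z v_2$ pairs the $\mathbb{P}_1$ part of $v_2$ with $\cos(k_fy)\in L^2_0$, so the forcing lands in $\mathbb{Q}_1$ and inherits the fast rate, giving the last line.

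The main obstacle I expect is the resonant case, part~2: establishing the pseudospectral bound for $\mathcal{L}$ restricted to $L^2_0(\mathbb{T}_{2\pi})$ with the correct rate $|k_1\gamma|^{1/2}$ (this is where the nonlocal Biot–Savart term must be shown to be a genuinely subordinate perturbation of the transport term $i\alpha\sin(k_f y)$, delicate because on the zero mode the operator $\Delta^{-1}$ is unbounded in the relevant sense), and then tracking the precise $\nu$- and $\gamma$-powers in the coupling $\mathbb{Q}_1\to\mathbb{P}_1$ to land exactly on $|\gamma|^{1/6}\nu^{-1/3}$ rather than a cruder exponent. The non-resonant part~1 is, by contrast, a fairly mechanical application of Gearhart–Prüss plus Duhamel once the pseudospectral bound (assumed available from earlier in the paper) is in hand.
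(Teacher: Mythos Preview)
Your proposal has two genuine gaps, both at the core of why the paper invokes the wave operator rather than a direct Duhamel argument.

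\textbf{Part 1, the $\omega_2$ estimate.} The naive Duhamel you outline does not close. The forcing in the $\omega_2$ equation carries a prefactor $|\gamma|/\nu$, and after bounding $\|\partial_z\Delta^{-1}\widehat\varphi\|\lesssim |k_1|^{-1}\|\widehat\varphi\|$ and integrating $\int_0^t e^{-(t-\tau)a}e^{-\tau a}\,d\tau=te^{-at}$ you are left with a term of size $\frac{|\gamma|}{\nu|k_1|}\,te^{-at}\|\widehat\varphi(0)\|$. To match the claimed bound you would need $\frac{|\gamma|}{\nu}\lesssim a=c_3|k_1\gamma|^{1/2}+\nu(k_1^2+k_3^2)$; for $|k_1|=1$ and $k_1^2+k_3^2$ bounded this forces $|\gamma|/\nu\lesssim |\gamma|^{1/2}$, i.e.\ $\nu\gtrsim |\gamma|^{1/2}$, the \emph{opposite} of the hypothesis $\nu<c_2|\gamma|^{1/2}$. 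The paper explicitly flags this (``The main trouble comes from the term $\frac{ik_3}{k_f^3}\frac{\gamma}{\nu}\cos y(\alpha^2-\partial_y^2)^{-1}f$, which can not be viewed as a source term of $g$''). The fix is the wave operator $\mathbb{D}_2$, which satisfies the intertwining relation $\mathbb{D}_2\big(\sin y(1+(\partial_y^2-\alpha^2)^{-1})\omega\big)=\sin y\,\mathbb{D}_2(\omega)-(\partial_y^2-\alpha^2)^{-1}\omega$. Setting $g_1=g+\frac{k_3}{k_1k_f}\cos y\,\mathbb{D}_2(f)$ \emph{cancels} the large forcing and leaves $(\partial_t+\nu(k_1^2+k_3^2)+\mathcal{H}'_{k_1,k_3})g_1=\frac{\nu k_fk_3}{k_1}[\cos y\,\mathbb{D}_2,\partial_y^2]f$, whose prefactor is $\nu$ rather than $\gamma/\nu$. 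Duhamel applied to \emph{this} equation, together with the commutator bound \eqref{est of D2} and the energy estimate \eqref{est of L(k_1,k_3)3}, produces the $(1+at)/|k_1|$ factor. Note also that you misidentify the ``wave operator method'': it is this algebraic conjugation via $\mathbb{D}_2$, not the Gearhart--Pr\"uss conversion of resolvent to semigroup bounds (which the paper uses separately, as you say, for the $\varphi$ estimate).

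\textbf{Part 2, the $\mathbb{P}_1\widehat\varphi$ estimate.} Your Duhamel on $\mathbb{P}_1$ with forcing of size $|\gamma|/\nu$ from $\mathbb{Q}_1$ gives, after integrating against $e^{-c|\gamma|^{1/2}\tau}$, a loss of order $|\gamma|^{1/2}\nu^{-1}$, not $|\gamma|^{1/6}\nu^{-1/3}$; the handwave ``tightening the bookkeeping'' cannot repair this. The paper's mechanism is entirely different: writing $\partial_tf=-\nu f+\mathcal{L}_1u$ with $\mathcal{L}_1u=\nu u''-\nu u-i\frac{\beta}{\nu}\sin y\,u$, one observes the exact identity $\partial_t\langle\mathcal{L}_1^{-1}f,1\rangle=-\nu\langle\mathcal{L}_1^{-1}f,1\rangle$, so $\langle\mathcal{L}_1^{-1}f,1\rangle(t)=e^{-\nu t}\langle\mathcal{L}_1^{-1}f_0,1\rangle$. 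Combined with the coercive lower bound $|\langle\mathcal{L}_1^{-1}1,1\rangle|\geq c\nu/|\beta|$ and the resolvent estimate $\|\mathcal{L}_1^{-1}u\|_{L^1}\leq C|\beta|^{-5/6}\nu^{2/3}\|u\|_{L^2}$, one bounds $|\mathbb{P}_1f|\leq C(|\beta|/\nu)|\langle\mathcal{L}_1^{-1}\mathbb{P}_1f,1\rangle|$ and unwinds to get exactly $|\gamma|^{1/6}\nu^{-1/3}$. These sharp $L^1$/coercivity bounds for $\mathcal{L}_1^{-1}$ are the missing ingredient; no amount of Duhamel on the primitive variables recovers this exponent. (Your treatment of $\widehat\omega_2$ here is essentially right: since $k_1^2+k_3^2=1$ with $k_1\in\mathbb{Z}\setminus\{0\}$ forces $k_3=0$, the forcing vanishes and $\widehat\omega_2$ evolves freely under $\mathcal{H}'$.)
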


The second goal of this paper is to study the stability threshold of the  3-D Kolmogorov flow, whose proof relies on the enhanced decay estimates for the linearized system \eqref{eq:NS-L}.

\begin{theorem}\label{thm:NS}
Given $k_f\in (0,1),$ there exist constants $c_2,\ c',\ c_4\in(0,1),$ such that if $\gamma\in (-1,1),\ 0<\nu<c_2|\gamma|^{\frac{1}{2}},$ and  $\|V_0\|_{H^2}\leq c_4\nu^{\frac{5}{2}}|\gamma|^{-\frac{3}{4}}$, then the solution $V$ of \eqref{eq:NS-V} is global in time and satisfies
\begin{align*}
&\| v_2(t)\|_{H^2}+e^{c'\sqrt{\gamma}t}\|(\Delta v_2)_{\neq}(t)\|_{L^2}+e^{c'\sqrt{\gamma}t}\|\partial_x\omega_2(t)\|_{L^2}+\| P_0v_3(t)\|_{H^1}\leq C\|V_0\|_{H^2},\\
&\| V(t)\|_{H^2}\leq{C|\gamma|^{\frac{1}{4}}}/{\nu }\|V_0\|_{H^2}.
\end{align*}
Here $P_0f=\frac{1}{2\pi}\int f(x,y,z)dx,\ f_{\neq}=f-P_0f.$
\end{theorem}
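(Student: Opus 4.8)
The plan is to run a standard bootstrap/continuity argument on the shear-wise formulation \eqref{eq:NS-shear}, using Theorem \ref{thm:decay-L} to control the linear dynamics and absorbing the nonlinear terms via the enhanced dissipation. First I would set up the bootstrap hypotheses: on a maximal interval $[0,T^*)$ assume that $\|v_2(t)\|_{H^2}+\|P_0v_3(t)\|_{H^1}\le 2C\|V_0\|_{H^2}$, that the enhanced-dissipation quantities $e^{c'\sqrt{\gamma}t}\|(\Delta v_2)_{\neq}(t)\|_{L^2}$ and $e^{c'\sqrt{\gamma}t}\|\partial_x\omega_2(t)\|_{L^2}$ are bounded by $2C\|V_0\|_{H^2}$, and that $\|V(t)\|_{H^2}\le 2C|\gamma|^{1/4}\nu^{-1}\|V_0\|_{H^2}$; then I would show each constant can be improved to $C$, forcing $T^*=\infty$. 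The first equation of \eqref{eq:NS-shear} reads $(\partial_t+\mathcal L)\Delta v_2=-\Delta(V\cdot\nabla v_2)-\partial_2(\Delta p)$ on the nonzero frequencies, so by Duhamel and the decay estimate of Theorem \ref{thm:decay-L} (cases 1 and 2, noting that on $k_1^2+k_3^2=1$ the $\mathbb P_1$ component of $\hat\varphi$ is handled separately and reinjected), one gets $\|(\Delta v_2)_{\neq}(t)\|_{L^2}\lesssim e^{-c'\sqrt\gamma t}\|V_0\|_{H^2}+\int_0^t e^{-c'\sqrt\gamma(t-s)}\|(\text{nonlinear})(s)\|_{L^2}\,ds$, and similarly for $\partial_x\omega_2$ from the second equation of \eqref{eq:NS-L} with the forcing $-\frac{\gamma}{\nu k_f}\cos(k_fy)\partial_zv_2$ plus the nonlinear divergence term.

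Next I would estimate the nonlinear terms under the bootstrap hypotheses. The schematic structure is $\|\Delta(V\cdot\nabla v_2)\|_{L^2}\lesssim \|V\|_{H^2}\|v_2\|_{H^3}$ after using the divergence-free condition and interpolation/product estimates on $\mathbb T^3$; the point is to distribute derivatives so that at least one factor carries the exponentially decaying $(\cdot)_{\neq}$ piece or so that the large factor $\|V\|_{H^2}\lesssim |\gamma|^{1/4}\nu^{-1}\|V_0\|_{H^2}$ is multiplied by a quantity small enough to close. Feeding the bootstrap bounds in gives contributions of size $|\gamma|^{1/4}\nu^{-1}\|V_0\|_{H^2}^2$ times a factor from the time integral; the time integral $\int_0^t e^{-c'\sqrt\gamma(t-s)}\,ds\lesssim \gamma^{-1/2}$, so the net nonlinear contribution to $e^{c'\sqrt\gamma t}\|(\Delta v_2)_{\neq}\|_{L^2}$ is of order $\gamma^{-1/2}|\gamma|^{1/4}\nu^{-1}\|V_0\|_{H^2}\cdot 2C\|V_0\|_{H^2}$, which is $\le \tfrac12 C\|V_0\|_{H^2}$ precisely when $\|V_0\|_{H^2}\lesssim \nu|\gamma|^{1/4}$ — and the hypothesis $\|V_0\|_{H^2}\le c_4\nu^{5/2}|\gamma|^{-3/4}$ is much stronger than this once $\nu<c_2|\gamma|^{1/2}$, leaving room for the weaker estimates (the $v_2$ and $P_0v_3$ bounds, which decay only at the slow viscous rate $e^{-\nu t}$ and hence pick up factors of $\nu^{-1}$). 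The zero mode $P_0v_3$ satisfies a forced heat equation $(\partial_t-\nu\Delta)P_0v_3=-P_0(V\cdot\nabla v_3)$ whose source is quadratic in the nonzero modes (since $P_0$ of a product of two zero modes of divergence-free fields with the relevant structure drops out or is controlled), so it inherits smallness from the exponentially decaying pieces; this is where the stronger $\nu^{5/2}$ threshold is consumed.

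I expect the main obstacle to be the careful accounting of the anisotropic/frequency-resolved norms: the estimates in Theorem \ref{thm:decay-L} are stated frequency-by-frequency in $(k_1,k_3)$ with different behavior on $k_1^2+k_3^2>k_f^2$, on the resonant circle $k_1^2+k_3^2=1$, and (implicitly) one must also handle the $k_1=0$ modes, which are not covered by Theorem \ref{thm:decay-L} at all and must be treated as a separate (2D-in-$(y,z)$) system with its own energy estimates — this is typically where the worst powers of $\nu$ enter and where one must exploit that the lift-up/nonlinear interaction feeding $v_1$ from $v_2$ is quadratically small. A second delicate point is closing the $\|V\|_{H^2}\le C|\gamma|^{1/4}\nu^{-1}\|V_0\|_{H^2}$ bound itself: this is the "lift-up" growth, and one must show the growth saturates at the factor $|\gamma|^{1/4}/\nu$ rather than growing further, which again uses that the source of the growth (the term $\frac{\gamma\cos(k_fy)}{\nu k_f}v_2$ in the $v_1$ equation) is driven by a $v_2$ that is itself decaying, so $\int_0^\infty \frac{|\gamma|}{\nu}\|v_2(s)\|\,ds\lesssim \frac{|\gamma|}{\nu}\cdot\frac{1}{\sqrt\gamma}\|V_0\|=\frac{|\gamma|^{1/2}}{\nu}\|V_0\|$ — wait, reconciling this with the claimed $|\gamma|^{1/4}/\nu$ is exactly the kind of bookkeeping (the slow $e^{-\nu t}$ decay of $v_2$ versus the fast $e^{-c'\sqrt\gamma t}$ decay of $(\Delta v_2)_{\neq}$, and which norm controls which) that will occupy most of the real work.
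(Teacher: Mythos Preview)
Your bootstrap framework is correct and matches the paper's, but three of the steps you sketch would fail as written, and each corresponds to a specific device the paper introduces.

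First, the Duhamel argument you propose for $(\Delta v_2)_{\neq}$ does not close: Theorem~\ref{thm:decay-L} gives only $L^2\!\to\!L^2$ semigroup decay, so you cannot recover the dissipation piece $\nu\int e^{2c'\sqrt{|\gamma|}s}\|\nabla(\Delta v_2)_{\neq}\|_{L^2}^2\,ds$ needed to absorb $\|\nabla\Delta v_2\|$ factors in the nonlinear estimates. The paper avoids this by combining the semigroup bound with the \emph{energy identity} special to the Kolmogorov operator (see the computation leading to~\eqref{dissp1} and the time-splitting argument in Lemma~\ref{5.2}), which yields the $X_{c'}$-norm estimate of Proposition~\ref{Prop5.3}. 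Second, in the $\omega_2$ equation you propose to treat $-\tfrac{\gamma}{\nu k_f}\cos(k_fy)\partial_z v_2$ as a forcing; this is fatal because the prefactor $\gamma/\nu$ is large and the resulting bound on $\omega_2$ would be worse by a factor $\sim|\gamma|^{1/2}/\nu$ than what you need. The paper's remedy (Section~5.1 and the proof of~\eqref{NLNSK decay2}) is to apply the wave operator $\widetilde{\mathbb D}_2$ and pass to the good unknown $g_1=\partial_x\omega_2+k_f^{-1}\partial_z\cos(k_fy)\widetilde{\mathbb D}_2((\Delta v_2)_{\neq})$, for which the large coupling term cancels exactly. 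Third, your confusion at the end about the $M_1$ bound is real: the lift-up term $\tfrac{\gamma\cos(k_fy)}{\nu k_f}P_0v_2$ in the $P_0v_1$ equation is not integrable in time against $\|v_2\|_{H^2}$ (which only decays like $e^{-\nu t}$, not $e^{-c'\sqrt{|\gamma|}t}$). The paper handles this by subtracting the explicit steady solution $v_1^{(1)}=-\dfrac{\nu k_f\cos(k_fy)+a_2\sin(k_fy)}{(\nu k_f)^2+a_2^2}\dfrac{\gamma a_2}{\nu k_f^2}$ (with $a_2$ the spatial mean of $v_2$), after which the remainder $v_1^{(2)}=P_0v_1-v_1^{(1)}$ is forced only by $P_0v_2-a_2$, which \emph{does} have good decay.

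Finally, your nonlinear estimate $\|\Delta(V\cdot\nabla v_2)\|_{L^2}\lesssim\|V\|_{H^2}\|v_2\|_{H^3}$ is too blunt: you cannot afford $\|v_2\|_{H^3}$ on the right. The paper's Lemma~\ref{Union} gives anisotropic product bounds that place every $H^2$ factor on a quantity in $M_0$ and route the remaining derivative through $\|\nabla\Delta v_2\|_{L^2}$ or $\|\nabla\partial_x\omega_2\|_{L^2}$, which are then absorbed by the dissipation pieces of $X_{c'}$ and $Y_0$.
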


\no{\bf Some remarks:}
\begin{itemize}
\item[1.]  If $\gamma=\nu$, $U^*$ is independent of $\nu$, thus the transition threshold $\beta\le \f 74$. In this case, the solution is included in a basin of attraction of the Kolmogorov flow with the size of $\nu$. The transition threshold should be not optimal. To improve it, more subtle analysis is needed. In 2-D, the last two authors and Zhao proved that the transition threshold $\beta$ is less than $\f 23+$ \cite{WZZ3}.

\item[2.] For simplifying the presentation, we consider the case of $k_f<1$.  Our result could be extended to the case of $k_f=1$. For $k_f>1$, the flow $U^*$ is linearly unstable.

\item[3.] The enhanced dissipation rate of the linearized operator around   the 2-D Kolmogorov flow $(\sin y,0)$ was independently proved by Wei et al.  \cite{WZZ3} and Ibrahim et al.\cite{IMM}. In 3-D, the problem is more difficult, since we need to handle a coupled linear system \eqref{eq:NS-L}.
Back to 2D, the bound of  $\|\mathbb{P}_1\hat{\varphi}(t,k_1,\cdot,k_3)\|_{L^2}$ is better than the corresponding one(see 3.203) in \cite{IMM}, where
there is loss of $\nu^{-\f13}$. Here the loss is $\nu^{-\f16}$ by taking $\gamma=\nu$.

\end{itemize}

\section{Sketch of the proof and ideas}

\subsection{Pseudospectra}
To study the linearized Navier-Stokes equations \eqref{eq:NS-L},
we first consider the resolvent estimates of the following linear operator in section 3:
\begin{align*}
L_\lambda w=i\f{\alpha}{\nu}\big[(\sin y-\lambda)w+\sin y\varphi\big]-\nu\partial_y^2w,
\end{align*}
where $(\partial_y^2-\beta^2)\varphi=w$, $\al, \la\in \R, \nu>0$.

In the case without nonlocal term $\sin y\varphi$, the problem is relatively
simple. For $\la>1$, we consider the estimate
\begin{align*}
\big|\big\langle i\f{\alpha}{\nu}(\sin y-\lambda)w-\nu\partial_y^2w,\chi_{[\hat{y}_{-},\hat{y}_{+}]}w\big\rangle\big|
\end{align*}
for some  $\hat{y}_{-}\in(\f{\pi}{2},\f{\pi}{2}+\delta)$ and $\hat{y}_{+}\in(\f{5\pi}{2}-\delta,\f{5\pi}{2})$ with $\delta=|\alpha|^{-\f14}\nu^{\f12}\ll1$.
For $\la\in [0,1]$, we take $0\leq y_1\leq\f{\pi}{2}\leq y_2\leq\pi$ such that $\lambda=\sin y_1=\sin y_2$, and choose some $\hat{y}_{-}\in(y_2,y_2+\delta)$ and $\hat{y}_{+}\in(y_1+2\pi-\delta,y_1+2\pi)$.
Then we consider the estimates
\begin{align*}
\big|\big\langle i\f{\alpha}{\nu}(\sin y-\lambda)w-\nu\partial_y^2w,\chi_{[y_1,y_2]}w\big\rangle\big|,\quad
\big|\big\langle i\f{\alpha}{\nu}(\sin y-\lambda)w-\nu\partial_y^2w,\chi_{[\hat{y}_{-},\hat{y}_{+}]}w\big\rangle\big|.
\end{align*}
The key point is that in the interval $(y_1+\delta, y_2-\delta)$ and $(y_2+\delta, y_1+2\pi-\delta)$, we have
\beno
\la-\sin y\gtrsim \delta^2.
\eeno
We control $L^2$ estimate of $w$ in the other intervals of $\delta$ size by using $L^\infty$ estimate of $w$, while
$\|w\|_{L^\infty}^2\leq\|w\|_{L^2}\|w'\|_{L^2}+\|w\|_{L^2}^2$.

For the full operator $L_\la$, the case of $\la>1$ is similar. For $\la\in [0,1]$, we can first  reduce it to consider the operator
$\tilde{L}_\la u=i\f{\alpha}{\nu}[(\sin y-\lambda)u+\lambda\varphi]-\nu\partial_y^2u$. Let $0\leq y_1\leq\f{\pi}{2}\leq y_2\leq\pi$ so that $\lambda=\sin y_1=\sin y_2$, and $(\varphi, u)$ satisfy
\begin{align*}
\left\{
\begin{aligned}
&(\partial_y^2-\tilde{\beta}^2)\varphi=u,\quad y_1\leq y\leq y_2,\\
&\varphi(y_1)=\varphi(y_2)=0.
\end{aligned}
\right.
\end{align*}
To handle the nonlocal term, one of the key points is the following coercive estimate:
 \begin{align*}
&\lambda\Big(\int_{y_1}^{y_2}(\sin y-\lambda)^2\Big|(\f{\varphi}{\sin y-\lambda})'\Big|^2dy+\int_{y_1}^{y_2}\tilde{\beta}^2|\varphi|^2dy\Big)\nonumber\\&\qquad\leq \int_{y_1}^{y_2}(\sin y-\lambda)|u|^2dy+\big\langle\lambda\varphi,u\chi_{(y_1,y_2)}\big\rangle.
\end{align*}
This is similar to the situation of the Lamb-Oseen operator when $|k|\ge 2$. Only when $|k|=1$, we need to use the wave operator.

After taking Fourier transformation in $x,z$,  the linearized operators $\mathcal{L}$ and $\mathcal{H}$ are reduced to
\beno
&&\mathcal{L}'_{k_1,k_3}=-\nu k_f^2\partial_y^2+\frac{ik_1}{k_f^2}\f{\gamma}{\nu }\sin y\big(1-(\beta^2-\partial_y^2)^{-1}\big),\\
&&\mathcal{H}'_{k_1,k_3}=-\nu k_f^2\partial_y^2+\frac{ik_1}{k_f^2}\f{\gamma}{\nu }\sin y,
\eeno
where $ \beta^2=\f{k_1^2+k_3^2}{k_f^2}$. These two operators are accretive. Define
\beno
\Psi(H)=\inf\big\{\|(H-i\la)f\|;f\in {D}(H),\ \la\in \mathbb{R},\  \|f\|=1\big\}.
\eeno
Using the resolvent estimates of $L_\la$, we can prove that
\ben\label{eq:spectra}
\Psi(\mathcal{H}'_{k_1,k_3})\geq c|k_1\gamma|^{\frac{1}{2}},\quad
\Psi(\mathcal{L}'_{k_1,k_3})\geq c|k_1\gamma|^{\frac{1}{2}}(1-\beta^{-2}).\een

\subsection{Enhanced dissipation}
For the accretive operator $H$, we can derive the following sharp semigroup bound from the pseudospectra $\Psi(H)$:
\beno
\|e^{-tH}\|\leq e^{-t\Psi(H)+{\pi}/{2}}.
\eeno
Thus, using the pseudospectra bounds \eqref{eq:spectra}, we can show that
\begin{align*}
&\|e^{-t\mathcal{H}}g_{\neq}\|_{L^2}\leq Ce^{-c|\gamma|^{\f12}t-\nu t}\|g_{\neq}\|_{L^2},\\
&\|e^{-t\mathcal{L}}g_{\neq}\|_{L^2}\leq Ce^{-c|\gamma|^{\f12}t-\nu t}\|g_{\neq}\|_{L^2}.
\end{align*}
However, these two estimates are not enough to handle the linearized
coupled system \eqref{eq:NS-L}.  Let $f(t,y)=\hat\varphi(t,k_1,y,k_3)$ and $g(t,y)=\hat{\omega}_2(t,k_1,y,k_3)$. Then \eqref{eq:NS-L} becomes
\begin{align*}
\left\{
\begin{array}{l}
\partial_tf+(\nu(k_1^2+k_3^2)+\mathcal{L}'_{k_1,k_3})f=0,\\
\partial_tg+(\nu(k_1^2+k_3^2)+\mathcal{H}'_{k_1,k_3})g=\frac{ik_3}{k_f^3}\f{\gamma}{\nu}\cos y (\alpha^2-\partial_y^2)^{-1}f,
\end{array}\right.
\end{align*}
where $ \alpha^2=\f{k_1^2+k_3^2}{k_f^2}$. The main trouble comes from
the term $\frac{ik_3}{k_f^3}\f{\gamma}{\nu}\cos y (\alpha^2-\partial_y^2)^{-1}f$, which can not be viewed as a source term of $g$.
Otherwise, this will lead to a bad decay estimate. Our key idea is to use the wave operator method introduced in \cite{LWZ}. Following the construction in \cite{WZZ3}, we can find an operator $\mathbb{D}_2$ so that
  \begin{align*}
&\mathbb{D}_2\big(\sin y(1+(\partial_y^2-\alpha^2)^{-1})\omega\big)=\sin y\mathbb{D}_2(\omega)-(\partial_y^2-\alpha^2)^{-1}\omega.
\end{align*}
Then we introduce a good unknown $g_1=g+\f{k_3}{k_1k_f}\cos y\mathbb{D}_2(f)$, which satisfies
\begin{align*}
\partial_tg_1+(\nu(k_1^2+k_3^2)+\mathcal{H}'_{k_1,k_3})g_1= \f{\nu k_fk_3}{k_1}[\cos y\mathbb{D}_2,\partial_y^2]f .
\end{align*}
Based on this formulation, we can show that for $\al>1$
\begin{align*}
\|g(t)\|_{L^2} \leq Ce^{-at}\|g_0\|_{L^2}+\f{Ce^{-at}(1+at)\|f_0\|_{L^2}}{|k_1|},
\end{align*}
where $a=\nu(k_1^2+k_3^2)+c|k_1\gamma|^{\frac{1}{2}}$. The price we pay is the extra growth factor $(1+at)$.

In the case of $\al=1$, we also need some new ideas to control the growth of $\|\mathbb{P}_1f(t)\|_{L^2}$. In this case, $f$ satisfies
\begin{align*}
\partial_t f&=-\nu f+\nu f''-i\f{\beta}{\nu}\sin y u=-\nu f+\mathcal{L}_1u,
\end{align*}
where $\mathcal{L}_1u=\nu u''-\nu u-i\f{\beta}{\nu}\sin y u$ with $f''=u''-u$ and $\beta=\gamma k_1$.  In particular, we need the following coercive estimate
\begin{align*}
 |\langle\mathcal{L}^{-1}_11,1\rangle|\geq c\nu/|\beta|,
\end{align*}
and the key observation:
\begin{align*}
\langle\mathcal{L}_1^{-1}f,1\rangle(t)=e^{-\nu t} \langle\mathcal{L}_1^{-1}f_0,1\rangle.
\end{align*}
Thus, we can control $|\mathbb{P}_{1}f(t)|$ in the following way
\begin{align*}
|\mathbb{P}_{1}f(t)|&\leq C(|\beta|/\nu) |\langle\mathcal{L}^{-1}_1\mathbb{P}_{1}f(t),1\rangle|\leq C(|\beta|/\nu) (|\langle\mathcal{L}^{-1}_1\mathbb{Q}_{1}f(t),1\rangle|+|\langle\mathcal{L}^{-1}_1f(t),1\rangle|)\\
&\leq C(|\beta|/\nu)\big(e^{-\nu t}|\langle\mathcal{L}^{-1}_1f_0,1\rangle|+\|\mathcal{L}^{-1}_1\mathbb{Q}_{1}f(t)\|_{L^1}\big).
\end{align*}

\subsection{Nonlinear stability threshold}
To study the nonlinear stability, the first step is to establish the enhanced decay estimates for the inhomogeneous system:
\begin{align*}
&(\partial_t+\mathcal{L})\Delta v_2=\text{div}f,\\
&(\partial_t+\mathcal{H})\omega_2=-\f{\gamma}{\nu k_f}\cos(k_fy)\partial_zv_2+\text{div}g.
\end{align*}
More precisely, we will establish the following decay estimates:
\begin{align*}
\|(\Delta v_2)_{\neq}\|_{X_{c'}}^2\lesssim&  \|(\Delta v_2)_{\neq}(0)\|_{L^2}^2+\nu^{-1}\|e^{c'\sqrt{|\gamma|}t}f_{\neq} \|_{L^2L^2}^2,\\
\|\partial_x\omega_2\|_{X_{c'}}^2\lesssim&\|(\Delta v_2)_{\neq}(0)\|_{L^2}^2+\|\partial_x\omega_2(0)\|_{L^2}^2\\&+ \nu^{-1}\Big(\|e^{c'\sqrt{|\gamma|}t}f_{\neq}\|_{L^{2}L^2}^2+\|e^{c'\sqrt{|\gamma|}t}\partial_xg\|_{L^{2}L^2}^2\Big),
\end{align*}
where
\begin{align*}
\|u\|_{X_{c'}}^2=\|e^{c'\sqrt{|\gamma|}t}u\|_{L^{\infty}L^2}^2+{|\gamma|}^{\frac{1}{2}}\|e^{c'\sqrt{|\gamma|}t}u\|_{L^{2}L^2}^2+\nu\|e^{c'\sqrt{|\gamma|}t}\nabla u\|_{L^{2}L^2}^2.
\end{align*}
The semigroup method does not work, since we only have the decay estimates of the semigroup in $L^2$. To overcome this difficulty, the key idea is to use the following energy estimate for the linearized equation:
\begin{align*}
\|(\Delta v_2)_{\neq}(t)\|_{L^2}^2+\nu\int_0^t\|(\nabla\Delta v_2)_{\neq}(s)\|_{L^2}^2ds\leq C\|(\Delta v_2)_{\neq}(0)\|_{L^2}^2+C \nu^{-1}\int_0^t\|f_{\neq}(s) \|_{L^2}^2ds.
\end{align*}
This property seems special for the Kolmogorov flow. Again, we need to use the wave operator for the decay estimates of $\om_2$.

Next we introduce the following two quantities:
\begin{align*}
&M_0(T)=\sup_{0\leq t\leq T}\big(\| v_2(t)\|_{H^2}+e^{c'\sqrt{\gamma}t}\|(\Delta v_2)_{\neq}(t)\|_{L^2}+e^{c'\sqrt{\gamma}t}\|\partial_x\omega_2(t)\|_{L^2}+\| P_0v_3(t)\|_{H^1}\big),\\
&M_1(T)=\sup_{0\leq t\leq T}\| V(t)\|_{H^2}.
\end{align*}
Our goal is to show the following uniform estimates:
\begin{align*}
M_0(T)\leq  C_1\|V_0\|_{H^2},\quad M_1(T)\leq  C_1(|\gamma|/\nu^2)\|V_0\|_{H^2}.
\end{align*}
The main difficulty is to control the part of the solution without the enhanced decay. For the nonlinear estimates, we have to study nonlinear interaction between two parts very carefully and use the good structures.  The most subtle estimate is $\|P_0v_1\|_{H^2}$, where $P_0v_1$
satisfies
\begin{align*}
&(\partial_t-\nu\Delta)P_0v_1+\f{\gamma\cos(k_fy)}{\nu k_f}P_0v_2\\
&=-P_0v_2\partial_yP_0v_1-P_0v_3\partial_zP_0v_1-P_0(V_{\neq}\cdot\nabla (v_1)_{\neq}).
\end{align*}
In this equation, the trouble terms are  $\f{\gamma\cos(k_fy)}{\nu k_f}P_0v_2$ and $P_0v_2\partial_yP_0v_1$, which give rise to the so called lift-up effect in 3-D. To overcome this difficulty, our idea is to introduce the steady solution $v_1^{(1)}$, i.e.,
\begin{align*}
&-\nu\Delta v_1^{(1)}+a_2\partial_yv_1^{(1)}+\f{\gamma\cos(k_fy)}{\nu k_f}a_2=0,
\end{align*}
where $a_2$ is the average of $v_2$ on the domain.  That is, $v_1^{(1)}=-\dfrac{\nu k_f \cos(k_fy)+a_2\sin(k_fy)}{(\nu k_f)^2+a_2^2}\dfrac{\gamma a_2}{\nu k_f^2}$, thus,
\begin{align*}
\| v_1^{(1)}\|_{H^2}\leq \dfrac{C|\gamma a_2|}{\nu^2}\leq \dfrac{C|\gamma|}{\nu^2}\| V_0\|_{H^2}.
\end{align*}
Let $v_1^{(2)}=P_0v_1-v_1^{(1)}$.  Then $v_1^{(2)}$ satisfies a heat equation with good source terms:
\begin{align*}
&(\partial_t-\nu\Delta)v_1^{(2)}+\f{\gamma\cos(k_fy)}{\nu k_f}(P_0v_2-a_2)\\
&=-P_0v_2\partial_yv_1^{(2)}-(P_0v_2-a_2)\partial_yv_1^{(1)}-P_0v_3\partial_zv_1^{(2)}-P_0(V_{\neq}\cdot\nabla (v_1)_{\neq}).\end{align*}

\section{Resolvent estimate of the linearized operator}

In this section, we study the resolvent estimate of the linearized operator
in $\T$:
\begin{align}\label{def:L}
L_\lambda w=i\f{\alpha}{\nu}\big[(\sin y-\lambda)w+\sin y\varphi\big]-\nu\partial_y^2w,
\end{align}
where $(\partial_y^2-\beta^2)\varphi=w$, $\al, \la\in \R, \nu>0$.  In \cite{IMM},  Ibrahim et al. studied the resolvent estimate of a similar operator by developing an abstract framework. We will follow the approach developed in \cite{LWZ}, which seems more elementary and simple. 

\subsection{Resolvent estimate without nonlocal term}
In this subsection, we consider the following linear operator $\cN_\la$ without nonlocal term
\ben\label{def:N}
\mathcal{N}_{\lambda}w= i\f{\alpha}{\nu}(\sin y-\lambda)w-\nu\partial_y^2w.
\een

\begin{proposition}\label{prop:Res-toy}
Given $0<\nu\leq1$, $|\alpha|\gg \nu^2$ and $\lambda\in\mathbb{R}$, there exists a constant $C>0$  independent of $\nu,\alpha,\lambda$, such that for any $w\in H^2(\T)$
\begin{align}
\|\cN_\lambda w\|_{L^2}\geq C|\alpha|^{\f12}\|w\|_{L^2}.
\end{align}
\end{proposition}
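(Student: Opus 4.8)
\textbf{Proof proposal for Proposition \ref{prop:Res-toy}.}

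The plan is to prove the resolvent lower bound by testing the equation $\cN_\lambda w = F$ against suitable localized multipliers, exactly as sketched in the introduction, and splitting into the cases $\lambda > 1$ and $\lambda \in [-1,1]$ (the case $\lambda < -1$ being symmetric to $\lambda > 1$ under $y \mapsto y+\pi$). Set $\delta = |\alpha|^{-1/4}\nu^{1/2}$, which is $\ll 1$ by the hypothesis $|\alpha| \gg \nu^2$. Throughout, the elementary interpolation inequality $\|w\|_{L^\infty}^2 \le \|w\|_{L^2}\|w'\|_{L^2} + \|w\|_{L^2}^2$ on $\T$ will be used to control $w$ on short intervals, while the real and imaginary parts of $\langle \cN_\lambda w, w\rangle$ give the basic a priori bounds
\[
\nu\|w'\|_{L^2}^2 = \mathrm{Re}\langle F, w\rangle \le \|F\|_{L^2}\|w\|_{L^2}, \qquad \frac{|\alpha|}{\nu}\Big|\int (\sin y - \lambda)|w|^2\,dy\Big| \le \|F\|_{L^2}\|w\|_{L^2}.
\]

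\emph{Case $\lambda > 1$.} Here $\sin y - \lambda$ never vanishes and $|\sin y - \lambda| \gtrsim 1$ except near the two maxima of $\sin y$ in a period of length $4\pi$. Choose $\hat y_- \in (\tfrac{\pi}{2}, \tfrac{\pi}{2}+\delta)$ and $\hat y_+ \in (\tfrac{5\pi}{2}-\delta, \tfrac{5\pi}{2})$ so that on the complement of $[\hat y_-, \hat y_+]$ inside a period one has $\lambda - \sin y \gtrsim \delta^2$ (this is where the specific choice of $\delta$ enters: $1 - \sin y \gtrsim \delta^2$ for $y$ at distance $\gtrsim \delta$ from the maximum). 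Testing against $\chi_{[\hat y_-,\hat y_+]}w$ and integrating by parts, the imaginary part controls $\frac{|\alpha|}{\nu}\int_{[\hat y_-,\hat y_+]}(\lambda-\sin y)|w|^2$, hence $\frac{|\alpha|}{\nu}\delta^2 \int_{[\hat y_-,\hat y_+]}|w|^2 = |\alpha|^{1/2}\int_{[\hat y_-,\hat y_+]}|w|^2$ modulo boundary terms of the form $\nu w'(\hat y_\pm)\overline{w}(\hat y_\pm)$ and an error from $\nu\int |w'|^2$; these are absorbed using the a priori bounds and the $L^\infty$ estimate on the short transition intervals. On the two remaining intervals of length $\lesssim 1$ around the maxima one simply uses $|\sin y - \lambda| \gtrsim 1$ there together with the imaginary part bound, so $\|w\|_{L^2(\text{those intervals})}^2 \lesssim \frac{\nu}{|\alpha|}\|F\|_{L^2}\|w\|_{L^2} \le |\alpha|^{-1}\|F\|\|w\|$ as $\nu \le |\alpha|$. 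Combining gives $|\alpha|^{1/2}\|w\|_{L^2}^2 \lesssim \|F\|_{L^2}\|w\|_{L^2}$, i.e. the claim.

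\emph{Case $\lambda \in [-1,1]$.} Pick $0 \le y_1 \le \tfrac{\pi}{2} \le y_2 \le \pi$ with $\sin y_1 = \sin y_2 = \lambda$, and pick $\hat y_- \in (y_2, y_2+\delta)$, $\hat y_+ \in (y_1 + 2\pi - \delta, y_1 + 2\pi)$. The two critical observations are that on $(y_1+\delta, y_2-\delta)$ one has $\sin y - \lambda \gtrsim \delta^2$, and on $(y_2+\delta, y_1 + 2\pi - \delta)$ one has $\lambda - \sin y \gtrsim \delta^2$ — here again the choice $\delta = |\alpha|^{-1/4}\nu^{1/2}$ is what makes $|\alpha|\delta^2/\nu = |\alpha|^{1/2}$. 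Testing $\cN_\lambda w = F$ against $\chi_{[y_1,y_2]}w$ and against $\chi_{[\hat y_-,\hat y_+]}w$, taking imaginary parts, and using the sign of $\sin y - \lambda$ on each subinterval, we get $|\alpha|^{1/2}\big(\|w\|_{L^2(y_1+\delta,y_2-\delta)}^2 + \|w\|_{L^2(y_2+\delta, y_1+2\pi-\delta)}^2\big)$ bounded by $\|F\|\|w\|$ plus boundary terms $\nu w'(\cdot)\overline w(\cdot)$ at the four points $y_1,y_2,\hat y_\pm$ plus $\nu\|w'\|_{L^2}^2$. The boundary terms are handled by writing $\nu|w'(y_0)\overline w(y_0)| \le \nu\|w'\|_{L^\infty}\|w\|_{L^\infty}$ and distributing the derivative loss against $\nu\|w'\|^2 \le \|F\|\|w\|$, and the remaining four short intervals of size $\delta$ (around $y_1,y_2,\hat y_\pm$) are controlled by $\delta\|w\|_{L^\infty}^2 \lesssim \delta(\|w\|\|w'\| + \|w\|^2)$, again absorbed. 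This yields control of $\|w\|_{L^2}$ over the whole period minus those four short intervals, and then those intervals are folded back in, giving $|\alpha|^{1/2}\|w\|_{L^2}^2 \lesssim \|F\|_{L^2}\|w\|_{L^2}$.

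\emph{Main obstacle.} The routine part is the sign analysis of $\sin y - \lambda$ and the bookkeeping of which short interval gets which treatment. The genuinely delicate point — and where care is needed — is the treatment of the boundary terms $\nu w'(\hat y_\pm)\overline w(\hat y_\pm)$ and $\nu w'(y_i)\overline w(y_i)$ produced by integrating $-\nu\partial_y^2 w$ by parts against the sharp cutoff: one must show they are lower-order relative to $|\alpha|^{1/2}\|w\|_{L^2}^2$. The mechanism is that $\nu\|w'\|_{L^2}^2 \le \|F\|\|w\|$ combined with $\|w'\|_{L^\infty}^2 \lesssim \|w'\|\|w''\| + \|w'\|^2$ and the equation $\nu w'' = i\frac{\alpha}{\nu}(\sin y - \lambda)w - F$ to bound $\|w''\|$; balancing these powers of $\nu$ and $|\alpha|$ against $\delta = |\alpha|^{-1/4}\nu^{1/2}$ is exactly the computation that forces this choice of $\delta$ and uses $|\alpha| \gg \nu^2$. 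One can alternatively average $\hat y_\pm$ over their $\delta$-intervals to trade the pointwise boundary term for an integral, which is cleaner. Once the boundary terms are shown to be absorbable, assembling the pieces is immediate.
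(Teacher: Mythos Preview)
Your strategy matches the paper's: localize with sharp cutoffs around the zero set of $\sin y-\lambda$, exploit $|\sin y-\lambda|\gtrsim\delta^2$ away from a $\delta$-neighborhood with $\delta=|\alpha|^{-1/4}\nu^{1/2}$, and cover the remaining $\delta$-intervals by $\delta\|w\|_{L^\infty}^2$. However, your write-up of the case $\lambda>1$ is garbled in a way that matters. You say $\lambda-\sin y\gtrsim\delta^2$ on the \emph{complement} of $[\hat y_-,\hat y_+]$ and that on ``remaining intervals of length $\lesssim 1$'' one has $|\sin y-\lambda|\gtrsim 1$. Both are wrong: the lower bound $\lambda-\sin y\ge 1-\sin y\gtrsim\delta^2$ holds on $[\tfrac{\pi}{2}+\delta,\tfrac{5\pi}{2}-\delta]\subset[\hat y_-,\hat y_+]$ (the big interval, almost a full period), and the leftover consists of two intervals of size $\le\delta$ near the maxima of $\sin y$, where $\lambda-\sin y$ can be as small as $\lambda-1$, arbitrarily close to $0$. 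Those short intervals are precisely where the $\delta\|w\|_{L^\infty}^2$ bound is used, not an estimate $|\sin y-\lambda|\gtrsim 1$.

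On the boundary terms, your first suggestion (controlling $\|w'\|_{L^\infty}$ via the equation and $\|w''\|$) does not balance cleanly: the equation gives $\nu\|w''\|\le\|F\|+\tfrac{|\alpha|}{\nu}\|(\sin y-\lambda)w\|$, and the second term is of order $|\alpha|\nu^{-1}\|w\|$, which is too large. The averaging trick you mention as an alternative is what the paper actually does, and it should be the primary argument: choose $\hat y_\pm$ inside their $\delta$-windows so that $|w'(\hat y_\pm)|^2\le\delta^{-1}\|w'\|_{L^2}^2$ (mean-value/pigeonhole), giving boundary contributions $\nu|w'(\hat y_\pm)\bar w(\hat y_\pm)|\le\nu\delta^{-1/2}\|w'\|_{L^2}\|w\|_{L^\infty}$, which does balance against $|\alpha|^{1/2}\|w\|^2$. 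The same device is used at every cutoff endpoint in the $|\lambda|\le 1$ case: the paper cuts at points $y_-\in(y_1,y_1+\delta)$, $y_+\in(y_2-\delta,y_2)$ chosen this way, not at $y_1,y_2$ themselves. With these two corrections your outline becomes the paper's proof; note also that the paper splits the $|\lambda|\le1$ argument into the subcases $\delta<\tfrac{y_2-y_1}{2}$ and $\delta\ge\tfrac{y_2-y_1}{2}$, since in the latter the interval $(y_1+\delta,y_2-\delta)$ is empty and $[y_1,y_2]$ is itself absorbed into the $L^\infty$ treatment.
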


\begin{proof}
Let us first consider the case of  $|\lambda|>1$. Without loss of generality, we assume $\lambda>1$.
Let $\delta=|\alpha|^{-\f14}\nu^{\f12}\ll1$, then
$\f{\pi}{2}+\delta<\f{5\pi}{2}-\delta$.
We choose $\hat{y}_{-}\in(\f{\pi}{2},\f{\pi}{2}+\delta)$ and $\hat{y}_{+}\in(\f{5\pi}{2}-\delta,\f{5\pi}{2})$ so that
\begin{align}
|w'(\hat{y}_{-})|^2+|w'(\hat{y}_{+})|^2\leq\f{\|w'\|^2}{\delta}\label{I03}.
\end{align}

By integration by parts, we get
\begin{align*}
&\big|\big\langle i\f{\alpha}{\nu}(\sin y-\lambda)w-\nu\partial_y^2w,\chi_{[\hat{y}_{-},\hat{y}_{+}]}w\big\rangle\big|\\
&\geq\Big|i\f{\alpha}{\nu}\int_{\hat{y}_{-}}^{\hat{y}_{+}}(\sin y-\lambda)|w|^2dy+\nu\int_{\hat{y}_{-}}^{\hat{y}_{+}}|\partial_yw|^2dy\Big|-\nu(|w'\bar{w}(\hat{y}_{+})|+|w'\bar{w}(\hat{y}_{-})|)\\
&\geq \f{\sqrt{2}}{2}\Big(\f{|\alpha|}{\nu}\int_{\hat{y}_{-}}^{\hat{y}_{+}}(\lambda-\sin y)|w|^2dy+\nu\int_{\hat{y}_{-}}^{\hat{y}_{+}}|\partial_yw|^2dy\Big)-\nu\big(|w'\bar{w}(\hat{y}_{+})|+|w'\bar{w}(\hat{y}_{-})|\big),
\end{align*}
and by (\ref{I03}), we have
\begin{align*}
\nu\big(|w'\bar{w}(\hat{y}_{+})|+|w'\bar{w}(\hat{y}_{-})|\big)\leq \f{2\nu}{\sqrt{\delta}}\|w'\|_{L^2}\|w\|_{L^{\infty}},
\end{align*}
which yield that
\begin{align*}
&\| i\f{\alpha}{\nu}(\sin y-\lambda)w-\nu\partial_y^2w\| \|w\|+\f{2\nu}{\sqrt{\delta}}\|w'\|_{L^2}\|w\|_{L^{\infty}}\\
&\gtrsim \big|\big\langle i\f{\alpha}{\nu}(\sin y-\lambda)w-\nu\partial_y^2w,\chi_{[\hat{y}_{-},\hat{y}_{+}]}w\big\rangle \big|+\f{2\nu}{\sqrt{\delta}}\|w'\|_{L^2}\|w\|_{L^{\infty}}\\
&\gtrsim \f{|\alpha|}{\nu}\int_{\hat{y}_{-}}^{\hat{y}_{+}}(\lambda-\sin y)|w|^2dy\geq \f{|\alpha|}{\nu}\int_{\f{\pi}{2}+\delta}^{\f{5\pi}{2}-\delta}(1-\sin y)|w|^2dy.
\end{align*}
For any $y\in(\f{\pi}{2}+\delta,\f{5\pi}{2}-\delta)$, we have
\begin{align*}
1-\sin y&\geq \sin \f{\pi}{2}-\sin (\f{\pi}{2}+\delta)=\sin \f{\pi}{2}-\sin (\f{\pi}{2}-\delta)=2\sin \f{\delta}{2} \cos(\f{\pi}{2}-\f{\delta}{2})=2\sin^2\f{\delta}{2}\gtrsim \delta^2.
\end{align*}
Thus, we have
\begin{align*}
\|w\|_{L^2([\f{\pi}{2}+\delta,\f{5\pi}{2}-\delta])}^2\lesssim\f{\nu}{\delta^2|\alpha|}\big(\| i\f{\alpha}{\nu}(\sin y-\lambda)w-\nu\partial_y^2w\| \|w\|+\f{2\nu}{\sqrt{\delta}}\|w'\|_{L^2}\|w\|_{L^{\infty}}\big).
\end{align*}
On the other hand, we have
\begin{align}
&\|w\|_{L^\infty}^2\leq\|w\|_{L^2}\|w'\|_{L^2}+\|w\|_{L^2}^2,\label{eq:w-inf} \\
&\big|\big\langle i\f{\alpha}{\nu}(\sin y-\lambda)w-\nu\partial_y^2w,w\big\rangle \big|\geq\nu\|w'\|_{L^2}^2.\label{eq:Res-tri}
\end{align}
Thus, we deduce that
\begin{align*}
\|w\|_{L^2}^2&=\|w\|_{L^2([\f{\pi}{2}+\delta,\f{5\pi}{2}-\delta])}^2+\|w\|_{L^2([\f{\pi}{2},\f{\pi}{2}+\delta]\cup[\f{5\pi}{2}-\delta,\f{5\pi}{2}])}^2\\
&\lesssim \f{\nu}{\delta^2|\alpha|}\big(\|\mathcal{N}_{\lambda}w\|_{L^2} \|w\|_{L^2}+\f{2\nu}{\sqrt{\delta}}\|w'\|_{L^2}\|w\|_{L^{\infty}}\big)+\delta\|w\|_{L^\infty}^2\\
&\lesssim \f{\nu}{\delta^2|\alpha|}\big(\|\mathcal{N}_{\lambda}w\|_{L^2} \|w\|_{L^2}+\f{2\nu}{\sqrt{\delta}}\|w'\|_{L^2}^{\f32}\|w\|_{L^2}^{\f12}\big)+\delta \|w'\|_{L^2}\|w\|_{L^2}+\d\|w\|_{L^2}^2.
\end{align*}
As $\delta\ll1$, this shows that
\begin{align*}
\|w\|_{L^2}^2\lesssim \f{\nu}{\delta^2|\alpha|}(\|\mathcal{N}_{\lambda}w\|_{L^2} \|w\|_{L^2}+\f{2\nu^{\f14}}{\sqrt{\delta}}\|\mathcal{N}_{\lambda}w\|_{L^2}^{\f34}\|w\|_{L^2}^{\f54})+\f{\delta}{\sqrt{\nu}} \|\mathcal{N}_{\lambda}w\|_{L^2}^{\f12}\|w\|_{L^2}^{\f32},
\end{align*}
which implies that
\begin{align*}
\|\cN_\la w\|_{L^2}&\gtrsim\min\big\{\f{\delta^2|\alpha|}{\nu},\ \f{\delta^{\f{10}{3}}|\alpha|^{\f43}}{\nu^{\f53}},\ \f{\nu}{\delta^2}\big\}\|w\|_{L^2}\gtrsim |\alpha|^{\f12}\|w\|_{L^2}.\label{K4}
\end{align*}

Due to $\tau_{\pi}\mathcal{N}_{\lambda}=\mathcal{N}_{-\lambda}\tau_{\pi}$, where $\tau_{\pi}w(y)=w(y+\pi)$. Hence,  for $\lambda<-1$, we also have
\begin{align}
\|\mathcal{N}_{\lambda}w\|_{L^2}\gtrsim|\alpha|^{\f12}\|w\|_{L^2}.
\end{align}

The case of $|\la|\le 1$ follows from the following lemma.
\end{proof}

\begin{lemma}
Under the same assumptions as Proposition \ref{prop:Res-toy} and $\la \in [-1,1]$, it
holds that for any $w\in H^2(\T)$
\begin{align*}
\|\cN_\la w \|_{L^2}\geq C|\alpha|^{\f12}\|w\|_{L^2}.
\end{align*}
\end{lemma}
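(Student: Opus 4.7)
The plan is to imitate the argument from the case $|\lambda|>1$, but to accommodate the sign change of $\sin y - \lambda$ by testing against two indicator functions — one on each component where the sign is constant. By the symmetry $\tau_\pi \mathcal{N}_\lambda = \mathcal{N}_{-\lambda}\tau_\pi$ already exploited above, I may assume $\lambda \in [0,1]$. Set $y_1 = \arcsin\lambda \in [0,\pi/2]$ and $y_2 = \pi - y_1 \in [\pi/2,\pi]$, so that $\sin y - \lambda \ge 0$ on $[y_1,y_2]$ while $\sin y - \lambda \le 0$ on $[y_2, y_1+2\pi]$, and keep $\delta = |\alpha|^{-\f14}\nu^{\f12} \ll 1$. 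The key geometric fact I will first establish is
\[
|\sin y - \lambda| \gtrsim \delta^2 \qquad\text{on}\qquad A := [y_1+\delta, y_2-\delta] \cup [y_2+\delta, y_1+2\pi-\delta],
\]
where the first subinterval is interpreted as empty when $y_2 - y_1 < 2\delta$. This follows from the identity $\sin(y_1 + \delta)-\sin y_1 = 2\sin(\delta/2)\cos(y_1+\delta/2)$ together with $\cos(y_1+\delta/2) \ge \sin(\delta/2) \gtrsim \delta$ whenever $y_1 + \delta/2 \le \pi/2$, and analogous computations at the other three endpoints (where the sign of $\cos$ flips but in a favorable direction for the second interval).

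\textbf{Two test intervals.} By the pigeonhole principle applied on each of the four $\delta$-width intervals adjacent to $y_1$, $y_2$, $y_1+2\pi$, I pick points $\hat y_1 \in (y_1, y_1+\delta)$, $\hat y_2 \in (y_2-\delta,y_2)$, $\hat y_3 \in (y_2, y_2+\delta)$, $\hat y_4 \in (y_1+2\pi-\delta, y_1+2\pi)$ such that $|w'(\hat y_j)|^2 \le \delta^{-1}\|w'\|_{L^2}^2$ for each $j$ (omitting $\hat y_1, \hat y_2$ when $y_2-y_1 < 2\delta$). Then I test the equation against $\chi_{I_1}w$ and $\chi_{I_2}w$ with $I_1 = [\hat y_1,\hat y_2]$, $I_2 = [\hat y_3,\hat y_4]$. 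Because $\sin y - \lambda$ has a definite sign on each $I_k$, the same real/imaginary triangle-inequality step used to prove Proposition~\ref{prop:Res-toy} yields
\[
\bigl|\bigl\langle \mathcal{N}_\lambda w, \chi_{I_k} w\bigr\rangle\bigr| \gtrsim \f{|\alpha|}{\nu}\int_{I_k} |\sin y - \lambda|\,|w|^2 \, dy - 2\nu\delta^{-\f12}\|w'\|_{L^2}\|w\|_{L^\infty},
\]
and summing over $k=1,2$ together with the geometric bound above gives
\[
\f{|\alpha|\delta^2}{\nu}\|w\|_{L^2(A)}^2 \lesssim \|\mathcal{N}_\lambda w\|_{L^2}\|w\|_{L^2} + \nu\delta^{-\f12}\|w'\|_{L^2}\|w\|_{L^\infty}.
\]

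\textbf{Closing.} Since $|\mathbb{T}\setminus A| \lesssim \delta$, I split $\|w\|_{L^2}^2 \le \|w\|_{L^2(A)}^2 + \delta\|w\|_{L^\infty}^2$, then apply the Agmon-type bound \eqref{eq:w-inf} and the coercive estimate $\nu\|w'\|_{L^2}^2 \le \|\mathcal{N}_\lambda w\|_{L^2}\|w\|_{L^2}$ coming from \eqref{eq:Res-tri}. The resulting inequality is structurally identical to the one closing the proof of Proposition~\ref{prop:Res-toy}, so the same optimization $\delta = |\alpha|^{-\f14}\nu^{\f12}$ produces $\|\mathcal{N}_\lambda w\|_{L^2} \gtrsim |\alpha|^{\f12}\|w\|_{L^2}$, as required.

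\textbf{Expected main obstacle.} The single genuinely new difficulty is the $\delta^2$-lower bound on $|\sin y - \lambda|$ uniformly in $\lambda \in [0,1]$: when $\lambda$ is close to $1$ the first interval $[y_1+\delta, y_2-\delta]$ degenerates (handled by a mild case split in which $I_1$ is dropped and $[y_1,y_2]$ is absorbed into the $O(\delta)$-width region where I use only $L^\infty$ bounds), and when $\lambda$ is close to $0$ the second interval wraps around $y=0$, so the sign of $\cos$ at the chosen endpoints has to be verified by hand. Beyond this elementary but case-splitting geometric check, the proof is a mechanical two-interval version of the single-interval argument already established in Proposition~\ref{prop:Res-toy}.
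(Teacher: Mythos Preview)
Your proposal is correct and follows essentially the same route as the paper: reduce to $\lambda\in[0,1]$ by the symmetry $\tau_\pi\mathcal{N}_\lambda=\mathcal{N}_{-\lambda}\tau_\pi$, test against $\chi_I w$ on each of the two sign-definite intervals $[y_1,y_2]$ and $[y_2,y_1+2\pi]$ with pigeonholed endpoints, invoke the $\delta^2$ lower bound on $|\sin y-\lambda|$, and close via \eqref{eq:w-inf}--\eqref{eq:Res-tri}, with the same case split at $y_2-y_1=2\delta$ to handle the degeneration near $\lambda=1$. Your product-formula derivation of the geometric bound $|\sin y-\lambda|\gtrsim\delta^2$ is in fact a bit cleaner than the paper's Taylor-expansion version.
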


\begin{proof}Without loss of generality, let us first assume that $\lambda\in[0,1]$. Then there exist $0\leq y_1\leq\f{\pi}{2}\leq y_2\leq\pi$ such that $\lambda=\sin y_1=\sin y_2$. For $ 0<\delta\leq\f\pi2$,  we can choose $y_{-}\in(y_1,y_1+\delta)$ and $y_{+}\in(y_2-\delta,y_2)$ so that
\begin{align}
|w'(y_{-})|^2+|w'(y_{+})|^2\leq\f{\|w'\|^2}{\delta}\label{I0},
\end{align}
where $\delta$ will be determined later.

First of all, we consider the case of $0<\delta<\f{y_2-y_1}{2}$.
As above, by integration by parts, we have
\begin{align*}
&\big|\big\langle i\f{\alpha}{\nu}(\sin y-\lambda)w-\nu\partial_y^2w,\chi_{[y_{-},y_{+}]}w\big\rangle \big|\\
&\geq \f{\sqrt{2}}{2}\Big(\f{|\alpha|}{\nu}\int_{y_{-}}^{y_{+}}(\sin y-\lambda)|w|^2dy+\nu\int_{y_{-}}^{y_{+}}|\partial_yw|^2dy\Big)-\nu\big(|w'\bar{w}(y_{+})|+|w'\bar{w}(y_{-})|\big),
\end{align*}
and by (\ref{I0}),
\begin{align*}
\nu\big(|w'\bar{w}(y_{+})|+|w'\bar{w}(y_{-})|\big)\leq \f{2\nu}{\sqrt{\delta}}\|w'\|_{L^2}\|w\|_{L^{\infty}},
\end{align*}
which yield that
\begin{align*}
&\| i\f{\alpha}{\nu}(\sin y-\lambda)w-\nu\partial_y^2w\| \|w\|+\f{2\nu}{\sqrt{\delta}}\|w'\|_{L^2}\|w\|_{L^{\infty}}\\
&\gtrsim \f{|\alpha|}{\nu}\int_{y_{-}}^{y_{+}}(\sin y-\lambda)|w|^2dy\geq \f{|\alpha|}{\nu}\int_{y_1+\delta}^{y_2-\delta}(\sin y-\lambda)|w|^2dy.
\end{align*}
For $ y\in(y_1+\delta,y_2-\delta)$, we have
\begin{align*}
\sin y-\sin y_1&=\cos y_1(y-y_1)+\f{\sin(\theta y+(1-\theta)y_1)}{2}(y-y_1)^2\geq \cos y_1\delta+\sin y_1\delta^2\gtrsim \delta^2.
\end{align*}
Therefore,
\begin{align*}
\|w\|_{L^2([y_1+\delta,y_2-\delta])}^2\lesssim\f{\nu}{\delta^2|\alpha|}\big(\| i\f{\alpha}{\nu}(\sin y-\lambda)w-\nu\partial_y^2w\| \|w\|+\f{2\nu}{\sqrt{\delta}}\|w'\|_{L^2}\|w\|_{L^{\infty}}\big),
\end{align*}
which along with \eqref{eq:w-inf} and \eqref{eq:Res-tri} gives
\begin{align}
\nonumber\|w\|_{L^2([y_1,y_2])}^2&=\|w\|_{L^2([y_1+\delta,y_2-\delta])}^2+\|w\|_{L^2([y_1,y_1+\delta]\cup[y_2-\delta,y_2])}^2\\
\nonumber&\lesssim \f{\nu}{\delta^2|\alpha|}(\|\mathcal{N}_{\lambda}w\|_{L^2} \|w\|_{L^2}+\f{2\nu}{\sqrt{\delta}}\|w'\|_{L^2}\|w\|_{L^{\infty}})+\delta\|w\|_{L^\infty}^2\\
&\lesssim \f{\nu}{\delta^2|\alpha|}(\|\mathcal{N}_{\lambda}w\|_{L^2} \|w\|_{L^2}+\f{2\nu}{\sqrt{\delta}}\|w'\|_{L^2}^{\f32}\|w\|_{L^2}^{\f12})+\delta \|w'\|_{L^2}\|w\|_{L^2}+\delta\|w\|_{L^2}^2.\label{NI1}
\end{align}

Now we estimate $w$ in $[y_2,y_1+2\pi]$. Similarly, let us choose $\tilde{y}_ {-}\in(y_2,y_2+\delta)$ and $\tilde{y}_{+}\in(y_1+2\pi-\delta,y_1+2\pi)$ so that
\begin{align}
|w'(\tilde{y}_{-})|^2+|w'(\tilde{y}_{+})|^2\leq\f{\|w'\|^2}{\delta}\label{I01}.
\end{align}
In a similar way as above, we can deduce that
\begin{align*}
&\| i\f{\alpha}{\nu}(\sin y-\lambda)w-\nu\partial_y^2w\| \|w\|+\f{2\nu}{\sqrt{\delta}}\|w'\|_{L^2}\|w\|_{L^{\infty}}\\
&\gtrsim \f{|\alpha|}{\nu}\int_{\tilde{y}_{-}}^{\tilde{y}_{+}}(\lambda-\sin y)|w|^2dy\geq \f{|\alpha|}{\nu}\int_{y_2+\delta}^{y_1+2\pi-\delta}(\lambda-\sin y)|w|^2dy.
\end{align*}
For $ y\in(y_2+\delta,y_1+2\pi-\delta)$, we have
\begin{align*}
\sin y_2-\sin y\geq& \sin y_2-\sin (y_2+\delta)=\sin y_1-\sin (y_1-\delta)=2\sin \f{\delta}{2} \cos(y_1-\f{\delta}{2})\\
=&2\sin\f{\delta}{2}(\cos y_1\cos\f{\delta}{2}+\sin y_1\sin\f{\delta}{2})=\cos y_1\sin\delta+2\sin y_1(\sin\f{\delta}{2})^2\\
\gtrsim& \cos y_1\delta+\sin y_1\delta^2\gtrsim \delta^2.
\end{align*}
Therefore, we can conclude that
\begin{align}
\nonumber\|w\|_{L^2([y_2,y_1+2\pi])}^2&=\|w\|_{L^2([y_2+\delta,y_1+2\pi-\delta])}^2+\|w\|_{L^2([y_2,y_2+\delta]\cup[y_1+2\pi-\delta,y_1+2\pi])}^2\\
&\lesssim \f{\nu}{\delta^2|\alpha|}(\|\mathcal{N}_{\lambda}w\|_{L^2} \|w\|_{L^2}+\f{2\nu}{\sqrt{\delta}}\|w'\|_{L^2}^{\f32}\|w\|_{L^2}^{\f12})+\delta \|w'\|_{L^2}\|w\|_{L^2}+\delta\|w\|_{L^2}^2.\label{NI2}
\end{align}
Taking $\delta=|\alpha|^{-\f14}\nu^{\f12}$, it follows from (\ref{NI1}) and
\eqref{NI2} that for $\delta<\f{y_2-y_1}{2}$
\begin{align}
\|\cN_\la w\|_{L^2}\gtrsim |\alpha|^{\f12}\|w\|_{L^2}.
\end{align}

Next we consider the case of $\delta=|\alpha|^{-\f14}\nu^{\f12}\geq\f{y_2-y_1}{2}$. Let us choose $\hat{y}_{-}\in(y_2,y_2+\delta)$ and $\hat{y}_{+}\in(y_1+2\pi-\delta,y_1+2\pi)$ so that
\begin{align}
|w'(\hat{y}_{-})|^2+|w'(\hat{y}_{+})|^2\leq\f{\|w'\|^2}{\delta}\label{I02}.
\end{align}
Using integration by parts as above, we deduce that
\begin{align*}
&\| i\f{\alpha}{\nu}(\sin y-\lambda)w-\nu\partial_y^2w\| \|w\|+\f{2\nu}{\sqrt{\delta}}\|w'\|_{L^2}\|w\|_{L^{\infty}}\\
&\gtrsim \f{|\alpha|}{\nu}\int_{\hat{y}_{-}}^{\hat{y}_{+}}(\lambda-\sin y)|w|^2dy\geq \f{|\alpha|}{\nu}\int_{y_2+\delta}^{y_1+2\pi-\delta}(\lambda-\sin y)|w|^2dy.
\end{align*}
For $ y\in(y_2+\delta,y_1+2\pi-\delta)$, we have
\begin{align*}
&\sin y_2-\sin y\gtrsim \delta^2.
\end{align*}
Then we infer that
\begin{align*}
\|w\|_{L^2}^2&=\|w\|_{L^2([y_2,y_1+2\pi])}^2+\|w\|_{L^2([y_1,y_2])}^2\\
&=\|w\|_{L^2([y_2+\delta,y_1+2\pi-\delta])}^2+\|w\|_{L^2([y_2,y_2+\delta]\cup[y_1+2\pi-\delta,y_1+2\pi])}^2+\|w\|_{L^2([y_1,y_2])}^2\\
&\lesssim \f{\nu}{\delta^2|\alpha|}(\|\mathcal{N}_{\lambda}w\|_{L^2} \|w\|_{L^2}+\f{2\nu}{\sqrt{\delta}}\|w'\|_{L^2}^{\f32}\|w\|_{L^2}^{\f12})+\delta \|w'\|_{L^2}\|w\|_{L^2}+\delta\|w\|_{L^2(\pi, 2\pi)}^2,
\end{align*}
which shows that for $\delta\geq\f{y_2-y_1}{2}$,
\begin{align}
\|\cN_\la w\|_{L^2}\gtrsim \alpha|^{\f12}\|w\|_{L^2}.\nonumber
\end{align}

The same estimate holds for $\la\in [-1,0]$ due to $ \tau_{\pi}\mathcal{N}_{\lambda}=\mathcal{N}_{-\lambda}\tau_{\pi}$.
\end{proof}

\subsection{Resolvent estimate of $L_\la$}

In this subsection, we consider the full linearized operator $L_\la$.

\begin{proposition}\label{Prop: nonlocal1}
Given $0<\nu\leq1$, ${|\alpha|}\gg \nu^2$, $\lambda\in\mathbb{R}$ and $|\beta|>1(\beta\in\mathbb{R})$, there exists a constant $C>0$  independent of $\nu,\alpha,\lambda,\beta$,
such that for any $w\in H^2(\T)$
\begin{align*}
\|L_\la w \|_{L^2}\geq C|\alpha|^{\f12}(1-\beta^{-2})\|w\|_{L^2},
\end{align*}
where $(\partial_y^2-\beta^2)\varphi=w$.
\end{proposition}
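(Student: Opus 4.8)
The plan is to reduce Proposition \ref{Prop: nonlocal1} to the nonlocal-free estimate of Proposition \ref{prop:Res-toy} by treating $i\f{\alpha}{\nu}\sin y\,\varphi$ as a perturbation and exploiting the smoothing of $(\partial_y^2-\beta^2)^{-1}$ when $|\beta|>1$. Write $L_\la w=\cN_\la w+i\f{\alpha}{\nu}\sin y\,\varphi$ with $(\partial_y^2-\beta^2)\varphi=w$. Proposition \ref{prop:Res-toy} gives $\|\cN_\la w\|_{L^2}\ge C|\alpha|^{\f12}\|w\|_{L^2}$, so it suffices to bound the nonlocal term by a small multiple of $|\alpha|^{\f12}\|w\|_{L^2}$, more precisely to show $\|i\f{\alpha}{\nu}\sin y\,\varphi\|_{L^2}\le$ (something involving $\beta^{-2}$) times a quantity controlled by $\|L_\la w\|_{L^2}$ and $\|w\|_{L^2}$, so that the factor $(1-\beta^{-2})$ survives after absorption. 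The natural route is an energy identity: pair $L_\la w$ with $\varphi$ (or with a suitable multiple of $\varphi$), integrate by parts, and extract from the real and imaginary parts the quantities $\|w\|_{L^2}^2$, $\nu\|w'\|_{L^2}^2$, and $\f{|\alpha|}{\nu}\langle(\sin y-\lambda)w,\varphi\rangle$ together with the nonlocal contribution $\f{|\alpha|}{\nu}\langle \sin y\,\varphi,\varphi\rangle$.

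The key algebraic point is the following: testing $L_\la w$ against $\varphi$ and using $w=\varphi''-\beta^2\varphi$ gives
\begin{align*}
\langle L_\la w,\varphi\rangle = i\f{\alpha}{\nu}\big\langle (\sin y-\lambda)w+\sin y\,\varphi,\varphi\big\rangle + \nu\big(\|\varphi''\|_{L^2}^2-\beta^2\langle \varphi'',\varphi\rangle \cdot 0 \dots\big),
\end{align*}
so I will instead test against $w$ itself to recover \eqref{eq:Res-tri}-type coercivity, and separately test the nonlocal term. The cleaner approach: observe that
\begin{align*}
\big\langle i\tfrac{\alpha}{\nu}\sin y\,\varphi,\ w\big\rangle = i\tfrac{\alpha}{\nu}\big\langle \sin y\,\varphi,\ \varphi''-\beta^2\varphi\big\rangle = -i\tfrac{\alpha}{\nu}\big\langle (\sin y\,\varphi)',\varphi'\big\rangle - i\tfrac{\alpha}{\nu}\beta^2\langle \sin y\,\varphi,\varphi\rangle,
\end{align*}
and the real part of $-i\langle(\sin y\,\varphi)',\varphi'\rangle$ is $-\mathrm{Im}\langle \cos y\,\varphi,\varphi'\rangle=\tfrac12\int (\cos y)'|\varphi|^2 = -\tfrac12\int\sin y\,|\varphi|^2$ — wait, this needs care. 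The honest plan is: (i) from $\langle L_\la w, w\rangle$ extract $\nu\|w'\|_{L^2}^2 \le \|L_\la w\|\|w\| + \f{|\alpha|}{\nu}|\langle\sin y\,\varphi,w\rangle|$ and control $\langle \sin y\,\varphi,w\rangle$ via integration by parts using $|\beta|>1$, which yields $\|\varphi\|_{L^2}\le \beta^{-2}\|w\|_{L^2}$ and $\|\varphi'\|_{L^2}\le \beta^{-1}\|w\|_{L^2}$ directly from $(\partial_y^2-\beta^2)\varphi=w$; (ii) hence $\|\sin y\,\varphi\|_{L^2}\le\|\varphi\|_{L^2}\le\beta^{-2}\|w\|_{L^2}$, so $\|i\f{\alpha}{\nu}\sin y\,\varphi\|_{L^2}\le \f{|\alpha|}{\nu}\beta^{-2}\|w\|_{L^2}$. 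But the factor $\f{|\alpha|}{\nu}$ versus the target $|\alpha|^{\f12}$ means a naive triangle inequality $\|L_\la w\|\ge\|\cN_\la w\|-\|i\f{\alpha}{\nu}\sin y\varphi\|$ is too lossy — one must use that $\cN_\la$ and the nonlocal term share the same large prefactor and combine coherently.

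The correct mechanism, which I expect to be the heart of the argument, is to run the proof of Proposition \ref{prop:Res-toy} again but with $\cN_\la$ replaced by $L_\la$, i.e. estimate
\begin{align*}
\Big|\big\langle i\tfrac{\alpha}{\nu}\big[(\sin y-\lambda)w+\sin y\,\varphi\big]-\nu\partial_y^2w,\ \chi_{I}w\big\rangle\Big|
\end{align*}
on the same intervals $I$ as before, where now the extra piece $i\f{\alpha}{\nu}\langle \sin y\,\varphi,\chi_I w\rangle$ must be compared with the good term $\f{|\alpha|}{\nu}\int_I(\lambda-\sin y)|w|^2$. Since $\langle\sin y\,\varphi,w\rangle = \langle \sin y\,\varphi,\varphi''-\beta^2\varphi\rangle$, integrating by parts twice shows this equals $-\beta^2\langle\sin y\,\varphi,\varphi\rangle - \langle (\sin y)''\varphi+2(\sin y)'\varphi',\varphi\rangle$-type terms, all bounded by $(1+\beta^2)\|\varphi\|_{L^2}^2+\dots \le C\beta^{-2}\|w\|_{L^2}^2$ using $\|\varphi\|\le\beta^{-2}\|w\|$, $\|\varphi'\|\le\beta^{-1}\|w\|$; localized to $\chi_I$ this produces the claimed $\beta^{-2}$ relative error. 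Thus the nonlocal term contributes at most $C\beta^{-2}\f{|\alpha|}{\nu}\|w\|_{L^2}^2$, which is a $\beta^{-2}$-fraction of the main coercive quantity $\f{|\alpha|}{\nu}\|w\|^2$ driving the Proposition \ref{prop:Res-toy} estimate; carrying this fraction through the min of three terms at the end gives $\|L_\la w\|\gtrsim |\alpha|^{\f12}(1-C\beta^{-2})\|w\|\gtrsim|\alpha|^{\f12}(1-\beta^{-2})\|w\|$ after adjusting constants. The main obstacle is bookkeeping the localized integration-by-parts for $\langle\sin y\,\varphi,\chi_I w\rangle$ cleanly enough that only $\beta^{-2}\|w\|_{L^2}^2$ (not $\|w\|_{L^2}\|w'\|_{L^2}$ with a bad $\nu$-power) appears, since boundary terms at $\partial I$ involving $\varphi'\bar w$ must be absorbed using the same $\delta$-selection trick \eqref{I0} that was used for $w'\bar w$. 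I also need to confirm, as the reduction mentioned in Section 2 suggests, that for $|\lambda|\le 1$ one first passes to $\tilde L_\la u = i\f{\alpha}{\nu}[(\sin y-\lambda)u+\lambda\varphi]-\nu\partial_y^2 u$ via $w=u+\text{(correction)}$ so that the nonlocal coefficient becomes the constant $\lambda$ rather than $\sin y$; but since here $|\beta|>1$ gives brute-force smallness of $\varphi$, I expect the direct argument above to work uniformly in $\lambda$ without that reduction.
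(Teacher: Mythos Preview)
Your perturbative plan has a genuine gap. The coercive quantity that drives the proof of Proposition \ref{prop:Res-toy} is not $\tfrac{|\alpha|}{\nu}\|w\|_{L^2}^2$ but rather $\tfrac{|\alpha|}{\nu}\int_I(\lambda-\sin y)|w|^2$, which after the bound $|\lambda-\sin y|\gtrsim\delta^2$ and the choice $\delta=|\alpha|^{-1/4}\nu^{1/2}$ is only of order $|\alpha|^{1/2}\|w\|_{L^2}^2$. The nonlocal contribution you correctly identify, $\tfrac{|\alpha|}{\nu}|\langle\sin y\,\varphi,\chi_I w\rangle|\le\tfrac{|\alpha|}{\nu}\beta^{-2}\|w\|_{L^2}^2$, is therefore \emph{not} a $\beta^{-2}$-fraction of the coercive term: their ratio is $\tfrac{|\alpha|^{1/2}}{\nu}\beta^{-2}$, which for fixed $|\beta|>1$ blows up as $|\alpha|/\nu^2\to\infty$. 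So the extra term cannot be absorbed and the argument breaks down exactly in the regime $|\alpha|\gg\nu^2$ you are in.

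The paper does not attempt a perturbative argument at all. Instead it performs precisely the algebraic reduction you mention in your last paragraph and then dismiss: set $u=w+\varphi$, so that $(\partial_y^2-\tilde\beta^2)\varphi=u$ with $\tilde\beta^2=\beta^2-1$, and compute
\[
L_\lambda w=i\tfrac{\alpha}{\nu}\big[(\sin y-\lambda)u+\lambda\varphi\big]-\nu\partial_y^2 u+\nu\varphi''.
\]
The point is that the large prefactor $i\tfrac{\alpha}{\nu}$ now sits on an operator $\tilde L_\lambda u$ with \emph{constant} nonlocal coefficient $\lambda$, and the only error is $\nu\varphi''$, which satisfies $\|\varphi''\|_{L^2}\le\|u\|_{L^2}$ by Fourier analysis. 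Hence $\|L_\lambda w\|\ge\|\tilde L_\lambda u\|-\nu\|u\|$, and Proposition \ref{Prop: nonlocal2} (the genuinely hard estimate, occupying several pages) gives $\|\tilde L_\lambda u\|\ge C|\alpha|^{1/2}\|u\|$. Since $|\alpha|\gg\nu^2$ the $\nu\|u\|$ term is absorbed, and the factor $(1-\beta^{-2})$ comes only at the very end from $\|u\|_{L^2}\ge\|w\|_{L^2}-\|\varphi\|_{L^2}\ge(1-\beta^{-2})\|w\|_{L^2}$. The ``brute-force smallness of $\varphi$'' you rely on is used only for this final comparison of norms, not to control the nonlocal term inside the resolvent estimate.
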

\begin{proof}
Let $u=w+\varphi$. As $(\partial_y^2-\beta^2)\varphi=w$, $(\partial_y^2-\tilde{\beta}^2)\varphi=u$ for $\tilde{\beta}^2=\beta^2-1$. Thus,
\begin{align*}
\|\varphi''\|_{L^2}\leq\|u\|_{L^2},\quad \|\varphi\|_{L^2}\le \beta^{-2}\|w\|_{L^2}.
\end{align*}
Using Proposition \ref{Prop: nonlocal2}, we obtain
\begin{align*}
\big\|i\f{\alpha}{\nu}[(\sin y-\lambda)w+\sin y\varphi]-\nu\partial_y^2w\big\|_{L^2}&\geq \big\|i\f{\alpha}{\nu}[(\sin y-\lambda)u+\lambda\varphi]-\nu\partial_y^2u \|_{L^2}-\nu\|\varphi''\big\|_{L^2}\\
&\geq(C|\alpha|^{\f12}-\nu)\|u\|_{L^2}\gtrsim|\alpha|^{\f12}(1-\beta^{-2})\|w\|_{L^2}.
\end{align*}
\end{proof}

\begin{proposition}\label{Prop: nonlocal2}
Under the same assumptions as Proposition \ref{Prop: nonlocal1}, it holds that
\begin{align*}
\|i\f{\alpha}{\nu}[(\sin y-\lambda)u+\lambda\varphi]-\nu\partial_y^2u \|_{L^2}\geq C|\alpha|^{\f12}\|u\|_{L^2},
\end{align*}
where $(\partial_y^2-\tilde{\beta}^2)\varphi=u$ with $\beta^2-1=\tilde{\beta}^2$.
\end{proposition}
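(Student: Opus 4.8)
The plan is to reduce the nonlocal operator to the local operator $\mathcal{N}_\lambda$ by absorbing the $\lambda\varphi$ term into the main $(\sin y-\lambda)u$ term, at the cost of a commutator-type error that is handled by the coercive estimate advertised in the introduction. The case $|\lambda|>1$ should parallel Proposition \ref{prop:Res-toy}: since $|\lambda|>1$ means $\sin y-\lambda$ never vanishes, one tests $i\f{\alpha}{\nu}[(\sin y-\lambda)u+\lambda\varphi]-\nu\partial_y^2u$ against $\chi_{[\hat y_-,\hat y_+]}u$ on an interval avoiding a $\delta$-neighborhood of the extrema of $\sin y$, uses $|\sin y-\lambda|\gtrsim\delta^2$ there, controls boundary terms via a mean-value choice of $\hat y_\pm$ as in \eqref{I03}, and estimates the nonlocal contribution $\langle i\f{\alpha}{\nu}\lambda\varphi,\chi u\rangle$ using $\|\varphi\|_{L^2}\le\tilde\beta^{-2}\|u\|_{L^2}$ (harmless when $|\beta|>1$). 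So I expect the real content to be in $|\lambda|\le 1$, and WLOG $\lambda\in[0,1]$.

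For $\lambda\in[0,1]$, pick $0\le y_1\le\f\pi2\le y_2\le\pi$ with $\lambda=\sin y_1=\sin y_2$, and split $\mathbb{T}$ into the "critical" interval $[y_1,y_2]$ (where $\sin y-\lambda$ changes sign and $\lambda\varphi$ cannot simply be absorbed) and the "good" interval $[y_2,y_1+2\pi]$ (where $\lambda-\sin y\gtrsim\delta^2$ away from the endpoints). On the good interval I would proceed exactly as in the proof of the lemma following Proposition \ref{prop:Res-toy}: integration by parts, the sign of $\lambda-\sin y$, boundary terms via \eqref{I01}, and the interpolation $\|u\|_{L^\infty}^2\le\|u\|_{L^2}\|u'\|_{L^2}+\|u\|_{L^2}^2$ together with the triangle-type lower bound $\nu\|u'\|_{L^2}^2\le|\langle i\f{\alpha}{\nu}[(\sin y-\lambda)u+\lambda\varphi]-\nu\partial_y^2u,u\rangle|+\f{|\alpha|}{\nu}|\langle\lambda\varphi,u\rangle|$. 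On the critical interval $[y_1,y_2]$, set up the auxiliary problem $(\partial_y^2-\tilde\beta^2)\varphi=u$ on $[y_1,y_2]$ with $\varphi(y_1)=\varphi(y_2)=0$ and invoke the coercive estimate
\begin{align*}
\lambda\Big(\int_{y_1}^{y_2}(\sin y-\lambda)^2\Big|\big(\tfrac{\varphi}{\sin y-\lambda}\big)'\Big|^2dy+\int_{y_1}^{y_2}\tilde\beta^2|\varphi|^2dy\Big)\le\int_{y_1}^{y_2}(\sin y-\lambda)|u|^2dy+\big\langle\lambda\varphi,u\chi_{(y_1,y_2)}\big\rangle,
\end{align*}
which after rearrangement yields a bound on $\int_{y_1}^{y_2}(\sin y-\lambda)|u|^2dy$ plus lower-order terms; combined with the real-part identity $\mathrm{Re}\langle L_\lambda$-type expression$,\chi_{[y_1,y_2]}u\rangle$ gives control of $\|u\|_{L^2([y_1,y_2])}$. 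Choosing $\delta=|\alpha|^{-1/4}\nu^{1/2}$ and optimizing as before produces $\|\cdots\|_{L^2}\gtrsim\min\{\delta^2|\alpha|/\nu,\ \delta^{10/3}|\alpha|^{4/3}/\nu^{5/3},\ \nu/\delta^2\}\|u\|_{L^2}\gtrsim|\alpha|^{1/2}\|u\|_{L^2}$. The sub-case $\delta\ge(y_2-y_1)/2$ (where the critical interval is already short) is treated by absorbing $[y_1,y_2]$ into a single $\delta$-scale $L^\infty$ estimate, as in the previous lemma, and $\lambda\in[-1,0]$ follows from $\tau_\pi\mathcal{N}_\lambda=\mathcal{N}_{-\lambda}\tau_\pi$ (the nonlocal term is likewise equivariant under $\tau_\pi$).

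The main obstacle is the critical interval $[y_1,y_2]$ where the local argument degenerates because $\sin y-\lambda$ vanishes and the nonlocal term $\lambda\varphi$ genuinely interacts with the transport term rather than being a lower-order perturbation. Everything hinges on proving the coercive estimate above — essentially a Hardy/Poincaré-type inequality for the singular weight $(\sin y-\lambda)$ with the Dirichlet boundary condition on $\varphi$ — and on checking that the factor $\lambda$ on the left is not lost when $\lambda$ is small (this is exactly the regime, analogous to $|k|\ge 2$ for Lamb–Oseen, where no wave operator is needed because the coercivity already closes the estimate; the genuinely borderline case $|\beta|=1$, i.e. $\tilde\beta=0$, is excluded by hypothesis). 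I would prove the coercive estimate by testing $(\partial_y^2-\tilde\beta^2)\varphi=u$ against $\tfrac{\varphi}{\sin y-\lambda}$ (or against $\overline{\varphi}$ and manipulating), integrating by parts, and using $\varphi(y_1)=\varphi(y_2)=0$ to kill boundary terms, keeping careful track of signs since $\sin y-\lambda\ge 0$ on $[y_1,y_2]$.
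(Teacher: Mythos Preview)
Your outline captures the overall architecture (split by $|\lambda|$, work on $[y_1,y_2]$ and $[y_2,y_1+2\pi]$ separately, invoke the coercive inequality) but misses the central mechanism on the critical interval, and the role you assign to the coercive estimate is inverted.

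Testing $\mathcal{L}_\lambda u$ against $\chi_{(y_1,y_2)}u$ only produces the \emph{weighted} quantity $\int_{y_1}^{y_2}(\sin y-\lambda)|u|^2$ plus the nonlocal cross term; it does not give $\|u\|_{L^2(y_1,y_2)}$. The paper extracts the unweighted $L^2$ norm by a \emph{second} test, against $\dfrac{u}{\sin y-\lambda}\chi_{(y_1+\delta,y_2-\delta)}$ (and similarly on the good interval), which yields $\int|u|^2$ directly but at the price of the singular error $\displaystyle\int\frac{\lambda^2|\varphi|^2}{(\sin y-\lambda)^2}$. The coercive estimate \eqref{B2} is then used \emph{forward}, not backward: together with a Hardy inequality (Lemma~\ref{lem:ell-2}) it bounds this singular integral by $\mathcal{E}_2(u)$. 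So the coercive estimate does not ``after rearrangement yield a bound on $\int(\sin y-\lambda)|u|^2$''; rather it converts the first test's output into control of the second test's error. Your proposal never introduces the $u/(\sin y-\lambda)$ test function, and without it the argument cannot close.

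Two further gaps. First, the auxiliary Dirichlet problem you set up is the paper's $\varphi_1$, but the actual $\varphi$ on the torus differs from $\varphi_1$ by a boundary corrector $\varphi_2$ which must be estimated separately via $\|\lambda\varphi_2\|_{L^\infty}$ (Lemma following \eqref{K1.3}); this is not a lower-order nuisance but a term in $\mathcal{E}_1,\mathcal{E}_2$. Second, the single scaling $\delta=|\alpha|^{-1/4}\nu^{1/2}$ does not work uniformly: when $y_2-y_1$ is large compared to this $\delta$, the paper instead takes $\delta^3(y_2-y_1)|\alpha|=\nu^2$, and for $\lambda$ near~$1$ an additional point-value argument is needed to control $|\varphi_1(\pi/2)|^2/(y_2-y_1)^3$.
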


We need the following lemmas.

\begin{lemma}\label{lem:ell-1}
For $\lambda\in[0,1]$, let $0\leq y_1\leq\f{\pi}{2}\leq y_2\leq\pi$ so that $\lambda=\sin y_1=\sin y_2$. Let $\varphi$, $u$ satisfy the following Dirichlet problem
\begin{align}
\left\{
\begin{aligned}
&(\partial_y^2-\tilde{\beta}^2)\varphi=u,\quad y_1\leq y\leq y_2,\\
&\varphi(y_1)=\varphi(y_2)=0.
\end{aligned}
\right.\label{D1}
\end{align}
Then it holds that
\begin{align}
&-\big\langle\varphi,u\chi_{(y_1,y_2)}\big\rangle=\tilde{\beta}^2\int_{y_1}^{y_2}|\varphi(y)|^2dy+\int_{y_1}^{y_2}|\varphi'(y)|^2dy,\label{B0}\\
& -\big\langle\lambda\varphi,u\chi_{(y_1,y_2)}\big\rangle\leq \f{(\f{\pi}{2}-y_1)\sin y_1}{\cos y_1}\int_{y_1}^{y_2}(\sin y-\lambda)|u(y)|^2dy, \label{B1}
\end{align}
and
\begin{align}
&\lambda\Big(\int_{y_1}^{y_2}(\sin y-\lambda)^2\Big|(\f{\varphi}{\sin y-\lambda})'\Big|^2dy+\int_{y_1}^{y_2}\tilde{\beta}^2|\varphi|^2dy\Big)\nonumber\\&\qquad\leq \int_{y_1}^{y_2}(\sin y-\lambda)|u|^2dy+\big\langle\lambda\varphi,u\chi_{(y_1,y_2)}\big\rangle.\label{B2}
\end{align}
If $\varphi$, $u$ satisfy
\begin{align}
\left\{
\begin{aligned}
&(\partial_y^2-\tilde{\beta}^2)\varphi=u,\quad y_2\leq y\leq y_1+2\pi,\\
&\varphi(y_2)=\varphi(y_1+2\pi)=0.
\end{aligned}
\right.\label{D1}
\end{align}
Then we have
\begin{align}
-\big\langle\varphi,u\chi_{(y_2,y_1+2\pi)}\big\rangle=\tilde{\beta}^2\int_{y_2}^{y_1+2\pi}|\varphi(y)|^2dy+\int_{y_2}^{y_1+2\pi}|\varphi'(y)|^2dy.\label{B02}
\end{align}

 \end{lemma}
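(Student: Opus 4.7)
The plan is to prove the four items in two groups: (B0) and (B02) by direct integration by parts, (B2) via a Rayleigh-type substitution combined with Cauchy--Schwarz and a completion-of-squares argument, and (B1) by combining Cauchy--Schwarz with a weighted Hardy-type inequality tailored to the profile $\sin y - \lambda$.

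For (B0) and (B02), pair $(\partial_y^2 - \tilde\beta^2)\varphi = u$ with $\bar\varphi$ and integrate over the relevant interval; the Dirichlet condition $\varphi(y_j) = 0$ kills the boundary term arising from $\int \bar\varphi\varphi''\,dy$, yielding both identities at once.

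For (B2), introduce the Rayleigh-type substitution $\xi = \varphi/(\sin y - \lambda)$. Since $\varphi(y_j) = 0 = \sin y_j - \lambda$ with $\cos y_j \neq 0$, the function $\xi$ is $C^1$ on $[y_1, y_2]$. A direct computation recasts the equation in Sturm--Liouville form
\begin{align*}
(\sin y - \lambda)u = \bigl[(\sin y - \lambda)^2 \xi'\bigr]' - \bigl[\tilde\beta^2(\sin y - \lambda) + \sin y\bigr]\varphi .
\end{align*}
Pairing with $\bar\xi$, integrating by parts (the boundary contribution $[(\sin y - \lambda)^2\xi'\bar\xi]_{y_1}^{y_2}$ vanishes thanks to the double zero of $(\sin y - \lambda)^2$ at the endpoints), using (B0) to rewrite the left side, and splitting $\sin y = (\sin y - \lambda) + \lambda$ produces the key identity
\begin{align*}
\int_{y_1}^{y_2}(\sin y - \lambda)^2|\xi'|^2\,dy + \lambda\int_{y_1}^{y_2}\frac{|\varphi|^2}{\sin y - \lambda}\,dy = \int_{y_1}^{y_2}|\varphi'|^2\,dy - \int_{y_1}^{y_2}|\varphi|^2\,dy .
\end{align*}
Separately, Cauchy--Schwarz on $\int \bar\varphi u\,dy$ with the weight pair $(\sin y - \lambda)^{\pm 1/2}$, combined with (B0), yields $s^2 \leq Mc$, where $s := \int|\varphi'|^2 + \tilde\beta^2\int|\varphi|^2$, $M := \int (\sin y - \lambda)|u|^2$, $c := \int|\varphi|^2/(\sin y - \lambda)$. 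Substituting the key identity into the left-hand side of (B2) and using $\langle \lambda\varphi, u\chi_{(y_1, y_2)}\rangle = -\lambda s$, the target reduces to $M - 2\lambda s + \lambda^2 c \geq -\lambda\int|\varphi|^2$; its left-hand side is non-negative by $M \geq s^2/c$ via completing the square, $M - 2\lambda s + \lambda^2 c \geq s^2/c - 2\lambda s + \lambda^2 c = (s/\sqrt{c} - \lambda\sqrt{c})^2 \geq 0$, while the right-hand side is non-positive.

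For (B1), Cauchy--Schwarz gives $s \leq \sqrt{cM}$. A weighted Hardy-type inequality, exploiting $\varphi(y_j) = 0$ and the concavity of $\sin y - \lambda$ on $[y_1, y_2]$ together with the endpoint slopes $|(\sin y - \lambda)'|_{y = y_j} = \cos y_1$, delivers $c \leq \frac{\pi/2 - y_1}{\cos y_1}\int|\varphi'|^2 \leq \frac{\pi/2 - y_1}{\cos y_1}s$. Combining, $s^2 \leq cM \leq \frac{\pi/2 - y_1}{\cos y_1}sM$, hence $s \leq \frac{\pi/2 - y_1}{\cos y_1}M$; multiplying by $\lambda = \sin y_1$ yields (B1) with the stated geometric constant. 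The main technical hurdle is the Sturm--Liouville identity in (B2): one must verify that $\xi$ and $\xi'$ stay bounded at $y_1, y_2$ so the boundary term really does vanish, which is precisely where the hypothesis $\cos y_j \neq 0$ is used. The sharp Hardy constant in (B1) is also delicate because $\sin y - \lambda$ vanishes linearly at both endpoints rather than behaving as a pure power weight, so one must track carefully how the optimal constant depends on $y_1$.
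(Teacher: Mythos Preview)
Your treatment of \eqref{B0}, \eqref{B02}, and \eqref{B2} is correct and matches the paper: the paper also obtains \eqref{B2} from the pointwise square $(\sin y-\lambda)|u|^2+2\lambda\mathrm{Re}(\varphi\bar u)+\lambda^2|\varphi|^2/(\sin y-\lambda)\ge 0$ (your $M-2\lambda s+\lambda^2 c\ge 0$ integrated) together with the identity $\int|\varphi'|^2-\int\frac{\sin y}{\sin y-\lambda}|\varphi|^2=\int(\sin y-\lambda)^2|\xi'|^2$, which is your ``key identity'' rewritten.

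For \eqref{B1} your route is genuinely different from the paper's, and the step you flag as delicate is indeed the crux. The paper does \emph{not} pass through a Hardy inequality on $\varphi$: it writes $\varphi(y)=\int K_1(y,z)u(z)\,dz$ with the Dirichlet Green's function $K_1\le 0$ and applies a Schur-type test with the weight $f(y)=2+\tilde\beta^2(y-y_1)(y_2-y)$, which satisfies $-\int K_1(\cdot,z)f(z)\,dz=(y-y_1)(y_2-y)$; the constant $\frac{(\pi/2-y_1)\sin y_1}{\cos y_1}$ then comes from the pointwise bound $\sup_{(y_1,y_2)}\frac{(y-y_1)(y_2-y)}{2(\sin y-\lambda)}=\frac{\pi/2-y_1}{\cos y_1}$, proved as a separate calculus lemma. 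Your Hardy inequality $c\le\frac{\pi/2-y_1}{\cos y_1}\int|\varphi'|^2$ is \emph{true}, but it does not follow from concavity plus endpoint slopes: one can build concave functions $w$ on $[a,b]$ with $w(a)=w(b)=0$, $w'(a)=-w'(b)=m$, that violate $w(y)\ge\frac{m}{b-a}(y-a)(b-y)$ (put all the curvature near the endpoints). What actually proves your Hardy bound is precisely the same pointwise lemma the paper uses, namely $\sin y-\lambda\ge\frac{\cos y_1}{\pi-2y_1}(y-y_1)(y_2-y)$, combined with the sharp inequality $\int\frac{|\varphi|^2}{(y-y_1)(y_2-y)}\,dy\le\tfrac12\int|\varphi'|^2\,dy$ (obtained from the ground-state identity with $w=(y-y_1)(y_2-y)$, since $w''=-2$). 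So both arguments ultimately rest on that one calculus lemma; your approach trades the Green's-function Schur test for Cauchy--Schwarz plus Hardy, which is a little more elementary once the lemma is in hand.
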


\begin{proof}
The equalities (\ref{B0}) and \eqref{B02} are obvious. Let\begin{align*}
0\geq K_1(y,z)=
\left\{
\begin{aligned}
&\f{\sinh\tilde{\beta}(y-y_2)\sinh\tilde{\beta}(z-y_1)}{\tilde{\beta}\sinh\tilde{\beta}(y_2-y_1)},\quad y\geq z,\\
&\f{\sinh\tilde{\beta}(y-y_1)\sinh\tilde{\beta}(z-y_2)}{\tilde{\beta}\sinh\tilde{\beta}(y_2-y_1)},\quad y\leq z.
\end{aligned}
\right.\label{G1}
\end{align*}
Then we have $\varphi(y)=\int_{y_1}^{y_2}K_1(y,z)u(z)dz$. Let\begin{align}
I(f)(y)=-\int_{y_1}^{y_2}K_1(y,z)f(z)dz=(y-y_1)(\pi-y-y_1)
\end{align}
for $f(y)=2+\tilde{\beta}^2(y-y_1)(\pi-y_1-y).$  Thus, for $y_1\leq y\leq y_2$
\begin{align*}
\f{\lambda I(f)}{f}(y)\leq\lambda\f{(y-y_1)(\pi-y-y_1)}{2(\sin y-\lambda)}(\sin y-\lambda),
\end{align*}
from which and (\ref{A.5.1}), we infer that for any $\la\in [0,1]$
\begin{align*}
\f{\lambda I(f)}{f}\leq \f{(\f{\pi}{2}-y_1)\sin y_1}{\cos y_1}(\sin y-\lambda).\end{align*}

%Let $t_1=\f{\pi}{2}-y_1\in(0,\f{\pi}{2}]$, then
%\begin{align*}
%0\leq \f{(\f{\pi}{2}-y_1)\sin y_1}{\cos y_1}=\f{t_1}{\tan t_1}<1,
%\end{align*}
%and $\f{t_1}{\tan t_1}$ is increasing with the growth of $\lambda$.

Since $K_1(y,z)\leq0$ and $K_1(y,z)$ is symmetric in $y,z$, we obtain
\begin{align*}
&-\big\langle\lambda\varphi,u\chi_{(y_1,y_2)}\big\rangle\\
&=-\int_{y_1}^{y_2}\int_{y_1}^{y_2}K_1(y,z)u(z)\bar{u}(y)dzdy\\
&\leq \f{1}{2}\int_{y_1}^{y_2}\int_{y_1}^{y_2}-K_1(y,z)\f{f(z)}{f(y)}|u(y)|^2dzdy+\f{1}{2}\int_{y_1}^{y_2}\int_{y_1}^{y_2}-K_1(y,z)\f{f(y)}{f(z)}|u(z)|^2dydz\\
&=\int_{y_1}^{y_2}\f{I(f)(y)}{f(y)}|u(y)|^2dy\leq \f{(\f{\pi}{2}-y_1)\sin y_1}{\cos y_1}\int_{y_1}^{y_2}(\sin y-\lambda)|u(y)|^2dy,
\end{align*}
which shows (\ref{B1}).

By integration by parts, we have
\begin{align*}
\int_{y_1}^{y_2}|\varphi'|^2-\f{\sin y}{\sin y-\lambda}|\varphi|^2dy=\int_{y_1}^{y_2}(\sin y-\lambda)^2\Big|(\f{\varphi}{\sin y-\lambda})'\Big|^2dy.
\end{align*}
On the other hand, we have
\beno
(\sin y-\la)|u|^2+2\la \varphi u+\la^2\f {|\varphi|^2} {\sin y-\la}\ge 0.
\eeno
Therefore, we obtain
\begin{align*}
\int_{y_1}^{y_2}(\sin y-\lambda)|u|^2dy+\big\langle\lambda\varphi,u\chi_{(y_1,y_2)}\big\rangle\geq& -\big\langle\lambda\varphi,u\chi_{(y_1,y_2)}\big\rangle-\lambda^2\int_{y_1}^{y_2}\f{|\varphi|^2}{\sin y-\lambda}dy\\
=&\lambda\Big(\int_{y_1}^{y_2}|\varphi'|^2+\tilde{\beta}^2|\varphi|^2dy\Big)-\lambda^2\int_{y_1}^{y_2}\f{|\varphi|^2}{\sin y-\lambda}dy\\
\geq& \lambda\Big(\int_{y_1}^{y_2}|\varphi'|^2-\f{\sin y}{\sin y-\lambda}|\varphi|^2dy\Big)+\lambda\int_{y_1}^{y_2}\tilde{\beta}^2|\varphi|^2dy\\
=&\lambda \int_{y_1}^{y_2}(\sin y-\lambda)^2\Big|(\f{\varphi}{\sin y-\lambda})'\Big|^2dy +\lambda\int_{y_1}^{y_2}\tilde{\beta}^2|\varphi|^2dy,
\end{align*}
which shows (\ref{B2}).
\end{proof}

\begin{lemma}\label{lem:ell-2}
For $\lambda\in[0,1]$, let $ 0\leq y_1\leq\f{\pi}{2}\leq y_2\leq\pi$ so that $\lambda=\sin y_1=\sin y_2$. If $(\varphi,u)$ satisfies \eqref{D1}, then we have
\begin{align}
\int_{y_1}^{y_2}\f{|\varphi(y)|^2}{(\sin y-\lambda)^2}dy\lesssim \f{1}{(y_2-y_1)^2}\int_{y_1}^{y_2}\Big|(\f{\varphi(y)}{\sin y-\lambda})'\Big|^2(\sin y-\lambda)^2dy+\f{|\varphi(\f{\pi}{2})|^2}{(y_2-y_1)^3},\label{K.4.14}
\end{align}
\end{lemma}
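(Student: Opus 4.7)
The plan is to substitute $\Phi(y)=\f{\varphi(y)}{\sin y-\la}$, which extends continuously to $[y_1,y_2]$ since $\varphi$ and $\sin y-\la$ both vanish linearly at the two endpoints. Writing $L=y_2-y_1$, the asserted inequality is equivalent to
\begin{align*}
\int_{y_1}^{y_2}\Phi^2\,dy\lesssim \f{1}{L^2}\int_{y_1}^{y_2}(\Phi')^2(\sin y-\la)^2\,dy+\f{|\varphi(\pi/2)|^2}{L^3}.
\end{align*}
The hypotheses $\sin y_1=\sin y_2=\la$ with $0\le y_1\le\pi/2\le y_2\le\pi$ force $y_2=\pi-y_1$, so $y_1=\pi/2-L/2$ and $1-\la=2\sin^2(L/4)\asymp L^2$; in particular $|\varphi(\pi/2)|^2/L^3=(1-\la)^2\Phi(\pi/2)^2/L^3\asymp L\,\Phi(\pi/2)^2$, which is the form I actually target.

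Starting from the identity $\Phi(y)^2=\Phi(\pi/2)^2+2\int_{\pi/2}^y\Phi\Phi'\,ds$, I would integrate over $y\in(y_1,y_2)$ and apply Fubini to obtain
\begin{align*}
\int_{y_1}^{y_2}\Phi^2\,dy=L\,\Phi(\pi/2)^2+2\int_{\pi/2}^{y_2}(y_2-s)\Phi\Phi'(s)\,ds-2\int_{y_1}^{\pi/2}(s-y_1)\Phi\Phi'(s)\,ds.
\end{align*}
The pivotal pointwise ingredient, used to trade the linear weights $y_2-s$ and $s-y_1$ for the natural weight $\sin s-\la$, is
\begin{align*}
s-y_1\le \f{\pi^2(\sin s-\la)}{L}\ \text{on}\ (y_1,\pi/2),\qquad y_2-s\le \f{\pi^2(\sin s-\la)}{L}\ \text{on}\ (\pi/2,y_2).
\end{align*}
Each of these follows from the half-angle identity $\sin s-\la=2\cos\f{s+y_1}{2}\sin\f{s-y_1}{2}$ together with $\f{2x}{\pi}\le\sin x\le x$ on $[0,\pi/2]$: on the left half this yields $\cos\f{s+y_1}{2}\ge\sin(L/4)\gtrsim L$ and $\sin\f{s-y_1}{2}\gtrsim s-y_1$, and the right half is the reflection $y\mapsto\pi-y$ of the left.

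Inserting these bounds in the two cross integrals estimates them by $\f{C}{L}\int_{y_1}^{y_2}(\sin s-\la)|\Phi\Phi'|\,ds$, to which Cauchy--Schwarz with Young's inequality $|ab|\le \e a^2+b^2/(4\e)$ gives $\f12\int_{y_1}^{y_2}\Phi^2\,ds+\f{C'}{L^2}\int_{y_1}^{y_2}(\sin s-\la)^2(\Phi')^2\,ds$. Absorbing $\f12\int\Phi^2$ into the left-hand side of the Fubini identity and translating $L\,\Phi(\pi/2)^2\asymp|\varphi(\pi/2)|^2/L^3$ then completes the proof. The only substantive step is the trigonometric comparison $\min(y-y_1,y_2-y)\lesssim (\sin y-\la)/L$; because $y_2=\pi-y_1$ reduces the geometry to a symmetric arc around $\pi/2$, this reduces to one application of the half-angle identity, and no elliptic regularity, spectral analysis, or wave-operator machinery is needed for this lemma.
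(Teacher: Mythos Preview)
Your argument is correct and slightly more direct than the paper's. The paper proceeds by duality: writing $\Phi=\varphi/(\sin y-\la)$, it pairs $\Phi$ against an arbitrary test function $g$, expresses $\Phi(y)=\int_{\pi/2}^y\Phi'+\Phi(\pi/2)$, swaps the order of integration, and then applies the classical Hardy inequality $\big\|\tfrac{1}{y_2-z}\int_z^{y_2}g\big\|_{L^2}\lesssim\|g\|_{L^2}$ to the $g$-integral before specializing $g=\overline{\Phi}$. Your route replaces this duality--Hardy step by the elementary identity $\Phi(y)^2=\Phi(\pi/2)^2+2\int_{\pi/2}^y\Phi\Phi'$ followed by Fubini and Young's inequality, which lets you absorb $\tfrac12\int\Phi^2$ directly. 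Both proofs rest on the same geometric comparison $\sin y-\la\sim(y-y_1)(y_2-y)$ (this is the paper's \eqref{F4.3}), so the weight substitution $\min(y-y_1,y_2-y)\lesssim(\sin y-\la)/L$ you use is exactly the pointwise form of the equivalence \eqref{eq:varphi-int} the paper invokes. The gain of your approach is that it avoids the Hardy inequality entirely; the paper's version has the minor advantage of separating the cases $\varphi(\pi/2)=0$ and $\varphi(\pi/2)\neq0$ more explicitly, which makes the origin of the second term on the right transparent. One small point worth noting in your write-up: the identity and the absorption step should strictly be carried out on $(y_1+\epsilon,y_2-\epsilon)$ and then passed to the limit by monotone convergence, since a priori one does not know $\Phi\in L^2(y_1,y_2)$; this is routine but keeps the argument from being circular.
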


\begin{proof} Let us first assume that $\varphi(\f{\pi}{2})=0$. Then we have
\begin{align}
\nonumber&\int_{y_1}^{y_2}\f{\varphi(y)}{\sin y-\lambda}g(y)dy=\int_{y_1}^{y_2}\int_{\f{\pi}{2}}^y(\f{\varphi(z)}{\sin z-\lambda})'dzg(y)dy\\
\nonumber&=\int_{\f{\pi}{2}}^{y_2}(\f{\varphi(z)}{\sin z-\lambda})'dz\int_{z}^{y_2}g(y)dy-\int_{y_1}^{\f{\pi}{2}}(\f{\varphi(z)}{\sin z-\lambda})'dz\int_{y_1}^zg(y)dy\\
\nonumber&\leq\Big(\int_{\f{\pi}{2}}^{y_2}\Big|(\f{\varphi(z)}{\sin z-\lambda})'(z-y_2)\Big|^2dz\Big)^{\f12}\Big(\int_{\f{\pi}{2}}^{y_2}\Big|\f{\int_{z}^{y_2}g(y)dy}{y_2-z}\Big|^2dz\Big)^{\f12}\\
\nonumber&\quad+\Big(\int_{y_1}^{\f{\pi}{2}}\Big|(\f{\varphi(z)}{\sin z-\lambda})'(z-y_1)\Big|^2dz\Big)^{\f12}\Big(\int_{y_1}^{\f{\pi}{2}}\Big|\f{\int_{y_1}^zg(y)dy}{y_1-z}\Big|^2dz\Big)^{\f12}\\
&\leq C \Big(\int_{y_1}^{y_2}\Big|(\f{\varphi(z)}{\sin z-\lambda})'\Big|^2\min((z-y_1)^2,(z-y_2)^2)dz\Big)^{\f12}\Big(\int_{y_1}^{y_2}|g(y)|^2dy\Big)^{\f12}.\label{Hardy1}
\end{align}
Thanks to (\ref{F4.3}), we know that
\begin{align}
\int_{y_1}^{y_2}\Big|(\f{\varphi(z)}{\sin z-\lambda})'\Big|^2\min((z-y_1)^2,(z-y_2)^2)dz\sim \int_{y_1}^{y_2}\Big|(\f{\varphi(z)}{\sin z-\lambda})'\Big|^2\f{(\sin z-\lambda)^2}{(y_2-y_1)^2}dz.\label{eq:varphi-int}
\end{align}
Taking $g(y)=\f{\overline{\varphi(y)}}{\sin y-\lambda}$ in \eqref{Hardy1}, we obtain
\begin{align*}
\int_{y_1}^{y_2}\f{|\varphi(y)|^2}{(\sin y-\lambda)^2}dy\lesssim \int_{y_1}^{y_2}\Big|(\f{\varphi(y)}{\sin y-\lambda})'\Big|^2\f{(\sin y-\lambda)^2}{(y_2-y_1)^2}dy.
\end{align*}

If $\varphi(\f{\pi}{2})\neq0$, then $\f{\varphi(y)}{\sin y-\lambda}=\int_{\f{\pi}{2}}^y(\f{\varphi(z)}{\sin z-\lambda})'dz+\f{\varphi(\f{\pi}{2})}{1-\lambda}$. Similarly, we have
\begin{align*}
&\int_{y_1}^{y_2}\Big(\int_{\f{\pi}{2}}^y(\f{\varphi(z)}{\sin z-\lambda})'dz+\f{\varphi(\f{\pi}{2})}{1-\lambda}\Big)g(y)dy\\
&\lesssim \Big(\f{1}{(y_2-y_1)^2}\int_{y_1}^{y_2}|(\f{\varphi(y)}{\sin y-\lambda})'|^2(\sin y-\lambda)^2dy\Big)^{\f12}\Big(\int_{y_1}^{y_2}|g(y)|^2dy\Big)^{\f12}\\
&\qquad+\f{|\varphi(\f{\pi}{2})|}{1-\lambda}(y_2-y_1)^{\f12}\Big(\int_{y_1}^{y_2}|g(y)|^2dy\Big)^{\f12},
\end{align*}
which implies (\ref{K.4.14}) by noting that $1-\lambda\sim(y_2-y_1)^2$ and taking $g(y)=\f{\overline{\varphi(y)}}{\sin y-\lambda}$.
\end{proof}

In the following lemmas, we assume that $(\varphi,u)$ satisfies $(\partial_y^2-\tilde{\beta}^2)\varphi=u$. We introduce the decomposition $\varphi=\varphi_1+\varphi_2$, where
\begin{align*}
\varphi_2(y)=
\left\{
\begin{aligned}
&\f{\sinh\tilde{\beta}(y-y_1)}{\sinh\tilde{\beta}(y_2-y_1)}\varphi(y_2)+\f{\sinh\tilde{\beta}(y_2-y)}{\sinh\tilde{\beta}(y_2-y_1)}\varphi(y_1),\quad y_1\leq y\leq y_2,\\
&\f{\sinh\tilde{\beta}(y-y_2)}{\sinh\tilde{\beta}(y_1+2\pi-y_2)}\varphi(y_1)+\f{\sinh\tilde{\beta}(y_1+2\pi-y)}{\sinh\tilde{\beta}(y_1+2\pi-y_2)}\varphi(y_2),\quad y_2\leq y\leq y_1+2\pi.
\end{aligned}
\right.
\end{align*}
Then we have
\begin{align*}
\left\{
\begin{aligned}
&(\partial_y^2-\tilde{\beta}^2)\varphi_1=u,\\
&\varphi_1(y_1)=\varphi_1(y_2)=\varphi_1(y_1+2\pi)=0,
\end{aligned}
\right.
\end{align*}
and
\begin{align*}
\left\{
\begin{aligned}
&(\partial_y^2-\tilde{\beta}^2)\varphi_2=0,\\
&\varphi_2(y_1)=\varphi_2(y_1+2\pi)=\varphi(y_1),\quad \varphi_2(y_2)=\varphi(y_2).
\end{aligned}
\right.
\end{align*}

\begin{lemma}
For $\lambda\in[0,1]$, let $ 0\leq y_1\leq\f{\pi}{2}\leq y_2\leq\pi$ so that $\lambda=\sin y_1=\sin y_2$. Then it holds that for any $\delta\in(0,1]$
\begin{align}
\| u\|_{L^2(y_2,y_1+2\pi)}^2\leq C\mathcal{E}_1(u),\quad\quad\forall u\in L^2,\label{Key1.1}
\end{align}
where
\begin{align*}
\mathcal{E}_1(u)=&\f{\nu}{|\alpha|(y_2-y_1+\delta)\delta}\|\mathcal{L}_{\lambda}u\|_{L^2}\|u\|_{L^{2}}+\f{\nu^2}{|\alpha|(y_2-y_1+\delta)\delta}\|u'\bar{u}\|_{L^{\infty}(B(y_1,\delta)\cup B(y_2,\delta))}\\
&+\f{\nu^2}{|\alpha|(y_2-y_1+\delta)\delta^{\f32}}\|u'\|_{L^2}\|u\|_{L^{\infty}}+\delta\|u\|_{L^{\infty}}^2+\f{\|\lambda\varphi_2\|_{L^{\infty}}^2}{(y_2-y_1+\delta)^2\delta}
\end{align*}
with $\mathcal{L}_{\lambda}u=i\f{\alpha}{\nu}[(\sin y-\lambda)u+\lambda\varphi]-\nu\partial_y^2u$.
\end{lemma}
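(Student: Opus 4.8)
The plan is to estimate $\|u\|_{L^2(y_2,y_1+2\pi)}^2$ by testing the equation $\mathcal{L}_\lambda u = i\frac{\alpha}{\nu}[(\sin y-\lambda)u+\lambda\varphi]-\nu\partial_y^2 u$ against a suitable localized multiple of $\bar u$, exactly in the spirit of Proposition \ref{prop:Res-toy} and its companion lemma. First I would split $\varphi = \varphi_1 + \varphi_2$ as introduced just above the statement, so that the nonlocal contribution $\lambda\varphi$ decomposes into the "Dirichlet part" $\lambda\varphi_1$, which vanishes at the endpoints $y_1,y_2,y_1+2\pi$ and is controlled via the coercive estimates of Lemma \ref{lem:ell-1} (inequalities \eqref{B1}, \eqref{B2}), and the "harmonic part" $\lambda\varphi_2$, which is pointwise bounded by $\|\lambda\varphi_2\|_{L^\infty}$ and will be absorbed into the last term of $\mathcal{E}_1(u)$ after a Cauchy--Schwarz.

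The core of the argument is the same interval decomposition as before: choose $\hat y_-\in(y_2,y_2+\delta)$ and $\hat y_+\in(y_1+2\pi-\delta,y_1+2\pi)$ on which $|u'|^2$ is controlled by $\delta^{-1}\|u'\|_{L^2}^2$, integrate by parts in $\langle \mathcal{L}_\lambda u,\chi_{[\hat y_-,\hat y_+]}u\rangle$, and take real/imaginary parts to extract $\frac{|\alpha|}{\nu}\int_{\hat y_-}^{\hat y_+}(\lambda-\sin y)|u|^2\,dy$ plus $\nu\int|u'|^2$, minus the boundary terms $\nu|u'\bar u(\hat y_\pm)|$ and the nonlocal term $\frac{|\alpha|}{\nu}|\langle \lambda\varphi,u\chi\rangle|$. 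On the inner interval $(y_2+\delta,y_1+2\pi-\delta)$ the lower bound $\lambda-\sin y\gtrsim \delta^2$ (the trigonometric computation already carried out in the previous lemma's proof) converts this into an $L^2$ bound with the weight $\frac{|\alpha|\delta^2}{\nu}$; the remaining two $\delta$-intervals near $y_2$ and $y_1+2\pi$ are handled by the $L^\infty$ bound $\|u\|_{L^\infty}^2\le \|u\|_{L^2}\|u'\|_{L^2}+\|u\|_{L^2}^2$. Rescaling everything by $\frac{\nu}{|\alpha|\delta^2}$ and replacing the prefactor $\delta^2$ by $(y_2-y_1+\delta)\delta$—which is legitimate since $y_2-y_1\lesssim 1$ so $(y_2-y_1+\delta)\delta \lesssim \delta$ and we are only weakening the bound—produces exactly the five terms of $\mathcal{E}_1(u)$: the first from $\|\mathcal{L}_\lambda u\|\,\|u\|$, the second from the precise endpoint boundary terms $\nu^2|u'\bar u|$, the third from the $\frac{2\nu}{\sqrt\delta}\|u'\|\|u\|_{L^\infty}$ term, the fourth from the $\delta$-interval $L^\infty$ estimate, and the fifth from the harmonic piece $\lambda\varphi_2$.

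The delicate point—and the one I expect to be the main obstacle—is getting the sharp $(y_2-y_1+\delta)$ denominator rather than just $\delta$ in the first three terms. Writing simply $\lambda-\sin y\gtrsim\delta^2$ only yields a $\delta^2$ weight; to improve the weight to $(y_2-y_1+\delta)\delta$ one must exploit that across the \emph{whole} interval $(y_2+\delta,y_1+2\pi-\delta)$ the quantity $\lambda-\sin y$ is not uniformly $\sim\delta^2$ but grows, being comparable to $(y-y_2)(y-y_1-2\pi)$ up to the slope $\cos y_1\sim(y_2-y_1)$; integrating the genuine profile of $\lambda-\sin y$ against $|u|^2$, together with a Hardy-type control of $\int |u|^2$ by $\int(\lambda-\sin y)|u|^2$ on this interval (the analogue of Lemma \ref{lem:ell-2} applied to $u$ in place of $\varphi$), gives the extra factor $y_2-y_1+\delta$. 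I would therefore first establish this weighted Hardy inequality on $(y_2,y_1+2\pi)$, then plug it into the energy identity above. The other routine nuisance is bounding $\frac{|\alpha|}{\nu}|\langle\lambda\varphi_1,u\chi_{(y_2,y_1+2\pi)}\rangle|$ by $\int(\lambda-\sin y)|u|^2$ via \eqref{B2} (stated for $(y_1,y_2)$ but valid verbatim on $(y_2,y_1+2\pi)$ by the same argument, using \eqref{B02}), so that it is absorbed on the left; this is where the sign $K_1\le 0$ and the symmetric-kernel trick of Lemma \ref{lem:ell-1} are reused without change.
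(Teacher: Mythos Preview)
Your overall plan---test $\mathcal{L}_\lambda u$ against $\chi_{(y_2,y_1+2\pi)}\bar u$, move the $\varphi_1$ contribution to the left by the sign coming from \eqref{B02}, and handle $\varphi_2$ by $\|\lambda\varphi_2\|_{L^\infty}$---is sound, but the ``delicate point'' you identify is a red herring. The lower bound you quote, $\lambda-\sin y\gtrsim\delta^2$, is not sharp on $(y_2+\delta,y_1+2\pi-\delta)$: the factorisation
\[
\sin y_2-\sin y \;=\; 2\Big|\sin\tfrac{y-y_1}{2}\Big|\,\Big|\sin\tfrac{y-y_2}{2}\Big|\;\sim\;(y-y_1)(y-y_2)
\]
(which is exactly \eqref{A4.2.1} in the appendix, and already implicit in the earlier $\cos y_1\,\delta+\sin y_1\,\delta^2$ computation once one notes $\cos y_1\sim y_2-y_1$) gives directly $\lambda-\sin y\gtrsim(y_2-y_1+\delta)\delta$ on the whole inner interval. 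With this corrected bound your single-test-function argument closes immediately: dividing \eqref{K.4.16.1} by $(y_2-y_1+\delta)\delta$ produces the first, second and fifth terms of $\mathcal{E}_1(u)$, and the $\delta$-boundary pieces give the fourth. No Hardy inequality for $u$ is needed---and indeed your proposed one (Lemma~\ref{lem:ell-2} ``applied to $u$ in place of $\varphi$'') would not work as stated, since that lemma relies on the Dirichlet condition $\varphi_1(y_1)=\varphi_1(y_2)=0$, which $u$ does not satisfy.

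For comparison, the paper takes a longer route: in addition to testing against $\chi u$ (giving \eqref{K.4.16.1}), it tests against $\chi_{(y_2+\delta,y_1+2\pi-\delta)}\dfrac{\bar u}{\sin y-\lambda}$, which makes the weight in the imaginary part equal to $1$ and lets the sharp $(y_2-y_1+\delta)$ factor emerge from the pointwise bounds \eqref{A4.2.1}--\eqref{A4.2.3} on $\tfrac{1}{\sin y-\lambda}$ and $\tfrac{\cos y}{(\sin y-\lambda)^2}$. The price is an extra term $\int\frac{\lambda^2|\varphi|^2}{(\sin y-\lambda)^2}$ on the right, which the paper then controls by a genuine Hardy argument for $\varphi_1$ (via \eqref{B02} and \eqref{K.4.16.1}) and by the crude $L^\infty$ bound for $\varphi_2$; this is where the third term of $\mathcal{E}_1(u)$ appears. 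Your corrected approach is actually shorter for this lemma and never produces that third term at all, though the paper's second test function and the accompanying machinery are reused in the proof of the next lemma (the $\mathcal{E}_2$ bound on $(y_1,y_2)$), so the apparent detour is not wasted.
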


\begin{proof}By integration by parts, we get
\begin{align*}
&\Big|\text{Im}\big\langle-\nu\partial_y^2u+i\f{\alpha}{\nu}[(\sin y-\lambda)u+\lambda\varphi],\chi_{(y_2,y_1+2\pi)}u\big\rangle\Big|\\
&=\Big|\f{\alpha}{\nu}\int_{y_2}^{y_1+2\pi}(\sin y-\lambda)|u(y)|^2dy+\nu\text{Im}(u'\bar{u}(y_2)-u'\bar{u}(y_{1}))+\text{Im}(i\f{\alpha}{\nu}\int_{y_2}^{y_1+2\pi}\lambda\varphi \bar{u}dy)\Big|\\
&\geq\f{|\alpha|}{\nu}\Big(\int_{y_2}^{y_1+2\pi}(\lambda-\sin y)|u(y)|^2dy-\int_{y_2}^{y_1+2\pi}\lambda\varphi_1\bar{u}dy\Big)\\
&\qquad-\nu\big(|u'\bar{u}(y_2)|+|u'\bar{u}(y_{1})|\big)-\f{|\alpha|}{\nu}\Big|\int_{y_2}^{y_1+2\pi}\lambda\varphi_2 \bar{u}dy\Big|,
\end{align*}
which implies that
\begin{align}
\nonumber&\int_{y_2}^{y_1+2\pi}(\lambda-\sin y)|u(y)|^2dy+\int_{y_2}^{y_1+2\pi}-\lambda \varphi_1\overline{u}dy\\
&\leq \f{\nu}{|\alpha|} \|\mathcal{L}_{\lambda}u\|_{L^2}\|u\|_{L^2}+\f{2\nu^2}{|\alpha|}\|u'\bar{u}\|_{L^{\infty}(B(y_1,\delta)\cup B(y_2,\delta))}+\Big|\int_{y_2}^{y_1+2\pi}\lambda\varphi_2\overline{u}dy\Big|\label{K.4.16.1}.
\end{align}
Similarly, we have
\begin{align*}
&\Big|\text{Im}\Big\langle-\nu\partial_y^2u+i\f{\alpha}{\nu}[(\sin y-\lambda)u+\lambda\varphi],\chi_{(y_2+\delta,y_1+2\pi-\delta)}\f{u}{\sin y-\lambda} \Big\rangle\Big|\\
&=\Big|\text{Im}\Big(-\nu\int_{y_2+\delta}^{y_1+2\pi-\delta}\f{u''\bar{u}}{\sin y-\lambda}dy+i\f{\alpha}{\nu}\int_{y_2+\delta}^{y_1+2\pi-\delta}|u|^2dy+i\f{\alpha}{\nu}\int_{y_2+\delta}^{y_1+2\pi-\delta}\f{\lambda\varphi \bar{u}}{\sin y-\lambda}dy\Big)\Big|\\
&=\Big|\text{Im}\Big(-\nu \f{u'\bar{u}}{\sin y-\lambda}\Big|_{y_2+\delta}^{y_1+2\pi-\delta}+\nu\int_{y_2+\delta}^{y_1+2\pi-\delta}\f{|u'|^2}{\sin y-\lambda}dy-\nu\int_{y_2+\delta}^{y_1+2\pi-\delta}u'\bar{u}\f{\cos y}{(\sin y-\lambda)^2}dy\Big)\\
&\quad+\text{Im}\Big(i\f{\alpha}{\nu}\int_{y_2+\delta}^{y_1+2\pi-\delta}|u|^2dy+i\f{\alpha}{\nu}\int_{y_2+\delta}^{y_1+2\pi-\delta}\f{\lambda\varphi\bar{u}}{\sin y-\lambda}dy\Big)\Big|\\
&\geq -\nu\|u'\bar{u}\|_{L^{\infty}(B(y_1,\delta)\cup B(y_2,\delta))}\Big(\f{1}{|\sin(y_2+\delta)-\sin y_2|}+\f{1}{|\sin (y_1-\delta)-\sin y_1|}\Big)\\
&\quad-\nu\|u\|_{L^{\infty}} \|u'\|_{L^2}\Big\|\f{\cos y}{(\sin y-\lambda)^2}\Big\|_{L^2(y_2+\delta,y_1+2\pi-\delta)}+\f{|\alpha|}{2\nu}\int_{y_2+\delta}^{y_1+2\pi-\delta}|u|^2dy\\
&\quad-\f{|\alpha|}{2\nu}\int_{y_2+\delta}^{y_1+2\pi-\delta}\f{\lambda^2|\varphi|^2}{(\sin y-\lambda)^2}dy,
\end{align*}
which implies that
\begin{align*}
&\f{|\alpha|}{2\nu}\int_{y_2+\delta}^{y_1+2\pi-\delta}|u|^2dy\leq \|\mathcal{L}_{\lambda}u\|_{L^2}\Big\|\f{u}{\sin y-\lambda}\Big\|_{L^2(y_2+\delta,y_1+2\pi-\delta)}+\f{2\nu\|u'\bar{u}\|_{L^{\infty}(B(y_1,\delta)\cup B(y_2,\delta))}}{|\sin (y_1-\delta)-\sin y_1|}\\
&\qquad+\nu\|u\|_{L^{\infty}} \|u'\|_{L^2}\Big\|\f{\cos y}{(\sin y-\lambda)^2}\Big\|_{L^2(y_2+\delta,y_1+2\pi-\delta)}+\f{|\alpha|}{2\nu}\int_{y_2+\delta}^{y_1+2\pi-\delta}\f{\lambda^2|\varphi|^2}{(\sin y-\lambda)^2}dy,
\end{align*}
which along with (\ref{A4.2.1}) and (\ref{A4.2.3}) gives
\begin{align}
\nonumber\|u\|_{L^2(y_2,y_1+2\pi)}^2&\leq \|u\|_{L^2(y_2+\delta,y_1+2\pi-\delta)}^2+2\delta\|u\|_{L^{\infty}}^2\\
&\lesssim\f{\nu}{|\alpha|(y_2-y_1+\delta){\delta}}\|\mathcal{L}_{\lambda}u\|_{L^2}\|u\|_{L^2}+\f{\nu^2}{|\alpha|\delta(y_2-y_1+\delta)}\|u'\bar{u}\|_{L^{\infty}(B(y_1,\delta)\cup B(y_2,\delta))}\label{O1}\\
\nonumber&+\f{\nu^2}{|\alpha|\delta^{\f32}(y_2-y_1+\delta)}\|u'\|_{L^2}\|u\|_{L^{\infty}}+\delta\|u\|_{L^{\infty}}^2+\int_{y_2+\delta}^{y_1+2\pi-\delta}\f{\lambda^2|\varphi|^2}{(\sin y-\lambda)^2}dy.
\end{align}

It remains to estimate $\int_{y_2+\delta}^{y_1+2\pi-\delta}\f{\lambda^2|\varphi|^2}{(\sin y-\lambda)^2}dy$. We have
\begin{align*}
\int_{y_2+\delta}^{y_1+2\pi-\delta}\f{\lambda^2|\varphi|^2}{(\sin y-\lambda)^2}dy\lesssim&
\int_{y_2+\delta}^{y_1+2\pi-\delta}\f{\lambda^2|\varphi_1|^2}{(\sin y-\lambda)^2}dy+\int_{y_2+\delta}^{y_1+2\pi-\delta}\f{\lambda^2|\varphi_2|^2}{(\sin y-\lambda)^2}dy.
\end{align*}
For $y\in[y_2+\delta,\f{3\pi}{2}]$, we have
\begin{align*}
|\sin y-\sin y_2|&=2|\cos \f{y+y_2}{2}\sin\f{y-y_2}{2}|=2|\sin\f{y_1-y}{2}\sin\f{y-y_2}{2}|\sim(y-y_1)(y-y_2),
\end{align*}
and for $ y\in[\f{3\pi}{2},y_1+2\pi-\delta]$, we have
\begin{align*}
\nonumber|\sin y-\sin y_2|&=2|\cos \f{y+y_2}{2}\sin\f{y-y_2}{2}|=2|\sin\f{y_1-y}{2}\sin\f{y-y_2}{2}|\\
&=2|\sin\f{2\pi+y_1-y}{2}\sin\f{2\pi+y_2-y}{2}|\sim(2\pi+y_1-y)(2\pi+y_2-y),
\end{align*}
from which, we infer that
\begin{align*}
\int_{y_2+\delta}^{y_1+2\pi-\delta}\f{\lambda^2|\varphi_1|^2}{(\sin y-\lambda)^2}dy&\lesssim \int_{y_2+\delta}^{\f{3\pi}{2}}\f{\lambda^2|\varphi_1|^2}{(\sin y-\lambda)^2}dy+\int_{\f{3\pi}{2}}^{y_1+2\pi-\delta}\f{\lambda^2|\varphi_1|^2}{(\sin y-\lambda)^2}dy\\
&\lesssim\f{1}{(y_2-y_1+\delta)^2} \Big(\int_{y_2+\delta}^{\f{3\pi}{2}}\f{\lambda^2|\int_{y_2}^y \varphi'_1dz|^2}{( y-y_2)^2}dy+\int_{\f{3\pi}{2}}^{y_1+2\pi-\delta}\f{\lambda^2|\int_{y}^{y_1+2\pi}\varphi'_1dz|^2}{(y_1+2\pi-y)^2}dy\Big).
\end{align*}
Then by Hardy's inequality, we get
\begin{align}
\int_{y_2+\delta}^{y_1+2\pi-\delta}\f{\lambda^2|\varphi_1|^2}{(\sin y-\lambda)^2}dy\lesssim \f{1}{(y_2-y_1+\delta)^2}\int_{y_2+\delta}^{y_1+2\pi-\delta}\lambda^2|\varphi'_1|^2dy\label{4.21.1}.
\end{align}
By (\ref{B02}) and (\ref{K.4.16.1}), we have
\begin{align*}
&\f{1}{(y_2-y_1+\delta)^2}\int_{y_2}^{y_1+2\pi}\lambda^2|\varphi'_1|^2dy\leq\f{\lambda^2}{(y_2-y_1+\delta)^2}\langle-\varphi_1,u\chi_{(y_2,y_1+2\pi)}\rangle\\
&\lesssim\f{\lambda}{(y_2-y_1+\delta)^2}\Big(\f{\nu}{|\alpha|} \|\mathcal{L}_{\lambda}u\|_{L^2}\|u\|_{L^2}+\f{2\nu^2}{|\alpha|}\|u'\bar{u}\|_{L^{\infty}(B(y_1,\delta)\cup B(y_2,\delta))}+\big|\int_{y_2}^{y_1+2\pi}\lambda\varphi_2\overline{u}dy\big|\Big)\\
&\lesssim\f{\nu}{|\alpha|(y_2-y_1+\delta)^2} \|\mathcal{L}_{\lambda}u\|_{L^2}\|u\|_{L^2}+ \f{\nu^2}{|\alpha|(y_2-y_1+\delta)^2}\|u'\bar{u}\|_{L^{\infty}(B(y_1,\delta)\cup B(y_2,\delta))}\\
&\quad+\lambda\f{\|\lambda\varphi_2\|_{L^{\infty}}}{\delta^{\f12}(y_2-y_1+\delta)}\f{\delta^{\f32}\|u\|_{L^{\infty}}+\delta^{\f12}\|u\|_{L^{1}(y_2+\d,y_1+2\pi-\d)}}{y_2-y_1+\delta}.
\end{align*}
By (\ref{K.4.16.1}) and (\ref{A4.2.4}), we have
\begin{align*}
&\|u\|_{L^{1}(y_2+\d,y_1+2\pi-\d)}^2\leq\left(\int_{y_2}^{y_1+2\pi}(\lambda-\sin y)|u(y)|^2dy\right)\Big\|\f{1}{\sin y-\lambda}\Big\|_{L^1(y_2+\delta,y_1+2\pi-\delta)}\\
&\lesssim \left(\f{\nu}{|\alpha|} \|\mathcal{L}_{\lambda}u\|_{L^2}\|u\|_{L^2}+\f{2\nu^2}{|\alpha|}\|u'\bar{u}\|_{L^{\infty}(B(y_1,\delta)\cup B(y_2,\delta))}+\Big|\int_{y_2}^{y_1+2\pi}\lambda\varphi_2\overline{u}dy\Big|\right)\f{1+\ln(1+\frac{y_2-y_1}{\delta})}{(y_2-y_1+\delta)}\\
 &\leq \big(\mathcal{E}_1(u)(y_2-y_1+\delta)\delta+\|\lambda\varphi_2\|_{L^{\infty}}(2\delta\|u\|_{L^{\infty}}+\|u\|_{L^{1}(y_2+\d,y_1+2\pi-\d)})\big)\f{1+\ln(1+\frac{y_2-y_1}{\delta})}{(y_2-y_1+\delta)}\\
 &\lesssim \mathcal{E}_1(u)\delta\big(1+\ln(1+\frac{y_2-y_1}{\delta})\big)+\|\lambda\varphi_2\|_{L^{\infty}}\|u\|_{L^{1}(y_2+\d,y_1+2\pi-\d)}\f{1+\ln(1+\frac{y_2-y_1}{\delta})}{(y_2-y_1+\delta)},
\end{align*}
which gives
\begin{align}
\nonumber\|u\|_{L^{1}(y_2+\d,y_1+2\pi-\d)}^2&\lesssim \mathcal{E}_1(u)\delta\big(1+\ln(1+\frac{y_2-y_1}{\delta})\big)+\|\lambda\varphi_2\|_{L^{\infty}}^2\frac{(1+\ln(1+\frac{y_2-y_1}{\delta}))^2}{(y_2-y_1+\delta)^2}\\&\lesssim \mathcal{E}_1(u)\delta\big(1+\ln(1+\frac{y_2-y_1}{\delta})\big)^2\nonumber\\
&\lesssim \mathcal{E}_1(u)\delta\big(1+\frac{y_2-y_1}{\delta}\big)=\mathcal{E}_1(u)(y_2-y_1+\delta).\label{eq:u-L1}
\end{align}
Then by (\ref{4.21.1}), we obtain
\begin{align}
\int_{y_2+\delta}^{y_1+2\pi-\delta}\f{\lambda^2|\varphi_1|^2}{(\sin y-\lambda)^2}dy\lesssim\mathcal{E}_1(u)\label{O2}.
\end{align}

Thanks to (\ref{A4.2.2}),  we have
\begin{align}
\int_{y_2+\delta}^{y_1+2\pi-\delta}\f{\lambda^2|\varphi_2|^2}{(\sin y-\lambda)^2}dy&\leq\|\lambda\varphi_2\|_{L^{\infty}}^2 \int_{y_2+\delta}^{y_1+2\pi-\delta}\f{1}{(\sin y-\lambda)^2}dy\nonumber\\
&\lesssim\f{\|\lambda\varphi_2\|_{L^{\infty}}^2}{\delta(y_2-y_1+\delta)^2}\leq\mathcal{E}_1(u)\label{O3}.
\end{align}

Finally, (\ref{Key1.1}) follows from (\ref{O1}), (\ref{O2}), and (\ref{O3}).\end{proof}

\begin{lemma}
For $\lambda\in[0,1]$, let $ 0\leq y_1\leq\f{\pi}{2}\leq y_2\leq\pi$ so that $\lambda=\sin y_1=\sin y_2$.  Then it holds that for any $\delta\in(0,\f{y_2-y_1}{4}]$
\begin{align}
\| u\|_{L^2}^2\leq C\mathcal{E}_2(u),\label{Key1.2}
\end{align}
where
\begin{align*}
\mathcal{E}_2(u)=&\f{\nu}{|\alpha|\delta(y_2-y_1)}\|\mathcal{L}_{\lambda}u\|_{L^2}\|u\|_{L^2}+\f{\nu^2}{|\alpha|\delta(y_2-y_1)}\|u'\bar{u}\|_{L^{\infty}(B(y_1,\delta)\cup B(y_2,\delta))}+\delta\|u\|_{L^{\infty}}^2\\
&+\f{\nu^2}{|\alpha|\delta^{\f32}(y_2-y_1)}\|u'\|_{L^2}\|u\|_{L^{\infty}}+\f{\|\lambda \varphi_2\|_{L^{\infty}}^2}{(y_2-y_1)^2\delta}+\delta^3\|u'\|_{L^{\infty}(B(y_1,\delta)\cup B(y_2,\delta))}^2\\
&+\f{\nu^2}{|\alpha|^2}\f{1}{(y_2-y_1)^3\delta}\|\mathcal{L}_{\lambda}u\|_{L^2}^2+\f{\nu^4}{|\alpha|^2}\Big(\f{\|u'\|_{L^{\infty}(B(y_1,\delta)\cup B(y_2,\delta))}^2}{(y_2-y_1)^3\delta^2}+\f{\|u'\|_{L^2}^2}{(y_2-y_1)^3\delta^3}\Big).
\end{align*}
\end{lemma}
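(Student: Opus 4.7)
The plan is to parallel the proof of the preceding lemma but additionally bound $\|u\|_{L^2(y_1,y_2)}^2$, i.e.\ on the interval where $\sin y-\lambda\ge 0$. I would write
\begin{align*}
\|u\|_{L^2}^2 = \|u\|_{L^2(y_1,y_2)}^2 + \|u\|_{L^2(y_2,y_1+2\pi)}^2,
\end{align*}
and observe that the second piece is already bounded by $C\mathcal{E}_1(u)$ from the previous lemma. Under the hypothesis $\delta\le(y_2-y_1)/4$ one has $y_2-y_1+\delta\sim y_2-y_1$, so every term of $\mathcal{E}_1(u)$ is dominated by the corresponding term of $\mathcal{E}_2(u)$, leaving only the bound on $[y_1,y_2]$.

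On $[y_1,y_2]$, split into $[y_1,y_1+\delta]\cup[y_1+\delta,y_2-\delta]\cup[y_2-\delta,y_2]$: the two thin slabs contribute $\lesssim\delta\|u\|_{L^\infty}^2$, already present in $\mathcal{E}_2$. On the middle interval I would test $\mathcal{L}_\lambda u$ against $\chi_{[y_1+\delta,y_2-\delta]}\bar u/(\sin y-\lambda)$ (meaningful since $\sin y-\lambda>0$ there) and take imaginary parts. The principal term yields $\tfrac{|\alpha|}{\nu}\int_{y_1+\delta}^{y_2-\delta}|u|^2\,dy$, which is exactly the target; since $\sin y-\lambda\gtrsim\delta(y_2-y_1)$ on this interval, Cauchy--Schwarz gives $\|u/(\sin y-\lambda)\|_{L^2}\lesssim\|u\|_{L^2}/[\delta(y_2-y_1)]$, producing the leading term $\tfrac{\nu}{|\alpha|\delta(y_2-y_1)}\|\mathcal{L}_\lambda u\|_{L^2}\|u\|_{L^2}$ in $\mathcal{E}_2$.

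The nonlocal correction $i\tfrac{\alpha}{\nu}\lambda\varphi/(\sin y-\lambda)$ is handled via the decomposition $\varphi=\varphi_1+\varphi_2$. For $\varphi_1$, I would combine the Hardy-type inequality of Lemma~\ref{lem:ell-2} with the coercive identity (\ref{B2}) of Lemma~\ref{lem:ell-1}, so that $\lambda^2\int|\varphi_1|^2/(\sin y-\lambda)^2$ is traded for $\int(\sin y-\lambda)|u|^2+\langle\lambda\varphi,u\rangle$, and the latter inner product is then controlled through (\ref{B1}) and the same Cauchy--Schwarz pairing. The smooth piece $\varphi_2$ is controlled directly through its $L^\infty$ bound, yielding the $\|\lambda\varphi_2\|_{L^\infty}^2/[(y_2-y_1)^2\delta]$ contribution.

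The main obstacle, and the source of the unusual squared $\nu^2/|\alpha|^2$ and $\nu^4/|\alpha|^2$ terms in $\mathcal{E}_2$, lies in handling the dissipation $-\nu u''$ against $\bar u/(\sin y-\lambda)$. Integration by parts produces: (i) the positive term $\nu\int|u'|^2/(\sin y-\lambda)$, which is discarded; (ii) a bulk commutator $\nu\int u'\bar u\cos y/(\sin y-\lambda)^2$, controlled via the $L^2$ estimates of $\cos y/(\sin y-\lambda)^2$ already established in this section; and (iii) boundary values $\nu u'\bar u/(\sin y-\lambda)\big|_{y_1+\delta,\,y_2-\delta}$ whose denominator is only $\sim\delta(y_2-y_1)$. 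To avoid an uncontrolled amplification of $\|u\|_{L^2}^2$, I would split these boundary terms by Young's inequality, isolating the $u'$ factor (which feeds the $\nu^4/|\alpha|^2$-weighted quadratic contributions and, when the $L^\infty$-norm is used pointwise, the $\delta^3\|u'\|_{L^\infty(B(y_1,\delta)\cup B(y_2,\delta))}^2$ contribution) from the $\|u\|_{L^2}$ factor that is absorbed into the left-hand side. Assembling the pieces from both intervals and absorbing the resulting $\|u\|_{L^2}^2$-contributions yields the claimed bound $\|u\|_{L^2}^2\le C\mathcal{E}_2(u)$.
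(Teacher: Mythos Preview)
Your overall architecture---split the period into $[y_1,y_2]$ and $[y_2,y_1+2\pi]$, test $\mathcal{L}_\lambda u$ against $\chi_{[y_1+\delta,y_2-\delta]}\bar u/(\sin y-\lambda)$, and treat the nonlocal term via $\varphi=\varphi_1+\varphi_2$---matches the paper. But there is a genuine gap in your treatment of $\varphi_1$, and a misidentification of where the $\nu^2/|\alpha|^2$ and $\nu^4/|\alpha|^2$ terms come from.

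Your plan for $\varphi_1$ invokes the Hardy-type inequality \eqref{K.4.14} together with \eqref{B2} and \eqref{B1}. The bound \eqref{B1} has constant $(\tfrac{\pi}{2}-y_1)\sin y_1/\cos y_1$, which blows up as $\lambda\to 1$; it is only useful for $\lambda$ bounded away from $1$ (this is the paper's Case~1, $\lambda\in[0,\sqrt3/2]$). For $\lambda$ close to $1$ you must use \eqref{K.4.14}, but that inequality carries the extra point-value term $|\varphi_1(\tfrac{\pi}{2})|^2/(y_2-y_1)^3$, which \eqref{B2} does \emph{not} control. You never address this point value; this is the heart of the lemma. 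The paper handles it by writing $\varphi_1(\tfrac{\pi}{2})/(1-\lambda)$ as an integral against $(\varphi_1/(\sin y-\lambda))'$ plus $\int_{y_1+\theta}^{y_2-\theta}\varphi_1/(\sin y-\lambda)$, and then \emph{inverting the equation}: from $\mathcal{L}_\lambda u = i\tfrac{\alpha}{\nu}[(\sin y-\lambda)u+\lambda\varphi]-\nu u''$ one gets $u=[-i\tfrac{\nu}{\alpha}(\mathcal{L}_\lambda u+\nu u'')-\lambda\varphi]/(\sin y-\lambda)$, so $\int\lambda\varphi/(\sin y-\lambda)$ is expressed through $\int u$, $\int\mathcal{L}_\lambda u/(\sin y-\lambda)$, and $\int u''/(\sin y-\lambda)$. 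Squaring these last two, divided by $(y_2-y_1)$, is exactly what produces the terms $\tfrac{\nu^2}{|\alpha|^2(y_2-y_1)^3\delta}\|\mathcal{L}_\lambda u\|_{L^2}^2$ and $\tfrac{\nu^4}{|\alpha|^2}\big(\tfrac{\|u'\|_{L^\infty(B)}^2}{(y_2-y_1)^3\delta^2}+\tfrac{\|u'\|_{L^2}^2}{(y_2-y_1)^3\delta^3}\big)$ in $\mathcal{E}_2$.

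By contrast, the boundary contributions from integrating $-\nu u''$ against $\bar u/(\sin y-\lambda)$, which you identify as the source of the $|\alpha|^{-2}$ terms, are of size $\nu|u'\bar u|/|\sin(y_1+\delta)-\lambda|\sim\nu|u'\bar u|/[\delta(y_2-y_1)]$; after dividing by $|\alpha|/\nu$ this gives the $\tfrac{\nu^2}{|\alpha|\delta(y_2-y_1)}\|u'\bar u\|_{L^\infty(B)}$ term already present in $\mathcal{E}_2$ with a \emph{single} power of $|\alpha|^{-1}$. No Young splitting of this boundary term will manufacture the correct $(y_2-y_1)^{-3}$ weights or the $|\alpha|^{-2}$ prefactors. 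So the last paragraph of your proposal does not account for the actual obstruction, and the argument as written would fail precisely when $\lambda$ is close to $1$.
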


\begin{proof}We get by integration by parts that
\begin{align*}
&\Big|\text{Im}\langle-\nu\partial_y^2u+i\f{\alpha}{\nu}[(\sin y-\lambda)u+\lambda\varphi],\chi_{(y_1,y_2)}u \rangle\Big|\\
&=\Big|\f{\alpha}{\nu}\int_{y_1}^{y_2}(\sin y-\lambda)|u(y)|^2dy-\nu\text{Im}(u'\bar{u}(y_2)-u'\bar{u}(y_{1}))+\text{Im}\Big(i\f{\alpha}{\nu}\int_{y_1}^{y_2}\lambda\varphi \bar{u}dy\Big)\Big|\\
&\geq\f{|\alpha|}{\nu}\Big(\int_{y_1}^{y_2}(\sin y-\lambda)|u(y)|^2dy+\int_{y_1}^{y_2}\lambda\varphi_1\bar{u}dy\Big)-\nu\big(|u'\bar{u}(y_2)|+|u'\bar{u}(y_{1})|\big)-\f{|\alpha|}{\nu}\Big|\int_{y_1}^{y_2}\lambda\varphi_2 \bar{u}dy\Big|,
\end{align*}
which implies that
\begin{align}
\nonumber&\f{1}{(y_2-y_1)^2}\Big(\int_{y_1}^{y_2}(\sin y-\lambda)|u(y)|^2dy+\int_{y_1}^{y_2}\lambda \varphi_1\overline{u}dy\Big)\\
\nonumber&\leq \f{1}{(y_2-y_1)^2}\Big(\f{\nu}{|\alpha|} \|\mathcal{L}_{\lambda}u\|_{L^2}\|u\|_{L^2}+\f{2\nu^2}{|\alpha|}\|u'\bar{u}\|_{L^{\infty}(B(y_1,\delta)\cup B(y_2,\delta))}+\Big|\int_{y_1}^{y_2}\lambda\varphi_2\overline{u}dy\Big|\Big)\\
&\lesssim \mathcal{E}_2(u)+\f{\|\lambda\varphi_2\|_{L^{\infty}}}{(y_2-y_1)\delta^{\f12}}\delta^{\f12}\|u\|_{L^{\infty}}\lesssim\mathcal{E}_2(u)\label{K.4.16}.
\end{align}

Similarly, we have
\begin{align*}
&\Big|\text{Im}\Big\langle-\nu\partial_y^2u+i\f{\alpha}{\nu}[(\sin y-\lambda)u+\lambda\varphi],\chi_{(y_1+\delta,y_2-\delta)}\f{u}{\sin y-\lambda}\Big \rangle\Big|\\
&=\Big|\text{Im}\Big(-\nu\int_{y_1+\delta}^{y_2-\delta}\f{u''\bar{u}}{\sin y-\lambda}dy+i\f{\alpha}{\nu}\int_{y_1+\delta}^{y_2-\delta}|u|^2dy+i\f{\alpha}{\nu}\int_{y_1+\delta}^{y_2-\delta}\f{\lambda\varphi \bar{u}}{\sin y-\lambda}dy\Big)\Big|\\
&=\Big|\text{Im}\Big(-\nu \f{u'\bar{u}}{\sin y-\lambda}\Big|_{y_1+\delta}^{y_2-\delta}+\nu\int_{y_1+\delta}^{y_2-\delta}\f{|u'|^2}{\sin y-\lambda}dy-\nu\int_{y_1+\delta}^{y_2-\delta}u'\bar{u}\f{\cos y}{(\sin y-\lambda)^2}dy\Big)\\
&\quad+\text{Im}\Big(i\f{\alpha}{\nu}\int_{y_1+\delta}^{y_2-\delta}|u|^2dy+i\f{\alpha}{\nu}\int_{y_1+\delta}^{y_2-\delta}\f{\lambda\varphi\bar{u}}{\sin y-\lambda}dy\Big)\Big|\\
&\geq -\nu\|u'\bar{u}\|_{L^{\infty}(B(y_1,\delta)\cup B(y_2,\delta))}\Big(\f{1}{|\sin(y_2-\delta)-\sin y_2|}+\f{1}{|\sin (y_1+\delta)-\sin y_1|}\Big)\\
&\quad-\nu\|u\|_{L^{\infty}} \|u'\|_{L^2}\Big\|\f{\cos y}{(\sin y-\lambda)^2}\Big\|_{L^2([y_1+\delta,y_2-\delta])}+\f{|\alpha|}{2\nu}\int_{y_1+\delta}^{y_2-\delta}|u|^2dy-\f{|\alpha|}{2\nu}\int_{y_1+\delta}^{y_2-\delta}\f{\lambda^2|\varphi|^2}{(\sin y-\lambda)^2}dy,
\end{align*}
which implies that
\begin{align*}
\f{|\alpha|}{2\nu}\int_{y_1+\delta}^{y_2-\delta}|u|^2dy\leq &\|\mathcal{L}_{\lambda}u\|_{L^2}\Big\|\f{u}{\sin y-\lambda}\Big\|_{L^2([y_1+\delta,y_2-\delta])}+\f{2\nu\|u'\bar{u}\|_{L^{\infty}(B(y_1,\delta)\cup B(y_2,\delta))}}{|\sin (y_1+\delta)-\sin y_1|}\\
&+\nu\|u\|_{L^{\infty}} \|u'\|_{L^2}\Big\|\f{\cos y}{(\sin y-\lambda)^2}\Big\|_{L^2([y_1+\delta,y_2-\delta])}+\f{|\alpha|}{2\nu}\int_{y_1+\delta}^{y_2-\delta}\f{\lambda^2|\varphi|^2}{(\sin y-\lambda)^2}dy,
\end{align*}
which along with (\ref{A4.3.1}), (\ref{A4.3.3}), and $y_2-y_1\geq 4\delta$
gives
\begin{align}
\|u\|_{L^2(y_1,y_2)}^2\lesssim& \f{\nu}{|\alpha|\delta(y_2-y_1)}\|\mathcal{L}_{\lambda}u\|_{L^2}\|u\|_{L^2}+\f{\nu^2}{|\alpha|\delta(y_2-y_1)}\|u'\bar{u}\|_{L^{\infty}(B(y_1,\delta)\cup B(y_2,\delta))}\label{N1}\\
\nonumber&+\f{\nu^2}{|\alpha|\delta^{\f32}(y_2-y_1)}\|u'\|_{L^2}\|u\|_{L^{\infty}}+\delta\|u\|_{L^{\infty}}^2+\int_{y_1+\delta}^{y_2-\delta}\f{\lambda^2|\varphi|^2}{(\sin y-\lambda)^2}dy.
\end{align}

We bound the last term as
\begin{align*}
\int_{y_1+\delta}^{y_2-\delta}\f{\lambda^2|\varphi|^2}{(\sin y-\lambda)^2}dy\lesssim \int_{y_1+\delta}^{y_2-\delta}\f{\lambda^2|\varphi_1|^2}{(\sin y-\lambda)^2}dy+\int_{y_1+\delta}^{y_2-\delta}\f{\lambda^2|\varphi_2|^2}{(\sin y-\lambda)^2}dy.
\end{align*}
First of all, we get by (\ref{A4.3.2}) that
\begin{align}
\int_{y_1+\delta}^{y_2-\delta}\f{\lambda^2|\varphi_2|^2}{(\sin y-\lambda)^2}dy\leq\|\lambda\varphi_2\|_{L^{\infty}}^2\int_{y_1+\delta}^{y_2-\delta}\f{1}{(\sin y-\lambda)^2}dy\lesssim\f{\|\lambda\varphi_2\|_{L^{\infty}}^2}{(y_2-y_1)^2\delta}\leq\mathcal{E}_2(u)\label{N2}.
\end{align}

Thanks to \eqref{N1}, (\ref{Key1.1}), and $\mathcal{E}_1(u)\leq\mathcal{E}_2(u)$, it remains to prove that
\beno
\int_{y_1+\delta}^{y_2-\delta}\f{\lambda^2|\varphi_1|^2}{(\sin y-\lambda)^2}dy\lesssim \mathcal{E}_2(u).
\eeno
The proof is split into two cases.\smallskip

\no$\mathbf{Case\ 1.\ \lambda\in[0,\f{\sqrt{3}}2]}$.\smallskip

In this case, we have $0\leq y_1\leq \frac{\pi}{3},\ y_2=\pi-y_1$. By Hardy's inequality and (\ref{F4.3}), we get
\begin{align*}
\int_{y_1+\delta}^{y_2-\delta}\f{\lambda^2|\varphi_1|^2}{(\sin y-\lambda)^2}dy=& \int_{y_1+\delta}^{\f{\pi}{2}}\f{\lambda^2|\varphi_1|^2}{(\sin y-\lambda)^2}dy+\int_{\f{\pi}{2}}^{y_2-\delta}\f{\lambda^2|\varphi_1|^2}{(\sin y-\lambda)^2}dy\\
\lesssim&\f{\lambda^2}{(y_2-y_1)^2}\Big(\int_{y_1+\delta}^{\f{\pi}{2}}\f{|\int_{y_1}^y\varphi'_1dz|^2}{(y-y_1)^2}dy+\int_{\f{\pi}{2}}^{y_2-\delta}\f{|\int_{y_2}^y\varphi'_1dz|^2}{(y-y_2)^2}dy\Big)\\
\lesssim&\f{\lambda^2}{(y_2-y_1)^2}\int_{y_1}^{y_2}|\varphi'_1|^2dy.
\end{align*}
Thanks to (\ref{B1}) and (\ref{B0}), we have
\begin{align*}
&\int_{y_1}^{y_2}(\sin y-\lambda)|u|^2dy+\langle\lambda\varphi_1,u\chi_{(y_1,y_2)}\rangle\geq \Big(1-\f{(\f{\pi}{2}-y_1)\sin y_1}{\cos y_1}\Big)\int_{y_1}^{y_2}(\sin y-\lambda)|u|^2dy\\
&\quad\gtrsim \int_{y_1}^{y_2}(\sin y-\lambda)|u|^2dy
\gtrsim\langle-{\lambda}\varphi_1,u\chi_{(y_1,y_2)}\rangle\geq\lambda\int_{y_1}^{y_2}|\varphi'_1|^2dy.
\end{align*}
Then by (\ref{K.4.16}), we obtain
\begin{align}
\int_{y_1+\delta}^{y_2-\delta}\f{\lambda^2|\varphi_1|^2}{(\sin y-\lambda)^2}dy\lesssim\f{\int_{y_1}^{y_2}(\sin y-\lambda)|u|^2dy+\langle\lambda\varphi_1,u\chi_{(y_1,y_2)}\rangle}{(y_2-y_1)^2}\lesssim\mathcal{E}_2(u).\label{N3.0}
\end{align}

\no $\mathbf{Case\ 2. \ \lambda\in[\f{\sqrt{3}}2,1]}$. \smallskip

Thanks to (\ref{K.4.14}), (\ref{B2}), (\ref{K.4.16}), we find
\begin{align}
\nonumber\int_{y_1+\delta}^{y_2-\delta}\f{\lambda^2|\varphi_1|^2}{(\sin y-\lambda)^2}dy\lesssim& \f{\lambda^2}{(y_2-y_1)^2}\int_{y_1}^{y_2}|(\f{\varphi_1(y)}{\sin y-\lambda})'|^2(\sin y-\lambda)^2dy+\f{|\lambda\varphi_1(\f{\pi}{2})|^2}{(y_2-y_1)^3}\\
\nonumber\lesssim& \f{\lambda}{(y_2-y_1)^2}\Big(\int_{y_1}^{y_2}(\sin y-\lambda)|u|^2dy+\langle\lambda\varphi_1,u\chi_{(y_1,y_2)}\rangle\Big)+\f{|\lambda\varphi_1(\f{\pi}{2})|^2}{(y_2-y_1)^3}\\
\lesssim&\mathcal{E}_2(u)+\f{|\lambda\varphi_1(\f{\pi}{2})|^2}{(y_2-y_1)^3}.\label{Key2}
\end{align}
Thus, it remains to control $\f{|\lambda\varphi_1(\f{\pi}{2})|^2}{(y_2-y_1)^3}$.  For any $\delta\leq\theta\leq\f{y_2-y_1}{4}$, we have
\begin{align*}
&\int_{\f{\pi}{2}}^{y_2-\theta}\Big(\f{\varphi_1(y)}{\sin y-\lambda}\Big)'(y_2-\theta-y)dy-\int_{y_1+\theta}^{\f{\pi}{2}}\Big(\f{\varphi_1(y)}{\sin y-\lambda}\Big)'(y-y_1-\theta)dy\\
&=\f{\varphi_1(y)}{\sin y-\lambda}(y_2-\theta-y)\Big|_{\f{\pi}{2}}^{y_2-\theta}+\int_{\f{\pi}{2}}^{y_2-\theta}\f{\varphi_1(y)}{\sin y-\lambda}dy\\
&\quad-\f{\varphi_1(y)}{\sin y-\lambda}(y-y_1-\theta)\Big|_{y_1+\theta}^{\f{\pi}{2}}+\int_{y_1+\theta}^{\f{\pi}{2}}\f{\varphi_1(y)}{\sin y-\lambda}dy\\
&=\int_{y_1+\theta}^{y_2-\theta}\f{\varphi_1(y)}{\sin y-\lambda}dy-\f{\varphi_1(\f{\pi}{2})}{1-\lambda}(y_2-y_1-2\theta),
\end{align*}
which gives
\begin{align*}
(y_2-y_1-2\theta)\f{|\varphi_1(\f{\pi}{2})|}{1-\lambda}&\leq\Big| \int_{y_1+\theta}^{y_2-\theta}\f{\varphi_1(y)}{\sin y-\lambda}dy\Big|+\int_{y_1+\theta}^{y_2-\theta}\Big|(\f{\varphi_1(y)}{\sin y-\lambda})'\Big|\min(y_2-y,y-y_1)dy\\
&\lesssim \Big| \int_{y_1+\theta}^{y_2-\theta}\f{\varphi_1(y)}{\sin y-\lambda}dy\Big|+(y_2-y_1)^{\f12}\Big(\int_{y_1+\theta}^{y_2-\theta}\Big|(\f{\varphi_1(y)}{\sin y-\lambda})'\Big|^2\min(y_2-y,y-y_1)^2dy\Big)^{\f12}\\
&\lesssim \Big| \int_{y_1+\theta}^{y_2-\theta}\f{\varphi_1(y)}{\sin y-\lambda}dy\Big|+(y_2-y_1)^{\f12}\Big(\int_{y_1+\theta}^{y_2-\theta}\Big|(\f{\varphi_1(y)}{\sin y-\lambda})'\Big|^2\f{(\sin y-\lambda)^2}{(y_2-y_1)^2}dy\Big)^{\f12}.
\end{align*}
As $1-\lambda\sim(y_2-y_1)^2$,  this gives
\begin{align}
\f{\lambda^2|\varphi_1(\f{\pi}{2})|^2}{(y_2-y_1)^3}\lesssim\f{\lambda^2 \big| \int_{y_1+\theta}^{y_2-\theta}\f{\varphi_1(y)}{\sin y-\lambda}dy\big|^2}{y_2-y_1}+\lambda^2\int_{y_1+\theta}^{y_2-\theta}\Big|(\f{\varphi_1(y)}{\sin y-\lambda})'\Big|^2\f{(\sin y-\lambda)^2}{(y_2-y_1)^2}dy\label{KK1}.
\end{align}
By (\ref{B2}) and (\ref{K.4.16}),  we get
\begin{align*}
\lambda^2\int_{y_1}^{y_2}|(\f{\varphi_1(y)}{\sin y-\lambda})'|^2\f{(\sin y-\lambda)^2}{(y_2-y_1)^2}\lesssim \mathcal{E}_2(u).
\end{align*}

To estimate $\f{\lambda^2}{y_2-y_1} \big| \int_{y_1+\theta}^{y_2-\theta}\f{\varphi_1(y)}{\sin y-\lambda}dy\big|^2$, we consider two subcases:\smallskip

\no$\mathbf{Case\ 2.1.\ \tilde\beta^2(y_2-y_1)^2\geq1}$. \smallskip

Taking $\theta=\f{y_2-y_1}{4}$, we get by (\ref{A4.3.2}) that
\begin{align*}
\f{\lambda^2}{y_2-y_1} \Big| \int_{y_1+\theta}^{y_2-\theta}\f{\varphi_1(y)}{\sin y-\lambda}dy\Big|^2\leq& \f{\lambda^2}{y_2-y_1}\int_{y_1+\theta}^{y_2-\theta}|\varphi_1|^2dy\int_{y_1+\theta}^{y_2-\theta}\f{1}{(\sin y-\lambda)^2}dy\\
\lesssim& \f{\lambda^2}{(y_2-y_1)^3\theta} \int_{y_1+\theta}^{y_2-\theta}|\varphi_1|^2dy\lesssim\f{\lambda^2\tilde{\beta}^2}{(y_2-y_1)^2} \int_{y_1+\theta}^{y_2-\theta}|\varphi_1|^2dy,
\end{align*}
which along with (\ref{B2}) and (\ref{K.4.16}) gives
\begin{align}
\f{\lambda^2}{y_2-y_1} \big| \int_{y_1+\theta}^{y_2-\theta}\f{\varphi_1(y)}{\sin y-\lambda}dy\big|^2\lesssim\mathcal{E}_2(u).
\end{align}

\no$\mathbf{Case\ 2.2.\  \tilde{\beta}^2(y_2-y_1)^2\leq1}$. \smallskip

Take $\theta=\delta$ and
\begin{align*}
\chi(y)=\eta\Big(\f{y-\f{\pi}{2}}{y_2-y_1}\Big)\quad\text{with}\quad\eta(z)=
\left\{
\begin{aligned}
&1,\quad |z|\leq1,\\
&0,\quad |z|\geq2.
\end{aligned}
\right.
\end{align*}
By integration by parts, we get
\begin{align*}
&\Big|\int_{-\frac{\pi}{2}}^{\frac{3\pi}{2}}\varphi'\chi'+\tilde{\beta}^2\varphi\chi dy \Big|=\Big|\int_{-\frac{\pi}{2}}^{\frac{3\pi}{2}}-u(y)\chi(y)dy \Big|.
\end{align*}
Due to $\chi(y)=1$ for $y\in [y_1+\delta, y_2-\delta]$, this implies that
\begin{align}
\Big|\int_{y_1+\delta}^{y_2-\delta}-u(y)dy\Big|\leq\Big|\int_{-\f \pi 2}^{\f {3\pi} 2}\varphi'\chi'+\tilde{\beta}^2\varphi\chi dy \Big|+4\delta\|u\|_{L^{\infty}}+\int_{y_2+\delta}^{2\pi+(y_1-\delta)}|u(y)|dy\label{C1}.
\end{align}
Recall that $\mathcal{L}_{\lambda}u=i\f{\alpha}{\nu}[(\sin y-\lambda)u+\lambda\varphi]-\nu\partial_y^2u$. Then $u=\f{-i\f{\nu}{\alpha}(\mathcal{L}_{\lambda}u+\nu\partial^2_yu)-\lambda\varphi}{\sin y-\lambda}$, hence,\begin{align*}
&\Big|\int_{y_1+\delta}^{y_2-\delta}\f{\lambda\varphi}{\sin y-\lambda}dy\Big|-\Big|\int_{y_1+\delta}^{y_2-\delta}\f{i\f{\nu}{\alpha}(\mathcal{L}_{\lambda}u+\nu\partial^2_yu)}{\sin y-\lambda}dy\Big|\\
&\leq\Big|\int_{y_1+\delta}^{y_2-\delta}\f{i\f{\nu}{\alpha}(\mathcal{L}_{\lambda}u+\nu\partial^2_yu)+\lambda\varphi}{\sin y-\lambda}dy\Big|\\
&=\Big|\int_{y_1+\delta}^{y_2-\delta}-u(y)dy\Big|\leq \Big|\int_{-\f \pi 2}^{\f {3\pi} 2}\varphi'\chi'+\tilde{\beta}^2\varphi\chi dy \Big|+4\delta\|u\|_{L^{\infty}}+\int_{y_2+\delta}^{2\pi+(y_1-\delta)}|u(y)|dy,
\end{align*}
which implies that
\begin{align*}
&\f{1}{y_2-y_1}\Big|\int_{y_1+\delta}^{y_2-\delta}\f{\lambda\varphi_1}{\sin y-\lambda}dy\Big|^2\\
&\lesssim \f{\delta^2}{y_2-y_1}\|u\|_{L^{\infty}}^2+\f{\nu^2}{|\alpha|^2(y_2-y_1)}\Big|\int_{y_1+\delta}^{y_2-\delta}\f{\mathcal{L}_{\lambda}u}{\sin y-\lambda}dy\Big|^2+\f{\nu^4}{|\alpha|^2(y_2-y_1)}\Big|\int_{y_1+\delta}^{y_2-\delta}\f{u''}{\sin y-\lambda}dy\Big|^2\\
&\quad+\f{1}{y_2-y_1}\Big|\int_{y_1+\delta}^{y_2-\delta}\f{\lambda\varphi_2}{\sin y-\lambda}dy\Big|^2+\f{1}{y_2-y_1}\Big|\int_{-\f \pi 2}^{\f {3\pi} 2}\varphi'\chi'+\tilde{\beta}^2\varphi\chi dy \Big|^2+\f{1}{y_2-y_1}\Big(\int_{y_2+\delta}^{2\pi+(y_1-\delta)}|u(y)|dy\Big)^2\\
&:=\text{I}_1+\cdots+\text{I}_6.
\end{align*}

Next we estimate $\text{I}_i(i=1,\cdots 6)$ one by one. Due to $y_2-y_1\ge 4\delta$, we have
\begin{align*}
\text{I}_1\leq\delta\|u\|_{L^{\infty}}^2\lesssim\mathcal{E}_2(u).
\end{align*}
Thanks to (\ref{A4.3.2}), we get
\begin{align*}
\text{I}_2\lesssim\f{\nu^2\|\mathcal{L}_{\lambda}u\|_{L^2}^2 }{|\alpha|^2(y_2-y_1)}\int_{y_1+\delta}^{y_2-\delta}\f{1}{(\sin y-\lambda)^2}dy\lesssim\f{\nu^2}{|\alpha|^2}\f{1}{\delta(y_2-y_1)^3}\|\mathcal{L}_{\lambda}u\|_{L^2}^2\leq\mathcal{E}_2(u).
\end{align*}
By (\ref{A4.3.1}) and (\ref{A4.3.3}), we have
\begin{align}
\nonumber\text{I}_3&=\f{\nu^4}{|\alpha|^2(y_2-y_1)}\Big|\f{u'}{\sin y-\lambda}\Big|_{y_1+\delta}^{y_2-\delta}+\int_{y_1+\delta}^{y_2-\delta}\f{u'\cos y}{(\sin y-\lambda)^2}dy  \Big|^2\\
&\lesssim\f{\nu^4}{|\alpha|^2}\Big(\f{\|u'\|_{L^{\infty}(B(y_1,\delta)\cup B(y_2,\delta))}^2}{\delta^2(y_2-y_1)^3}+\f{\|u'\|_{L^2}^2}{(y_2-y_1)^3\delta^3}\Big)\leq\mathcal{E}_2(u)\nonumber.
\end{align}

By (\ref{A4.3.2}), we get
\begin{align}
\text{I}_{4}&=\f{\big|\int_{y_1+\delta}^{y_2-\delta}\f{\lambda\varphi_2}{\sin y-\lambda}dy\big|^2}{y_2-y_1}\lesssim \|\lambda\varphi_2\|_{L^{\infty}}^2\int_{y_1+\delta}^{y_2-\delta}\f{dy}{(\sin y-\lambda)^2}\lesssim\f{\|\lambda\varphi_2\|_{L^{\infty}}^2}{(y_2-y_1)^2\delta}\leq\mathcal{E}_2(u)\nonumber
\end{align}
By \eqref{eq:u-L1}, we have
\begin{align*}
\text{I}_6=\f{1}{y_2-y_1}\|u\|_{L^{1}(y_2+\d,y_1+2\pi-\d)}^2\leq \f{C}{y_2-y_1}\mathcal{E}_1(u)(y_2-y_1+\d)\leq C\mathcal{E}_1(u)\leq C\mathcal{E}_2(u).
\end{align*}
For $\text{I}_5$, we have
\begin{align*}
\text{I}_{5}=&\f{1}{y_2-y_1}\Big|\int_{-\f \pi 2}^{\f {3\pi} 2}\varphi'\chi'+\tilde{\beta}^2\varphi\chi dy \Big|^2\\
\lesssim&\f{1}{y_2-y_1}\Big(\|\varphi'_1\|_{L^2([-\frac{\pi}{2},\frac{3\pi}{2}]\backslash(y_1-\delta,y_2+\delta))}^2\|\chi'\|_{L^2}^2
+\|\varphi'_2\|_{L^1}^2\|\chi'\|_{L^{\infty}}^2+\tilde{\beta}^4\|\varphi\|_{L^1(B(\f{\pi}{2},2(y_2-y_1)))}^2\Big)\\
\lesssim&\f{1}{y_2-y_1}\Big[\frac{\|\varphi'_1\|_{L^2([-\frac{\pi}{2},\frac{3\pi}{2}]\backslash(y_1-\delta,y_2+\delta))}^2}{y_2-y_1}
+\frac{\|\varphi'_2\|_{L^1}^2}{(y_2-y_1)^2}+\tilde{\beta}^4\Big(\int_{-2(y_2-y_1)+\f{\pi}{2}}^{2(y_2-y_1)+\f{\pi}{2}}|\varphi|dy\Big)^2\Big]\\
=&\text{I}_{5}^1+\text{I}_{5}^2+\text{I}_{5}^3.
\end{align*}
By \eqref{B02}, (\ref{K.4.16.1}), \eqref{eq:u-L1} and $\lambda\in[\f{\sqrt{3}}2,1]$, we have
\begin{align*}
\text{I}_{5}^1=&\f{1}{(y_2-y_1)^2}\|\varphi'_1\|_{L^2([-\frac{\pi}{2},\frac{3\pi}{2}]\backslash(y_1-\delta,y_2+\delta))}^2\lesssim \f{1}{(y_2-y_1)^2}\|\varphi'_1\|_{L^2{(y_2,y_1+2\pi)}}^2\\
\lesssim&\f{1}{(y_2-y_1)^2\lambda}\big\langle-\lambda\varphi_1,u\chi_{(y_2,y_1+2\pi)}\big\rangle\lesssim \f{1}{(y_2-y_1)^2}\big\langle-\lambda\varphi_1,u\chi_{(y_2,y_1+2\pi)}\big\rangle\\
\lesssim&\f{1}{(y_2-y_1)^2}\Big(\f{\nu}{|\alpha|} \|\mathcal{L}_{\lambda}u\|_{L^2}\|u\|_{L^2}+\f{2\nu^2}{|\alpha|}\|u'\bar{u}\|_{L^{\infty}(B(y_1,\delta)\cup B(y_2,\delta))}+\Big|\int_{y_2}^{y_1+2\pi}\lambda\varphi_2\overline{u}dy\Big|\Big)\\
\lesssim&\mathcal{E}_2(u)+\f{\|\lambda\varphi_2\|_{L^{\infty}}}{(y_2-y_1)^2\delta^{\f12}}
\big(\delta^{\f32}\|u\|_{L^{\infty}}+\delta^{\f12}\|u\|_{L^{1}(y_2+\d,y_1+2\pi-\d)}\big)\lesssim\mathcal{E}_2(u).
\end{align*}
By the definition of $\varphi_2$ and the monotonicity of $\sinh$, we get
\begin{align*}
\text{I}_{5}^2=\f{1}{(y_2-y_1)^3}\|\varphi'_2\|_{L^1}^2\lesssim\f{\|\varphi_2\|_{L^{\infty}}^2}{(y_2-y_1)^3} \lesssim\f{\|\lambda\varphi_2\|_{L^{\infty}}^2}{(y_2-y_1)^2\delta}\leq\mathcal{E}_2(u).
\end{align*}
Due to $\tilde{\beta}^2(y_2-y_1)^2\leq1$, we have
\begin{align*}
\text{I}_{5}^3&=\f{\tilde{\beta}^4\Big(\int_{-2(y_2-y_1)+\f{\pi}{2}}^{2(y_2-y_1)+\f{\pi}{2}}|\varphi|dy\Big)^2}{y_2-y_1}\lesssim \f{\tilde{\beta}^4\Big(\int_{-2(y_2-y_1)+\f{\pi}{2}}^{2(y_2-y_1)+\f{\pi}{2}}|\varphi_1|dy\Big)^2+\tilde{\beta}^4\Big(\int_{-2(y_2-y_1)+\f{\pi}{2}}^{2(y_2-y_1)+\f{\pi}{2}}|\varphi_2|dy\Big)^2}{y_2-y_1}\\
&\lesssim\tilde{\beta}^4 \int_{-2(y_2-y_1)+\f{\pi}{2}}^{2(y_2-y_1)+\f{\pi}{2}}|\varphi_1|^2dy+\tilde{\beta}^4(y_2-y_1)\|\varphi_2\|_{L^{\infty}}^2\\
&\lesssim\f{1}{\lambda(y_2-y_1)^2}\lambda\tilde{\beta}^2\int_{-2(y_2-y_1)+\f{\pi}{2}}^{2(y_2-y_1)+\f{\pi}{2}}|\varphi_1|^2dy+\f{\|\lambda\varphi_2\|_{L^{\infty}}^2}{(y_2-y_1)^2\delta},
\end{align*}
which along with (\ref{B2}), (\ref{K.4.16}), (\ref{B02}) and (\ref{K.4.16.1})
gives
\begin{align}
\text{I}_{5}^3\lesssim\mathcal{E}_2(u),\quad\text{thus,}\quad  \text{I}_{5}\lesssim\mathcal{E}_2(u).
\end{align}

Summing up the estimates of $\text{I}_i(i=1,\cdots 6)$, we conclude
\beno
\f{1}{y_2-y_1}\Big|\int_{y_1+\delta}^{y_2-\delta}\f{\lambda\varphi_1}{\sin y-\lambda}dy\Big|^2
\lesssim \mathcal{E}_2(u).
\eeno
This completes the proof of  the lemma.
\end{proof}

\begin{lemma}
For any $\nu\in(0,1]$, $\lambda\in[0,1]$ and $\delta\in(0,1]$, it holds that
\begin{align}
\f{\|\lambda \varphi_2\|_{L^{\infty}}^2}{(y_2-y_1+\delta)^2\delta}\lesssim\mathcal{F}_1(u)\label{K1.3},
\end{align}
where
\begin{align*}
 \mathcal{F}_1(u)=\f{\nu^2\|\mathcal{L}_{\lambda}u\|_{L^2}^2}{|\alpha|^2\delta^2(y_2-y_1+\delta)^2}+
 \f{\nu^3\|\mathcal{L}_{\lambda}u\|_{L^2}\|u\|_{L^2}}{|\alpha|^2\delta^4(y_2-y_1+\delta)^2}+\delta\|u\|_{L^{\infty}}^2.
 \end{align*}
 \end{lemma}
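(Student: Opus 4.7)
\emph{Step 1 (Reduction to boundary values of $\varphi$).} The explicit $\sinh$-formula for $\varphi_2$ exhibits it, on each of $[y_1,y_2]$ and $[y_2,y_1+2\pi]$, as a convex combination of $\varphi(y_1)$ and $\varphi(y_2)$ with nonnegative coefficients bounded by $1$, so
\beno
\|\lambda\varphi_2\|_{L^\infty}\leq |\lambda\varphi(y_1)|+|\lambda\varphi(y_2)|.
\eeno
Thus it suffices to prove, for $i=1,2$, that $|\lambda\varphi(y_i)|^2/((y_2-y_1+\delta)^2\delta)\lesssim\mathcal{F}_1(u)$.

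\emph{Step 2 (Pointwise identity from the equation).} The critical observation is that $\sin y_i=\lambda$, so solving the definition $\mathcal{L}_\lambda u=i(\alpha/\nu)[(\sin y-\lambda)u+\lambda\varphi]-\nu u''$ pointwise for $\lambda\varphi$ gives
\beno
\lambda\varphi(y)=-i\f{\nu}{\alpha}\mathcal{L}_\lambda u(y)-(\sin y-\lambda)u(y)-i\f{\nu^2}{\alpha}u''(y),
\eeno
which at $y=y_i$ reduces to $i(\alpha/\nu)\lambda\varphi(y_i)=\mathcal{L}_\lambda u(y_i)+\nu u''(y_i)$.

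\emph{Step 3 (Averaging and Cauchy--Schwarz).} To replace pointwise values by $L^2$ norms, I would test the identity against a smooth cutoff $\chi$ with $\int\chi=1$ supported in a $\delta$-neighborhood of $y_i$ (so $\|\chi\|_{L^2}\sim\delta^{-1/2}$, $\|\chi'\|_{L^2}\sim\delta^{-3/2}$); after one integration by parts on the $u''$-term,
\beno
\int\lambda\varphi\,\chi\,dy=-i\f{\nu}{\alpha}\int\mathcal{L}_\lambda u\,\chi - \int(\sin y-\lambda)u\,\chi+i\f{\nu^2}{\alpha}\int u'\chi'.
\eeno
Cauchy--Schwarz yields three bounds: $(\nu/|\alpha|)\|\mathcal{L}_\lambda u\|_{L^2}\delta^{-1/2}$ from the first term; $\delta\|u\|_{L^\infty}$ from the second (using $|\sin y-\lambda|\lesssim\delta$ on $\mathrm{supp}\,\chi$); and $(\nu^{3/2}/|\alpha|)\|\mathcal{L}_\lambda u\|_{L^2}^{1/2}\|u\|_{L^2}^{1/2}\delta^{-3/2}$ from the third, after invoking the coercive estimate $\nu\|u'\|_{L^2}^2\lesssim\|\mathcal{L}_\lambda u\|_{L^2}\|u\|_{L^2}$ (from the real part of $\langle\mathcal{L}_\lambda u,u\rangle$, the nonlocal cross-term being controlled by $\lambda\|\varphi\|_{L^2}\|u\|_{L^2}\lesssim\beta^{-2}\|u\|_{L^2}^2$). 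Squaring and dividing by $(y_2-y_1+\delta)^2\delta$ reproduces exactly the three summands of $\mathcal{F}_1(u)$.

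\emph{Main obstacle.} The most delicate point is closing the gap between the pointwise value $\lambda\varphi(y_i)$ and the smeared quantity $\int\lambda\varphi\,\chi$: the error is bounded by $\delta^{1/2}\|\lambda\varphi'\|_{L^2(y_i-\delta,y_i+\delta)}$, and if treated naively introduces a term $\|u\|_{L^2}^2/|\tilde\beta|^2$ which is not obviously dominated by $\mathcal{F}_1(u)$. To absorb it I plan to use the elliptic bound $\|\varphi'\|_{L^2}\lesssim\|u\|_{L^2}/|\tilde\beta|$ (obtained by testing $(\partial_y^2-\tilde\beta^2)\varphi=u$ against $\bar\varphi$, valid since $|\beta|>1$) together with Young's inequality applied to the mixed product $\|\mathcal{L}_\lambda u\|_{L^2}^{1/2}\|u\|_{L^2}^{1/2}$, interpolating the error into the first two summands of $\mathcal{F}_1(u)$. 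Alternatively, one may center $\chi$ at an auxiliary point $y_i^\ast\in(y_i,y_i+\delta)$ chosen (by a Chebyshev selection) so that $|\lambda\varphi(y_i^\ast)|$ is comparable to its $L^1$-average, then transfer back to $y_i$ via a one-sided Poincar\'e inequality; this route avoids any $\|\varphi'\|_{L^2}$-term on the right-hand side and gives the bound directly in the form of $\mathcal{F}_1(u)$.
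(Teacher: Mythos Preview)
Your Steps 1--3 are essentially the paper's strategy: reduce to the boundary values $\lambda\varphi(y_i)$, average the defining identity for $\mathcal{L}_\lambda u$ over a $\delta$-interval near $y_i$, integrate the $u''$-term by parts, and use $\nu\|u'\|_{L^2}^2\le\|\mathcal{L}_\lambda u\|_{L^2}\|u\|_{L^2}$. (Minor remark: the nonlocal cross-term $i\frac{\alpha}{\nu}\lambda\langle\varphi,u\rangle$ is in fact purely imaginary because $\langle\varphi,u\rangle=-\|\varphi'\|_{L^2}^2-\tilde\beta^2\|\varphi\|_{L^2}^2$ is real, so $\text{Re}\,\langle\mathcal{L}_\lambda u,u\rangle=\nu\|u'\|_{L^2}^2$ exactly; your parenthetical about controlling it by $\beta^{-2}\|u\|_{L^2}^2$ is unnecessary.)

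The gap is precisely the obstacle you flag, and neither of your proposed fixes closes it. Fix 1 produces a term of the form $\lambda^2\|\varphi'\|_{L^2}^2/(y_2-y_1+\delta)^2$, which via $\|\varphi'\|_{L^2}\lesssim\|u\|_{L^2}/|\tilde\beta|$ becomes a pure $\|u\|_{L^2}^2$-term; but $\mathcal{F}_1(u)$ contains no such term, and there is no interpolation that manufactures a factor of $\|\mathcal{L}_\lambda u\|_{L^2}$ out of thin air. Fix 2 has the same defect: transferring from a Chebyshev-selected $y_i^\ast$ back to $y_i$ still costs $\int_{y_i}^{y_i^\ast}|\varphi'|$.

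The paper's resolution is to raise the order of the averaging formula so that the error involves $\varphi''$ rather than $\varphi'$. Concretely, for $a\in B(y_i,\delta)$ and $b\in[\delta/2,\delta]$ one has the elementary identity
\[
|\varphi(a)|\le \frac{1}{2b}\Big|\int_{a-b}^{a+b}\varphi(y)\,dy\Big|+b^2\|\varphi''\|_{L^\infty},
\]
and the crucial observation $\|\varphi''\|_{L^\infty}\lesssim\|u\|_{L^\infty}$ (write $\varphi''=u+\tilde\beta^2\varphi$ and use that the Green's function of $(\partial_y^2-\tilde\beta^2)^{-1}$ on $\mathbb{T}$ has $L^1$-norm $\sim\tilde\beta^{-2}$). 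The $b^2\|\varphi''\|_{L^\infty}$ error then contributes $\lambda\delta^2\|u\|_{L^\infty}$, which after dividing by $(y_2-y_1+\delta)\delta^{1/2}$ and squaring lands directly in the $\delta\|u\|_{L^\infty}^2$ summand of $\mathcal{F}_1(u)$. The averaged integral $\int_{a-b}^{a+b}\lambda\varphi$ is then handled exactly as in your Step 3, with $b$ chosen by a Chebyshev selection so that $|u'(a\pm b)|^2\le 2\delta^{-1}\|u'\|_{L^2}^2$ (this is where the Chebyshev trick actually enters --- for $u'$, not for $\varphi$).
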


\begin{proof} For any $a>0, b>0$, we have
\begin{align*}
\int_{a-b}^{a+b}\varphi(y)dy=2b\varphi(a)+\int_a^{a+b}\varphi''(z)\f{(a-z)^2}{2}dz+\int_{a-b}^a\varphi''(z)\f{(z-b)^2}{2}dz,
\end{align*}
which implies that
\begin{align}
|\varphi(a)|\leq\f{1}{2b}|\int_{a-b}^{a+b}\varphi(y)dy|+b^2\|\varphi''\|_{L^{\infty}}.\label{eq:cal}
\end{align}
For any $ a\in B(y_1,\delta)$, there exits $b\in[\f{\delta}{2},\delta]$ so that
\begin{align*}
|u'(a+b)|^2+|u'(a-b)|^2\leq\f{2}{\delta}\|u'\|_{L^2}^2.
\end{align*}
Recall that $\mathcal{L}_{\lambda}u=i\f{\alpha}{\nu}[(\sin y-\lambda)u+\lambda\varphi]-\nu\partial_y^2u$. Then we have
\begin{align*}
|\int_{a-b}^{a+b}\lambda\varphi(y)dy|\leq&\f{\nu}{|\alpha|}\int_{a-b}^{a+b}|\mathcal{L}_{\lambda}u|dy+\f{\nu^2}{|\alpha|}(|u'(a+b)|+|u'(a-b)|)+\int_{a-b}^{a+b}|\sin y-\lambda|dy \|u\|_{L^{\infty}}\\
\lesssim&\f{\nu\delta^{\f12}}{|\alpha|}\|\mathcal{L}_{\lambda}u\|_{L^2}+\f{\nu^2}{|\alpha|\delta^{\f12}}\|u'\|_{L^2}+(y_2-y_1)\delta^2\|u\|_{L^{\infty}}.
\end{align*}
Thanks to $\|\varphi''\|_{L^{\infty}}\lesssim\|u\|_{L^{\infty}}$, we infer that for any $a\in B(y_1,\delta)$
\begin{align*}
\f{|\lambda\varphi(a)|}{(y_2-y_1+\delta)\delta^{\f12}}\lesssim\f{\nu}{|\alpha|(y_2-y_1+\delta)\delta} \|\mathcal{L}_{\lambda}u\|_{L^2}+\f{\nu^2}{|\alpha|\delta^2(y_2-y_1+\delta)}\|u'\|_{L^2}+\delta^{\f12}\|u\|_{L^{\infty}},
\end{align*}
which gives
\begin{align}
\f{\|\lambda \varphi\|_{L^{\infty}(B(y_1,\delta))}^2}{(y_2-y_1+\delta)^2\delta}\lesssim\f{\nu^2\|\mathcal{L}_{\lambda}u\|_{L^2}^2}{|\alpha|^2\delta^2(y_2-y_1+\delta)^2} + \f{\nu^4\|u'\|_{L^2}^2}{|\alpha|^2\delta^4(y_2-y_1+\delta)^2}+\delta\|u\|_{L^{\infty}}^2.\label{V1}
\end{align}
Similarly, we have
\begin{align}
\f{\|\lambda \varphi\|_{L^{\infty}(B(y_2,\delta))}^2}{(y_2-y_1+\delta)^2\delta}\lesssim\f{\nu^2\|\mathcal{L}_{\lambda}u\|_{L^2}^2}{|\alpha|^2\delta^2(y_2-y_1+\delta)^2} + \f{\nu^4\|u'\|_{L^2}^2}{|\alpha|^2\delta^4(y_2-y_1+\delta)^2}+\delta\|u\|_{L^{\infty}}^2.\nonumber
\end{align}
Thanks to $\|\lambda\varphi_2\|_{L^{\infty}}\leq\|\lambda\varphi\|_{L^{\infty}(B(y_1,\delta)\cup B(y_2,\delta))}$, we deduce that
\begin{align}
\f{\|\lambda \varphi_2\|_{L^{\infty}}^2}{(y_2-y_1+\delta)^2\delta}\lesssim\f{\nu^2\|\mathcal{L}_{\lambda}u\|_{L^2}^2}{|\alpha|^2\delta^2(y_2-y_1+\delta)^2} + \f{\nu^4\|u'\|_{L^2}^2}{|\alpha|^2\delta^4(y_2-y_1+\delta)^2}+\delta\|u\|_{L^{\infty}}^2\lesssim\mathcal{F}_1(u).\nonumber
\end{align}
this completes the proof.
\end{proof}

\begin{lemma}
For any $\nu\in(0,1]$, $ \lambda\in[0,1]$ and $\delta\in(0,1]$,
it holds that
\begin{align}
\delta^3\|u'\|_{L^{\infty}(B(y_1,\delta)\cup B(y_2,\delta))}^2\lesssim\mathcal{F}_2(u)\label{K2.3},
\end{align}
where
\begin{align*}
 \mathcal{F}_2(u)=\f{\delta^6(y_2-y_1+\delta)^2|\alpha|^2}{\nu^{4}}\mathcal{F}_1(u)+\f{\delta^4}{\nu^2}\|\mathcal{L}_{\lambda}u\|_{L^2}^2.
 \end{align*}
\end{lemma}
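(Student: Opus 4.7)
The plan is to rewrite the ODE in the form
\begin{align*}
u''(y)=\f{i\alpha}{\nu^{2}}\bigl[(\sin y-\lambda)u+\lambda\varphi\bigr]-\f{1}{\nu}\mathcal{L}_{\lambda}u
\end{align*}
and to recover the pointwise value of $u'(a)$ for $a\in B(y_{1},\delta)$ (the case $B(y_{2},\delta)$ is entirely symmetric) by integrating $u''$ from a well-chosen reference point $a_{*}\in B(y_{1},\delta)$. First I would derive the elementary energy identity $\nu\|u'\|_{L^{2}}^{2}=\mathrm{Re}\,\langle\mathcal{L}_{\lambda}u,u\rangle$, which holds because $\int\varphi\bar u=-\|\varphi'\|_{L^{2}}^{2}-\tilde\beta^{2}\|\varphi\|_{L^{2}}^{2}$ is real, so the nonlocal term makes no contribution to the real part. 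This yields $\|u'\|_{L^{2}}^{2}\leq\|\mathcal{L}_{\lambda}u\|_{L^{2}}\|u\|_{L^{2}}/\nu$, and pigeonholing over the interval $B(y_{1},\delta)$ of length $2\delta$ produces an $a_{*}$ with $|u'(a_{*})|^{2}\leq\|u'\|_{L^{2}}^{2}/\delta$. Consequently $\delta^{3}|u'(a_{*})|^{2}\leq\delta^{2}\|\mathcal{L}_{\lambda}u\|_{L^{2}}\|u\|_{L^{2}}/\nu$, which is already controlled by $\mathcal{F}_{2}(u)$.

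The main new ingredient will be a sharpened pointwise bound on $\sin z-\lambda$ near the critical points. Because $y_{1}+y_{2}=\pi$ with $y_{1}\in[0,\pi/2]$, one has $\cos y_{1}=\sin(\pi/2-y_{1})\leq \pi/2-y_{1}=(y_{2}-y_{1})/2$, so a second-order Taylor expansion gives
\begin{align*}
\|\sin z-\lambda\|_{L^{\infty}(B(y_{1},\delta))}\leq \cos y_{1}\cdot\delta+\delta^{2}/2\lesssim \delta(y_{2}-y_{1}+\delta).
\end{align*}
The extra factor $(y_{2}-y_{1}+\delta)$ beyond the trivial Lipschitz bound $\delta$ is the crucial gain: without it the $(\sin z-\lambda)u$ contribution cannot fit inside $\mathcal{F}_{2}(u)$ in the regime $\lambda\to 1$, where $y_{2}-y_{1}+\delta$ degenerates.

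With these preparations I would write $u'(a)=u'(a_{*})+\int_{a_{*}}^{a}u''(z)\,dz$ and bound the three resulting integrals. For the $(\sin z-\lambda)u$ piece, combine the sharp pointwise bound above with $\|u\|_{L^{1}(B(y_{1},\delta))}\leq 2\delta\|u\|_{L^{\infty}}$ to obtain a contribution of order $|\alpha|\delta^{2}(y_{2}-y_{1}+\delta)\|u\|_{L^{\infty}}/\nu^{2}$; squaring and multiplying by $\delta^{3}$ gives $\delta^{7}(y_{2}-y_{1}+\delta)^{2}|\alpha|^{2}\|u\|_{L^{\infty}}^{2}/\nu^{4}$, which is dominated by $\frac{\delta^{6}(y_{2}-y_{1}+\delta)^{2}|\alpha|^{2}}{\nu^{4}}\cdot \delta\|u\|_{L^{\infty}}^{2}\leq\frac{\delta^{6}(y_{2}-y_{1}+\delta)^{2}|\alpha|^{2}}{\nu^{4}}\mathcal{F}_{1}(u)\leq\mathcal{F}_{2}(u)$. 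For the $\lambda\varphi$ piece, the bound \eqref{V1} already established in the proof of the previous lemma gives $\|\lambda\varphi\|_{L^{\infty}(B(y_{1},\delta))}^{2}\lesssim(y_{2}-y_{1}+\delta)^{2}\delta\,\mathcal{F}_{1}(u)$, and $|\int_{a_{*}}^{a}\lambda\varphi\,dz|\leq 2\delta\|\lambda\varphi\|_{L^{\infty}(B(y_{1},\delta))}$ produces a contribution of the same size $\frac{\delta^{6}(y_{2}-y_{1}+\delta)^{2}|\alpha|^{2}}{\nu^{4}}\mathcal{F}_{1}(u)$. Finally, for the $\mathcal{L}_{\lambda}u$ piece Cauchy--Schwarz gives $|\int_{a_{*}}^{a}\mathcal{L}_{\lambda}u\,dz|\leq\sqrt{2\delta}\|\mathcal{L}_{\lambda}u\|_{L^{2}}$, yielding $\delta^{4}\|\mathcal{L}_{\lambda}u\|_{L^{2}}^{2}/\nu^{2}\leq\mathcal{F}_{2}(u)$. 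Combining these three with the reference-point estimate finishes \eqref{K2.3}.

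The hard part is precisely the $(\sin z-\lambda)u$ term: the cruder bound $|\sin z-\lambda|\leq\delta$ leaves a residual factor of $(y_{2}-y_{1}+\delta)^{-2}$ that $\mathcal{F}_{2}(u)$ cannot accommodate when $\lambda$ is close to the extrema of $\sin$. Once the geometric observation $\cos y_{1}\leq (y_{2}-y_{1})/2$ is invoked, the remaining steps are routine bookkeeping of the terms appearing in $\mathcal{F}_{1}(u)$ and $\mathcal{F}_{2}(u)$.
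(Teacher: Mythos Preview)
Your proof is correct and follows essentially the same strategy as the paper: rewrite $u''$ via the defining relation for $\mathcal{L}_{\lambda}u$, then estimate each of the three contributions using the sharp bound $|\sin y-\lambda|\lesssim\delta(y_2-y_1+\delta)$ on $B(y_j,\delta)$ together with the previously-established estimate \eqref{V1} for $\|\lambda\varphi\|_{L^{\infty}}$. The only variation is cosmetic: the paper controls the base value via the interpolation $\|u'\|_{L^{\infty}(B(y_1,\delta))}\lesssim \delta^{-1}\|u\|_{L^{\infty}}+\|u''\|_{L^{1}(B(y_1,\delta))}$, whereas you pigeonhole on $\|u'\|_{L^{2}}$ (combined with $\nu\|u'\|_{L^{2}}^{2}=\mathrm{Re}\,\langle\mathcal{L}_{\lambda}u,u\rangle$) to locate a good reference point $a_{*}$.
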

\begin{proof}
First of all, we have
\begin{align*}
\|u'\|_{L^{\infty}(B(y_1,\delta))}\lesssim\f{1}{\delta}\|u\|_{L^{\infty}}+\|u''\|_{L^1(B(y_1,\delta))}.
\end{align*}
Recalling $\mathcal{L}_{\lambda}u=i\f{\alpha}{\nu}[(\sin y-\lambda)u+\lambda\varphi]-\nu\partial_y^2u$, we find
\begin{align*}
&\delta^{\f32}\int_{B(y_1,\delta)}|u''|dy\leq\f{\delta^{\f32}|\alpha|}{\nu^2}\int_{y_1-\delta}^{y_1+\delta}|(\sin y-\lambda)u|dy+ \f{\delta^{\f32}|\alpha|}{\nu^2}\int_{y_1-\delta}^{y_1+\delta}|\lambda\varphi|dy+\f{\delta^{\f32}}{\nu} \int_{y_1-\delta}^{y_1+\delta}|\mathcal{L}_{\lambda}u|dy\\
&\quad\lesssim\f{\delta^3(y_2-y_1+\delta)|\alpha|}{\nu^2}\delta^{\f12}\|u\|_{L^{\infty}}+\f{\delta^2}{\nu}\|\mathcal{L}_{\lambda}u\|_{L^2}+\f{(y_2-y_1)\delta^3|\alpha|}{\nu^2}\f{\|\lambda\varphi\|_{L^{\infty}(B(y_1,\delta)\cup B(y_2,\delta))}}{(y_2-y_1+\delta)\delta^{\f12}},
\end{align*}
from which and (\ref{V1}), we infer that
\begin{align}
\nonumber\delta^3\|u'\|_{L^{\infty}(B(y_1,\delta))}^2\lesssim&\f{\delta^6(y_2-y_1)^2|\alpha|^2}{\nu^{4}}\delta\|u\|_{L^{\infty}}^2+\f{\delta^4}{\nu^2}\|\mathcal{L}_{\lambda}u\|_{L^2}^2
+\f{\delta^6(y_2-y_1+\delta)^2|\alpha|^2}{\nu^{4}}\mathcal{F}_1(u)\\
\lesssim&\f{\delta^6(y_2-y_1+\delta)^2|\alpha|^2}{\nu^{4}}\mathcal{F}_1(u)+\f{\delta^4}{\nu^2}\|\mathcal{L}_{\lambda}u\|_{L^2}^2.\nonumber
\end{align}
The estimate in $B(y_2,\delta)$ is similar.
\end{proof}

Now we are in a position to prove Proposition \ref{Prop: nonlocal2}.

\begin{proof}
First of all, we consider the case of $\lambda>1$. We get by integration by parts that
\begin{align*}
\big|\text{Im}\langle i\f{\alpha}{\nu}[(\sin y-\lambda)u+\lambda\varphi]-\nu\partial_y^2u, u\rangle\big|=\f{|\alpha|}{\nu}\Big(\int_0^{2\pi}(\lambda-\sin y)|u|^2dy+\lambda\|\varphi'\|_{L^2}^2+\lambda\tilde{\beta}^2\|\varphi\|_{L^2}^2\Big).
\end{align*}
which implies that
\begin{align}
\int_0^{2\pi}(\lambda-\sin y)|u|^2dy+\lambda\|\varphi'\|_{L^2}^2+\lambda\tilde{\beta}^2\|\varphi\|_{L^2}^2\leq\f{\nu}{|\alpha|}\|\mathcal{L}_{\lambda}u\|_{L^2}\|u\|_{L^2}.\label{L1.1}
\end{align}
Let $\delta\in(0,1]$. Then for any $y\in(\f{\pi}{2}+\delta,\f{5\pi}{2}-\delta)$, we have
\begin{align*}
1-\sin y&\geq \sin \f{\pi}{2}-\sin (\f{\pi}{2}+\delta)=\sin \f{\pi}{2}-\sin (\f{\pi}{2}-\delta)=2\sin \f{\delta}{2} \cos(\f{\pi}{2}-\f{\delta}{2})=2\sin^2\f{\delta}{2}\gtrsim \delta^2.
\end{align*}
Then it follows from (\ref{L1.1}) that
\begin{align}
\nonumber&\|u\|_{L^2}^2\leq \|u\|_{L^2(\f{\pi}{2}+\delta,\f{5\pi}{2}-\delta)}^2+2\delta\|u\|_{L^{\infty}}^2\lesssim\delta^{-2}\int_0^{2\pi}(\lambda-\sin y)|u|^2dy+\delta\|u\|_{L^{\infty}}^2\\
&\lesssim\f{\nu}{|\alpha|\delta^2}\|\mathcal{L}_{\lambda}u\|_{L^2}\|u\|_{L^2}+\nu^{-\f12}\delta{\|\mathcal{L}_{\lambda}u\|_{L^2}^{\f12}\|u\|_{L^2}^{\f32}}+\delta\|u\|_{L^2}^{2}.\label{LA1}
\end{align}
Here we used the following fact that
\begin{align}\label{eq:u-linfty}
\|u\|_{L^{\infty}}\leq \|u'\|_{L^2}^{\f12}\|u\|_{L^2}^{\f12}+\|u\|_{L^2}\leq \nu^{-\f14}\|\mathcal{L}_{\lambda}u\|_{L^2}^{\f14}\|u\|_{L^2}^{\f34}+\|u\|_{L^2},
\end{align}
due to $\nu\|u'\|_{L^2}^2=|\text{Re}\langle\mathcal{L}_{\lambda}u,u\rangle|$.
Taking $\delta=|\alpha|^{-\f14}\nu^{\f12}\ll1$, we infer that
\begin{align*}
&\|u\|_{L^2}^2\lesssim\frac{\|\mathcal{L}_{\lambda}u\|_{L^2}\|u\|_{L^2}}{|\alpha|^{\f12}}+\f{\|\mathcal{L}_{\lambda}u\|_{L^2}^{\f12}\|u\|_{L^2}^{\f32}}{|\alpha|^{\f14}},
\end{align*}
which implies that
\begin{align}
\|\mathcal{L}_{\lambda}u\|_{L^2}\gtrsim|\alpha|^{\f12}\|u\|_{L^2}.
\end{align}

Next we handle the case of  $\lambda\in[0,1]$. Then there exist $0\leq y_1\leq\f{\pi}{2}\leq y_2\leq\pi$ so that $\lambda=\sin y_1=\sin y_2$.\smallskip

\no$\mathbf{Case\ 1.}$ $|\alpha|^{-\f14}\nu^{\f12}\geq\f{y_2-y_1}{4}$.\smallskip

Let $1\gg\delta=|\alpha|^{-\f14}\nu^{\f12}\geq\f{y_2-y_1}{4}$. Using (\ref{Key1.1}) and $\| u\|_{L^2(y_1,y_2)}^2\lesssim\delta\|u\|_{L^{\infty}}^2\leq \mathcal{E}_1(u)$, we deduce that  $\| u\|_{L^2}^2\lesssim\mathcal{E}_1(u)$. Now we estimate each term in $\mathcal{E}_1(u)$.

Thanks to (\ref{K1.3}), we have
\begin{align*}
\f{\|\lambda \varphi_2\|_{L^{\infty}}^2}{(y_2-y_1+\delta)^2\delta}\lesssim& \f{\nu^2\|\mathcal{L}_{\lambda}u\|_{L^2}^2}{|\alpha|^2\delta^2(y_2-y_1+\delta)^2}+\f{\nu^3\|\mathcal{L}_{\lambda}u\|_{L^2}\|u\|_{L^2}}{|\alpha|^2\delta^4(y_2-y_1+\delta)^2}+\delta\|u\|_{L^{\infty}}^2\\
\lesssim& \f{\nu^2\|\mathcal{L}_{\lambda}u\|_{L^2}^2}{|\alpha|^2\delta^4}+\f{\nu^3\|\mathcal{L}_{\lambda}u\|_{L^2}\|u\|_{L^2}}{|\alpha|^2\delta^6}+\delta\|u\|_{L^{\infty}}^2,
\end{align*}
from which and \eqref{K2.3}, we infer that
\begin{align*}
\delta^3\|u'\|_{L^{\infty}(B(y_1,\delta)\cup B(y_2,\delta))}^2\lesssim&\f{\delta^8|\alpha|^2}{\nu^4}\Big(\f{\nu^2\|\mathcal{L}_{\lambda}u\|_{L^2}^2}{|\alpha|^2\delta^4}+\f{\nu^3\|\mathcal{L}_{\lambda}u\|_{L^2}\|u\|_{L^2}}{|\alpha|^2\delta^6}
+\delta\|u\|_{L^{\infty}}^2\Big)+\f{\delta^4}{\nu^2}\|\mathcal{L}_{\lambda}u\|_{L^2}^2\\
=&\f{2\|\mathcal{L}_{\lambda}u\|_{L^2}^2}{|\alpha|}+\f{\|\mathcal{L}_{\lambda}u\|_{L^2}\|u\|_{L^2}}{|\alpha|^{\f12}}
+\delta\|u\|_{L^{\infty}}^2,
\end{align*}
which in turn gives
\begin{align*}
\f{\nu^2}{|\alpha|(y_2-y_1+\delta)\delta}\|u'\bar{u}\|_{L^{\infty}(B(y_1,\delta)\cup B(y_2,\delta))}\lesssim&\f{\nu^2}{|\alpha|\delta^4}\delta^{\f32}\|u'\|_{L^{\infty}(B(y_1,\delta)\cup B(y_2,\delta))}\delta^{\f12}\|u\|_{L^{\infty}}\\\lesssim&\f{\nu^2}{|\alpha|\delta^4}\big(\delta^3\|u'\|_{L^{\infty}(B(y_1,\delta)\cup B(y_2,\delta))}^2+\delta\|u\|_{L^{\infty}}^2\big)\\ \nonumber
\lesssim&\f{\|\mathcal{L}_{\lambda}u\|_{L^2}^2}{|\alpha|}+\f{\|\mathcal{L}_{\lambda}u\|_{L^2}\|u\|_{L^2}}{|\alpha|^{\f12}}
+\delta\|u\|_{L^{\infty}}^2.
\end{align*}
For other terms, we have  by \eqref{eq:u-linfty} that
\begin{align*}
&\f{\nu^2}{|\alpha|\delta^{\f52}}\|u'\|_{L^2}\|u\|_{L^{\infty}}\leq\f{\nu^4}{|\alpha|^2\delta^{6}}\|u'\|_{L^2}^2+\delta\|u\|_{L^{\infty}}^2\leq
\f{\nu^3\|\mathcal{L}_{\lambda}u\|_{L^2}\|u\|_{L^2}}{|\alpha|^2\delta^{6}}+\delta\|u\|_{L^{\infty}}^2,\\
&\delta\|u\|_{L^{\infty}}^2\lesssim\f{\delta}{\nu^{\f12}}\|\mathcal{L}_{\lambda}u\|_{L^2}^{\f12}\|u\|_{L^2}^{\f32}+\delta\|u\|_{L^{2}}^2
\leq\f{\|\mathcal{L}_{\lambda}u\|_{L^2}^{\f12}\|u\|_{L^2}^{\f32}}{|\alpha|^{\f14}}+\delta\|u\|_{L^{2}}^2.
\end{align*}

Summing up, we conclude
\begin{align}
\|u\|_{L^2}^2\lesssim \f{\|\mathcal{L}_{\lambda}u\|_{L^2}^2}{|\alpha|}+\f{\|\mathcal{L}_{\lambda}u\|_{L^2}\|u\|_{L^2}}{|\alpha|^{\f12}}
+\f{\|\mathcal{L}_{\lambda}u\|_{L^2}^{\f12}\|u\|_{L^2}^{\f32}}{|\alpha|^{\f14}}+\delta\|u\|_{L^{2}}^2.
\end{align}
Due to $\delta\ll1 $, this implies that
\begin{align}
\|\mathcal{L}_{\lambda}u\|_{L^2}\gtrsim|\alpha|^{\f12}\|u\|_{L^2}.
\end{align}

\no $\mathbf{Case\ 2.}$ $|\alpha|^{-\f14}\nu^{\f12}\leq\f{y_2-y_1}{4}$.
\smallskip

Take $\delta^3(y_2-y_1)|\alpha|=\nu^{2}$, then $0<\delta\leq\f{y_2-y_1}{4}$. In this case, we have $\|u\|_{L^2}\le \mathcal{E}_2(u)$ by (\ref{Key1.2}). Let us estimate each term in $\mathcal{E}_2(u)$.

First of all, we have
\begin{align*}
\f{\nu}{|\alpha|\delta(y_2-y_1)}\|\mathcal{L}_{\lambda}u\|_{L^2}\|u\|_{L^2}\lesssim\f{\|\mathcal{L}_{\lambda}u\|_{L^2}\|u\|_{L^2}}{|\alpha|^{\f12}},
\end{align*}
and by \eqref{eq:u-linfty},
\begin{align*}
\mathcal{E}_{2}^1(u):=\delta\|u\|_{L^{\infty}}^2\leq\f{\delta}{\nu^{\f12}}\|\mathcal{L}_{\lambda}u\|_{L^2}^{\f12}\|u\|_{L^2}^{\f32}+\delta\|u\|_{L^2}^{2}\lesssim \f{\|\mathcal{L}_{\lambda}u\|_{L^2}^{\f12}\|u\|_{L^2}^{\f32}}{|\alpha|^{\f14}}+\delta\|u\|_{L^2}^{2},
\end{align*}
and
\begin{align*}
\f{\nu^2\|u'\|_{L^2}\|u\|_{L^{\infty}}}{|\alpha|\delta^{\f32}(y_2-y_1)}&=\delta\|u'\|_{L^2}\delta^{\f12}\|u\|_{L^{\infty}}\leq\delta^2 \|u'\|_{L^2}^2+\delta \|u\|_{L^{\infty}}^2\\
&\leq\f{\delta^2}{\nu}\|\mathcal{L}_{\lambda}u\|_{L^2}\|u\|_{L^2}+\delta \|u\|_{L^{\infty}}^2\\
&\lesssim \f{\|\mathcal{L}_{\lambda}u\|_{L^2}\|u\|_{L^2}}{|\alpha|^{\f12}}+\f{\|\mathcal{L}_{\lambda}u\|_{L^2}^{\f12}\|u\|_{L^2}^{\f32}}{|\alpha|^{\f14}}+\delta\|u\|_{L^2}^{2}.
\end{align*}
Thanks to (\ref{K1.3}), we have
\begin{align*}
\f{\|\lambda \varphi_2\|_{L^{\infty}}^2}{(y_2-y_1)^2\delta} \lesssim& \f{\nu^2\|\mathcal{L}_{\lambda}u\|_{L^2}^2}{|\alpha|^2\delta^2(y_2-y_1)^2}+\f{\nu^3\|\mathcal{L}_{\lambda}u\|_{L^2}\|u\|_{L^2}}{|\alpha|^2\delta^4(y_2-y_1)^2}+\delta\|u\|_{L^{\infty}}^2\\
\lesssim &\f{\|\mathcal{L}_{\lambda}u\|_{L^2}^2}{|\alpha|}+\f{\|\mathcal{L}_{\lambda}u\|_{L^2}\|u\|_{L^2}}{|\alpha|^{\f12}}+\f{\|\mathcal{L}_{\lambda}u\|_{L^2}^{\f12}\|u\|_{L^2}^{\f32}}{|\alpha|^{\f14}}+\delta\|u\|_{L^2}^{2}.
\end{align*}
Thanks to (\ref{K2.3}), we have
\begin{align*}
\nonumber\mathcal{E}_{2}^2(u):=&\delta^3\|u'\|_{L^{\infty}(B(y_1,\delta)\cup B(y_2,\delta))}^2\\
\nonumber\lesssim& \f{\delta^6(y_2-y_1)^2|\alpha|^2}{\nu^{4}}\mathcal{F}_1(u)+\f{\delta^4}{\nu^2}\|\mathcal{L}_{\lambda}u\|_{L^2}^2\\
\lesssim& \f{\|\mathcal{L}_{\lambda}u\|_{L^2}^2}{|\alpha|}+\f{\|\mathcal{L}_{\lambda}u\|_{L^2}\|u\|_{L^2}}{|\alpha|^{\f12}}+\f{\|\mathcal{L}_{\lambda}u\|_{L^2}^{\f12}\|u\|_{L^2}^{\f32}}{|\alpha|^{\f14}}+\delta\|u\|_{L^2}^{2}.
\end{align*}
Then we have
\begin{align*}
\f{\nu^2\|u'\bar{u}\|_{L^{\infty}(B(y_1,\delta)\cup B(y_2,\delta))}}{|\alpha|\delta(y_2-y_1)}&\leq\f{\nu^2\delta^{\f32}\|u'\|_{L^{\infty}(B(y_1,\delta)\cup B(y_2,\delta))}\delta^{\f12}\|u\|_{L^{\infty}}}{|\alpha|\delta^3(y_2-y_1)}\\
&\leq\delta^3\|u'\|_{L^{\infty}(B(y_1,\delta)\cup B(y_2,\delta))}^2+\delta\|u\|_{L^{\infty}}^2=\mathcal{E}_{2}^1(u)+\mathcal{E}_{2}^2(u).
\end{align*}
For the other terms, we have
\begin{align*}
&\f{\nu^2}{|\alpha|^2}\f{1}{(y_2-y_1)^3\delta}\|\mathcal{L}_{\lambda}u\|_{L^2}^2\lesssim\f{\|\mathcal{L}_{\lambda}u\|_{L^2}^2}{|\alpha|},\\
&\f{\nu^4}{|\alpha|^2}\f{\|u'\|_{L^{\infty}(B(y_1,\delta)\cup B(y_2,\delta))}^2}{(y_2-y_1)^3\delta^2}\leq \mathcal{E}_{2}^2(u),\\
&\f{\nu^4}{|\alpha|^2}\f{\|u'\|_{L^2}^2}{(y_2-y_1)^3\delta^3}\leq\f{\|\mathcal{L}_{\lambda}u\|_{L^2}\|u\|_{L^2}}{|\alpha|^{\f12}}.
\end{align*}

Summing up, we conclude
\begin{align*}
\|u\|_{L^2}^2\lesssim \f{\|\mathcal{L}_{\lambda}u\|_{L^2}^2}{|\alpha|}+\f{\|\mathcal{L}_{\lambda}u\|_{L^2}\|u\|_{L^2}}{|\alpha|^{\f12}}+\f{\|\mathcal{L}_{\lambda}u\|_{L^2}^{\f12}\|u\|_{L^2}^{\f32}}{|\alpha|^{\f14}}+\delta\|u\|_{L^2}^{2},
\end{align*}
which implies that
\begin{align}
\|\mathcal{L}_{\lambda}u\|_{L^2}\gtrsim|\alpha|^{\f12}\|u\|_{L^2}.\nonumber
\end{align}

As $\tau_{\pi}\mathcal{L}_{\lambda}=\mathcal{L}_{-\lambda}\tau_{\pi}$, the same result holds for $\la\le 0$. This completes the proof of the proposition.
\end{proof}

\section{Pseudospectral bound and semigroup bound}

\subsection{Pseudospectral bound}

Recall that an operator $H$ in a Hilbert space $X$ is accretive if ${\rm Re} \langle Hf,f\rangle\geq 0$ for all $f\in {D}(H),$ or equivalently $\|(\la+H)f\|\geq \la\|f\|$ for all $f\in {D}(H)$ and all $\la>0$ \cite{Pazy}. The operator $ H$ is called m-accretive if in addition any $\la<0$ belongs to the resolvent set of $ H$\cite{Kato}. We define
\beno
\Psi(H)=\inf\big\{\|(H-i\la)f\|;f\in {D}(H),\ \la\in \mathbb{R},\  \|f\|=1\big\}.
\eeno

Let $ \beta^2=\f{k_1^2+k_3^2}{k_f^2}(\beta>0)$. We introduce two operators
\beno
&&\mathcal{L}'_{k_1,k_3}=-\nu k_f^2\partial_y^2+\frac{ik_1}{k_f^2}\f{\gamma}{\nu }\sin y\big(1-(\beta^2-\partial_y^2)^{-1}\big),\\
&&\mathcal{H}'_{k_1,k_3}=-\nu k_f^2\partial_y^2+\frac{ik_1}{k_f^2}\f{\gamma}{\nu }\sin y.
\eeno
Since $\mathcal{L}'_{k_1,k_3} $ and $\mathcal{H}'_{k_1,k_3} $ are relatively compact perturbations of the operator $ -\nu k_f^2\partial_y^2,$ which itself has compact
resolvent, it is clear that the operators $\mathcal{L}'_{k_1,k_3} $ and $\mathcal{H}'_{k_1,k_3} $ have compact resolvent and only point spectrum. Thus, we only need to check $\mathcal{L}'_{k_1,k_3} $ and $\mathcal{H}'_{k_1,k_3} $ are accretive, which will imply that they are also m-accretive.

For the operator $\mathcal{H}'_{k_1,k_3}$, we take $X=L^2(\mathbb{T}_{2\pi})$ and $D(\mathcal{H}'_{k_1,k_3})=H^2(\mathbb{T}_{2\pi})$. Then ${\rm Re}\langle\mathcal{H}'_{k_1,k_3}f,f\rangle=\nu k_f^2\|\partial_y f\|_{L^2}^2\geq 0$, thus $\mathcal{H}'_{k_1,k_3}$ is m-accretive.
For the operator $\mathcal{L}'_{k_1,k_3}$, we take $X=L^2(\mathbb{T}_{2\pi})$ with the norm $\|f\|_*=\langle f,f-(\beta^2-\partial_y^2)^{-1}f\rangle^{1/2}$ and the inner product $\langle f,g\rangle_*=\langle f,g-(\beta^2-\partial_y^2)^{-1}g\rangle,$ and $D(\mathcal{L}'_{k_1,k_3})=H^2(\mathbb{T}_{2\pi})$.
If $k_1^2+k_3^2>k_f^2,$ $ \beta^2>1,$ then this norm is equivalent to the $ L^2$ norm: $(1-\beta^{-2})\|f\|_{L^2}^2\leq \|f\|_*^2\leq \|f\|_{L^2}^2,$ which can be easily proved by using Fourier transform. It is easy to see that
\beno
{\rm Re}\big\langle\mathcal{L}'_{k_1,k_3}f,f\big\rangle_*=-\nu k_f^2\langle \partial_y^2f,f\rangle_*
=\nu k_f^2\langle\partial_yf,\partial_yf-\partial_y(\beta^2-\partial_y^2)^{-1}f\rangle=\nu k_f^2\|\partial_y f\|_{*}^2\geq 0.
\eeno
Thus, $\mathcal{L}'_{k_1,k_3}$  is m-accretive.

\begin{lemma}\label{H1}
If $\gamma\gg \nu^2$, then
$\Psi(\mathcal{H}'_{k_1,k_3})\geq c|k_1\gamma|^{\frac{1}{2}}.$
\end{lemma}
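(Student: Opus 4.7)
The plan is to reduce the estimate directly to Proposition~3.1 by rescaling so that $\mathcal{H}'_{k_1,k_3}-i\mu$ becomes an instance of $\mathcal{N}_\lambda$. Since $\mathcal{H}'_{k_1,k_3}$ has no nonlocal term, Proposition~3.1 should be sufficient; the nonlocal counterpart (Proposition~3.3) is not needed here.

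First, for arbitrary $\mu\in\R$ and $k_1\neq 0$, I would write
\begin{align*}
(\mathcal{H}'_{k_1,k_3}-i\mu)f
=-\nu k_f^2\partial_y^2 f+i\f{k_1\gamma}{\nu k_f^2}\bigl(\sin y-\lambda\bigr)f,
\qquad \lambda:=\f{\mu\,\nu k_f^2}{k_1\gamma}\in\R.
\end{align*}
Setting $\nu':=\nu k_f^2$ and $\alpha:=k_1\gamma$, this takes precisely the form
$\mathcal{N}_\lambda f=i(\alpha/\nu')(\sin y-\lambda)f-\nu'\partial_y^2 f$
with parameters $(\nu',\alpha,\lambda)$.

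Next I check the hypotheses of Proposition~3.1. Since $k_f\in(0,1]$ and we assume (as in the standing hypotheses of the paper) $\nu\le 1$, we have $\nu'=\nu k_f^2\le 1$. The smallness condition $|\alpha|\gg (\nu')^2$ reads $|k_1\gamma|\gg \nu^2 k_f^4$, and since $|k_1|\ge 1$ and $k_f\le 1$, this is implied by the assumption $|\gamma|\gg \nu^2$ of the lemma. Proposition~3.1 then yields
\begin{align*}
\|(\mathcal{H}'_{k_1,k_3}-i\mu)f\|_{L^2}\ge C|\alpha|^{1/2}\|f\|_{L^2}=C|k_1\gamma|^{1/2}\|f\|_{L^2},
\end{align*}
with $C$ independent of $\mu$ (and of $\nu,k_1,k_3,\gamma,k_f$). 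Taking the infimum over $\mu\in\R$ and unit vectors $f\in D(\mathcal{H}'_{k_1,k_3})=H^2(\mathbb{T}_{2\pi})$ gives $\Psi(\mathcal{H}'_{k_1,k_3})\ge c|k_1\gamma|^{1/2}$. The case $k_1=0$ is trivial since then the right-hand side vanishes.

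There is essentially no obstacle here: the entire content lives in Proposition~3.1, and the only thing to be careful about is the bookkeeping in the change of variables (making sure the rescaled $\nu'$ still satisfies $\nu'\le 1$ and that the nondegeneracy condition $|\alpha|\gg(\nu')^2$ is preserved under the rescaling). I would make these factors of $k_f$ explicit to be sure none of them flips the direction of the required inequality.
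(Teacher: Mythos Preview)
Your proof is correct and follows essentially the same route as the paper: reduce $(\mathcal{H}'_{k_1,k_3}-i\mu)$ to the model operator $\mathcal{N}_\lambda$ of Proposition~3.1 and read off the bound. The only cosmetic difference is in how the factors of $k_f$ are distributed: the paper pulls out a global factor $k_f^2$ and sets $\alpha=k_1\gamma/k_f^4$ with the original $\nu$, whereas you absorb $k_f^2$ into the viscosity by setting $\nu'=\nu k_f^2$ and $\alpha=k_1\gamma$; both rescalings lead to the same conclusion after checking the hypotheses of Proposition~3.1.
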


\begin{proof}
Let $\al=k_1\gamma/k_f^4$. Then we have
\beno
(\mathcal{H}'_{k_1,k_3}-i\la)w=k_f^2\left(i\f{\alpha}{\nu}(\sin y-\frac{\nu\lambda}{\al})w-\nu\partial_y^2w\right).
\eeno
Then it follows from Proposition \ref{prop:Res-toy} that
\begin{align*}
\|(\mathcal{H}'_{k_1,k_3}-i\la)w\|_{L^2}&=k_f^2\big\|i\f{\alpha}{\nu}(\sin y-\frac{\nu\lambda}{k_f^2\al})w-\nu\partial_y^2w\big\|_{L^2}\\
&\geq ck_f^2|\al|^{\frac{1}{2}}\|w\|_{L^2}=c|k_1\gamma|^{\frac{1}{2}}\|w\|_{L^2}.
\end{align*}
\end{proof}

We define  $\Psi(\mathcal{L}'_{k_1,k_3})=\inf\big\{\|(\mathcal{L}'_{k_1,k_3}-i\la)f\|_*;f\in {D}(\mathcal{L}'_{k_1,k_3}),\ \la\in\mathbb{R},\  \|f\|_*=1\big\}.$

\begin{lemma}\label{L1}If $\gamma\gg \nu^2$ and $ \beta^2=(k_1^2+k_3^2)/k_f^2>1,$ then
$\Psi(\mathcal{L}'_{k_1,k_3})\geq c|k_1\gamma|^{\frac{1}{2}}(1-\beta^{-2}).$
\end{lemma}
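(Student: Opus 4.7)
The plan is to reduce the resolvent estimate for $\mathcal{L}'_{k_1,k_3}-i\la$ to the bound for $L_\la$ from Section 3 and then carefully translate between the $L^2$ norm and the weighted $*$-norm. Setting $\al=k_1\g/k_f^4$ and $\tilde\la=\nu\la/(k_f^2\al)\in\R$, a direct rescaling shows
\beno
(\mathcal{L}'_{k_1,k_3}-i\la)w = k_f^2 L_{\tilde\la}w,
\eeno
where $L_{\tilde\la}$ is the operator (\ref{def:L}) with $(\pa_y^2-\b^2)\var=w$; this places us in the framework of Section 3.

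Rather than invoking the final conclusion of Proposition \ref{Prop: nonlocal1}, I would reprise its proof one step earlier: introducing $u=w+\var$, which satisfies $(\pa_y^2-\tilde\b^2)\var=u$ with $\tilde\b^2=\b^2-1\ge 0$ (valid because $\b^2>1$), Proposition \ref{Prop: nonlocal2} combined with the routine estimate $\nu\|\var''\|_{L^2}\le\nu\|u\|_{L^2}\ll|\al|^{\f12}\|u\|_{L^2}$ (using $\g\gg\nu^2$) yields the intermediate bound
\beno
\|(\mathcal{L}'_{k_1,k_3}-i\la)w\|_{L^2}=k_f^2\|L_{\tilde\la}w\|_{L^2}\ge c|k_1\g|^{\f12}\|u\|_{L^2}.
\eeno
The third step is the Fourier comparison of the two norms. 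On $\T_{2\pi}$, one has $\widehat u_n=(1-(\b^2+n^2)^{-1})\widehat w_n$ and $\|w\|_*^2=\sum_n(1-(\b^2+n^2)^{-1})|\widehat w_n|^2$, so the pointwise inequality $1-(\b^2+n^2)^{-1}\ge 1-\b^{-2}$ for every $n\in\Z$ gives the two one-sided comparisons
\beno
\|u\|_{L^2}\ge(1-\b^{-2})^{\f12}\|w\|_*,\qquad \|g\|_*\ge(1-\b^{-2})^{\f12}\|g\|_{L^2}\ \text{for any } g\in L^2.
\eeno
Chaining these into the intermediate estimate gives
\beno
\|(\mathcal{L}'_{k_1,k_3}-i\la)w\|_*\ge (1-\b^{-2})^{\f12}\|(\mathcal{L}'_{k_1,k_3}-i\la)w\|_{L^2}\ge c|k_1\g|^{\f12}(1-\b^{-2})\|w\|_*,
\eeno
which is the desired pseudospectral lower bound.

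The subtle point that demands care is the exponent of $1-\b^{-2}$. If one simply quoted the stated conclusion of Proposition \ref{Prop: nonlocal1}, namely $\|L_{\tilde\la}w\|_{L^2}\ge C|\al|^{\f12}(1-\b^{-2})\|w\|_{L^2}$, and then switched $\|w\|_{L^2}$ to $\|w\|_*$ on the right using $\|w\|_{L^2}\ge\|w\|_*$ while switching the left-hand $L^2$ norm to the $*$-norm using $\|g\|_*\ge(1-\b^{-2})^{\f12}\|g\|_{L^2}$, one would accumulate an extra factor of $(1-\b^{-2})^{\f12}$ and end up with only $(1-\b^{-2})^{\f32}$. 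The remedy, carried out above, is to halt the $L^2$ chain at $\|u\|_{L^2}$ and use the comparison $\|u\|_{L^2}\ge(1-\b^{-2})^{\f12}\|w\|_*$ directly, which costs only one factor of $(1-\b^{-2})^{\f12}$ rather than two.
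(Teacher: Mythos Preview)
Your proof is correct and follows essentially the same route as the paper's: identify $(\mathcal{L}'_{k_1,k_3}-i\la)w=k_f^2 L_{\tilde\la}w$, pass to $u=w+\varphi$ and invoke Proposition~\ref{Prop: nonlocal2} to reach $\|(\mathcal{L}'_{k_1,k_3}-i\la)w\|_{L^2}\ge c|k_1\gamma|^{1/2}\|u\|_{L^2}$, then use the norm comparisons $\|g\|_*\ge(1-\beta^{-2})^{1/2}\|g\|_{L^2}$ and $\|u\|_{L^2}\ge(1-\beta^{-2})^{1/2}\|w\|_*$ to close. Your Fourier-side derivation of $\|u\|_{L^2}^2\ge(1-\beta^{-2})\|w\|_*^2$ is in fact slightly cleaner than the paper's Cauchy--Schwarz argument ($\|w\|_*^2=\langle w,u\rangle\le\|w\|_{L^2}\|u\|_{L^2}$ combined with $(1-\beta^{-2})\|w\|_{L^2}\le\|u\|_{L^2}$), and your closing remark about why quoting Proposition~\ref{Prop: nonlocal1} directly would cost an extra $(1-\beta^{-2})^{1/2}$ is a worthwhile observation.
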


\begin{proof}For $\la\in\R,$ let $\al=k_1\gamma/k_f^4,\ \la_1=\frac{\nu k_f^2\lambda}{k_1\gamma}$. Then we have
\beno
(\mathcal{L}'_{k_1,k_3}-i\la)w=k_f^2\left(i\f{\alpha}{\nu}((\sin y-\la_1)w+\sin y\varphi)-\nu\partial_y^2w\right),
\eeno
where $\varphi=(\beta^2-\partial_y^2)^{-1}w$. Let  $u=w+\varphi,$ then we have $(\partial_y^2-\tilde{\beta}^2)\varphi=u$ for $\tilde{\beta}^2=\beta^2-1$ and $\|\varphi''\|_{L^2}\leq\|u\|_{L^2}$.

Thanks to Proposition \ref{Prop: nonlocal2}, we deduce that
\begin{align*}
\big\|i\f{\alpha}{\nu}[(\sin y-\lambda_1)w+\sin y\varphi]-\nu\partial_y^2w\big \|_{L^2}&\geq \big\|i\f{\alpha}{\nu}[(\sin y-\lambda_1)u+\lambda_1\varphi]-\nu\partial_y^2u \big\|_{L^2}-\nu\|\varphi''\|_{L^2}\\
&\geq(c|\alpha|^{\f12}-\nu)\|u\|_{L^2}\geq c|\alpha|^{\f12}\|u\|_{L^2}.
\end{align*}
Using the fact that $(1-\beta^{-2})\|w\|_{L^2}^2\leq\|w\|_*^2=\langle w,u\rangle\leq \|w\|_{L^2}\|u\|_{L^2},$ we have
\beno
(1-\beta^{-2})\|w\|_{L^2}\leq \|u\|_{L^2},\quad (1-\beta^{-2})\|w\|_*^2\leq (1-\beta^{-2})\|w\|_{L^2}\|u\|_{L^2}\leq \|u\|_{L^2}^2.
\eeno
Then we conclude
\begin{align*}
\|(\mathcal{L}'_{k_1,k_3}-i\la)w\|_{*}\geq&(1-\beta^{-2})^{\frac{1}{2}}\|(\mathcal{L}'_{k_1,k_3}-i\la)w\|_{L^2}\\=&
(1-\beta^{-2})^{\frac{1}{2}}k_f^2\big\|i\f{\alpha}{\nu}[(\sin y-\lambda_1)w+\sin y\varphi]-\nu\partial_y^2w\big\|_{L^2}\\ \geq & c(1-\beta^{-2})^{\frac{1}{2}}k_f^2|\al|^{\frac{1}{2}}\|u\|_{L^2}\geq c(1-\beta^{-2})|k_1\gamma|^{\frac{1}{2}}\|w\|_{*}.
\end{align*}
\end{proof}

\subsection{Semigroup bounds}

To obtain the semigroup bound from the pseudospectral bound, we use the following Gearhart-Pr\"{u}ss type lemma with sharp bound from \cite{Wei}.

\begin{lemma}\label{Lem:GP}
Let $ H$ be a m-accretive operator in a Hilbert space $X$. Then
$\|e^{-tH}\|\leq e^{-t\Psi+{\pi}/{2}}$ for any $t\geq 0$.
\end{lemma}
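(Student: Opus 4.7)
The strategy is to upgrade the uniform resolvent bound $\|(H-i\lambda)^{-1}\|\le \Psi^{-1}$ on the imaginary axis to the exponential semigroup estimate claimed. First, since $H$ is m-accretive, $\|e^{-tH}\|\le 1$ for all $t\ge 0$, so the bound is automatic whenever $t\Psi\le \pi/2$: in that range $-t\Psi+\pi/2\ge 0$ and hence $e^{-t\Psi+\pi/2}\ge 1\ge\|e^{-tH}\|$. In particular the case $\Psi=0$ is trivial, and I can reduce to proving the statement for $\Psi>0$ and $t>\pi/(2\Psi)$.

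The first substantive step is to extend the pseudospectral bound from the imaginary axis into a strip. Writing $(H-i\lambda-a)^{-1}=(H-i\lambda)^{-1}\sum_{n\ge 0}a^{n}(H-i\lambda)^{-n}$ and summing this Neumann series for $|a|<\Psi$ (valid because $\|(H-i\lambda)^{-1}\|\le\Psi^{-1}$) gives
\begin{equation*}
\|(H-z)^{-1}\|\le \frac{1}{\Psi-|\mathrm{Re}\,z|},\qquad |\mathrm{Re}\,z|<\Psi,
\end{equation*}
which is the basic input for shifting contours or, equivalently, for a weighted Plancherel inequality.

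The core of the argument is a Plancherel--Duhamel estimate. Given $f\in D(H)$, set $\phi(t)=e^{-tH}f$. For a smooth compactly supported cutoff $\chi$ with $\chi(0)=0$ and a weight $\alpha\in(0,\Psi)$, the function $v(t)=\chi(t)e^{\alpha t}\phi(t)$ (extended by $0$ for $t\le 0$) satisfies $v(0)=0$ and
\begin{equation*}
\partial_{t}v+Hv=(\chi'(t)+\alpha\chi(t))\,e^{\alpha t}\phi(t).
\end{equation*}
Taking the Fourier transform in $t$ yields $(H+i\lambda)\widehat v(\lambda)=\widehat g(\lambda)$ with $g=(\chi'+\alpha\chi)e^{\alpha\cdot}\phi$, so combining the pseudospectral bound pointwise in $\lambda$ with the Plancherel identity produces
\begin{equation*}
\int_{\mathbb{R}}|\chi(t)|^{2}e^{2\alpha t}\|\phi(t)\|^{2}\,dt\le \frac{1}{\Psi^{2}}\int_{\mathbb{R}}|\chi'(t)+\alpha\chi(t)|^{2}e^{2\alpha t}\|\phi(t)\|^{2}\,dt.
\end{equation*}
Using the contraction property $\|\phi(t)\|\le\|f\|$ on the right side, this converts into a bound on $\int \chi^{2}e^{2\alpha t}\|\phi\|^{2}\,dt$ purely in terms of $\|f\|^{2}$ and the geometry of $\chi$; combined with the monotonicity of $\|\phi(t)\|$ one extracts a pointwise estimate of the form $\|\phi(T)\|\le C(T,\alpha)\,e^{-\alpha T}\|f\|$ on a single interval.

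The last step is to optimize: letting $\alpha\uparrow \Psi$ and iterating through the semigroup identity $e^{-tH}=(e^{-(t/n)H})^{n}$ promotes the one-interval bound to the claimed global estimate with the universal preconstant $e^{\pi/2}$. The main obstacle is precisely this final optimization. A naive choice of cutoff, e.g.\ $\chi(t)=\sin(\pi t/T)$, together with the monotonicity of $\|\phi\|$ only yields a polynomial rate $\|e^{-tH}\|\lesssim (t\Psi)^{-1}$; attaining an exponential rate with the sharp constant $\pi/2$ requires a careful simultaneous calibration of the weight $\alpha$, the length and shape of $\chi$, and the iteration count, which is the delicate content of \cite{Wei} and amounts (in the equivalent contour-integral formulation) to pushing $\mathrm{Re}\,z$ arbitrarily close to $\Psi$ while controlling the boundary contributions to the Laplace inversion.
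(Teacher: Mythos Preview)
The paper does not prove this lemma; it simply quotes the result from \cite{Wei}. Your sketch correctly identifies the Plancherel/energy framework that underlies Wei's argument: the contraction property of $e^{-tH}$ handles the range $t\Psi\le\pi/2$, and for larger $t$ one tests the equation for $v(t)=\chi(t)e^{\alpha t}e^{-tH}f$ against its own time-Fourier transform, using the uniform bound $\|(H+i\lambda)^{-1}\|\le\Psi^{-1}$ on the imaginary axis to convert the pseudospectral information into an $L^2_t$ estimate.

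That said, your proposal is an outline rather than a proof. You explicitly acknowledge that the decisive step---choosing the cutoff and weight so as to produce the sharp prefactor $e^{\pi/2}$---is left undone and deferred back to \cite{Wei}. The naive iteration you describe (e.g.\ with $\chi(t)=\sin(\pi t/T)$) does give exponential decay after bootstrapping through the semigroup law, but only at a rate strictly smaller than $\Psi$ or with an unbounded constant; the whole point of the lemma is the explicit constant, and that is exactly the part you have not supplied. Also, the Neumann-series extension to the strip $|\mathrm{Re}\,z|<\Psi$ that you record in your second paragraph is correct but unused in your Plancherel step, which only invokes the bound on $\mathrm{Re}\,z=0$; it belongs to the alternative contour-shifting formulation you allude to at the end, not to the argument you actually sketch. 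If you want a self-contained proof you must reproduce Wei's specific choice of test function and the estimate that yields $\pi/2$; otherwise, citing \cite{Wei} as the paper does is the honest route.
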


\begin{proposition}\label{Prop: H1}
Given $k_f\in(0,1]$ and $\gamma>0$, if $|\gamma|\gg \nu^2$, then there exist constants $C,c>0$  independent of $\gamma, \nu$ such that
\begin{align}
\label{local shear part}\|e^{-t\mathcal{H}}g_{\neq}\|_{L^2}\leq Ce^{-c|\gamma|^{\f12}t-\nu t}\|g_{\neq}\|_{L^2}.
\end{align}
\end{proposition}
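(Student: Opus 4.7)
The plan is to pass to Fourier variables in the $(x,z)$ directions, rescale in $y$ so the linearized operator on each Fourier slab becomes exactly $\mathcal{H}'_{k_1,k_3}$ plus an explicit diffusive multiplier, and then combine the pseudospectral bound from Lemma \ref{H1} with the Gearhart–Prüss type semigroup bound in Lemma \ref{Lem:GP}.

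First I would write $g_{\neq} = \sum_{k_1 \neq 0,\, k_3} e^{i(k_1 x + k_3 z)}\hat g(k_1,y,k_3)$ and, for each $(k_1,k_3)$ with $k_1 \neq 0$, compute the symbol of $\mathcal{H}$ in $(x,z)$. Performing the change of variable $\tilde y = k_f y$ (so that the $y$-torus becomes $\mathbb{T}_{2\pi}$), a direct calculation gives
\begin{align*}
\widehat{\mathcal{H} v}(k_1,\tilde y,k_3) \;=\; \bigl(\nu(k_1^2+k_3^2) + \mathcal{H}'_{k_1,k_3}\bigr)\hat v(k_1,\tilde y,k_3),
\end{align*}
where $\mathcal{H}'_{k_1,k_3}$ is exactly the operator defined in Section~4.1. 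Since the scalar $\nu(k_1^2+k_3^2)$ commutes with $\mathcal{H}'_{k_1,k_3}$, the semigroup factors as
\begin{align*}
e^{-t\mathcal{H}} \hat g \;=\; e^{-t\nu(k_1^2+k_3^2)}\, e^{-t\mathcal{H}'_{k_1,k_3}}\hat g
\end{align*}
on each Fourier slab.

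Next I would apply Lemma \ref{H1} to obtain $\Psi(\mathcal{H}'_{k_1,k_3}) \geq c|k_1\gamma|^{1/2}$, and then Lemma \ref{Lem:GP} (which applies because $\mathcal{H}'_{k_1,k_3}$ was shown above to be m-accretive on $L^2(\mathbb{T}_{2\pi})$) to conclude
\begin{align*}
\|e^{-t\mathcal{H}'_{k_1,k_3}}\hat g(k_1,\cdot,k_3)\|_{L^2} \;\leq\; e^{\pi/2} e^{-ct|k_1\gamma|^{1/2}} \|\hat g(k_1,\cdot,k_3)\|_{L^2}.
\end{align*}
Since $k_1 \neq 0$ forces $|k_1|\geq 1$, we have $|k_1\gamma|^{1/2} \geq |\gamma|^{1/2}$ and $\nu(k_1^2+k_3^2) \geq \nu$, so that together with the diffusive factor
\begin{align*}
\|e^{-t\nu(k_1^2+k_3^2)} e^{-t\mathcal{H}'_{k_1,k_3}}\hat g(k_1,\cdot,k_3)\|_{L^2} \;\leq\; C e^{-c|\gamma|^{1/2} t - \nu t}\|\hat g(k_1,\cdot,k_3)\|_{L^2}.
\end{align*}
Summing the squares over $(k_1,k_3)$ with $k_1\neq 0$ and invoking Plancherel yields the desired estimate.

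There is essentially no major obstacle here: the only non-routine items are verifying the m-accretivity so that Lemma \ref{Lem:GP} is applicable (already established in the preceding discussion of $\mathcal{H}'_{k_1,k_3}$) and keeping careful track of the rescaling $y \mapsto k_f y$ so that the Jacobian factor $k_f^2$ in front of $-\nu\partial_y^2$ is absorbed into the definition of $\mathcal{H}'_{k_1,k_3}$, ensuring the $|k_1\gamma|^{1/2}$ pseudospectral bound is stated in the original normalization. Once these bookkeeping points are set, the proof is a direct application of Lemmas \ref{H1} and \ref{Lem:GP} to each Fourier slab.
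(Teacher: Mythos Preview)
Your proposal is correct and follows essentially the same approach as the paper: Fourier decomposition in $(x,z)$, rescaling $y\mapsto k_f y$ so that each slab sees $\nu(k_1^2+k_3^2)+\mathcal{H}'_{k_1,k_3}$, then applying Lemma~\ref{H1} together with Lemma~\ref{Lem:GP} to get $\|e^{-t\mathcal{H}'_{k_1,k_3}}\|\le e^{-ct|k_1\gamma|^{1/2}+\pi/2}$, and finally summing via Plancherel using $|k_1|\ge 1$ and $k_1^2+k_3^2\ge 1$. The paper's proof is identical in structure, only making the Jacobian factor $4\pi^2/k_f$ from the $y$-rescaling explicit in the Plancherel identity.
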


\begin{proof}We write\begin{align*}
&g(x,y,z)=\sum\limits_{(k_1,k_3)\in\mathbb{Z}^2}\hat{\varphi}(k_1,k_fy,k_3)e^{i(k_1,k_3)\cdot(x,z)},\ \ \|g_{\neq}\|_{L^2}^2=\frac{4\pi^2}{k_f}\sum\limits_{k_1\neq 0}\|\hat{\varphi}(k_1,\cdot,k_3)\|_{L^2}^2.
\end{align*}
Then we have
\begin{align*}
&e^{-t\mathcal{H}}g_{\neq}(x,y,z)=\sum\limits_{k_1\neq 0}e^{-\nu(k_1^2+k_3^2)t}e^{-t\mathcal{H}'_{k_1,k_3}}\hat{\varphi}(k_1,\cdot,k_3)(k_fy)e^{i(k_1,k_3)\cdot(x,z)}.
\end{align*}
By Lemma \ref{Lem:GP},  we have
\ben\label{eq:Hk-est}
\|e^{-t\mathcal{H}'_{k_1,k_3}}\|_{L^2\to L^2}\leq e^{-ct|k_1\gamma|^{\frac{1}{2}}+{\pi}/{2}},
\een
from which, it follows that
\begin{align*}
\|e^{-t\mathcal{H}}g_{\neq}\|_{L^2}^2=&\frac{4\pi^2}{k_f}\sum\limits_{k_1\neq 0}\|e^{-\nu(k_1^2+k_3^2)t}e^{-t\mathcal{H}'_{k_1,k_3}}\hat{\varphi}(k_1,\cdot,k_3)\|_{L^2}^2\\ \leq &\frac{4\pi^2}{k_f}\sum\limits_{k_1\neq 0}e^{-2\nu(k_1^2+k_3^2)t}e^{-2ct|k_1\gamma|^{\frac{1}{2}}+{\pi}}\|\hat{\varphi}(k_1,\cdot,k_3)\|_{L^2}^2\\ \leq &\frac{4\pi^2}{k_f}Ce^{-2\nu t}\sum\limits_{k_1\neq 0}e^{-2ct|\gamma|^{\frac{1}{2}}}\|\hat{\varphi}(k_1,\cdot,k_3)\|_{L^2}^2=Ce^{-2c|\gamma|^{\f12}t-2\nu t}\|g_{\neq}\|_{L^2}^2.
\end{align*}
This completes the proof.
\end{proof}

\begin{proposition}\label{Prop: L1}
Given $k_f\in(0,1)$ and $\gamma>0$, if $|\gamma|\gg \nu^2$,  there exists constants $C,c>0$  independent of $\gamma, \nu$  such that
\begin{align}
\label{shear part}\|e^{-t\mathcal{L}}g_{\neq}\|_{L^2}\leq Ce^{-c|\gamma|^{\f12}t-\nu t}\|g_{\neq}\|_{L^2}.
\end{align}
Moreover, for any $c'\in(0,c)$, there exists a constant $C(c,c')$ so that
\begin{align}
\label{shear part energy}\nu \int_0^{+\infty}e^{2c'|\gamma|^{\f12}s}\|(\nabla e^{-t\mathcal{L}}g)_{\neq}(s)\|_{L^2}^2ds\leq C(c,c') \|g_{\neq}\|_{L^2}^2.
\end{align}
\end{proposition}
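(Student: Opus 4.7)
The plan is to adapt the proof of Proposition \ref{Prop: H1} to $\mathcal{L}$, with Lemma \ref{H1} replaced by Lemma \ref{L1}. First I would Fourier decompose $g_{\neq}$ in $(x,z)$ and rescale $y\mapsto y/k_f$, so that on each mode $(k_1,k_3)$ with $k_1\neq 0$ the action of $\mathcal{L}$ on $\hat{g}(k_1,\cdot,k_3)$ becomes $\nu(k_1^2+k_3^2)+\mathcal{L}'_{k_1,k_3}$ on $L^2(\mathbb{T}_{2\pi})$. Because $k_f\in(0,1)$ and $k_1\neq 0$, we have $\beta^2=(k_1^2+k_3^2)/k_f^2\geq k_f^{-2}>1$, hence $1-\beta^{-2}\geq 1-k_f^2>0$, and Lemma \ref{L1} furnishes the \emph{uniform} lower bound $\Psi(\mathcal{L}'_{k_1,k_3})\geq c(1-k_f^2)|k_1\gamma|^{1/2}$. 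Combining this with Lemma \ref{Lem:GP} in the norm $\|\cdot\|_*$ and the equivalence $(1-k_f^2)\|w\|_{L^2}^2\leq \|w\|_*^2\leq \|w\|_{L^2}^2$ (uniform in $(k_1,k_3)$ with $k_1\neq 0$), one obtains
\[
\|e^{-t\mathcal{L}'_{k_1,k_3}}\|_{L^2\to L^2}\leq C e^{-c(1-k_f^2)t|k_1\gamma|^{1/2}}.
\]
Multiplying by the viscous factor $e^{-\nu(k_1^2+k_3^2)t}$ and summing in $(k_1,k_3)$ with $k_1\neq 0$, after absorbing the $k_f$-dependent constants into $c$ and $C$, yields (\ref{shear part}) exactly as in the proof of Proposition \ref{Prop: H1}.

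For (\ref{shear part energy}) I would exploit the dissipation structure of $\mathcal{L}'_{k_1,k_3}$ in $\|\cdot\|_*$. Since $\mathrm{Re}\langle \mathcal{L}'_{k_1,k_3}w,w\rangle_*=\nu k_f^2\|\partial_y w\|_*^2$, the mode $\hat{w}(t)=e^{-\nu(k_1^2+k_3^2)t}e^{-t\mathcal{L}'_{k_1,k_3}}\hat{g}(k_1,\cdot,k_3)$ satisfies the energy identity
\[
\tfrac12\tfrac{d}{dt}\|\hat{w}(t)\|_*^2+\nu(k_1^2+k_3^2)\|\hat{w}(t)\|_*^2+\nu k_f^2\|\partial_y\hat{w}(t)\|_*^2=0.
\]
Multiplying by the weight $e^{2c'|\gamma|^{1/2}t}$ and integrating over $t\in[0,\infty)$, the only term produced with a positive sign is $2c'|\gamma|^{1/2}\int_0^\infty e^{2c'|\gamma|^{1/2}s}\|\hat{w}(s)\|_*^2\,ds$, which for $c'<c$ is bounded by $C(c-c')^{-1}\|\hat{g}(k_1,\cdot,k_3)\|_*^2$ upon inserting the pointwise decay from (\ref{shear part}). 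Summing over $(k_1,k_3)$ with $k_1\neq 0$, and noting that $\|(\nabla w)_{\neq}\|_{L^2}^2$ decomposes into horizontal-frequency pieces $(k_1^2+k_3^2)\|\hat{w}\|_{L^2}^2$ and a $y$-derivative piece $\|\partial_y\hat{w}\|_{L^2}^2$ (both of which appear as dissipation on the left-hand side of the energy identity), yields (\ref{shear part energy}).

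The main obstacle is essentially bookkeeping: the norm $\|\cdot\|_*$ and its equivalence with $\|\cdot\|_{L^2}$ depend on $(k_1,k_3)$ through $\beta$, and it is precisely the strict inequality $k_f<1$ that forces the equivalence constants to be uniform over $(k_1,k_3)$ with $k_1\neq 0$; without this, neither the semigroup bound nor the energy estimate can be summed mode by mode. No new analytic input beyond Proposition \ref{Prop: nonlocal1}, Lemma \ref{L1} and Lemma \ref{Lem:GP} is required, and the argument structurally mirrors that of Proposition \ref{Prop: H1} with the extra energy identity layered on top to produce the weighted $L^2_tH^1_{x,y,z}$ bound.
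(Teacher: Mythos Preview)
Your proposal is correct and follows the same route as the paper. The paper packages the argument as two lemmas (Lemma~\ref{f3} for the mode-by-mode semigroup decay and the lemma immediately after it for the weighted energy estimate), but the content is identical: Gearhart--Pr\"uss in the $\|\cdot\|_*$ norm combined with the uniform equivalence $(1-\beta^{-2})\geq 1-k_f^2>0$ gives \eqref{shear part}, and the energy identity you wrote,
\[
\tfrac12\tfrac{d}{dt}\|\hat w\|_*^2+\nu(k_1^2+k_3^2)\|\hat w\|_*^2+\nu k_f^2\|\partial_y\hat w\|_*^2=0,
\]
is precisely the paper's identity \eqref{dissp1} once one unpacks $\|f\|_*^2=\|f\|_{L^2}^2-\alpha^2\|\varphi\|_{L^2}^2-\|\varphi'\|_{L^2}^2$ and checks that $\alpha^2\|f\|_*^2+\|\partial_y f\|_*^2=(\alpha^2-1)\|f\|_{L^2}^2+\|\partial_y f\|_{L^2}^2$.
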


The proposition follows from the following two lemmas.

\begin{lemma}\label{f3}
Given $k_f\in(0,1]$ and $\gamma>0$, let $f(t,y)$ solve
\begin{align}
\label{3.2}&\partial_tf+\big(\nu(k_1^2+k_3^2)+\mathcal{L}'_{k_1,k_3}\big)f=0,\quad f(0)=f_0.
\end{align}
If $|\gamma|\gg \nu^2$, then there exist constants $C,c>0$, such that if $k_1,k_3\in\mathbb{Z},\ k_1\neq 0,\ \beta^2={(k_1^2+k_3^2)}/k_f^2>1$, then
\begin{align}
\label{shear part1}\|f(t)\|_{L^2}\leq Ce^{-c|k_1\gamma|^{\f12}t-\nu(k_1^2+k_2^2) t}\|f_0\|_{L^2}.
\end{align}
\end{lemma}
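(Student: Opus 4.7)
The strategy is to combine the pseudospectral bound for $\mathcal{L}'_{k_1,k_3}$ established in Lemma \ref{L1} with the Gearhart--Pr\"uss type inequality of Lemma \ref{Lem:GP}, then translate back from the weighted norm $\|\cdot\|_*$ to the ambient $L^2$ norm. The dissipative factor $e^{-\nu(k_1^2+k_3^2)t}$ is separated out at the beginning by writing
\[
f(t)=e^{-\nu(k_1^2+k_3^2)t}\,e^{-t\mathcal{L}'_{k_1,k_3}}f_0,
\]
so that only the semigroup generated by the m-accretive operator $\mathcal{L}'_{k_1,k_3}$ on $X=L^2(\mathbb{T}_{2\pi})$ (equipped with $\|\cdot\|_*$) remains to be estimated.

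First I would invoke Lemma \ref{Lem:GP} for $H=\mathcal{L}'_{k_1,k_3}$ in the Hilbert space $(X,\|\cdot\|_*)$. Lemma \ref{L1} supplies $\Psi(\mathcal{L}'_{k_1,k_3})\geq c|k_1\gamma|^{1/2}(1-\beta^{-2})$, hence
\[
\big\|e^{-t\mathcal{L}'_{k_1,k_3}}\big\|_{*\to *}\leq \exp\bigl(-ct|k_1\gamma|^{1/2}(1-\beta^{-2})+\pi/2\bigr).
\]
Next I would transfer this to the $L^2$ operator norm by means of the two-sided comparison $(1-\beta^{-2})\|w\|_{L^2}^2\leq \|w\|_*^2\leq \|w\|_{L^2}^2$ recorded in the proof of Lemma \ref{L1}; this costs a factor $(1-\beta^{-2})^{-1/2}$ on both sides and yields
\[
\big\|e^{-t\mathcal{L}'_{k_1,k_3}}f_0\big\|_{L^2}\leq (1-\beta^{-2})^{-1/2}\exp\bigl(-ct|k_1\gamma|^{1/2}(1-\beta^{-2})+\pi/2\bigr)\|f_0\|_{L^2}.
\]

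The only remaining point is to eliminate the ugly factor $(1-\beta^{-2})$ from both the exponent and the prefactor. Here I would use the arithmetic fact that $k_1\neq 0$ is an integer and $k_f\in(0,1]$, so $\beta^2=(k_1^2+k_3^2)/k_f^2\geq 1/k_f^2$ whenever $k_3=0$, and in any case $\beta^2>1$ forces $k_1^2+k_3^2\geq \lceil k_f^2+1\rceil \wedge 2$. A short case analysis (splitting $k_f<1$ and $k_f=1$) shows $1-\beta^{-2}\geq c_f$ for a positive constant $c_f$ depending only on $k_f$, which can be absorbed into $C$ and $c$. Multiplying by the separated factor $e^{-\nu(k_1^2+k_3^2)t}$ then produces \eqref{shear part1}.

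I do not expect any serious obstacle: the pseudospectral bound already does the heavy lifting, and the only subtlety is the passage between $\|\cdot\|_*$ and $\|\cdot\|_{L^2}$. The point to handle carefully is that the decay rate obtained from Gearhart--Pr\"uss in the weighted norm carries the geometric factor $1-\beta^{-2}$, and one must check that it is bounded away from zero uniformly in the admissible $(k_1,k_3)$. Once that quantitative gap is noted, the statement follows.
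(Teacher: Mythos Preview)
Your proposal is correct and follows essentially the same route as the paper: separate out the factor $e^{-\nu(k_1^2+k_3^2)t}$, apply Lemma~\ref{Lem:GP} together with the pseudospectral bound of Lemma~\ref{L1} in the $\|\cdot\|_*$ norm, pass back to $L^2$ via the two-sided comparison $(1-\beta^{-2})\|w\|_{L^2}^2\le\|w\|_*^2\le\|w\|_{L^2}^2$, and then absorb $1-\beta^{-2}$ using the lower bound $\beta^2\ge c_0(k_f)>1$ (the paper records explicitly $c_0=1/k_f^2$ for $k_f<1$ and $c_0=2$ for $k_f=1$). One tiny slip: the norm transfer costs a single factor $(1-\beta^{-2})^{-1/2}$, not one on each side, since $\|f_0\|_*\le\|f_0\|_{L^2}$ already.
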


\begin{proof}
As $f(t)=e^{-\nu(k_1^2+k_3^2)t}e^{-t\mathcal{L}'_{k_1,k_3}}f_0$,  we have
\ben
\|f(t)\|_{L^2}\le e^{-\nu(k_1^2+k_3^2)t}\|e^{-t\mathcal{L}'_{k_1,k_3}}f_0\|_{L^2}.\label{eq:f-L2}
\een
By Lemma \ref{L1} and Lemma \ref{Lem:GP},  we have
\beno
\|e^{-t\mathcal{L}'_{k_1,k_3}}f_0\|_{*}\leq e^{-ct(1-\beta^{-2})|k_1\gamma|^{\frac{1}{2}}+{\pi}/{2}}\|f_0\|_{*}.
\eeno
Notice that $\beta^2\geq c_0(k_f)>1$ with $c_0=2$ for $k_f=1$ and $c_0=1/k_f^2$ for $k_f<1.$ Using the fact that $(1-\beta^{-2})\|f\|_{L^2}^2\leq \|f\|_*^2\leq \|f\|_{L^2}^2, $ we have
\beno
\|e^{-t\mathcal{L}'_{k_1,k_3}}f_0\|_{L^2}\leq Ce^{-ct|k_1\gamma|^{\frac{1}{2}}}\|f_0\|_{L^2}
\eeno
with $C>0,c>0$ depending only on $k_f$, which along with \eqref{eq:f-L2}
gives \eqref{shear part1}.
\end{proof}

\begin{lemma}
Given $k_f\in(0,1]$ and $\gamma>0$, let $f(t,y)$ solve \eqref{3.2}.
If $|\gamma|\gg \nu^2$, then there exist constants $C,c>0$, such that if $k_1,k_3\in\mathbb{Z},\ k_1\neq 0,\ \beta^2={(k_1^2+k_3^2)}/k_f^2>1$,
then
\begin{align}
&\nu k_f^2\int_0^t\big(\beta^2\|f(s)\|_{L^2}^2+\|\pa_{y}f(s)\|_{L^2}^2\big)ds\leq C \|f_0\|_{L^2}^2,\label{est of L(k_1,k_3)2}\\
&\nu k_f^2\int_0^te^{2as}\big(\beta^2\|f(s)\|_{L^2}^2+\|\pa_{y}f(s)\|_{L^2}^2\big)ds\leq C(1+at)\|f_0\|_{L^2}^2,\label{est of L(k_1,k_3)3}
\end{align}
where $a=\nu(k_1^2+k_3^2)+c|k_1\gamma|^{\frac{1}{2}}$ with $c$ as in  (\ref{shear part1}). Moreover, for any $c'\in(0,c)$, there exists a constant $C(c,c')$, so that
\begin{align}
\nu k_f^2\int_0^{+\infty}e^{2c'|k_1\gamma|^{\f12}s}\big(\al^2\|f(s)\|_{L^2}^2+\|\pa_{y}f(s)\|_{L^2}^2)ds\leq C(c,c') \|f_0\|_{L^2}^2.\label{est of L(k_1,k_3)4}
\end{align}
\end{lemma}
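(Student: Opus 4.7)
The plan is to derive all three estimates from a single energy identity obtained by pairing equation \eqref{3.2} with $f$ in the weighted inner product $\langle\cdot,\cdot\rangle_*$. Using the identity $\operatorname{Re}\langle\mathcal{L}'_{k_1,k_3}f,f\rangle_*=\nu k_f^2\|\partial_yf\|_*^2$ recorded above Lemma \ref{H1}, I obtain
\begin{align*}
\tfrac12\tfrac{d}{dt}\|f(t)\|_*^2+\nu(k_1^2+k_3^2)\|f(t)\|_*^2+\nu k_f^2\|\partial_yf(t)\|_*^2=0.
\end{align*}
A direct Fourier-series calculation on $\mathbb{T}_{2\pi}$ gives the useful identity
\begin{align*}
\beta^2\|f\|_*^2+\|\partial_yf\|_*^2=(\beta^2-1)\|f\|_{L^2}^2+\|\partial_yf\|_{L^2}^2,
\end{align*}
and since $\beta^2>1$ with $\beta^2\geq\max(2,k_f^{-2})$ under the hypotheses, one has $\beta^2-1\geq c_{k_f}\beta^2$ and therefore $\beta^2\|f\|_*^2+\|\partial_yf\|_*^2\geq c_{k_f}(\beta^2\|f\|_{L^2}^2+\|\partial_yf\|_{L^2}^2)$, which will be the bridge to the $L^2$-formulation.

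For \eqref{est of L(k_1,k_3)2} I simply integrate the energy identity on $[0,t]$, use $\nu(k_1^2+k_3^2)=\nu k_f^2\beta^2$, and drop $\|f(t)\|_*^2\geq 0$ to get
\begin{align*}
2\nu k_f^2\int_0^t\bigl(\beta^2\|f\|_*^2+\|\partial_yf\|_*^2\bigr)\,ds\leq\|f_0\|_*^2\leq\|f_0\|_{L^2}^2,
\end{align*}
and then invoke the norm inequality above. For \eqref{est of L(k_1,k_3)3} I multiply the energy identity by $e^{2as}$ with $a=\nu(k_1^2+k_3^2)+c|k_1\gamma|^{1/2}$ and integrate. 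The surplus that appears, $2a\int_0^t e^{2as}\|f(s)\|_*^2\,ds$, is controlled by the pointwise decay \eqref{shear part1} which gives $e^{2as}\|f(s)\|_{L^2}^2\leq C\|f_0\|_{L^2}^2$, so the surplus is bounded by $Cat\|f_0\|_{L^2}^2$; after rearrangement
\begin{align*}
2\nu k_f^2\int_0^t e^{2as}\|\partial_yf\|_*^2\,ds\leq\|f_0\|_*^2+Cat\|f_0\|_{L^2}^2\leq C(1+at)\|f_0\|_{L^2}^2,
\end{align*}
while the $\beta^2\|f\|_{L^2}^2$ contribution is treated directly by the pointwise decay, $\nu k_f^2\beta^2\int_0^t e^{2as}\|f\|_{L^2}^2\,ds\leq C\nu(k_1^2+k_3^2)t\|f_0\|_{L^2}^2\leq Cat\|f_0\|_{L^2}^2$.

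For \eqref{est of L(k_1,k_3)4} I repeat the scheme with the smaller weight $e^{2c'|k_1\gamma|^{1/2}s}$. Because $c'<c$, the weighted decay $e^{2c'|k_1\gamma|^{1/2}s}\|f(s)\|_{L^2}^2\leq Ce^{-2(c-c')|k_1\gamma|^{1/2}s}\|f_0\|_{L^2}^2$ is integrable on $[0,\infty)$, producing a bounded surplus
\begin{align*}
2c'|k_1\gamma|^{1/2}\int_0^\infty e^{2c'|k_1\gamma|^{1/2}s}\|f\|_*^2\,ds\leq\tfrac{Cc'}{c-c'}\|f_0\|_{L^2}^2=C(c,c')\|f_0\|_{L^2}^2;
\end{align*}
passing to $t\to\infty$ in the weighted energy identity yields the claim after the usual norm conversion. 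I do not expect any serious obstacle, since all three estimates rest on the same clean identity together with the pointwise decay already proved; the only bookkeeping care is the Fourier identity bridging $\|\cdot\|_*$ and $\|\cdot\|_{L^2}$ derivative norms, and ensuring that the ratio $(\beta^2-1)/\beta^2$ is bounded below uniformly on the admissible range of $(k_1,k_3,k_f)$.
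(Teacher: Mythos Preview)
Your proposal is correct and is essentially the same argument as the paper's, just packaged more concisely: the quantity the paper tracks, $\|f\|_{L^2}^2-\alpha^2\|\varphi\|_{L^2}^2-\|\varphi'\|_{L^2}^2$ with $\varphi=(\alpha^2-\partial_y^2)^{-1}f$, is precisely $\|f\|_*^2$, and your Fourier identity $\beta^2\|f\|_*^2+\|\partial_yf\|_*^2=(\beta^2-1)\|f\|_{L^2}^2+\|\partial_yf\|_{L^2}^2$ is exactly what makes the paper's dissipation term $(\alpha^2-1)\|f\|_{L^2}^2+\|\partial_yf\|_{L^2}^2$ appear. The only cosmetic difference is that for \eqref{est of L(k_1,k_3)3} you split off the $\beta^2\|f\|_{L^2}^2$ piece and bound it directly via \eqref{shear part1}, whereas the paper keeps the full dissipative combination throughout; both are fine, though if you keep the full combination you avoid the separate step of converting $\|\partial_yf\|_*^2$ back to $\|\partial_yf\|_{L^2}^2$ (which you would otherwise need, since $\|\cdot\|_*\leq\|\cdot\|_{L^2}$ goes the wrong way there).
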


\begin{proof}
We define $-(\partial_y^2-\alpha^2)\varphi=f$. Then we have
\begin{align*}
&\big\langle\partial_tf+(\nu(k_1^2+k_3^2)+\mathcal{L}'_{k_1,k_3})f,f\big\rangle\\
&=\f{1}{2}\f{d}{dt}\|f(t)\|_{L^2}^2+\nu k_f^2\beta^2\|f\|_{L^2}^2+\nu k_f^2\|\partial_yf\|_{L^2}^2+\text{Re}\f{ik_1}{k_f^2}\f{\gamma}{\nu}\langle\sin y(f-\varphi),f\rangle=0,
\end{align*}
and
\begin{align*}
&\big\langle\partial_tf+(\nu(k_1^2+k_3^2)+\mathcal{L}'_{k_1,k_3})f,\varphi\big\rangle\\
&=\f{1}{2}\f{d}{dt}\big(\beta^2\|\varphi(t)\|_{L^2}^2+\|\varphi'(t)\|_{L^2}^2\big)+\nu k_f^2\|f(t)\|_{L^2}^2+\text{Re}\f{ik_1}{k_f^2}\f{\gamma}{\nu}\langle\sin y(f-\varphi),\varphi\rangle=0,
\end{align*}
which imply that
\begin{align}
\label{dissp1}\f{1}{2}\f{d}{dt}\big(\|f(t)\|_{L^2}^2-\alpha^2\|\varphi(t)\|_{L^2}^2-\|\varphi'(t)\|_{L^2}^2\big)+\nu k_f^2\big((\alpha^2-1)\|f(t)\|_{L^2}^2+\|\partial_yf\|_{L^2}^2\big)=0.
\end{align}
Thanks to  $\|f\|_{L^2}^2=\beta^4\|\varphi\|_{L^2}^2+2\beta^2\|\varphi'\|_{L^2}^2+\|\varphi''\|_{L^2}^2$, we obtain
\begin{align*}
&\nu k_f^2\int_0^t\big((\beta^2-1)\|f(s)\|_{L^2}^2+\|\pa_{y}f(s)\|_{L^2}^2\big)ds\\
&=\f{1}{2}\Big[(\|f(0)\|_{L^2}^2-\beta^2\|\varphi(0)\|_{L^2}^2-\|\varphi'(0)\|_{L^2}^2)-(\|f(t)\|_{L^2}^2-\beta^2\|\varphi(t)\|_{L^2}^2-\|\varphi'(t)\|_{L^2}^2)\Big]\\
&\leq C \|f_0\|_{L^2}^2,
\end{align*}
which gives (\ref{est of L(k_1,k_3)2}) due to $\beta^2-1\sim\beta^2$.
Moreover, by \eqref{shear part1},
\begin{align*}
&\nu k_f^2\int_0^te^{2as}\big((\beta^2-1)\|f(s)\|_{L^2}^2+\|\pa_{y}f(s)\|_{L^2}^2\big)ds\\
&\leq C(\|f_0\|_{L^2}^2-e^{2at}\|f(t)\|_{L^2}^2)+\int_0^t2a\|f(s)\|_{L^2}^2e^{2as}ds\\
&\lesssim (1+at)\|f_0\|_{L^2}^2,
\end{align*}
which gives (\ref{est of L(k_1,k_3)3}). It follows from (\ref{dissp1}) that $\forall c'\in(0,c)$
\begin{align*}
&\nu k_f^2\int_0^te^{2c'\sqrt{|k_1\gamma|}s}((\al^2-1)\|f(s)\|_{L^2}^2+\|\pa_{y}f(s)\|_{L^2}^2)ds\\
&=\f{1}{2}\big[(\|f(0)\|_{L^2}^2-\alpha^2\|\varphi(0)\|_{L^2}^2-\|\varphi'(0)\|_{L^2}^2)-e^{2c'\sqrt{|k_1\gamma|}t}(\|f(t)\|_{L^2}^2-\alpha^2\|\varphi(t)\|_{L^2}^2-\|\varphi'(t)\|_{L^2}^2)\big]\\
&\quad+2c'\sqrt{|k_1\gamma|}\int_0^t\|f(s)\|_{L^2}^2e^{2c'\sqrt{|k_1\gamma|}s}ds\\
&\lesssim\|f_0\|_{L^2}^2+2c'\sqrt{|k_1\gamma|}\int_0^te^{2(c'-c)\sqrt{|k_1\gamma|}s}ds\|f(0)\|_{L^2}^2\lesssim\|f_0\|_{L^2}^2,
\end{align*}
which gives (\ref{est of L(k_1,k_3)4}).
\end{proof}

\section{Enhanced dissipation of the linearized Navier-Stokes equations}

In this section, we prove Theorem \ref{thm:decay-L}.
We will need to use the wave operator method introduced in \cite{LWZ}.

\subsection{Wave operator}

In the appendix, we will construct a wave operator $\mathbb{D}_1=\mathbb{D}_1^{(\al)}$ with the following properties.

\begin{proposition}\label{Prop: oper_D}
Let $\al>1$. It holds that
\begin{align}
&\mathbb{D}_1\big(\cos y(1+(\partial_y^2-\alpha^2)^{-1})\omega\big)=\cos y\mathbb{D}_1(\omega)+(\partial_y^2-\alpha^2)^{-1}\omega,\label{oper,cosyD1}\\
&\|\sin y\mathbb{D}_1(\partial_y^2\omega)-\partial_y^2(\sin y\mathbb{D}_1(\omega))\|_{L^2}\leq C\big(|\alpha|\|\omega\|_{L^2}+\|\partial_y\omega\|_{L^2}\big).\label{est of D1}
\end{align}
If $\om\in H^2( \mathbb{T})$, then $\bbD_1(\om)\in  H^2( \mathbb{T})$ and
\begin{align}
&\|\sin y\bbD_1(\om)\|_{L^2}\leq C\al^{-1}\|\om\|_{L^2},\label{D1.1}\\
&\|\pa_{y}(\sin y\bbD_1(\om))\|_{L^2}\leq C\big(\|\om\|_{L^2}+\al^{-1}\|\pa_{y}\om\|_{L^2}\big)\label{D1.2}.
\end{align}
\end{proposition}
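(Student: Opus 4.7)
The plan is to build $\bbD_1$ by conjugating the already-available operator $\bbD_2$ with the half-period shift $\tau f(y):=f(y+\pi/2)$ on $\T$, which swaps $\sin y\leftrightarrow\cos y$ up to a sign: $\tau(\sin y\cdot h)=\cos y\cdot\tau h$ and $\tau(\cos y\cdot h)=-\sin y\cdot\tau h$, while $\tau$ commutes with $\pa_y^2$ and hence with $R_\al:=(\pa_y^2-\al^2)^{-1}$. Setting $\bbD_1\om:=\tau^{-1}\bbD_2\tau\om$, the map $\tau$ is a unitary isometry on every $H^k(\T)$, so all $\bbD_2$-bounds transfer verbatim to $\bbD_1$.

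For the intertwining \eqref{oper,cosyD1}, I push $\cos y(1+R_\al)\om$ through $\tau$, apply the $\bbD_2$-identity from \cite{WZZ3}, and push back through $\tau^{-1}$ using $\tau^{-1}(\sin y\cdot k)=-\cos y\cdot\tau^{-1}k$:
\begin{align*}
\bbD_1\bigl(\cos y(1+R_\al)\om\bigr)
&=\tau^{-1}\bbD_2\bigl(-\sin y(1+R_\al)\tau\om\bigr)\\
&=-\tau^{-1}\bigl(\sin y\,\bbD_2(\tau\om)-R_\al\tau\om\bigr)\\
&=\cos y\,\bbD_1\om+R_\al\om,
\end{align*}
the two sign flips conspiring to produce the $+$ on the right (as opposed to the $-$ in the $\bbD_2$-identity).

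For the quantitative bounds \eqref{D1.1}--\eqref{D1.2}, the operator identity $\sin y\cdot\tau^{-1}g=\tau^{-1}(\cos y\cdot g)$ gives $\sin y\,\bbD_1\om=\tau^{-1}(\cos y\,\bbD_2\tau\om)$; by isometry of $\tau$ these reduce to the companion estimates
\[
\|\cos y\,\bbD_2 v\|_{L^2}\lesssim\al^{-1}\|v\|_{L^2},\qquad
\|\pa_y(\cos y\,\bbD_2 v)\|_{L^2}\lesssim\|v\|_{L^2}+\al^{-1}\|\pa_y v\|_{L^2}
\]
for $\bbD_2$---precisely the quantitative content (multiplication by the off-diagonal factor $\cos y$) that makes the good unknown $g+\f{k_3}{k_1 k_f}\cos y\,\bbD_2 f$ controllable in the 2D reduction described in the sketch of Section~2. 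The commutator estimate \eqref{est of D1} similarly reduces, via $[\tau,\pa_y^2]=0$, to the companion commutator bound $\|\cos y\,\bbD_2(\pa_y^2 v)-\pa_y^2(\cos y\,\bbD_2 v)\|_{L^2}\lesssim\al\|v\|_{L^2}+\|\pa_y v\|_{L^2}$ for $\bbD_2$.

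The shift reduction is purely cosmetic; the entire analytic burden lies in proving the companion estimates for $\bbD_2$. The expected mechanism is that $\cos y$ vanishes at the critical points $y=\pm\pi/2$ of $\sin y$, and the wave operator $\bbD_2$---built from Jost-type solutions of a Rayleigh-type ODE adapted to $\sin y$, as in \cite{WZZ3}---converts this vanishing into a gain of $\al^{-1}$ via Hardy-type bounds analogous to those used in Section~3. This quantitative step, rather than the algebraic intertwining, is where the real difficulty sits; once it is in place the present proposition follows by the transfer argument above.
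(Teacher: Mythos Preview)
Your translation argument is algebraically fine, but the direction of the reduction is backwards relative to the paper and makes the proposal circular. In this paper $\bbD_2$ is \emph{defined} from $\bbD_1$ via $\bbD_2=\tau_{-\pi/2}\circ\bbD_1\circ\tau_{\pi/2}$ (see the paragraph immediately following the statement of the proposition), and the bounds \eqref{oper,sinyD2}--\eqref{D6} for $\bbD_2$ are deduced from Proposition~\ref{Prop: oper_D}, not the other way around. So when you write that the ``companion estimates for $\bbD_2$'' are ``already-available'' and then defer to them, you are assuming precisely what is being proved.

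You might reply that \cite{WZZ3} already supplies a wave operator with the needed properties. The intertwining identity and a commutator estimate of the type \eqref{est of D1} are indeed there, but the specific $\al^{-1}$--gain bounds \eqref{D1.1}--\eqref{D1.2} (equivalently your companion bounds $\|\cos y\,\bbD_2 v\|_{L^2}\lesssim\al^{-1}\|v\|_{L^2}$ and the $H^1$ version) are not in \cite{WZZ3}; the paper proves them from scratch in the appendix, requiring new ingredients (a refined $L^2$ bound $\|\rho\mathrm{II}_1(\varphi)\|_{L^2}\le C\|\varphi\|_{L^2}$ via a dyadic decomposition in the spectral parameter, and a first-order commutator bound for $\mathcal{L}_0$) together with a lengthy odd/even case analysis. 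That is the actual content of the proposition, and your proposal does not address it---you explicitly flag this step as ``where the real difficulty sits'' and stop. The translation trick is therefore not an alternative proof but only a restatement of the problem in the $\sin$-convention.
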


Let us make a translation $ \mathbb{D}_2=\mathbb{D}_2^{(\al)}=\tau_{-\pi/2}\circ\mathbb{D}_1\circ\tau_{\pi/2},$ where $\tau_{a}f(y)=f(a+y)$. Then
we find that
\begin{align}
&\mathbb{D}_2\big(\sin y(1+(\partial_y^2-\alpha^2)^{-1})\omega\big)=\sin y\mathbb{D}_2(\omega)-(\partial_y^2-\alpha^2)^{-1}\omega,\label{oper,sinyD2}\\
&\|\cos y\mathbb{D}_2(\partial_y^2\omega)-\partial_y^2(\cos y\mathbb{D}_2(\omega))\|_{L^2}\leq C\big(|\alpha|\|\omega\|_{L^2}+\|\partial_y\omega\|_{L^2}\big),\label{est of D2}\\
&\|\cos y\mathbb{D}_2(\omega)\|_{L^2}\leq C\alpha^{-1}\|\omega\|_{L^2},\label{D2.1}\\
\label{D6}
&\|\partial_{y}(\cos y\mathbb{D}_2(\omega))\|_{L^2}\leq C\big(\|\omega\|_{L^2}+\alpha^{-1}\|\partial_{y}\omega\|_{L^2}\big).
\end{align}

\subsection{Enhanced dissipation}
Let $ \Delta v_2=\varphi$, and taking the Fourier transform in $x,z$ and changing $y$ to $ k_f y$, we have
\begin{align*}
&\varphi(t,x,y,z)=\sum\limits_{(k_1,k_3)\in\mathbb{Z}^2}\hat{\varphi}(t,k_1,k_fy,k_3)e^{i(k_1,k_3)\cdot(x,z)},\\
&\omega_2(t,x,y,z)=\sum\limits_{(k_1,k_3)\in\mathbb{Z}^2}\hat{\omega}_2(t,k_1,k_fy,k_3)e^{i(k_1,k_3)\cdot(x,z)}.
\end{align*}
Let $f(t,y)=\hat\varphi(t,k_1,y,k_3)$ and $g(t,y)=\hat{\omega}_2(t,k_1,y,k_3)$. Then the linearized Navier-Stokes equations \eqref{eq:NS-L} become
\begin{align}\label{eq:NS-LF}
\left\{
\begin{array}{l}
\partial_tf+(\nu(k_1^2+k_3^2)+\mathcal{L}'_{k_1,k_3})f=0,\\
\partial_tg+(\nu(k_1^2+k_3^2)+\mathcal{H}'_{k_1,k_3})g=\frac{ik_3}{k_f^3}\f{\gamma}{\nu}\cos y (\alpha^2-\partial_y^2)^{-1}f,\\
f(0)=f_0,\quad g(0)=g_0.
\end{array}\right.
\end{align}
where $ \alpha^2=\f{k_1^2+k_3^2}{k_f^2}$ and
\beno
&&\mathcal{L}'_{k_1,k_3}=-\nu k_f^2\partial_y^2+\frac{ik_1}{k_f^2}\f{\gamma}{\nu }\sin y(1-(\alpha^2-\partial_y^2)^{-1}),\\
&&\mathcal{H}'_{k_1,k_3}=-\nu k_f^2\partial_y^2+\frac{ik_1}{k_f^2}\f{\gamma}{\nu }\sin y.
\eeno

In what follows, we assume $\gamma\gg \nu^2$.

\subsubsection{$\mathbf{Case\ of\ \al>1}$}

\begin{proposition}\label{f4}
If $k_1\neq 0$ and $\al>1$, then there exist constants $C,c>0$ so that
\begin{align}
\|g(t)\|_{L^2} \leq Ce^{-at}\|g_0\|_{L^2}+\f{Ce^{-at}(1+at)\|f_0\|_{L^2}}{|k_1|},\label{eq:g-L2}
\end{align}
where $a=\nu(k_1^2+k_3^2)+c|k_1\gamma|^{\frac{1}{2}}$.\end{proposition}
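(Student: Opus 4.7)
The plan is to apply the wave operator method: the coupling term $\frac{ik_3}{k_f^3}\frac{\gamma}{\nu}\cos y(\alpha^2-\partial_y^2)^{-1}f$ on the right of the $g$-equation cannot be treated as a source because of the factor $\gamma/\nu$, but it can be absorbed into a modified unknown via the wave operator $\mathbb{D}_2$ from Proposition \ref{Prop: oper_D}. Concretely, I would set
\begin{align*}
g_1 \;=\; g \;+\; \frac{k_3}{k_1 k_f}\cos y\,\mathbb{D}_2(f),
\end{align*}
and use the intertwining identity \eqref{oper,sinyD2} to rewrite $\mathbb{D}_2$ applied to the nonlocal shear term coming from $\mathcal{L}'_{k_1,k_3} f$. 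A direct computation using $\mathcal{L}'_{k_1,k_3}=-\nu k_f^2\partial_y^2+\frac{ik_1}{k_f^2}\frac{\gamma}{\nu}\sin y(1+(\partial_y^2-\alpha^2)^{-1})$ and $\mathcal{H}'_{k_1,k_3}=-\nu k_f^2\partial_y^2+\frac{ik_1}{k_f^2}\frac{\gamma}{\nu}\sin y$ shows that the $\cos y$ multiplying $(\alpha^2-\partial_y^2)^{-1}f$ is cancelled exactly (using the $-(\partial_y^2-\alpha^2)^{-1}\omega$ extra term produced by $\mathbb{D}_2$), and so is the $\cos y\sin y\,\mathbb{D}_2(f)$ contribution (using that $\mathcal{H}'_{k_1,k_3}$ multiplies the added term by $\sin y$). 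The leftover is purely the commutator, giving
\begin{align*}
\partial_t g_1+\bigl(\nu(k_1^2+k_3^2)+\mathcal{H}'_{k_1,k_3}\bigr)g_1 \;=\; \frac{\nu k_f k_3}{k_1}\bigl[\cos y\,\mathbb{D}_2,\partial_y^2\bigr]f.
\end{align*}

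Now I would apply the semigroup bound. Since $\mathcal{H}'_{k_1,k_3}$ is m-accretive and $\Psi(\mathcal{H}'_{k_1,k_3})\geq c|k_1\gamma|^{1/2}$ by Lemma \ref{H1}, Lemma \ref{Lem:GP} yields $\|e^{-t(\nu(k_1^2+k_3^2)+\mathcal{H}'_{k_1,k_3})}\|_{L^2\to L^2}\leq Ce^{-at}$ with $a=\nu(k_1^2+k_3^2)+c|k_1\gamma|^{1/2}$. Duhamel and the commutator estimate \eqref{est of D2} then give
\begin{align*}
\|g_1(t)\|_{L^2}\leq Ce^{-at}\|g_1(0)\|_{L^2}+\frac{C\nu|k_3|}{|k_1|}\int_0^t e^{-a(t-s)}\bigl(\alpha\|f(s)\|_{L^2}+\|\partial_y f(s)\|_{L^2}\bigr)\,ds.
\end{align*}

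The key bookkeeping step is the Duhamel integral. Writing $e^{-a(t-s)}=e^{-at}e^{as}$ and applying Cauchy--Schwarz,
\begin{align*}
\int_0^t e^{-a(t-s)}\bigl(\alpha\|f\|+\|\partial_y f\|\bigr)\,ds \;\leq\; e^{-at}\,t^{1/2}\Bigl(\int_0^t e^{2as}\bigl(\alpha^2\|f\|_{L^2}^2+\|\partial_y f\|_{L^2}^2\bigr)\,ds\Bigr)^{1/2},
\end{align*}
and the weighted energy estimate \eqref{est of L(k_1,k_3)3} bounds the last factor by $C(1+at)^{1/2}\nu^{-1/2}\|f_0\|_{L^2}$. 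Since $t^{1/2}(1+at)^{1/2}\leq C(1+at)/\sqrt{a}$ and $\nu|k_3|^2/a\leq 1$ (because $\nu|k_3|^2\leq a$), the forcing contribution is bounded by $Ce^{-at}(1+at)\|f_0\|_{L^2}/|k_1|$, which is the desired shape.

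To finish, I would transfer the bound from $g_1$ back to $g$. Property \eqref{D2.1} gives $\|\cos y\,\mathbb{D}_2(f)\|_{L^2}\leq C\alpha^{-1}\|f\|_{L^2}$, so $\|g(t)-g_1(t)\|_{L^2}\leq \frac{C|k_3|}{|k_1|k_f\alpha}\|f(t)\|_{L^2}\leq \frac{C}{|k_1|}\|f(t)\|_{L^2}$; combined with Lemma \ref{f3} this contributes at worst $Ce^{-at}\|f_0\|_{L^2}/|k_1|$, and similarly at $t=0$ for $\|g_1(0)-g_0\|_{L^2}$. The main obstacle I anticipate is precisely the weighted-integral step above: one must squeeze the bound through to produce the linear-in-$at$ growth factor (rather than a worse polynomial), and verifying algebraically that $\nu|k_3|^2\leq a$ together with the inequality $t^{1/2}(1+at)^{1/2}\leq C(1+at)/\sqrt{a}$ is what allows the $1/|k_1|$ prefactor in the final inequality; every other ingredient (the cancellation in the $g_1$-equation, the semigroup bound, the commutator estimate) is essentially automatic once the wave-operator machinery is in place.
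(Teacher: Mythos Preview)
Your proposal is correct and follows essentially the same route as the paper: introduce $g_1=g+\frac{k_3}{k_1k_f}\cos y\,\mathbb{D}_2(f)$, use the intertwining identity \eqref{oper,sinyD2} to reduce the $g_1$-equation to one driven by the commutator $[\cos y\,\mathbb{D}_2,\partial_y^2]f$, apply the semigroup bound for $\mathcal{H}'_{k_1,k_3}$, Duhamel, Cauchy--Schwarz, the weighted energy estimate \eqref{est of L(k_1,k_3)3}, and then convert back via \eqref{D2.1} and Lemma \ref{f3}. Your handling of the Duhamel integral---using $t^{1/2}(1+at)^{1/2}\le (1+at)/\sqrt{a}$ together with $\nu k_3^2\le a$---is exactly the algebra the paper invokes (implicitly) to obtain the factor $(1+at)/|k_1|$.
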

\begin{proof}
By \eqref{oper,sinyD2}, we have
\begin{align*}
&\cos y\mathbb{D}_2\circ\mathcal{L}'_{k_1,k_3}-\mathcal{H}'_{k_1,k_3}\circ\cos y\mathbb{D}_2=-\nu k_f^2[\cos y\mathbb{D}_2,\partial_y^2]+\frac{ik_1}{k_f^3}\f{\gamma}{\nu }\cos y(\alpha^2-\partial_y^2)^{-1}.
\end{align*}
Let $g_1=g+\f{k_3}{k_1k_f}\cos y\mathbb{D}_2(f)$. Then we have
\begin{align}
\partial_tg_1+(\nu(k_1^2+k_3^2)+\mathcal{H}'_{k_1,k_3})g_1= \f{\nu k_fk_3}{k_1}[\cos y\mathbb{D}_2,\partial_y^2]f .
\end{align}

It follows from \eqref{eq:Hk-est} that
\begin{align*}
\|e^{-t(\nu(k_1^2+k_3^2)+\mathcal{H}_{k_1,k_3}^{'})}\|_{L^2\to L^2}\leq Ce^{-\nu(k_1^2+k_3^2)t-c|k_1\gamma|^{\frac{1}{2}}t},
\end{align*}
which along with \eqref{est of D2} and (\ref{est of L(k_1,k_3)3}) gives
\begin{align*}
\|g_1(t)\|_{L^2}\leq& Ce^{-\nu(k_1^2+k_3^2)t-c|k_1\gamma|^{\frac{1}{2}}t} \|g_1(0)\|_{L^2}\\
&+C\f{\nu |k_fk_3|}{|k_1|}\int_0^te^{-(\nu(k_1^2+k_3^2)+c|k_1\gamma|^{\frac{1}{2}})(t-s)}\big\|[\cos y\mathbb{D}_2,\partial_y^2]f(s)\big\|_{L^2}ds\\
\leq& Ce^{-at}\|g_1(0)\|_{L^2}+Ce^{-at}\f{\nu |k_f k_3|}{|k_1|}\int_0^te^{as}\big(\alpha\|f(s)\|_{L^2}+\|\partial_{y}f(s)\|_{L^2}\big)ds\\
\leq& Ce^{-at}\|g_1(0)\|_{L^2}+Ce^{-at}\f{\nu |k_f k_3|}{|k_1|}\sqrt{t}\Big(\int_0^te^{2as}\big(\alpha^2\|f(s)\|_{L^2}^2+\|\partial_{y}f(s)\|_{L^2}^2\big)ds\Big)^{\f12}\\
\leq& Ce^{-at}\|g_1(0)\|_{L^2}+Ce^{-at} \big|\f{k_3}{k_1}\big| (\nu t)^{\f12}(1+at)^{\frac{1}{2}}\|f_0\|_{L^2}\\
 \leq &Ce^{-at}\|g_1(0)\|_{L^2}+\f{Ce^{-at}(1+at)\|f_0\|_{L^2}}{|k_1|},
\end{align*}
where by \eqref{D2.1}, we have
\begin{align*}
\|g_1(0)\|_{L^2}\leq& \|g_0\|_{L^2}+\big|\f{k_3}{k_f k_1}\big|\|\cos y\mathbb{D}_2f(0)\big\|_{L^2}\\
\leq& \|g_0\|_{L^2}+C\big|\f{k_3}{k_f k_1}\alpha^{-1}\big|\|f_0\|_{L^2}\leq \|g_0\|_{L^2}+C\f{\|f_0\|_{L^2}}{|k_1|}.
\end{align*}
Then by \eqref{shear part1} and \eqref{D2.1}, we obtain
\begin{align*}
\|g(t)\|_{L^2}\leq& \|g_1(t)\|_{L^2}+\big|\f{k_3}{k_f k_1}\big|\|\cos y\mathbb{D}_2f(t)\|_{L^2}\\
\leq& Ce^{-at}\|g_1(0)\|_{L^2}+\f{Ce^{-at}(1+at)\|f_0\|_{L^2}}{|k_1|}+C\big|\f{k_3}{k_f k_1}\alpha^{-1}\big|\|f(t)\|_{L^2}\\
 \leq& Ce^{-at}\Big(\|g_0\|_{L^2}+\f{\|f_0\|_{L^2}}{|k_1|}\Big)+\f{Ce^{-at}(1+at)\|f_0\|_{L^2}}{|k_1|}+\f{Ce^{-at}\|f_0\|_{L^2}}{|k_1|}\\
 \leq &Ce^{-at}\|g_0\|_{L^2}+\f{Ce^{-at}(1+at)\|f_0\|_{L^2}}{|k_1|},
\end{align*}
which gives \eqref{eq:g-L2}.
\end{proof}

Now Theorem \ref{thm:decay-L} follows from \eqref{eq:f-L2} and \eqref{eq:g-L2}. Moreover, it follows from \eqref{eq:g-L2}  that if $0<k_f<1$, then we have
\begin{align}
&\|\partial_x(\omega_2)(t)\|_{L^2}\leq Ce^{-c|\gamma|^{\f12}t-\nu t}\big(\|\partial_x(\omega_2)(0)\|_{L^2}+\|(\Delta v_2)_{\neq}(0)\|_{L^2}\big).
\end{align}

\subsubsection{$\mathbf{Case\ of\ \al=1}$}
In this case, $k_1^2+k_3^2=k_f^2=1$. Since we assume $ k_1\neq 0$, we must have
$|k_1|=1,k_3=0.$  Then the first equation of (\ref{eq:NS-LF}) becomes
\begin{align*}
\partial_t f+\nu(1-\partial_y^2)f+i\f{\gamma k_1}{\nu}\sin y f+i\f{\gamma k_1}{\nu}\sin y\varphi=0,
\end{align*}
where $(\partial_y^2-1)\varphi=f$. Let $\gamma k_1=\beta $, $u=f+\varphi$, then $f''=u''-u$. Thus, we obtain
\begin{align}\label{eq:f-new}
\partial_t f&=-\nu f+\nu f''-i\f{\beta}{\nu}\sin y u=-\nu f+\mathcal{L}_1u,
\end{align}
where $\mathcal{L}_1u=\nu u''-\nu u-i\f{\beta}{\nu}\sin y u=\nu f''-i\f{\beta}{\nu}\sin y u$.

\begin{lemma}
It holds that
\begin{align}
&\|\mathcal{L}^{-1}_1u\|_{L^2}\leq C|\beta|^{-\f23}\nu^{\f13}\|u \|_{L^2}\label{UpB2},\\
&\|\mathcal{L}^{-1}_1u\|_{L^1}\leq C|\beta|^{-\f56}\nu^{\f23}\|u \|_{L^2}\label{UpB1}.
\end{align}
Here $C$ is a constant independent of $\beta,\nu$.
\end{lemma}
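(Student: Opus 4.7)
Setting $h := \mathcal{L}_1 u$, the lemma asks for the bounds $\|u\|_{L^2}\leq C\nu^{1/3}|\beta|^{-2/3}\|h\|_{L^2}$ and $\|u\|_{L^1}\leq C\nu^{2/3}|\beta|^{-5/6}\|h\|_{L^2}$. The equation
$\nu u'' - \nu u - i(\beta/\nu)\sin y\cdot u = h$
is a Schr\"odinger-type problem whose potential vanishes linearly at the two critical points $y=0$ and $y=\pi$, so the natural Airy boundary-layer width is $\delta_0:=(\nu^2/|\beta|)^{1/3}$; one expects $u$ to concentrate in these boundary layers while decaying outside. The plan is to establish the $L^2$ bound by a localized energy estimate at scale $\delta_0$, and then to upgrade to the $L^1$ bound by exploiting this concentration.

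For the $L^2$ bound I would follow the strategy of Proposition \ref{prop:Res-toy} specialized to $\lambda=0$, exploiting that the vanishing is now linear ($\sin y \gtrsim \delta$ on $(\delta,\pi-\delta)$) rather than quadratic. Pairing the equation with $\bar u$ and taking the real part yields $\nu\|u'\|_{L^2}^2+\nu\|u\|_{L^2}^2 \le \|h\|\|u\|$, which gives the $H^1$ control on $u$. Pairing against $\chi_{[0,\pi]}\bar u$ and $\chi_{[\pi,2\pi]}\bar u$ with cutoff endpoints $y_\pm$ chosen by pigeonhole inside $(0,\delta)$ and $(\pi-\delta,\pi)$ so that $|u'(y_\pm)|^2\le \|u'\|^2/\delta$, and taking the imaginary part, produces
\[
\tfrac{|\beta|}{\nu}\int_{y_-}^{y_+}\sin y\,|u|^2\,dy \;\lesssim\; \|h\|\|u\|+\tfrac{\nu}{\sqrt\delta}\|u'\|\|u\|_{L^\infty},
\]
and similarly on $[\pi,2\pi]$. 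Splitting $\|u\|_{L^2}^2$ into a far part (where $\sin y\gtrsim\delta$, so that $\|u\|_{L^2(\mathrm{far})}^2\lesssim\delta^{-1}\int\sin y|u|^2\,dy$) and a near part of width $\delta$ (where $\|u\|_{L^2(\mathrm{near})}^2\le 2\delta\|u\|_\infty^2$, controlled by $\|u\|_\infty^2\le\|u\|\|u'\|+\|u\|^2$ from \eqref{eq:w-inf}), the choice $\delta=\delta_0$ balances both contributions. All remaining cross terms are of the form $\nu^{a}|\beta|^{-b}\|h\|^{p}\|u\|^{2-p}$ with $p<2$ and the correct Airy scaling, and Young's inequality absorbs them to yield $\|u\|_{L^2}^2\le C\nu^{1/3}|\beta|^{-2/3}\|h\|\|u\|$, which is the $L^2$ bound.

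For the $L^1$ bound, decompose $\|u\|_{L^1}=\|u\|_{L^1(\mathrm{near})}+\|u\|_{L^1(\mathrm{far})}$ with near $:=\{\mathrm{dist}(y,\{0,\pi\})\le\delta_0\}$. The near contribution is immediate from Cauchy--Schwarz, using the $L^2$ bound just proved:
\[
\|u\|_{L^1(\mathrm{near})} \le 2\sqrt{2\delta_0}\,\|u\|_{L^2} \le C\delta_0^{1/2}\cdot\nu^{1/3}|\beta|^{-2/3}\|h\|=C\nu^{2/3}|\beta|^{-5/6}\|h\|,
\]
which already matches the target. For the far part I rewrite the equation as $i(\beta/\nu)\sin y\cdot u=-h+\nu u''-\nu u$, use the key weighted estimate $\|1/\sin y\|_{L^2(\delta_0,\pi-\delta_0)}\lesssim\delta_0^{-1/2}$, and combine with Cauchy--Schwarz; the leading $|h|$-contribution then gives exactly $(\nu/|\beta|)\delta_0^{-1/2}\|h\|_{L^2}=\nu^{2/3}|\beta|^{-5/6}\|h\|$.

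The main obstacle is the $\nu u''$ correction in the far region: the global bound $\|u''\|_{L^2}\lesssim|\beta|^{1/3}\nu^{-5/3}\|h\|$ derived crudely from the equation is not sharp enough, since it produces an $|\beta|^{-1/2}\|h\|$ contribution which is larger than the target when $|\beta|\gg\nu^2$. To overcome this I would further split the far region into dyadic annuli $A_k$ of width $2^k\delta_0$ around $y=0$ and $y=\pi$, on each of which $\sin y\sim 2^k\delta_0$. Using the equation to express $u$ and $\nu u''$ in terms of each other on $A_k$ and combining with the global $L^2$ bound already established, one gets $\|u\|_{L^2(A_k)}\lesssim\nu/(|\beta|\cdot 2^k\delta_0)\|h\|_{L^2(A_k)}+2^k\nu^{1/3}|\beta|^{-2/3}\|h\|\cdot(\cdots)$, and Cauchy--Schwarz gives $\|u\|_{L^1(A_k)}\le\sqrt{|A_k|}\|u\|_{L^2(A_k)}$. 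The sum over $k$ is geometric and converges without logarithmic loss to the desired $C\nu^{2/3}|\beta|^{-5/6}\|h\|$; the delicate point is to avoid a circular use of the equation when simultaneously estimating $u$ and $u''$ on each annulus, which I would handle by bootstrapping from the already-proved global $L^2$ bound.
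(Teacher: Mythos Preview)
Your $L^2$ argument is correct and close in spirit to the paper's. The one organizational difference is that the paper pairs $\mathcal{L}_1u$ against the smooth weight $u\sin y$ (no cutoffs, no pigeonhole), obtaining
\[
\frac{|\beta|}{\nu}\|u\sin y\|_{L^2}^2\le\frac{\nu}{|\beta|}\|\mathcal{L}_1u\|_{L^2}^2+2\nu\|u'\|_{L^2}\|u\|_{L^2},
\]
and then combines this with the elementary splitting $\|u\|_{L^2}^2\lesssim\delta\|u\|_{L^\infty}^2+\delta^{-2}\|u\sin y\|_{L^2}^2$ at $\delta=\delta_0$.

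Your $L^1$ argument, however, has a real gap. After one pass of your dyadic estimate (feeding in the global $L^2$ bound to control the error terms), one finds $\|u\|_{L^2(A_k)}\lesssim 2^{-k/2}\nu^{1/3}|\beta|^{-2/3}\|h\|$, and hence
\[
\|u\|_{L^1(A_k)}\le(2^k\delta_0)^{1/2}\|u\|_{L^2(A_k)}\lesssim \nu^{2/3}|\beta|^{-5/6}\|h\|,
\]
which is \emph{independent of $k$}. Summing over $k\le\log(1/\delta_0)$ therefore picks up a logarithmic factor, contradicting your claim of geometric convergence. To actually get geometric decay one would have to iterate, feeding the improved local bound back into the localized pairing; this bootstrap is coupled across neighboring annuli and is not carried out in your proposal. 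The direct substitution route you also mention (rewriting $\nu u''$ via the equation on $A_k$) is genuinely circular: the term $\frac{\nu}{|\beta|2^k\delta_0}\cdot\frac{|\beta|}{\nu}2^k\delta_0\,\|u\|_{L^2(A_k)}$ has coefficient exactly $1$.

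The paper bypasses all of this. The same quantity $\|u\sin y\|_{L^2}$ that drives the $L^2$ bound controls $L^1$ directly via
\[
\|u\|_{L^1}^2\lesssim\delta^2\|u\|_{L^\infty}^2+\delta^{-1}\|u\sin y\|_{L^2}^2
\]
(near part $\le 2\delta\|u\|_{L^\infty}$; far part $\le\|u\sin y\|_{L^2}\,\|1/\sin y\|_{L^2(\delta,\pi-\delta)}\lesssim\delta^{-1/2}\|u\sin y\|_{L^2}$). No $u''$ appears anywhere. In fact the paper packages both bounds into the single inequality $\delta^{-1}\|u\|_{L^1}^2+\|u\|_{L^2}^2\lesssim\delta\|u\|_{L^\infty}^2+\delta^{-2}\|u\sin y\|_{L^2}^2$, evaluated at $\delta=\delta_0$, and reads off both conclusions at once.
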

\begin{proof}
It is easy to see that
\begin{align*}
\|u \|_{L^1((\delta,\pi-\delta)\cup(\pi+\delta,2\pi-\delta))}^2=&\big\|u\sin y\f{1}{\sin y} \big\|_{L^1((\delta,\pi-\delta)\cup(\pi+\delta,2\pi-\delta))}^2\\
\leq&\| u\sin y\|_{L^2}^2\big\|\f{1}{\sin y}\big\|_{L^2((\delta,\pi-\delta)\cup(\pi+\delta,2\pi-\delta))}^2\lesssim\f{\| u\sin y\|_{L^2}^2}{\delta},
\end{align*}
which gives
\begin{align}
\|u \|_{L^1}^2\lesssim\delta^2\|u \|_{L^{\infty}}^2+\f{\| u\sin y\|_{L^2}^2}{\delta}.\label{FL1}
\end{align}
Similarly, we have
\begin{align*}
\|u \|_{L^2((\delta,\pi-\delta)\cup(\pi+\delta,2\pi-\delta))}^2=&\big\|u\sin y\f{1}{\sin y}\big\|_{L^2((\delta,\pi-\delta)\cup(\pi+\delta,2\pi-\delta))}^2\\
\leq&\|u\sin y\|_{L^2}^2\big\|\f{1}{\sin y}\big\|_{L^{\infty}((\delta,\pi-\delta)\cup(\pi+\delta,2\pi-\delta))}^2\lesssim\f{\|u\sin y\|_{L^2}^2}{\delta^2}.
\end{align*}
which along with (\ref{FL1}) gives
\begin{align}
\f{\|u \|_{L^1}^2}{\delta}+\|u \|_{L^2}^2\lesssim\delta\|u \|_{L^{\infty}}^2+\f{\| u\sin y\|_{L^2}^2}{\delta^2}.\label{FL2}
\end{align}
Obviously, we have
\begin{align*}
&|\text{Im}\langle\mathcal{L}_1u,u\sin y\rangle|=\big|\f{\beta}{\nu}\|u\sin y \|_{L^2}^2+\text{Im}\langle\nu u',u\cos y \rangle\big|,\end{align*}
which gives
\begin{align*}\f{|\beta|}{\nu}\|u\sin y \|_{L^2}^2\leq \|\mathcal{L}_1u \|_{L^2}\|u\sin y  \|_{L^2}+\nu\|u' \|_{L^2}\|u \|_{L^2},
\end{align*}
hence,
\begin{align}
\f{|\beta|}{\nu}\|u\sin y \|_{L^2}^2\leq\f{\nu}{|\beta|}\|\mathcal{L}_1u\|_{L^2}^2+2\nu\|u' \|_{L^2}\|u \|_{L^2}.\label{FL3}
\end{align}
It follows from (\ref{FL2}) and (\ref{FL3}) that
\begin{align*}
\f{\|u \|_{L^1}^2}{\delta}+\|u \|_{L^2}^2\lesssim\delta\|u \|_{L^{\infty}}^2+\f{\nu^2}{\delta^2|\beta|^2}\|\mathcal{L}_1u \|_{L^2}^2+\f{\nu^2}{\delta^2|\beta|}\|u'\|_{L^2}\|u\|_{L^2}.
\end{align*}
Using the facts that
\beno
\|u \|_{L^{\infty}}^2\leq(\|u' \|_{L^2}+\|u \|_{L^2})\|u \|_{L^2},\quad |\langle\mathcal{L}_1u,u\rangle|\geq\nu\|u' \|_{L^2}^2+\nu\|u\|_{L^2}^2,
\eeno
we deduce that
\begin{align*}
\f{\|u \|_{L^1}^2}{\delta}+\|u \|_{L^2}^2\lesssim \f{\delta}{\nu^{\f12}}\|\mathcal{L}_1u \|_{L^2}^{\f12}\|u \|_{L^2}^{\f32}+\f{\nu^2}{\delta^2|\beta|^2}\|\mathcal{L}_1u\|_{L^2}^2+\f{\nu^{\f32}}{\delta^2|\beta|}\|\mathcal{L}_1u \|_{L^2}^{\f12}\|u \|_{L^2}^{\f32}.
\end{align*}
which implies that by taking $\delta=\nu^{\f23}|\beta|^{-\f13}\ll1$
that
\begin{align*}
&\|\mathcal{L}_1u \|_{L^2}\gtrsim\min\big\{\f{\nu}{\delta^2},\f{\delta|\beta|}{\nu},\f{\delta^4|\beta|^2}{\nu^3} \big\}\|u \|_{L^2}=\f{|\beta|^{\f23}}{\nu^{\f13}}\|u \|_{L^2},\\
&\|\mathcal{L}_1u \|_{L^2}\gtrsim\min\big\{\f{\nu}{\delta^4},\f{\delta^{\f12}|\beta|}{\nu},\f{\delta^2|\beta|^2}{\nu^3}\big\}\|u \|_{L^1}=\f{|\beta|^{\f56}}{\nu^{\f23}}\|u \|_{L^1}.
\end{align*}
Thus, $\mathcal{L}_1$ is injective, thanks to Fredholm alternative theorem, $\mathcal{L}_1$ is invertible. This completes the proof.
\end{proof}

\begin{lemma}\label{L5}
There exists a constant $c$ independent of $\beta,\nu$ so that
\begin{align}
 |\langle\mathcal{L}^{-1}_11,1\rangle|\geq c\nu/|\beta|\label{LowerB}.
\end{align}
\end{lemma}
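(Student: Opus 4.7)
Setting $u:=\mathcal{L}_1^{-1}1$ so that $\nu u''-\nu u-i(\beta/\nu)\sin y\,u=1$ and $A:=\langle\mathcal{L}_1^{-1}1,1\rangle=\int u\,dy$, my plan is first to extract two basic identities. Testing the equation against the constant function $1$ on $\mathbb{T}_{2\pi}$ and killing $\int u''$ by periodicity gives
\[
-\nu A - i\tfrac{\beta}{\nu}B = 2\pi,\qquad B:=\int \sin y\,u\,dy.
\]
The antilinear involution $(Jf)(y):=\overline{f(-y)}$ commutes with $\mathcal{L}_1$ because $\sin y$ is odd, forcing $Ju=u$, so $A\in\mathbb R$ and $B\in i\mathbb R$; writing $B=ib$ with $b\in\mathbb R$, the identity reads $-\nu A+(\beta/\nu)b=2\pi$. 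Testing the equation against $u$ and integrating by parts, the real part yields $A=-\nu(\|u'\|_{L^2}^2+\|u\|_{L^2}^2)\le0$, so $|A|=\nu\|u\|_{H^1}^2$ and it suffices to show $\|u\|_{H^1}^2\ge c/|\beta|$.

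To obtain the lower bound I would use a duality argument against a near-inverse of $\mathcal{L}_1^*$. The natural candidate is
\[
\phi(y):=\bigl(i(\beta/\nu)\sin y-\nu\bigr)^{-1},
\]
the exact algebraic inverse of the multiplication part of $\mathcal{L}_1^*$. A direct computation gives $\mathcal{L}_1^*\phi=1+\nu\phi''$, whence
\[
A=\langle\mathcal{L}_1 u,\phi\rangle-\nu\langle u,\phi''\rangle=\int \bar\phi\,dy-\nu\int u\,\overline{\phi''}\,dy.
\]
The leading integral is computed by the residue theorem on $|z|=1$ with $z=e^{iy}$, yielding
\[
\int \bar\phi\,dy=-\frac{2\pi}{\sqrt{\nu^{2}+(\beta/\nu)^{2}}},
\]
whose modulus is $(1+o(1))\cdot 2\pi\nu/|\beta|$ when $\nu^{2}\ll|\beta|$. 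It thus remains to bound the error $\mathcal{E}:=\nu\bigl|\int u\,\overline{\phi''}\,dy\bigr|$ by $\pi\nu/|\beta|$, which would yield $|A|\ge\pi\nu/|\beta|$.

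The main obstacle is that a naïve integration by parts in $\mathcal{E}$ --- moving $\partial_y^2$ from $\phi$ onto $u$, substituting $\nu u''=1+\nu u+i(\beta/\nu)\sin y\,u$ from the ODE, and using the algebraic identity $(-\nu-i(\beta/\nu)\sin y)\bar\phi=1$ --- collapses to the tautology $A=A$, giving no information. To break this circularity I would split the error via a smooth cutoff $\chi$ at the boundary-layer scale $\delta:=\nu^{2/3}|\beta|^{-1/3}$ around $\{\sin y=0\}$. On the bulk region $\{|\sin y|>\delta\}$ the integration-by-parts scheme described above applies, but the cutoff-commutator terms $\chi'\phi'$ and $\chi''\phi$ now produce a genuine remainder that is controllable using the pointwise bounds on $\phi,\phi',\phi''$ just outside the boundary layer, combined with the $L^2$-estimate $\|u\|_{L^2}\le C|\beta|^{-2/3}\nu^{1/3}$ from \eqref{UpB2}. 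On the boundary-layer region $\{|\sin y|<\delta\}$, I would use the pointwise $L^\infty$ size of $\phi''$ there (of order $|\beta|^{2}/\nu^{5}$, multiplied against the layer measure $\delta$) combined with the $L^1$-bound $\|u\|_{L^1}\le C|\beta|^{-5/6}\nu^{2/3}$ from \eqref{UpB1}, obtaining a contribution of size $o(\nu/|\beta|)$.

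The choice $\delta=\nu^{2/3}|\beta|^{-1/3}$ is dictated by the Airy-type balance $\nu/\delta^{2}\sim(|\beta|/\nu)\delta$ in the inner boundary-layer equation $\nu\partial_Y^{2}U-i(\beta/\nu)yU\approx 0$; this is precisely the scale that governed the $L^p$-bounds in the previous lemma. The hard part of the argument is to verify that at this critical scale both the bulk cutoff-commutator error and the boundary-layer estimate are simultaneously subleading with respect to the leading residue $2\pi\nu/|\beta|$; getting the matching constants to cancel is where the Airy scaling enters essentially.
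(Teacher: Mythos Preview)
Your reduction to $\|u\|_{H^1}^2\ge c/|\beta|$ via the energy identity $-\mathrm{Re}\,A=\nu\|u\|_{H^1}^2$ is correct, as is the residue computation of $\int\bar\phi$. The gap is in the control of $\mathcal{E}=\nu\bigl|\int u\,\overline{\phi''}\bigr|$, and it occurs in two places.

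First, the cutoff does not break the circularity. After integrating by parts on the bulk and substituting the ODE for $\nu u''$, the term $\int u(1-\chi)$ reappears; since $\int u(1-\chi)=A-\int u\chi$, the $A$'s cancel from both sides and you are left with an identity relating only the boundary-layer and commutator pieces, containing no information about $A$. Second, and decisively, the boundary-layer estimate fails in the regime $|\beta|\gg\nu^2$. Near $y=0$ one has $|\phi''|\sim(\beta/\nu)^2/|g|^3$ with $|g|^2=\nu^2+(\beta/\nu)^2\sin^2y$, hence $\|\phi''\|_{L^\infty}\sim|\beta|^2/\nu^5$ and $\int_{|y|<\delta}|\phi''|\,dy\sim|\beta|/\nu^3$. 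Pairing this against any of the available bounds on $u$---the $L^1$ bound $\|u\|_{L^1}\lesssim|\beta|^{-5/6}\nu^{2/3}$ from \eqref{UpB1}, the $L^2$ bound from \eqref{UpB2}, or the $H^1$ bound $\|u\|_{H^1}=(|A|/\nu)^{1/2}$---yields a contribution that is $o(\nu/|\beta|)$ only when $|\beta|\lesssim\nu^2$, the opposite of the standing hypothesis. The test function $\phi$ is simply too singular.

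The paper's proof avoids any singular test function. It integrates the equation $\mathcal{L}_1u=1$ over a small symmetric interval $(-\delta_1,\delta_1)$ with $\delta_1\in(\delta/2,\delta)$ chosen so that $|u'(\pm\delta_1)|^2\le 2\|u'\|_{L^2}^2/\delta$. The crucial observation is that since $\sin y$ is odd, $\int_{-\delta_1}^{\delta_1}\sin y\,u(y)\,dy=\int_{-\delta_1}^{\delta_1}\sin y\,(u(y)-u(0))\,dy$, and the elementary H\"older bound $|u(y)-u(0)|\le|y|^{1/2}\|u'\|_{L^2}$ controls this by $(|\beta|/\nu)\delta^{5/2}\|u'\|_{L^2}$. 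Balancing the boundary term $\nu\|u'\|_{L^2}/\delta^{1/2}$ against this at $\delta=\nu^{2/3}|\beta|^{-1/3}$ yields $\delta\le C(\nu/\delta^{1/2})\|u\|_{H^1}$, i.e.\ $\|u\|_{H^1}\ge c\delta^{3/2}/\nu=c|\beta|^{-1/2}$, which is the lemma.
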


\begin{proof}
Let $u=\mathcal{L}^{-1}_11$. Then we have
\begin{align}
 -\text{Re}\langle\mathcal{L}^{-1}_11,1\rangle=-\text{Re}\langle u,1\rangle=-\text{Re}\langle u,\mathcal{L}_1u\rangle =\nu\big(\|u'\|_{L^2}^2+\|u\|_{L^2}^2\big)\label{Lo1}.
\end{align}
For every $ \d\in(0,1)$, we can choose $\d_1\in(\d/2,\d)$ such that $ |u'(-\d_1)|^2+|u'(\d_1)|^2\leq 2\|u'\|_{L^2}^2/\d$.
Then we have $|u(y)-u(0)|\leq |y|^{\f12}\|u'\|_{L^2}$. Thanks to $\mathcal{L}_1u=1$, we get
\begin{align*}
 \d\leq 2\d_1&=\int_{-\d_1}^{\d_1}\mathcal{L}_1udy=\nu\big(u'(\d_1)-u'(-\d_1)\big)-\int_{-\d_1}^{\d_1}\big(\nu u(y)+i\f{\beta}{\nu}\sin y u(y)\big)dy\\&=\nu\big(u'(\d_1)-u'(-\d_1)\big)-\int_{-\d_1}^{\d_1}\big(\nu u(y)+i\f{\beta}{\nu}\sin y (u(y)-u(0))\big)dy\\&\leq\nu\big(|u'(\d_1)|+|u'(-\d_1)|\big)+\nu\| u\|_{L^1(-\d,\d)}+\f{|\beta|}{\nu}\int_{-\d}^{\d}|\sin y| |u(y)-u(0)|dy\\&\leq2\nu \|u'\|_{L^2}/\d^{\f12}+\nu(2\d)^{\f12}\| u\|_{L^2}+\f{|\beta|}{\nu}\int_{-\d}^{\d}|y||y|^{\f12}\|u'\|_{L^2}dy\\&\leq2\nu \|u'\|_{L^2}/\d^{\f12}+2\nu\| u\|_{L^2}+\f{|\beta|}{\nu}\d^{\f52}\|u'\|_{L^2},
\end{align*}
which by taking $\delta=\nu^{\f23}|\beta|^{-\f13}\ll1$ gives
\begin{align*}
\d\leq 3\nu\|u'\|_{L^2}/\d^{\f12}+2\nu\| u\|_{L^2}\leq C(\nu /\d^{\f12})(\|u'\|_{L^2}^2+\|u\|_{L^2}^2)^{\f12}.
\end{align*}
Then we infer from \eqref{Lo1}  that
 \begin{align*}
 |\langle\mathcal{L}^{-1}_11,1\rangle|\geq \nu(\|u'\|_{L^2}^2+\|u\|_{L^2}^2)\ge C^{-1}\nu(\d^{\f32}/\nu)^2=C^{-1}\d^3/\nu=C^{-1}\nu/|\beta|.
\end{align*}
\end{proof}

Let $X=L_0^2(\mathbb{T}_{2\pi})=\big\{f\in L^2(\mathbb{T}_{2\pi});\int_0^{2\pi} f(y)dy=0\big\}$ with the norm $\|f\|_*=\big\langle f,f-(1-\partial_y^2)^{-1}f\big\rangle^{1/2}$ and the inner product $\langle f,g\rangle_*=\big\langle f,g-(1-\partial_y^2)^{-1}g\big\rangle.$
We denote by $ \mathbb{Q}_1$ the orthogonal projection from $L^2(\mathbb{T}_{2\pi})$ to $X$. Then $ \mathbb{Q}_1\mathcal{L}'_{k_1,k_3}$ is a closed operator in $X$ with $D(\mathbb{Q}_1\mathcal{L}'_{k_1,k_3})=H^2(\mathbb{T}_{2\pi})\cap X.$ We have the following norm equivalence.

\begin{lemma}\label{L3}If $ f\in X=L_0^2(\mathbb{T}_{2\pi}),$ then
$\frac{1}{2}\|f\|_{L^2}^2\leq \|f\|_*^2\leq \|f\|_{L^2}^2.$
\end{lemma}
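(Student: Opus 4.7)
The plan is to diagonalize $1 - (1-\partial_y^2)^{-1}$ via Fourier series on $\mathbb{T}_{2\pi}$, reducing the claimed norm equivalence to an elementary inequality on Fourier coefficients. Since $f \in L_0^2(\mathbb{T}_{2\pi})$, I write $f(y) = \sum_{n \neq 0} c_n e^{iny}$ and compute
\[
(1-\partial_y^2)^{-1}f = \sum_{n\neq 0}\frac{c_n}{1+n^2}e^{iny},
\]
so by Parseval's identity,
\[
\|f\|_{L^2}^2 = 2\pi\sum_{n\neq 0}|c_n|^2,
\qquad
\|f\|_*^2 = \big\langle f,\, f-(1-\partial_y^2)^{-1}f\big\rangle = 2\pi\sum_{n\neq 0}|c_n|^2\,\frac{n^2}{1+n^2}.
\]

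Then I would note that the function $n \mapsto n^2/(1+n^2)$ is increasing in $|n|$ and always at most $1$. In particular, for integers $n\neq 0$ we have $n^2 \ge 1$, hence
\[
\tfrac{1}{2} \le \frac{n^2}{1+n^2} \le 1.
\]
Multiplying by $|c_n|^2$ and summing over $n \neq 0$ yields $\tfrac{1}{2}\|f\|_{L^2}^2 \le \|f\|_*^2 \le \|f\|_{L^2}^2$, which is the claim.

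There is no real obstacle here; the only subtlety is the mean-zero hypothesis, which is exactly what rules out $n=0$ and thus provides the spectral gap that gives the lower bound $1/2$ (without this restriction, $n^2/(1+n^2)$ can be $0$, and the seminorm $\|\cdot\|_*$ degenerates on constants). This is why the norm equivalence is stated on $X = L_0^2(\mathbb{T}_{2\pi})$ rather than on all of $L^2(\mathbb{T}_{2\pi})$, and it matches the earlier use in Lemma \ref{L1} where a similar equivalence $(1-\beta^{-2})\|w\|_{L^2}^2 \le \|w\|_*^2 \le \|w\|_{L^2}^2$ was proved for $\beta^2 > 1$; here $\beta = 1$ but the projection onto mean-zero functions plays the role of the spectral gap.
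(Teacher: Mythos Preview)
Your proof is correct and is exactly the Fourier-series argument the paper has in mind; the paper simply states that the result ``can be easily proved by using Fourier transform'' without spelling out the details you have written.
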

This result can be easily proved by using Fourier transform. \smallskip

Since $\langle \mathbb{Q}_1\mathcal{L}'_{k_1,k_3}f,f\rangle_*=\langle \mathcal{L}'_{k_1,k_3}f,f\rangle_*=-\nu k_f^2\langle \partial_y^2f,f\rangle_*=\nu k_f^2\|\partial_y f\|_{*}^2\geq 0$ for $f\in X,$ $\mathbb{Q}_1\mathcal{L}'_{k_1,k_3}$ is m-accretive. We define
\beno
\Psi(\mathbb{Q}_1\mathcal{L}'_{k_1,k_3})=\inf\Big\{\|(\mathbb{Q}_1\mathcal{L}'_{k_1,k_3}-i\la)f\|_*;f\in {D}(\mathbb{Q}_1\mathcal{L}'_{k_1,k_3}),\ \la\in\R,\  \|f\|_*=1\Big\}.
\eeno

\begin{lemma}\label{L2}If $ k_1^2+k_3^2=k_f^2=1,$ then
$\Psi(\mathbb{Q}_1\mathcal{L}'_{k_1,k_3})\geq c|k_1\gamma|^{\frac{1}{2}}.$
\end{lemma}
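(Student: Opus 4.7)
My plan is to transfer the estimate to the framework of Proposition~\ref{Prop: nonlocal2} in the limiting regime $\tilde\beta = 0$, while carefully handling the zero-mean projection via the special structure of the $\|\cdot\|_*$-norm. For $f\in X\cap H^2$ and $\la\in\R$, I let $\varphi\in X$ be the unique solution of $(\partial_y^2-1)\varphi = f$ and set $u = f+\varphi \in X$, so that $\varphi'' = u$ on the torus. Mimicking the opening calculation in the proof of Lemma~\ref{L1} (now with $k_f=1$ and $\beta=1$) yields the identity
\beno
(\mathcal{L}'_{k_1,k_3}-i\la)f = \mathcal{L}_{\la_1}^{(0)}u + \nu u,
\eeno
where $\la_1 = \nu\la/(k_1\gamma)$ and $\mathcal{L}_{\la_1}^{(0)}u := i\f{k_1\gamma}{\nu}\bigl[(\sin y - \la_1)u + \la_1\varphi\bigr] - \nu u''$ is exactly the operator of Proposition~\ref{Prop: nonlocal2} at the critical value $\tilde\beta = 0$.

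The key feature of $\|\cdot\|_*$ is that it is invariant under the addition of constants, so $\|g\|_* = \|\mathbb{Q}_1 g\|_*$ for every $g\in L^2$. Combined with Lemma~\ref{L3} and the observation that $\nu u\in X$, this produces
\beno
\|(\mathcal{L}'_{k_1,k_3}-i\la)f\|_* \geq \tfrac{1}{\sqrt 2}\bigl(\|\mathbb{Q}_1\mathcal{L}_{\la_1}^{(0)}u\|_{L^2} - \nu\|u\|_{L^2}\bigr).
\eeno
Because $\nu \ll |k_1\gamma|^{1/2}$ and $\|f\|_* \sim \|u\|_{L^2}$ on $X$ (from Lemma~\ref{L3} together with the Fourier relation $\hat u(n) = \tfrac{n^2}{n^2+1}\hat f(n)$), the lemma reduces to the uniform-in-$\la_1$ bound
\beq\label{eq:key-L2-plan}
\|\mathbb{Q}_1\mathcal{L}_{\la_1}^{(0)}u\|_{L^2} \geq c|k_1\gamma|^{1/2}\|u\|_{L^2} \qquad (u\in X\cap H^2,\ \la_1\in\R).
\eeq

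Estimate \eqref{eq:key-L2-plan} is the heart of the proof and is where I expect the main difficulty to arise. When $\la_1\neq 0$, I exploit that at $\tilde\beta = 0$ the antiderivative $\varphi$ is only determined by $u$ up to a constant: taking $c_0 = -\langle\sin y\,u\rangle/\la_1$ and $\varphi_c = \varphi + c_0$, one still has $\varphi_c'' = u$, and a direct check shows that $\mathcal{L}_{\la_1}^{(0)}(u;\varphi_c)$ is mean-free; hence $\mathbb{Q}_1\mathcal{L}_{\la_1}^{(0)}(u;\varphi) = \mathcal{L}_{\la_1}^{(0)}(u;\varphi_c)$, and \eqref{eq:key-L2-plan} becomes a Proposition~\ref{Prop: nonlocal2}-type estimate for this shifted operator. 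Crucially, the proof of Proposition~\ref{Prop: nonlocal2} enters $\varphi$ only through the combination $\la\varphi$, and here $\la_1\varphi_c = \la_1\varphi - \langle\sin y\,u\rangle$ remains $O(\|u\|_{L^2})$ uniformly in $\la_1$, so the technical estimates of Lemmas~\ref{lem:ell-1}--\ref{lem:ell-2} can be pushed through the $\tilde\beta\to 0$ limit (where the $\sinh$-kernels in the Dirichlet decomposition degenerate into affine Green's functions). For $\la_1 = 0$ the operator no longer depends on $\varphi$ and the bound $\|\mathbb{Q}_1\mathcal{N}_0 u\|_{L^2}\geq c|k_1\gamma|^{1/2}\|u\|_{L^2}$ is proved directly by adapting Proposition~\ref{prop:Res-toy} to the zero-mean subspace. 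The main technical obstacle, where most of the work lies, is carefully verifying that the estimates of Proposition~\ref{Prop: nonlocal2} and its auxiliary lemmas remain uniformly valid in this degenerate regime, in particular that quantities such as $\|\la_1\varphi_2\|_{L^\infty}$ stay controlled in the shifted decomposition even as $|\la_1|\to 0$.
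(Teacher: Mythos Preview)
Your proposal is correct and follows essentially the same route as the paper: reduce to Proposition~\ref{Prop: nonlocal2} in the limit $\tilde\beta\to 0$, and absorb the $\mathbb{Q}_1$-projection by shifting the antiderivative $\varphi$ by a constant (the paper writes $\varphi=\varphi_0-a/(i\la)$ for the projection defect $a$, which is exactly your $\varphi_c$). The only cosmetic differences are that the paper keeps $-\nu\partial_y^2 w$ and passes to $-\nu\partial_y^2 u$ via $\|\varphi''\|_{L^2}\le\|u\|_{L^2}$ rather than isolating the $\nu u$ term as you do, and it handles $\la=0$ by continuity rather than by a direct argument; your concern about the $\tilde\beta\to 0$ degeneration is legitimate but routine, since the proof of Proposition~\ref{Prop: nonlocal2} never uses $\tilde\beta>0$ in an essential way (only the combination $\lambda\varphi$ and the relation $\varphi''=u$ appear).
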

\begin{proof}
For $w\in {D}\big(\mathbb{Q}_1\mathcal{L}'_{k_1,k_3}\big)=H^2(\mathbb{T}_{2\pi})\cap X,\ \la\in\R,$ let $\al=k_1\gamma/k_f^4,\ \la_1=\frac{\nu k_f^2\lambda}{k_1\gamma},\ \varphi_0=(\partial_y^2-1)^{-1}w,\ u=w+\varphi_0$. Then $\partial_y^2\varphi_0=u$ and
\begin{align*}
(\mathcal{L}'_{k_1,k_3}-i\la)w&=k_f^2\left(i\f{\alpha}{\nu}\big((\sin y-\la_1)w+\sin y\varphi_0\big)-\nu\partial_y^2w\right)\\
&=k_f^2\left(i\f{\alpha}{\nu}\big((\sin y-\la_1)u+\la_1\varphi_0\big)-\nu\partial_y^2w\right) . \end{align*}
Since $(\mathbb{Q}_1\mathcal{L}'_{k_1,k_3}-i\la)w=(\mathcal{L}'_{k_1,k_3}-i\la)w-a $ for some constant $a$, if $\la\neq0$, then we have
\begin{align*}
(\mathbb{Q}_1\mathcal{L}'_{k_1,k_3}-i\la)w&=k_f^2\left(i\f{\alpha}{\nu}\big((\sin y-\la_1)u+\la_1\varphi\big)-\nu\partial_y^2w\right)
\end{align*}
with $\varphi=\varphi_0-\frac{a\nu}{i\al k_f^2\la_1}=\varphi_0-\frac{a}{i\la}, $ and $\partial_y^2\varphi=u$.

Using the fact that $\frac{1}{2}\|w\|_{L^2}^2\leq\|w\|_*^2=\langle w,u\rangle\leq \|w\|_{L^2}\|u\|_{L^2},$ we have
\beno
\|w\|_{L^2}\leq 2\|u\|_{L^2},\quad \|w\|_*^2\leq \|w\|_{L^2}\|u\|_{L^2}\leq 2\|u\|_{L^2}^2.
\eeno
Thanks to Proposition \ref{Prop: nonlocal2}, we have (fix $\varphi$ and let $ \widetilde{\beta}\to 0$)
\begin{align*}
2^{\frac{1}{2}}\|(\mathbb{Q}_1\mathcal{L}'_{k_1,k_3}-i\la)w\|_{*}\geq& \|(\mathbb{Q}_1\mathcal{L}'_{k_1,k_3}-i\la)w\|_{L^2}\\
=&\|i\f{\alpha}{\nu}[(\sin y-\lambda_1)w+\sin y\varphi]-\nu\partial_y^2w \|_{L^2}\\
\geq& \|i\f{\alpha}{\nu}[(\sin y-\lambda_1)u+\lambda_1\varphi]-\nu\partial_y^2u \|_{L^2}-\nu\|\varphi''\|_{L^2}\\
\geq&(c|\alpha|^{\f12}-\nu)\|u\|_{L^2}\geq c|\alpha|^{\f12}\|w\|_{*}
\end{align*} for $\la\neq 0.$
This is also true for $\la=0$ by taking the limit.
\end{proof}

Now we are in a position to prove Theorem \ref{thm:decay-L}. \smallskip

Thanks to  $\mathcal{L}'_{k_1,k_3}1=0$, we have $ \mathbb{Q}_1\mathcal{L}'_{k_1,k_3}=\mathbb{Q}_1\mathcal{L}'_{k_1,k_3}\mathbb{Q}_1$. Notice that $ \partial_t f=-\nu f-\mathcal{L}'_{k_1,k_3}f$. Hence,
\beno
\partial_t \mathbb{Q}_1f=-\nu \mathbb{Q}_1f-\mathbb{Q}_1\mathcal{L}'_{k_1,k_3}\mathbb{Q}_1f,
\eeno
 which gives $\mathbb{Q}_1f(t)=e^{-\nu t} e^{-t\mathbb{Q}_1\mathcal{L}'_{k_1,k_3}}\mathbb{Q}_1f_0.$
Then  by Lemma \ref{Lem:GP}, Lemma \ref{L2} and Lemma \ref{L3}, we deduce that
\beno
\|\mathbb{Q}_1f(t)\|_{L^2}\le 2 \|e^{-t\mathbb{Q}_1\mathcal{L}'_{k_1,k_3}}\mathbb{Q}_1f_0\|_{*}\leq e^{-ct|k_1\gamma|^{\frac{1}{2}}+{\pi}/{2}}\|\mathbb{Q}_1f_0\|_{L^2}.
\eeno

Thanks to $\partial_t\mathcal{L}^{-1}_1f=-\nu\mathcal{L}^{-1}_1f+u$, we have
\beno
\partial_t\langle\mathcal{L}_1^{-1}f,1\rangle=-\nu\langle\mathcal{L}_1^{-1}f,1\rangle,
\eeno
hence,
\begin{align}\label{L4}
& \langle\mathcal{L}_1^{-1}f,1\rangle(t)=e^{-\nu t} \langle\mathcal{L}_1^{-1}f_0,1\rangle.
\end{align}
Let $P_1=I-Q_1$. By Lemma \ref{L5} and \eqref{L4}, we get
\begin{align*}
|\mathbb{P}_{1}f(t)|&\leq C(|\beta|/\nu) |\langle\mathcal{L}^{-1}_1\mathbb{P}_{1}f(t),1\rangle|\leq C(|\beta|/\nu) (|\langle\mathcal{L}^{-1}_1\mathbb{Q}_{1}f(t),1\rangle|+|\langle\mathcal{L}^{-1}_1f(t),1\rangle|)\\
&\leq C(|\beta|/\nu)\big(e^{-\nu t}|\langle\mathcal{L}^{-1}_1f_0,1\rangle|+\|\mathcal{L}^{-1}_1\mathbb{Q}_{1}f(t)\|_{L^1}\big)\\
&\leq C(|\beta|/\nu)\big(e^{-\nu t}|\beta|^{-\f56}\nu^{\f23}\|f_0\|_{L^2}+|\beta|^{-\f56}\nu^{\f23}\|\mathbb{Q}_{1}f(t)\|_{L^2}\big)\\
&\leq C|\beta|^{\f16}\nu^{-\f13}\big(e^{-\nu t}\|f_0\|_{L^2}+e^{-\nu t}\|\mathbb{Q}_{1}f_0\|_{L^2}\big)\leq C|\gamma|^{\f16}\nu^{-\f13}e^{-\nu t}\|f_0\|_{L^2}.
\end{align*}
Thus, if $\mathbb{P}_1f_0=0$, then
\beno
\|\mathbb{P}_{1}f(t)\|_{L^2}\leq C|\gamma|^{\f16}\nu^{-\f13}e^{-\nu t}\|\mathbb{Q}_{1}f_0\|_{L^2}.
\eeno
 Otherwise, $f(t)-e^{-\nu t}\mathbb{P}_1f_0$ is also a solution of \eqref{eq:f-new}, and then
 \beno
 |\mathbb{P}_{1}f(t)-e^{-\nu t}\mathbb{P}_1f_0|\leq C|\gamma|^{\f16}\nu^{-\f13}e^{-\nu t}\|f_0-\mathbb{P}_1f_0\|_{L^2}=C|\gamma|^{\f16}\nu^{-\f13}e^{-\nu t}\|\mathbb{Q}_1f_0\|_{L^2},
\eeno
which gives
\begin{align*}
\|\mathbb{P}_{1}f(t)\|_{L^2}\leq e^{-\nu t}\|\mathbb{P}_{1}f_0\|_{L^2}+C|\gamma|^{\f16}\nu^{-\f13}e^{-\nu t}\|\mathbb{Q}_{1}f_0\|_{L^2}.
\end{align*}

Finally, due to $k_3=0,\ k_1^2+k_3^2=1,$ we have $g(t)=e^{-\nu t} e^{-t\mathcal{H}'_{k_1,k_3}}g_0$, and then
\begin{align*}
\|g(t)\|_{L^2}\leq Ce^{-ct|k_1\gamma|^{\frac{1}{2}}-\nu t}\|g_0\|_{L^2}.
\end{align*}

%$\mathbf{B.When \ k_1=0,|k_3|=1.}$

\section{Nonlinear stability threshold}

We will work on the system in terms of the shear wise velocity $v_2$ and vorticity $\om_2$
\begin{align}
\label{eq:NS-shear-new}
\left\{
\begin{aligned}
&\Big(\partial_t+\f{\gamma}{\nu k_f^2}\sin(k_fy)\partial_x-\nu\Delta\Big)v_2+\f{\gamma}{\nu}\sin (k_fy)\partial_xv_2=-\Delta(V\cdot \nabla v_2)-\partial_2(\Delta p),\\
&\Big(\partial_t+\f{\gamma}{\nu k_f^2}\sin(k_fy)\partial_x-\nu\Delta\Big)\omega_2+\f{\gamma}{\nu k_f}\cos(k_fy)\partial_zv_2=\nabla\cdot(-v \omega_2+\omega v_2),
\end{aligned}
\right.
\end{align}
where $\omega=\nabla\times V=(\omega_1,\omega_2,\omega_3)$ and $p=-\Delta^{-1}\Big(\sum\limits_{i,j=1}^3\partial_iv_j\partial_jv_i\Big)$.\smallskip

We introduce the norm
\begin{align*}
\|\varphi\|_{L^p_TL^q}=\left\|\|\varphi\|_{L^q} \right\|_{L^p(0,T)},\quad \|\varphi\|_{L^p_TH^N}=\left\|\|\varphi\|_{H^N}\right\|_{L^p(0,T)}.
\end{align*}
For the simplicity, $\|\varphi\|_{L^pL^q}=\|\varphi\|_{L^p_TL^q}$
and $\|\varphi\|_{L^pH^N}=\|\varphi\|_{L^p_TH^N}$ for $T=+\infty$.

\subsection{Decay estimates of the shear wise velocity}

We decompose the shear wise velocity as $v_2=v^{L}_2+v_2^{NL}$ with
\begin{align}
\label{NLNSK1}
\left\{
\begin{aligned}
&(\partial_t+\mathcal{L})\Delta v_2^{NL}=\text{div}f,\\
&\Delta v_2^{NL}(0)=0,
\end{aligned}
\right.
\end{align}
and
\begin{align}
\label{LNSK1}
\left\{
\begin{aligned}
&(\partial_t+\mathcal{L})\Delta v_2^{L}=0,\\
&\Delta v_2^{L}(0)=\Delta v_2(0),
\end{aligned}
\right.
\end{align}
where $\mathcal{L}=\f{\gamma}{\nu k_f^2}\sin(k_fy)\partial_x-\nu\Delta+\f{\gamma}{\nu}\sin (k_fy)\partial_x\Delta^{-1}$ and $f=-\nabla(V\cdot \nabla v_2)-(0,\Delta p,0).$

 \begin{lemma}
 There exists a a constant $C>0$ independent of $t$ so that
\begin{align}
\label{NLNSK bound1}&\|(\Delta v_2^{NL})_{\neq}(t)\|_{L^2}^2+\nu\int_0^t\|(\nabla\Delta v_2^{NL})_{\neq}(s)\|_{L^2}^2ds\leq C \nu^{-1}\int_0^t\|f_{\neq}(s) \|_{L^2}^2ds,\\
\label{div bound}&\Big\|\int_{0}^{t}e^{-(t-s)\mathcal{L}}(\text{div}f_{\neq}(s))ds\Big\|_{L^2}^2\leq C\nu^{-1}\int_0^t \|f_{\neq}(s)\|_{L^2}^2ds.
\end{align}
\end{lemma}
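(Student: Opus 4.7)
The plan is to exploit the factorization
\[
\mathcal{L}\varphi = \tfrac{\gamma}{\nu k_f^2}\sin(k_f y)\partial_x(T\varphi) - \nu\Delta\varphi, \qquad T := I + k_f^2\Delta^{-1},
\]
obtained by regrouping the two $\sin(k_f y)\partial_x$ terms in the definition of $\mathcal{L}$. Since $\mathcal{L}$ commutes with the $x$-Fourier projection, I work throughout with $\varphi:=(\Delta v_2^{NL})_{\neq}$ and $\text{div}(f_{\neq})$. On the nonzero mode ($|k_1|\geq 1$) with $k_f<1$, the symbol of $T$ is $1-k_f^2/(k_1^2+\eta^2+k_3^2)\in[1-k_f^2,1]$, so $T$ is a bounded self-adjoint Fourier multiplier satisfying $(1-k_f^2)I\leq T\leq I$ and commuting with $\nabla$ and $\Delta$.

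The key step is the energy identity obtained by testing the equation against $T\varphi$ rather than $\varphi$ itself:
\[
\tfrac{1}{2}\tfrac{d}{dt}\langle\varphi,T\varphi\rangle + \nu\bigl(\|\nabla\varphi\|_{L^2}^2 - k_f^2\|\varphi\|_{L^2}^2\bigr) = \text{Re}\langle\text{div}(f_{\neq}),T\varphi\rangle.
\]
The transport contribution $\tfrac{\gamma}{\nu k_f^2}\text{Re}\langle\sin(k_f y)\partial_x(T\varphi),T\varphi\rangle$ vanishes by skew-symmetry of $\partial_x$, and $-\nu\text{Re}\langle\Delta\varphi,T\varphi\rangle$ expands via $\text{Re}\langle\nabla\varphi,\nabla\Delta^{-1}\varphi\rangle=-\|\varphi\|_{L^2}^2$ to $\nu(\|\nabla\varphi\|_{L^2}^2-k_f^2\|\varphi\|_{L^2}^2)$. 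On the nonzero mode the Fourier inequality $k_1^2+\eta^2+k_3^2-k_f^2\geq(1-k_f^2)(k_1^2+\eta^2+k_3^2)$ yields the coercive bound $\|\nabla\varphi\|_{L^2}^2-k_f^2\|\varphi\|_{L^2}^2\geq(1-k_f^2)\|\nabla\varphi\|_{L^2}^2$.

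For the forcing, integration by parts gives $\langle\text{div}(f_{\neq}),T\varphi\rangle=-\langle f_{\neq},\nabla T\varphi\rangle$, and $\|\nabla T\varphi\|_{L^2}\leq\|\nabla\varphi\|_{L^2}$ since $T$ is a bounded Fourier multiplier commuting with $\nabla$. Absorbing half of the dissipation via Young's inequality yields
\[
\tfrac{d}{dt}\langle\varphi,T\varphi\rangle + \nu(1-k_f^2)\|\nabla\varphi\|_{L^2}^2 \leq \tfrac{C}{\nu}\|f_{\neq}\|_{L^2}^2.
\]
Integration from $0$ to $t$ with $\varphi(0)=0$, together with the norm equivalence $(1-k_f^2)\|\varphi\|_{L^2}^2\leq\langle\varphi,T\varphi\rangle\leq\|\varphi\|_{L^2}^2$, gives (\ref{NLNSK bound1}) with $C$ depending only on $k_f$. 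For (\ref{div bound}), by Duhamel $(\Delta v_2^{NL})_{\neq}(t) = \int_0^t e^{-(t-s)\mathcal{L}}\text{div}(f_{\neq})(s)\,ds$, so the bound is immediate.

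The main obstacle is precisely this choice of test function. Pairing the equation against $\varphi$ directly would leave the cross-term $\tfrac{\gamma}{\nu}\text{Re}\langle\sin(k_f y)\partial_x v_2,\varphi\rangle=-\tfrac{\gamma k_f}{\nu}\text{Re}\langle\cos(k_f y)\partial_x v_2,\partial_y v_2\rangle$, whose large prefactor $\gamma/\nu$ cannot be absorbed by $\nu\|\nabla\varphi\|_{L^2}^2$ in the relevant regime $\gamma\gg\nu^2$. The multiplier $T\varphi$ cancels this cross-term exactly by making the transport skew-symmetric after the change of unknown, which is the algebraic ``specialness'' of the Kolmogorov shear highlighted in the proof sketch of \S2.3.
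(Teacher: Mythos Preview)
Your proof is correct and is essentially the same computation as the paper's, written in a more compact form. The paper tests the equation separately against $\Delta v_2^{NL}$ and against $-v_2^{NL}$ and then takes the linear combination with weights $1$ and $k_f^2$; since $T\varphi=\Delta v_2^{NL}+k_f^2 v_2^{NL}$, this is exactly your single test against $T\varphi$, and the resulting energy quantity $\langle\varphi,T\varphi\rangle=\|\Delta v_2^{NL}\|_{L^2}^2-k_f^2\|\nabla v_2^{NL}\|_{L^2}^2$ and dissipation $\nu(\|\nabla\varphi\|_{L^2}^2-k_f^2\|\varphi\|_{L^2}^2)$ coincide line for line.
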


\begin{proof} By integration by parts, we get
\begin{align*}
\text{Re}\big\langle(\partial_t+\mathcal{L})\Delta v^{NL}_2,\Delta v^{NL}_2\big\rangle=&\f{1}{2}\partial_t\|\Delta v^{NL}_2\|_{L^2}^2+\nu\|\nabla\Delta v^{NL}_2\|_{L^2}^2+\text{Re}\big\langle \f{\gamma}{\nu}\sin (k_fy)\partial_xv^{NL}_2,\Delta v^{NL}_2\big\rangle\\
=&\text{Re}\big\langle \text{div}f,\Delta v^{NL}_2\big\rangle,
\end{align*}
and
\begin{align*}
\text{Re}\big\langle(\partial_t+\mathcal{L})\Delta v^{NL}_2,- v^{NL}_2\big\rangle=&\f{1}{2}\partial_t\|\nabla  v^{NL}_2\|_{L^2}^2+\nu\|\Delta v^{NL}_2\|_{L^2}^2-\text{Re}\big\langle \f{\gamma}{\nu k_f^2}\sin (k_fy)\partial_x\Delta  v^{NL}_2,  v^{NL}_2\big\rangle\\
=&\text{Re}\langle \text{div}f,- v^{NL}_2\rangle.
\end{align*}
Then we have
\begin{align*}
&\f{1}{2}\partial_t\big(\|\Delta  v^{NL}_2\|_{L^2}^2-k_f^2\|\nabla  v^{NL}_2\|_{L^2}^2\big)+\nu\big(\|\nabla\Delta  v^{NL}_2\|_{L^2}^2-k_f^2\|\Delta  v^{NL}_2\|_{L^2}^2\big)\\
&=\text{Re}\langle \text{div}f,\Delta v^{NL}_2\rangle-k_f^2\text{Re}\langle \text{div}f,- v^{NL}_2\rangle=-\text{Re}\langle f,\nabla\Delta  v^{NL}_2\rangle-k_f^2\text{Re}\langle  f,\nabla v^{NL}_2\rangle.
\end{align*}
It is easy to see that
\begin{align*}
-\text{Re}\langle f,\nabla\Delta  v^{NL}_2\rangle-k_f^2\text{Re}\langle  f,\nabla  v^{NL}_2\rangle&\leq\|f\|_{L^2}\|\nabla\Delta  v^{NL}_2\|_{L^2}+\|f\|_{L^2}\|\nabla v^{NL}_2\|_{L^2}\\
&\leq C\nu^{-1}(1-k_f^2)^{-1}\|f\|_{L^2}^2+\f{\nu(1-k_f^2)}{2}\|\nabla\Delta  v^{NL}_2\|_{L^2}^2,
\end{align*}
and
\beno
\|(\Delta  v_2^{NL})_{\neq}\|_{L^2}^2\geq\|(\nabla v_2^{NL})_{\neq}\|_{L^2}^2,\quad \|(\nabla\Delta v_2^{NL})_{\neq}\|_{L^2}^2\geq\|(\Delta v_2^{NL})_{\neq}\|_{L^2}^2.
\eeno
Thus, we conclude
\begin{align*}
\f{1}{2}\partial_t\big(\|(\Delta  v_2^{NL})_{\neq}\|_{L^2}^2-k_f^2\|(\nabla v_2^{NL})_{\neq}\|_{L^2}^2\big)+\f{\nu}{2}(1-k_f^2)\|(\nabla\Delta v_2^{NL})_{\neq}\|_{L^2}^2\lesssim\nu^{-1} \|f_{\neq}\|_{L^2}^2,
\end{align*}
which gives
\begin{align}\label{03}
(1-k_f^2)\Big(\|(\Delta v_2^{NL})_{\neq}(t)\|_{L^2}^2+\nu\int_0^t\|(\nabla\Delta v_2^{NL})_{\neq}(s)\|_{L^{2}}^2ds\Big)\lesssim \nu^{-1}\int_0^t \|f_{\neq}(s)\|_{L^2}^2ds.
\end{align}
This proves (\ref{NLNSK bound1}). Using the fact that
\begin{align*}
&\Delta v_2^{NL}(t)=\int_{0}^{t}e^{-(t-s)\mathcal{L}}(\text{div}\ f(s))ds,
\end{align*}
and (\ref{NLNSK bound1}), we infer that
\begin{align*}
\Big\|\int_{0}^{t}e^{-(t-s)\mathcal{L}}(\text{div}\ f_{\neq}(s))ds\|_{L^2}^2= \| (\Delta v_2^{NL})_{\neq}(t)\Big\|_{L^2}^2\lesssim \nu^{-1}\int_0^t\|f_{\neq}(s)\|_{L^2}^2ds.
\end{align*}
This proves (\ref{div bound}).
\end{proof}

\begin{lemma}\label{5.2}
Let $c$ be in Proposition \ref{Prop: L1} and $c'\in (0,c)$. It holds that
\begin{align}
\label{NLNSK decay1}
&\big\|e^{c'\sqrt{|\gamma|}t}(\Delta v_2^{NL})_{\neq}\big\|_{L^{\infty}L^2}^2+|\gamma|^{\f12}\big\|e^{c'\sqrt{|\gamma|}t}(\Delta v_2^{NL})_{\neq}\big\|_{L^2L^2}^2\\
&\qquad+\nu\|e^{c'\sqrt{|\gamma|}t}(\nabla \Delta v_2^{NL})_{\neq}\big\|_{L^{2}L^2}^2\lesssim \nu^{-1}\|e^{c'\sqrt{|\gamma|}t}f_{\neq} \|_{L^2L^2}^2.\nonumber
\end{align}
\end{lemma}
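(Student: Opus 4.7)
The plan is to bootstrap from the unweighted energy estimate (\ref{NLNSK bound1})--(\ref{div bound}) to the weighted bounds via a mode-by-mode Duhamel analysis that exploits the sharp semigroup decay from Proposition \ref{Prop: L1}. Set $w(t) = e^{c'\sqrt{|\gamma|}t}(\Delta v_2^{NL})_{\neq}(t)$ and $F(t) = e^{c'\sqrt{|\gamma|}t}f_{\neq}(t)$; then $w$ satisfies $(\partial_t + \mathcal{L} - c'\sqrt{|\gamma|})w = \operatorname{div} F$ with $w(0) = 0$. Taking the Fourier transform in $(x,z)$ and rescaling $y\mapsto k_f y$, each horizontal mode $\hat w_{k_1,k_3}(t,y)$ with $k_1\neq 0$ solves
\beno
\partial_t \hat w + \mathcal{M}_{k_1,k_3}\hat w = \widehat{\operatorname{div} F}_{k_1,k_3}, \qquad \hat w(0)=0,
\eeno
where $\mathcal{M}_{k_1,k_3} = \mathcal{L}'_{k_1,k_3} + \nu(k_1^2+k_3^2) - c'\sqrt{|\gamma|}$.

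Next, since $|k_1|\ge 1$ gives $\sqrt{|k_1\gamma|}\ge\sqrt{|\gamma|}$, and $c'<c$, the decay estimate of Lemma \ref{f3} combined with the scalar shift identity $e^{-\tau\mathcal{M}_{k_1,k_3}} = e^{c'\sqrt{|\gamma|}\tau}e^{-\tau(\mathcal{M}_{k_1,k_3}+c'\sqrt{|\gamma|})}$ yields $\|e^{-\tau\mathcal{M}_{k_1,k_3}}g\|_{L^2_y}\leq Ce^{-a\tau}\|g\|_{L^2_y}$ with $a = (c-c')\sqrt{|k_1\gamma|}+\nu(k_1^2+k_3^2)$, together with the analogue of the weighted $H^1$-dissipation bound (\ref{est of L(k_1,k_3)4}) (with $c$ replaced by $c-c'$). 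The Duhamel representation $\hat w(t) = \int_0^t e^{-(t-s)\mathcal{M}_{k_1,k_3}}\widehat{\operatorname{div} F}(s)\,ds$ combined with these two ingredients then yields each of the three mode-by-mode estimates: the $L^\infty_t L^2_y$ bound follows from Cauchy--Schwarz; the $|k_1\gamma|^{1/2}L^2_tL^2_y$ bound (which controls the $|\gamma|^{1/2}L^2L^2$ piece since $|k_1|\ge 1$) follows from Young's inequality with the convolution kernel $e^{-a\tau}$, which gains a factor $1/a\lesssim|k_1\gamma|^{-1/2}$; and the $\nu L^2_tH^1_y$ bound follows from the weighted $H^1$-dissipation analogue of (\ref{est of L(k_1,k_3)4}). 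The divergence on the source is handled componentwise: the horizontal pieces $ik_1\hat F_1,\,ik_3\hat F_3$ produce a factor $|k_1|+|k_3|$ which is absorbed by the $\nu(k_1^2+k_3^2)$ contribution in $a$ (via $\sqrt{\nu^{-1}}\cdot\sqrt{\nu(k_1^2+k_3^2)}\gtrsim |k_1|+|k_3|$), while the $\partial_y\hat F_2$ piece is moved by duality onto the adjoint semigroup and controlled using the weighted $H^1$-dissipation, producing exactly the $\nu^{-1/2}$ prefactor. Plancherel in $(k_1,k_3)$ recovers the three claimed pieces of (\ref{NLNSK decay1}).

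The main obstacle is the shift term $c'\sqrt{|\gamma|}$ in the equation for $w$: a direct weighted energy estimate (mimicking the proof of (\ref{NLNSK bound1}) with the extra factor $e^{2c'\sqrt{|\gamma|}t}$) produces an unabsorbable term $c'\sqrt{|\gamma|}\|w\|_{L^2}^2$, since the viscous dissipation $\nu\|\nabla w\|_{L^2}^2$ is too weak when $\sqrt{|\gamma|}\gg \nu$, and a na\"ive Gr\"onwall argument would yield unacceptable exponential growth. The resolution crucially uses the sharp pseudospectral bound of Lemma \ref{L1} (equivalently, the enhanced decay of Proposition \ref{Prop: L1}) to shift the semigroup generator while retaining a genuine positive decay rate $(c-c')\sqrt{|k_1\gamma|}$ on every nontrivial horizontal mode. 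A secondary technical difficulty is the $\partial_y$ inside $\operatorname{div} F$, which is handled via duality against the weighted $H^1$-dissipation (\ref{est of L(k_1,k_3)4}) rather than by any direct commutation with the semigroup.
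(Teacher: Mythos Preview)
Your approach is essentially correct but takes a genuinely different route from the paper. The paper does \emph{not} work mode-by-mode with a shifted generator; instead it uses a time-slicing argument. One partitions $[0,\infty)$ into intervals of length $\tau_0=|\gamma|^{-1/2}$, defines $J_k(\tau)=\int_{(k-1)\tau_0}^{\min(\tau,k\tau_0)} e^{-(\tau-s)\mathcal{L}}\operatorname{div} f_{\neq}(s)\,ds$, and on each slice invokes the \emph{unweighted} bound (\ref{div bound}) to get $\|J_k(k\tau_0)\|_{L^2}^2\lesssim I_k^2:=\nu^{-1}\int_{(k-1)\tau_0}^{k\tau_0}\|f_{\neq}\|^2$. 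For $\tau>k\tau_0$ one then propagates $J_k$ by the homogeneous semigroup and applies Proposition \ref{Prop: L1} to get $\|J_k(\tau)\|\lesssim e^{-c\sqrt{|\gamma|}(\tau-k\tau_0)}I_k$. Summing geometrically in $k$ gives the $L^\infty L^2$ and $|\gamma|^{1/2}L^2L^2$ bounds; the $\nu L^2H^1$ piece then follows, exactly as in your final step, by running the weighted physical-space energy identity (the one behind (\ref{NLNSK bound1})) and absorbing the shift term $c'\sqrt{|\gamma|}\|w\|^2$ using the already-established $|\gamma|^{1/2}L^2L^2$ bound.

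The key conceptual difference is where the divergence structure gets handled. In the paper it is dealt with \emph{once}, in physical space, inside the proof of (\ref{div bound}) via integration by parts against $\Delta v_2^{NL}$ and $-k_f^2 v_2^{NL}$; after that, only the scalar semigroup bound is needed. In your route the divergence must be handled mode-by-mode inside the Duhamel integral, which forces you to split horizontal versus vertical derivatives and to invoke the adjoint $H^1$-dissipation (an analogue of (\ref{est of L(k_1,k_3)4}) for $\mathcal{L}'^{\,*}_{k_1,k_3}=\mathcal{L}'_{-k_1,k_3}$) to control the $\partial_y\hat F_2$ contribution by duality. This works---the adjoint is just the complex conjugate, so the same pseudospectral and dissipation bounds apply---but it is technically heavier, and the $|\gamma|^{1/2}L^2L^2$ bound for the vertical piece further requires solving a backward adjoint problem with $L^2$ source. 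The paper's slicing avoids all of this. Your claim that the $\nu L^2H^1$ piece follows ``from the weighted $H^1$-dissipation analogue of (\ref{est of L(k_1,k_3)4})'' is slightly misleading: a direct Minkowski-on-Duhamel argument with (\ref{est of L(k_1,k_3)4}) loses a power (it yields $L^1_s$ control of the source, not $L^2_s$); the correct route---which is what the paper does---is to obtain the $L^2L^2$ bound first and then close the weighted energy estimate.
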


\begin{proof}
We define $J_k(\tau)$ by
\begin{align}
J_k(\tau)=\left\{
\begin{aligned}
&\int_{(k-1)\tau_0}^{k\tau_0}e^{-(\tau-s)\mathcal{L}}\text{div}\ f_{\neq}(s)ds,\quad\quad \tau\geq k\tau_0,\\
&\int_{(k-1)\tau_0}^{\tau}e^{-(\tau-s)\mathcal{L}}\text{div}\ f_{\neq}(s)ds ,\quad\quad (k-1)\tau_0\leq \tau\leq k\tau_0,\\
&0, \quad\quad  \tau\leq (k-1)\tau_0.
\end{aligned}
\right.
\end{align}
Thanks to (\ref{div bound}), we deduce that for $\tau\in [(k-1)\tau_0, k\tau_0]$
\begin{align}
\label{cut1}\|J_k(\tau)\|_{L^2}^2\lesssim \nu^{-1}\int_{(k-1)\tau_0}^\tau\|f_{\neq}(s)\|_{L^2}^2ds\leq \nu^{-1}\int_{(k-1)\tau_0}^{k\tau_0}\|f_{\neq}(s)\|_{L^2}^2ds:=I_k^2.
\end{align}
As $J_k(\tau)=e^{-(\tau-k\tau_0)\mathcal{L}}J_k(k\tau_0)$ for $\tau\geq k\tau_0$, we deduce from (\ref{shear part})  that for $\tau\geq k\tau_0$\begin{align}
\label{cut2}\|J_k(\tau)\|_{L^2}^2\lesssim  e^{-2c\sqrt{|\gamma|}(\tau-k\tau_0)} \|J_k(k\tau_0)\|_{L^2}^2\lesssim e^{-2c\sqrt{|\gamma|}(\tau-k\tau_0)}I_k^2.
\end{align}
Taking $\tau_0=\f{1}{\sqrt{|\gamma|}}$. It follows from (\ref{cut1}) and (\ref{cut2}) that  for $t\in[(l-1)\tau_0,l\tau_0](l\in\mathbb{N^+})$\begin{align*}
\|(\Delta v_2^{NL})_{\neq}(t)\|_{L^2}\leq&\sum\limits_{k=1}^{+\infty} \|J_k(t)\|_{L^2}\leq \sum\limits_{k=1}^{l-1} \|J_k(t)\|_{L^2}+CI_{l}\leq C\sum\limits_{k=1}^{l} e^{-c(l-k)}I_k.
\end{align*}
Notice that
\begin{align*}
\nu^{-1}\int_{(k-1)\tau_0}^{k\tau_0}e^{2c'\sqrt{|\gamma|}s}\|f_{\neq}(s)\|_{L^2}^2ds\sim \nu^{-1}e^{2c'k}\int_{(k-1)\tau_0}^{k\tau_0}\|f_{\neq}(s)\|_{L^2}^2ds,
\end{align*}
which implies that
\beno
\sum\limits_{k=1}^{+\infty} e^{2c'k}I_k^2\leq C \nu^{-1}\|e^{c'\sqrt{|\gamma|}t}f_{\neq} \|_{L^2L^2}^2.
\eeno
 Then we deduce that for $c'\in(0,c)$
\begin{align*}
\|(\Delta v_2^{NL})_{\neq}(t)\|_{L^2}&\lesssim\sum\limits_{k=1}^{l} e^{-c(l-k)}I_k\lesssim \Big(\sum\limits_{k=1}^{l} e^{-2c'(l-k)}I_k^2\Big)^{\f12}\Big(\sum\limits_{k=1}^{l} e^{-2(c-c')(l-k)}\Big)^{\f12}\\
&\lesssim \nu^{-1}e^{-c'l} \|e^{c'\sqrt{|\gamma|}t}f_{\neq} \|_{L^2L^2}
\end{align*}
for any $t\in[(l-1)\tau_0,l\tau_0](l\in\mathbb{N^+})$, which implies that
\begin{align}
\label{NLNSK decay1.1}
\big\|e^{c'\sqrt{|\gamma|}t}(\Delta v_2^{NL})_{\neq}\big\|_{L^{\infty}L^2}^2\lesssim \nu^{-1}\big\|e^{c'\sqrt{|\gamma|}t}f _{\neq}\big\|_{L^2L^2}^2.
\end{align}

On the other hand, we also have
\begin{align*}
\|(\Delta v_2^{NL})_{\neq}(t)\|_{L^2}&\lesssim\sum\limits_{k=1}^{l} e^{-c(l-k)}I_k\lesssim \Big(\sum\limits_{k=1}^{l} e^{-(c+c')(l-k)}I_k^2\Big)^{\f12}\Big(\sum\limits_{k=1}^{l} e^{-(c-c')(l-k)}\Big)^{\f12}\\
&\lesssim \Big(\sum\limits_{k=1}^{l} e^{-(c+c')(l-k)}I_k^2\Big)^{\f12}\end{align*}
for any $t\in[(l-1)\tau_0,l\tau_0](l\in\mathbb{N^+})$, which implies that
\begin{align}
\nonumber{|\gamma|}^{\frac{1}{2}}\int_0^{+\infty}e^{2c'\sqrt{|\gamma|}t} \| (\Delta v_2^{NL})_{\neq}(t)\|_{L^2}^2dt\leq& C {|\gamma|}^{\frac{1}{2}}\sum\limits_{l=0}^{+\infty}e^{2c'l}\int_{l\tau_0}^{(l+1)\tau_0} \| (\Delta v_2^{NL})_{\neq}(t)\|_{L^2}^2dt\\
\nonumber\leq&C {|\gamma|}^{\frac{1}{2}}\sum\limits_{l=0}^{+\infty}e^{2c'l} \tau_0\sum\limits_{k=1}^{l+1} e^{-(c+c')(l+1-k)}I_k^2\\
=& C \sum\limits_{k=1}^{+\infty}e^{2c'k}I_k^2 \sum\limits_{l=k-1}^{+\infty} e^{-2(c-c')(l+1-k)}\nonumber\\
\label{NLNSK decay1.2}\lesssim&  \sum\limits_{k=1}^{+\infty}e^{2c'k}I_k^2 \lesssim\nu^{-1}\|e^{c'\sqrt{|\gamma|}t}f_{\neq}(t)\|_{L^2L^2}^2.
\end{align}

It follows from \eqref{03} that
 \begin{align*}
&\f{1}{2}\partial_te^{2c'\sqrt{|\gamma|}t}\big(\|(\Delta  v_2^{NL})_{\neq}\|_{L^2}^2-k_f^2\|(\nabla v_2^{NL})_{\neq}\|_{L^2}^2\big)+\f{\nu}{2}(1-k_f^2)e^{2c'\sqrt{|\gamma|}t}\|(\nabla\Delta v_2^{NL})_{\neq}\|_{L^2}^2\\
&\lesssim\nu^{-1} e^{2c'\sqrt{|\gamma|}t}\|f_{\neq}\|_{L^2}^2+{|\gamma|}^{\frac{1}{2}}e^{2c'\sqrt{|\gamma|}t}\big(\|(\Delta  v_2^{NL})_{\neq}\|_{L^2}^2-k_f^2\|(\nabla v_2^{NL})_{\neq}\|_{L^2}^2\big),
\end{align*}
from which and (\ref{NLNSK decay1.2}), it follows that
\begin{align}
\nonumber&\nu\int_0^{+\infty}e^{2c'\sqrt{|\gamma|}t} \| (\nabla \Delta v_2^{NL})_{\neq}(t)\|_{L^2}^2dt\\
\nonumber&\leq C \nu^{-1}\int_0^{+\infty}e^{2c'\sqrt{|\gamma|}t} \| f_{\neq}(t)\|_{L^2}^2dt+C\int_0^{+\infty}{|\gamma|}^{\frac{1}{2}}e^{2c'\sqrt{|\gamma|}t}\|(\Delta  v_2^{NL})_{\neq}(t)\|_{L^2}^2dt\\
\label{NLNSK decay1.3}&\leq C\nu^{-1}\|e^{c'\sqrt{|\gamma|}t}f_{\neq}(t)\|_{L^2L^2}^2.
\end{align}

Summing up (\ref{NLNSK decay1.1}), (\ref{NLNSK decay1.2}) and (\ref{NLNSK decay1.3}), we conclude (\ref{NLNSK decay1}).
\end{proof}

\begin{proposition}\label{Prop5.3}
Let $c$ be as in Proposition \ref{Prop: L1} and $c'\in (0,c)$.
It holds that
\begin{align}
\label{NLNSK decay1'}&\|e^{c'\sqrt{|\gamma|}t}(\Delta v_2)_{\neq}\|_{L^{\infty}L^2}^2+|\gamma|^{\f12}\|e^{c'\sqrt{|\gamma|}t}(\Delta v_2)_{\neq}\|_{L^2L^2}^2+\nu\|e^{c'\sqrt{|\gamma|}t}(\nabla \Delta v_2)_{\neq}\|_{L^{2}L^2}^2\\
\nonumber&\lesssim  \|(\Delta v_2)_{\neq}(0)\|_{L^2}^2+\nu^{-1}\|e^{c'\sqrt{|\gamma|}t}f_{\neq} \|_{L^2L^2}^2.
\end{align}
\end{proposition}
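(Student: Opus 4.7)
The plan is to exploit the linear decomposition $v_2 = v_2^L + v_2^{NL}$ introduced before \eqref{NLNSK1}--\eqref{LNSK1}: the nonlinear piece $v_2^{NL}$ is already handled by Lemma \ref{5.2}, so it remains only to establish the same $X_{c'}$-type bound for the homogeneous evolution $v_2^L$ driven by the initial data $(\Delta v_2)(0)$, and then add the two pieces by the triangle inequality.

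For the linear part, observe that $(\Delta v_2^L)(t) = e^{-t\mathcal{L}}(\Delta v_2)(0)$ and in particular
$(\Delta v_2^L)_{\neq}(t) = e^{-t\mathcal{L}}\big((\Delta v_2)(0)\big)_{\neq}$. The semigroup bound \eqref{shear part} of Proposition \ref{Prop: L1} gives the pointwise-in-$t$ decay
\begin{align*}
\|(\Delta v_2^L)_{\neq}(t)\|_{L^2} \leq C e^{-c\sqrt{|\gamma|}\,t - \nu t}\,\|(\Delta v_2)_{\neq}(0)\|_{L^2},
\end{align*}
so that, for any $c'\in(0,c)$,
\begin{align*}
\|e^{c'\sqrt{|\gamma|}t}(\Delta v_2^L)_{\neq}\|_{L^{\infty}L^2}^2 + |\gamma|^{\frac{1}{2}}\|e^{c'\sqrt{|\gamma|}t}(\Delta v_2^L)_{\neq}\|_{L^{2}L^2}^2 \lesssim \|(\Delta v_2)_{\neq}(0)\|_{L^2}^2,
\end{align*}
the $L^2_t$ integral converging thanks to the strict gap $c-c'>0$. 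For the dissipation term, the energy estimate \eqref{shear part energy} of Proposition \ref{Prop: L1} applied to $g = (\Delta v_2)(0)$ yields directly
\begin{align*}
\nu\|e^{c'\sqrt{|\gamma|}t}(\nabla\Delta v_2^L)_{\neq}\|_{L^{2}L^2}^2 \lesssim \|(\Delta v_2)_{\neq}(0)\|_{L^2}^2.
\end{align*}

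Combining these three bounds with Lemma \ref{5.2}, and using $\|u+v\|^2\le 2(\|u\|^2+\|v\|^2)$ on each of the three norms making up $\|\cdot\|_{X_{c'}}$, we conclude
\begin{align*}
&\|e^{c'\sqrt{|\gamma|}t}(\Delta v_2)_{\neq}\|_{L^{\infty}L^2}^2 + |\gamma|^{\frac{1}{2}}\|e^{c'\sqrt{|\gamma|}t}(\Delta v_2)_{\neq}\|_{L^2L^2}^2 + \nu\|e^{c'\sqrt{|\gamma|}t}(\nabla\Delta v_2)_{\neq}\|_{L^{2}L^2}^2 \\
&\qquad \lesssim \|(\Delta v_2)_{\neq}(0)\|_{L^2}^2 + \nu^{-1}\|e^{c'\sqrt{|\gamma|}t}f_{\neq}\|_{L^2L^2}^2,
\end{align*}
which is \eqref{NLNSK decay1'}. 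There is no real obstacle here: the only nontrivial input is Lemma \ref{5.2}, whose Duhamel-cutting argument (splitting the time axis into intervals of length $\tau_0 = |\gamma|^{-1/2}$) has already been carried out; the present proposition merely grafts the linear semigroup estimate onto that result via superposition.
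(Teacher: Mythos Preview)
Your proof is correct and follows essentially the same approach as the paper: split $v_2=v_2^L+v_2^{NL}$, invoke Lemma \ref{5.2} for the nonlinear part, and use the semigroup bounds \eqref{shear part} and \eqref{shear part energy} of Proposition \ref{Prop: L1} for the homogeneous part. The paper's proof is slightly terser but identical in substance.
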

\begin{proof}
Thanks to $(\Delta v_2^{L})(t)=e^{-t\mathcal{L}}(\Delta v_2^{L})(0)$, it follows from (\ref{shear part}) and (\ref{shear part energy}) that
\begin{align*}
&\|e^{c'\sqrt{|\gamma|}t}(\Delta v_2^{L})_{\neq}\|_{L^{\infty}L^2}^2+{|\gamma|}^{\frac{1}{2}}\|e^{c'\sqrt{|\gamma|}t}(\Delta v_2^{L})_{\neq}\|_{L^{2}L^2}^2+\nu\|e^{c'\sqrt{|\gamma|}t}(\nabla\Delta v_2^{L})_{\neq}\|_{L^{2}L^2}^2\\ &\lesssim \|(\Delta v_2)_{\neq}(0)\|_{L^2}^2,
\end{align*}
which along with (\ref{NLNSK decay1}) gives  \eqref{NLNSK decay1'}  due to $(\Delta v_2)_{\neq}=(\Delta v_2^{NL})_{\neq}+(\Delta v_2^{L})_{\neq}$. \end{proof}

\subsection{Decay estimates of the shear wise vorticity}
We write
\begin{align}
\label{NLNSK2}
(\partial_t+\mathcal{H})\omega_2=-\f{\gamma}{\nu k_f}\cos(k_fy)\partial_zv_2+\text{div}g,
\end{align}
where $\mathcal{H}=\f{\gamma}{\nu k_f^2}\sin(k_fy)\partial_x-\nu\Delta$ and $g=-\omega_2(v_1,v_2,v_3)+v_2(\omega_1,\omega_2,\omega_3).$

Similar to the proof of Proposition \ref{Prop5.3}, we have

\begin{proposition}\label{Prop5.4}
Let $c$ be as in  Proposition \ref{Prop: H1} and $c'\in (0,c)$.
Let $(g_1,g_2)$ satisfy $(\partial_t+\mathcal{H})g_1=\text{div}g_2 $. If $P_0g_1=P_0 g_2=0$, then we have
\begin{align}
\label{NLNSK decay3}&\|g_1\|_{X_{c'}}^2 \leq C\| g_1(0)\|_{L^2}^2+C\nu^{-1}\|e^{c'\sqrt{|\gamma|}t}g_2\|_{L^{2}L^2}^2,
\end{align}
where
\begin{align*}
\|g_1\|_{X_{c'}}^2=\|e^{c'\sqrt{|\gamma|}t}g_1\|_{L^{\infty}L^2}^2+{|\gamma|}^{\frac{1}{2}}\|e^{c'\sqrt{|\gamma|}t}g_1\|_{L^{2}L^2}^2+\nu\|e^{c'\sqrt{|\gamma|}t}\nabla g_1\|_{L^{2}L^2}^2.
\end{align*}
\end{proposition}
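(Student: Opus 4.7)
The plan is to mirror the proof of Proposition \ref{Prop5.3} (via Lemma \ref{5.2}), using the enhanced semigroup decay for $\mathcal{H}$ from Proposition \ref{Prop: H1} in place of Proposition \ref{Prop: L1}. The task is actually simpler than for $\mathcal{L}$, because $\mathcal{H}$ has no nonlocal term, so the basic energy estimate does not need the $(1-k_f^2)$ juggling carried out in \eqref{NLNSK bound1}.

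First I would split $g_1=g_1^{L}+g_1^{NL}$, where $g_1^{L}(t)=e^{-t\mathcal{H}}g_1(0)$ solves the homogeneous problem with data $g_1(0)$, and $g_1^{NL}$ solves the inhomogeneous problem $(\partial_t+\mathcal{H})g_1^{NL}=\dive g_2$ with zero data. Taking the real part of $\langle(\partial_t+\mathcal{H})g_1^{NL},g_1^{NL}\rangle$ and using $\mathrm{Re}\langle i\tfrac{\gamma}{\nu k_f^2}\sin(k_fy)\partial_xw,w\rangle=0$ yields
\begin{align*}
\tfrac12\partial_t\|g_1^{NL}\|_{L^2}^2+\nu\|\nabla g_1^{NL}\|_{L^2}^2
=-\mathrm{Re}\langle g_2,\nabla g_1^{NL}\rangle
\le \tfrac{\nu}{2}\|\nabla g_1^{NL}\|_{L^2}^2+\tfrac{1}{2\nu}\|g_2\|_{L^2}^2,
\end{align*}
so that
\begin{align*}
\|g_1^{NL}(t)\|_{L^2}^2+\nu\int_0^t\|\nabla g_1^{NL}(s)\|_{L^2}^2\,ds\ \lesssim\ \nu^{-1}\int_0^t\|g_2(s)\|_{L^2}^2\,ds,
\end{align*}
which is the analogue of \eqref{NLNSK bound1} and, via Duhamel, of \eqref{div bound}. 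Here $P_0g_1^{NL}=P_0g_2=0$ is preserved because $\mathcal{H}$ commutes with $P_0$.

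Next I would run the slab-decomposition argument of Lemma \ref{5.2}. With $\tau_0=|\gamma|^{-1/2}$, define
\begin{align*}
J_k(\tau)=\begin{cases}\int_{(k-1)\tau_0}^{k\tau_0}e^{-(\tau-s)\mathcal{H}}\dive g_2(s)\,ds,&\tau\ge k\tau_0,\\ \int_{(k-1)\tau_0}^{\tau}e^{-(\tau-s)\mathcal{H}}\dive g_2(s)\,ds,&(k-1)\tau_0\le\tau\le k\tau_0,\\ 0,&\tau\le(k-1)\tau_0,\end{cases}
\end{align*}
so that $g_1^{NL}(t)=\sum_k J_k(t)$. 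The Duhamel energy bound above gives $\|J_k(\tau)\|_{L^2}^2\lesssim I_k^2:=\nu^{-1}\int_{(k-1)\tau_0}^{k\tau_0}\|g_2(s)\|_{L^2}^2ds$ for $\tau\le k\tau_0$, while for $\tau\ge k\tau_0$ the identity $J_k(\tau)=e^{-(\tau-k\tau_0)\mathcal{H}}J_k(k\tau_0)$ combined with \eqref{local shear part} yields $\|J_k(\tau)\|_{L^2}^2\lesssim e^{-2c\sqrt{|\gamma|}(\tau-k\tau_0)}I_k^2$. The very same Cauchy--Schwarz and geometric-series manipulations as in \eqref{NLNSK decay1.1} and \eqref{NLNSK decay1.2} then give, for any $c'\in(0,c)$,
\begin{align*}
\|e^{c'\sqrt{|\gamma|}t}g_1^{NL}\|_{L^\infty L^2}^2+|\gamma|^{1/2}\|e^{c'\sqrt{|\gamma|}t}g_1^{NL}\|_{L^2L^2}^2\ \lesssim\ \nu^{-1}\|e^{c'\sqrt{|\gamma|}t}g_2\|_{L^2L^2}^2.
\end{align*}
The $\nu\|e^{c'\sqrt{|\gamma|}t}\nabla g_1^{NL}\|_{L^2L^2}^2$ bound is then obtained by multiplying the differential energy identity by $e^{2c'\sqrt{|\gamma|}t}$ and absorbing the $c'\sqrt{|\gamma|}\|g_1^{NL}\|_{L^2}^2$ error term using the $|\gamma|^{1/2}L^2L^2$ estimate just derived, exactly as in \eqref{NLNSK decay1.3}.

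Finally, for $g_1^{L}$ I would use Proposition \ref{Prop: H1} to get $\|g_1^{L}(t)\|_{L^2}\le Ce^{-c\sqrt{|\gamma|}t-\nu t}\|g_1(0)\|_{L^2}$, which immediately controls the $L^\infty L^2$ and $|\gamma|^{1/2}L^2L^2$ parts of $\|g_1^{L}\|_{X_{c'}}^2$ by $\|g_1(0)\|_{L^2}^2$; testing the homogeneous equation against $g_1^{L}$ and combining with this pointwise decay produces the gradient piece $\nu\|e^{c'\sqrt{|\gamma|}t}\nabla g_1^{L}\|_{L^2L^2}^2\lesssim\|g_1(0)\|_{L^2}^2$ in the standard way. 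Adding the bounds for $g_1^L$ and $g_1^{NL}$ gives \eqref{NLNSK decay3}. There is no genuine obstacle here: the only place where care is required is the book-keeping in the slab summation, but this is essentially identical to Lemma \ref{5.2} and the absence of the nonlocal term in $\mathcal{H}$ makes the energy step cleaner than the one in \eqref{NLNSK bound1}.
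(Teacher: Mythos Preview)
Your proposal is correct and follows exactly the approach the paper intends: the paper states that Proposition \ref{Prop5.4} is proved ``similar to the proof of Proposition \ref{Prop5.3}'', and you have carried out precisely that program, splitting into homogeneous and inhomogeneous parts, using the energy identity for $\mathcal{H}$ (which is indeed cleaner than \eqref{NLNSK bound1} since the skew term in $\mathcal{H}$ contributes nothing), running the slab decomposition of Lemma \ref{5.2} with Proposition \ref{Prop: H1} replacing Proposition \ref{Prop: L1}, and recovering the gradient piece via the weighted energy identity.
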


To handle \eqref{NLNSK2}, we need to use the wave operator.  Recall that
we introduce $\mathbb{D}_2=\mathbb{D}_2^{(\al)}$ in section 5.1. For $f(x,y,z)=\sum\limits_{k_1\neq 0}\widehat{f}(k_1,k_fy,k_3)e^{i(k_1x+k_3z)}$, we define
\beno
\widetilde{\mathbb{D}}_2f(x,y,z)=\sum\limits_{k_1\neq 0}\mathbb{D}_2^{(\sqrt{k_1^2+k_3^2}/k_f)}\widehat{f}(k_1,\cdot,k_3)(k_fy)e^{i(k_1x+k_3z)}.
\eeno
Similar to \eqref{oper,sinyD2}-\eqref{D6}, we can prove
\begin{align}
&\widetilde{\mathbb{D}}_2\big(\sin (k_fy)(1+k_f^2\Delta^{-1})\omega\big)=\sin (k_fy)\widetilde{\mathbb{D}}_2(\omega)-k_f^2\Delta^{-1}\omega,\label{oper,sinyD21}\\
&\|\cos (k_fy)\widetilde{\mathbb{D}}_2(\Delta\omega)-\Delta(\cos (k_fy)\widetilde{\mathbb{D}}_2(\omega))\|_{L^2}\leq C\|\nabla\omega\|_{L^2},\label{est of D21}\\
&\|(\partial_x,\partial_z)\cos (k_fy)\widetilde{\mathbb{D}}_2(\omega)\|_{L^2}\leq C\|\omega\|_{L^2},\label{D2.2}\\
&\|\nabla(\partial_x,\partial_z)\cos (k_fy)\widetilde{\mathbb{D}}_2(\omega)\|_{L^2}\leq C\|\nabla\omega\|_{L^2}.\label{D2.3}
\end{align}

\begin{proposition}
Let $c$ be as in Proposition \ref{Prop: H1} and $c'\in (0,c)$. Then  we have
\begin{align}
\label{NLNSK decay2}\|\partial_x\omega_2\|_{X_{c'}}^2\lesssim&\|(\Delta v_2)_{\neq}(0)\|_{L^2}^2+\|\partial_x\omega_2(0)\|_{L^2}^2\\&+ \nu^{-1}\Big(\|e^{c'\sqrt{|\gamma|}t}f_{\neq}\|_{L^{2}L^2}^2+\|e^{c'\sqrt{|\gamma|}t}\partial_xg\|_{L^{2}L^2}^2\Big).\nonumber
\end{align}
\end{proposition}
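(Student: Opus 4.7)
The plan is to adapt the wave-operator construction of Proposition \ref{f4} to the inhomogeneous setting, reducing the estimate on $\partial_x\omega_2$ to one for a corrected quantity that Proposition \ref{Prop5.4} can handle. Introduce the good unknown
\[G := \partial_x\omega_2 + \mathcal{D}(\Delta v_2)_{\neq},\qquad \mathcal{D}\phi := \f{1}{k_f}\cos(k_fy)\widetilde{\mathbb{D}}_2\partial_z\phi,\]
which in each $(k_1,k_3)$-Fourier mode coincides with $ik_1g_1$ from the proof of Proposition \ref{f4}. The role of the correction is to cancel the $\gamma/\nu$-sized source $-\f{\gamma}{\nu k_f}\cos(k_fy)\partial_x\partial_zv_2$ coming from the linear lift-up term. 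Using $(\partial_t+\mathcal{L})(\Delta v_2)_{\neq}=\text{div}f_{\neq}$, the equation for $\partial_x\omega_2$, and the physical-space translation of the intertwining identity
\[\mathcal{H}\cos(k_fy)\widetilde{\mathbb{D}}_2 - \cos(k_fy)\widetilde{\mathbb{D}}_2\mathcal{L} = \nu[\cos(k_fy)\widetilde{\mathbb{D}}_2,\partial_y^2] - \f{\gamma}{\nu k_f}\cos(k_fy)\partial_x\Delta^{-1}\]
used in the proof of Proposition \ref{f4}, the dangerous term cancels identically and one obtains
\[(\partial_t+\mathcal{H})G = \partial_x\text{div}g + \mathcal{D}\,\text{div}f_{\neq} + \nu\,\partial_z\bigl[\cos(k_fy)\widetilde{\mathbb{D}}_2,\partial_y^2\bigr](\Delta v_2)_{\neq}.\]

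The three sources on the right can be recast (by commuting $\partial_x,\partial_z$ through $\cos(k_fy)\widetilde{\mathbb{D}}_2$ and using \eqref{D2.2}--\eqref{D2.3} and \eqref{est of D21}) as the divergence of a vector field $g_2$ satisfying
\[\nu^{-1}\|e^{c'\sqrt{|\gamma|}t}g_2\|_{L^2L^2}^2 \;\lesssim\; \nu^{-1}\|e^{c'\sqrt{|\gamma|}t}\partial_xg\|_{L^2L^2}^2 + \nu^{-1}\|e^{c'\sqrt{|\gamma|}t}f_{\neq}\|_{L^2L^2}^2 + \nu\|e^{c'\sqrt{|\gamma|}t}\nabla(\Delta v_2)_{\neq}\|_{L^2L^2}^2,\]
the last summand arising from writing the $\nu$-commutator term as $\partial_z$ of a quantity of $L^2$-size $\nu\|\nabla(\Delta v_2)_{\neq}\|_{L^2}$. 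Since $\widetilde{\mathbb{D}}_2$ annihilates zero $x$-modes, $P_0G=P_0g_2=0$, so Proposition \ref{Prop5.4} applies and yields $\|G\|_{X_{c'}}^2 \lesssim \|G(0)\|_{L^2}^2 + (\text{RHS above})$.

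To conclude, \eqref{D2.2} gives $\|G(0)\|_{L^2}\leq\|\partial_x\omega_2(0)\|_{L^2}+C\|(\Delta v_2)_{\neq}(0)\|_{L^2}$ and $\|\mathcal{D}(\Delta v_2)_{\neq}\|_{X_{c'}}\lesssim\|(\Delta v_2)_{\neq}\|_{X_{c'}}$ via \eqref{D2.2}--\eqref{D2.3}; Proposition \ref{Prop5.3} controls both $\|(\Delta v_2)_{\neq}\|_{X_{c'}}^2$ and $\nu\|e^{c'\sqrt{|\gamma|}t}\nabla(\Delta v_2)_{\neq}\|_{L^2L^2}^2$ by $\|(\Delta v_2)_{\neq}(0)\|_{L^2}^2+\nu^{-1}\|e^{c'\sqrt{|\gamma|}t}f_{\neq}\|_{L^2L^2}^2$. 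Combining via $\|\partial_x\omega_2\|_{X_{c'}}\leq\|G\|_{X_{c'}}+\|\mathcal{D}(\Delta v_2)_{\neq}\|_{X_{c'}}$ then gives \eqref{NLNSK decay2}. The main obstacle is the $\nu$-weighted commutator source in the $G$-equation, which depends on $\nabla(\Delta v_2)_{\neq}$, a quantity absent from the standard input norms: the structural luck is that Proposition \ref{Prop5.3} provides exactly the $\nu$-weighted $L^2L^2$ control of this gradient, so that the $\nu$ prefactor on the commutator and the $\nu^{-1}$-loss inherent to Proposition \ref{Prop5.4} cancel and the estimate closes without any derivative loss.
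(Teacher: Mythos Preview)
Your overall strategy coincides with the paper's: introduce the same good unknown (the paper writes it as $g_1=\partial_x\omega_2+\f{1}{k_f}\partial_z\cos(k_fy)\widetilde{\mathbb{D}}_2((\Delta v_2)_{\neq})$), derive its $\mathcal{H}$-equation with the lift-up term cancelled, apply Proposition \ref{Prop5.4}, and close by undoing the correction via Proposition \ref{Prop5.3}. Your observation that the $\nu$ on the commutator source exactly compensates the $\nu^{-1}$ loss of Proposition \ref{Prop5.4} is precisely the mechanism the paper uses.

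There is, however, one genuine gap. When you ``recast $\mathcal{D}\,\text{div}f_{\neq}$ in divergence form by commuting $\partial_x,\partial_z$ through $\cos(k_fy)\widetilde{\mathbb{D}}_2$'', you overlook that $\text{div}f$ contains $\partial_y(f_2)_{\neq}$, and the wave operator does \emph{not} commute with $\partial_y$; the estimates \eqref{D2.2}--\eqref{D2.3} give no $L^2\to L^2$ bound on the first-order commutator $[\cos(k_fy)\widetilde{\mathbb{D}}_2,\partial_y]$. The paper circumvents this by exploiting the special structure $\text{div}f=\Delta f_0$ with $f_0=-V\cdot\nabla v_2-\partial_y p$: then
\[
\tfrac{1}{k_f}\partial_z\cos(k_fy)\widetilde{\mathbb{D}}_2\,\text{div}f_{\neq}
=\tfrac{1}{k_f}\partial_z\Delta\cos(k_fy)\widetilde{\mathbb{D}}_2(f_0)_{\neq}
+\tfrac{1}{k_f}\partial_z\bigl[\cos(k_fy)\widetilde{\mathbb{D}}_2,\Delta\bigr](f_0)_{\neq}.
\]
The first piece equals $\text{div}\bigl(\tfrac{1}{k_f}\partial_z\nabla\cos(k_fy)\widetilde{\mathbb{D}}_2(f_0)_{\neq}\bigr)$, bounded via \eqref{D2.3} by $\|\nabla(f_0)_{\neq}\|_{L^2}\le\|f_{\neq}\|_{L^2}$; the second is $\partial_z$ of a term controlled by the \emph{second}-order commutator estimate \eqref{est of D21}, again by $\|\nabla(f_0)_{\neq}\|_{L^2}$. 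In the paper this $f_0$-commutator is grouped with your $\nu$-commutator into a single term $[\cos(k_fy)\widetilde{\mathbb{D}}_2,\Delta](\nu\Delta v_2+f_0)_{\neq}$; see \eqref{Key trick}--\eqref{KEY estimate}. Once you insert this $\Delta f_0$ step your argument is complete and identical to the paper's.
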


\begin{proof}
First of all,  using \eqref{oper,sinyD21} and $\widetilde{\mathbb{D}}_2\partial_x=\partial_x\widetilde{\mathbb{D}}_2 $, we have
\begin{align*}
&\cos (k_fy)\widetilde{\mathbb{D}}_2\circ\mathcal{L}-\mathcal{H}\circ\cos (k_fy)\widetilde{\mathbb{D}}_2=-\nu\big[\cos (k_fy)\widetilde{\mathbb{D}}_2,\Delta\big]-\f{\gamma}{\nu }\cos (k_fy)\partial_x\Delta^{-1}.
\end{align*}
Let $g_1=\partial_x\omega_2+\f{1}{k_f}\partial_z\cos (k_fy)\widetilde{\mathbb{D}}_2((\Delta v_2)_{\neq})$. Then we find that
\begin{align*}
&(\partial_t+\mathcal{H})g_1=\partial_x(\partial_t+\mathcal{H})\omega_2+\f{1}{k_f}\partial_z(\partial_t+\mathcal{H})\cos (k_fy)\widetilde{\mathbb{D}}_2((\Delta v_2)_{\neq})\\
&=\partial_x\big(-\f{\gamma}{\nu k_f}\cos(k_fy)\partial_3v_2+\text{div}g\big)+\f{1}{k_f}\partial_z\partial_t\cos (k_fy)\widetilde{\mathbb{D}}_2((\Delta v_2)_{\neq})\\
&\quad+\f{1}{k_f}\partial_z\cos (k_fy)\widetilde{\mathbb{D}}_2(\mathcal{L}(\Delta v_2)_{\neq})+\f{1}{k_f}\partial_z\Big(\nu [\cos (k_fy)\widetilde{\mathbb{D}}_2,\Delta]+\f{\gamma}{\nu }\cos (k_fy)\partial_1\Delta^{-1}\Big)(\Delta v_2)_{\neq}\\
&=-\f{\gamma}{\nu k_f}\cos(k_fy)\partial_x\partial_zv_2+\text{div}\partial_1 g+\f{1}{k_f}\partial_z\cos (k_fy)\widetilde{\mathbb{D}}_2((\partial_t+\mathcal{L})(\Delta v_2)_{\neq})\\
&\quad+\partial_z\big(\f{1}{k_f}\nu [\cos (k_fy)\widetilde{\mathbb{D}}_2,\Delta](\Delta v_2)_{\neq}\big)+\f{\gamma}{\nu k_f}\cos(k_fy)\partial_z\partial_x(v_2)_{\neq}\\
&=\text{div}\partial_x g+\f{1}{k_f}\partial_z\cos (k_fy)\widetilde{\mathbb{D}}_2((\Delta f_0)_{\neq})+\partial_z\Big(\f{1}{k_f}\nu [\cos (k_fy)\widetilde{\mathbb{D}}_2,\Delta](\Delta v_2)_{\neq}\Big)\\
&=\text{div}\partial_xg+\f{1}{k_f}\partial_z\Delta\cos (k_fy)\widetilde{\mathbb{D}}_2((f_0)_{\neq})+\partial_z\Big(\f{1}{k_f} [\cos (k_fy)\widetilde{\mathbb{D}}_2,\Delta](\nu\Delta v_2+f_0)_{\neq}\Big):=\text{div}g_2,
\end{align*}
where $\partial_x(v_2)_{\neq}=\partial_xv_2$, $f_0=-V\cdot\na v_2-\pa_yp$ and
\begin{align}
 g_2=\partial_x g+\f{1}{k_f}\partial_z\nabla\cos (k_fy)\widetilde{\mathbb{D}}_2((f_0)_{\neq})+\Big(0,0,\f{1}{k_f} [\cos (k_fy)\widetilde{\mathbb{D}}_2,\Delta](\nu\Delta v_2+f_0)_{\neq}\Big).\label{Key trick}
\end{align}
By  \eqref{est of D21} and \eqref{D2.3}, we get
\begin{align}
\nonumber\|g_2\|_{L^2}&\leq \|\partial_xg\|_{L^2}+\f{1}{k_f}\big\|\partial_z\nabla\cos (k_fy)\widetilde{\mathbb{D}}_2((f_0)_{\neq})\big\|_{L^2}+\f{1}{k_f} \|[\cos (k_fy)\widetilde{\mathbb{D}}_2,\Delta](\nu\Delta v_2+f_0)_{\neq}\|_{L^2}\\
\nonumber&\leq\|\partial_xg\|_{L^2}+C\|\nabla (f_0)_{\neq}\|_{L^2}+C\|\nabla (\nu\Delta v_2+f_0)_{\neq}\|_{L^2}\\
&\leq\|\partial_xg\|_{L^2}+C\|\nabla (f_0)_{\neq}\|_{L^2}+C\nu\|\nabla (\Delta v_2)_{\neq}\|_{L^2}.\label{KEY estimate}
\end{align}
Using (\ref{NLNSK decay3}), (\ref{KEY estimate}), \eqref{D2.2}, $\text{div}\ f=\Delta f_0$ and \eqref{NLNSK decay1'}, we deduce
\begin{align*}
\|g_1\|_{X_{c'}}^2\lesssim& \nu^{-1}\|e^{c'\sqrt{|\gamma|}t}g_2\|_{L^{2}L^2}^2+\| g_1(0)\|_{L^2}^2\\
\lesssim&\nu^{-1}\|e^{c'\sqrt{|\gamma|}t}\partial_xg\|_{L^{2}L^2}^2+\nu^{-1}\|e^{c'\sqrt{|\gamma|}t}\nabla (f_0)_{\neq}\|_{L^{2}L^2}^2+\nu\|e^{c'\sqrt{|\gamma|}t}\nabla (\Delta v_2)_{\neq}\|_{L^{2}L^2}^2\\
&+\| \partial_x\omega_2(0)\|_{L^2}^2+\| k_f^{-1}\partial_z\cos (k_fy)\widetilde{\mathbb{D}}_2((\Delta v_2)_{\neq})(0)\|_{L^2}^2\\
 \lesssim& \nu^{-1}\|e^{c'\sqrt{|\gamma|}t}\partial_x g\|_{L^{2}L^2}^2+\nu^{-1}\|e^{c'\sqrt{|\gamma|}t} f_{\neq}\|_{L^{2}L^2}^2+\big(\|(\Delta v_2)_{\neq}(0)\|_{L^2}^2+\nu^{-1}\|e^{c'\sqrt{|\gamma|}t}f_{\neq}\|_{L^2L^2}^2\big)\\
& +\| \partial_x\omega_2(0)\|_{L^2}^2+\| (\Delta v_2)_{\neq}(0)\|_{L^2}^2.
 \end{align*}
This shows that
 \begin{align}
\|g_1\|_{X_{c'}}^2 \lesssim& \| \partial_x\omega_2(0)\|_{L^2}^2+\| (\Delta v_2)_{\neq}(0)\|_{L^2}^2+\f{1}{\nu}\big(\|e^{c'\sqrt{|\gamma|}t}\partial_1 g\|_{L^{2}L^2}^2+\|e^{c'\sqrt{|\gamma|}t} f_{\neq}\|_{L^{2}L^2}^2\big).\label{KEY estimate1}
\end{align}
On the other hand, by \eqref{D2.2} and \eqref{D2.3}, we have
\begin{align*}
\f{1}{k_f}\|\partial_z\cos (k_fy)\widetilde{\mathbb{D}}_2((\Delta v_2)_{\neq})\|_{X_{c'}}^2\leq C\| (\Delta v_2)_{\neq}\|_{X_{c'}}^2,
\end{align*}
and by \eqref{NLNSK decay1'},
\begin{align*}
\| (\Delta v_2)_{\neq}\|_{X_{c'}}^2\leq C\big(\|(\Delta v_2)_{\neq}(0)\|_{L^2}^2+\nu^{-1}\|e^{c'\sqrt{|\gamma|}t}f_{\neq} \|_{L^2L^2}^2\big).
\end{align*}
Therefore, we conclude
\begin{align*}
\|\partial_x\omega_2\|_{X_{c'}}^2\lesssim& \|g_1\|_{X_{c'}}^2+\f{1}{k_f}\|\partial_z\cos (k_fy)\widetilde{\mathbb{D}}_2((\Delta v_2)_{\neq})\|_{X_{c'}}^2\\
\lesssim& \|g_1\|_{X_{c'}}^2+\big(\|(\Delta v_2)_{\neq}(0)\|_{L^2}^2+\nu^{-1}\|e^{c'\sqrt{|\gamma|}t}f_{\neq} \|_{L^2L^2}^2\big)\\
\lesssim& \| \partial_x\omega_2(0)\|_{L^2}^2+\|(\Delta v_2)_{\neq}(0)\|_{L^2}^2+\nu^{-1}\|e^{c'\sqrt{|\gamma|}t}\partial_1 g\|_{L^{2}L^2}^2+\nu^{-1}\|e^{c'\sqrt{|\gamma|}t} f_{\neq}\|_{L^{2}L^2}^2,
\end{align*}
which gives \eqref{NLNSK decay2}.
\end{proof}

\subsection{Nonlinear estimates}

The following lemma gives the estimates of the velocity in terms of
$(v_2,w_2)$.

\begin{lemma}\label{lem1}
It holds that for $i,j\in\{1,3\}$,
\begin{align}
\label{5.37}&\|\partial_i\partial_j(v_i)_{\neq}\|_{L^2}+\|\partial_j(v_i)_{\neq}\|_{L^2}+\|\partial_x\partial_jV\|_{L^2}\leq C\big(\|(\Delta v_2)_{\neq}\|_{L^2}+\|\partial_x\omega_2\|_{L^2}\big),\\
\label{5.38}&\|\partial_i\nabla(v_i)_{\neq}\|_{L^2}+\|\nabla(v_i)_{\neq}\|_{L^2}+\|\partial_x\omega\|_{L^2}\leq C\big(\|(\Delta v_2)_{\neq}\|_{L^2}+\|\nabla\partial_x\omega_2\|_{L^2}\big),
\end{align}
\end{lemma}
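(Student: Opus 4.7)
The plan is to prove both inequalities by a direct Fourier-multiplier computation. Taking the Fourier transform of $V$ and $\omega_2$ in $(x,z)$, the divergence-free condition $\partial_x v_1+\partial_y v_2+\partial_z v_3=0$ together with $\omega_2=\partial_z v_1-\partial_x v_3$ produce, for each mode $(k_1,k_3)\neq(0,0)$, a $2\times 2$ linear system with determinant $-(k_1^2+k_3^2)$, whose solution is
$$
\hat v_1(k_1,y,k_3)=\frac{ik_1\partial_y\hat v_2-ik_3\hat\omega_2}{k_1^2+k_3^2},\qquad
\hat v_3(k_1,y,k_3)=\frac{ik_3\partial_y\hat v_2+ik_1\hat\omega_2}{k_1^2+k_3^2}.
$$
Restriction to $(v_i)_{\neq}$ amounts to summing over $k_1\neq 0$, so $k_1^2\geq 1$ throughout. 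Integration by parts in $y$ gives the companion identity
$$
\|\widehat{\Delta v_2}\|_{L^2_y}^2=\|\partial_y^2\hat v_2\|_{L^2_y}^2+2(k_1^2+k_3^2)\|\partial_y\hat v_2\|_{L^2_y}^2+(k_1^2+k_3^2)^2\|\hat v_2\|_{L^2_y}^2,
$$
and in particular $\|\partial_y^m\hat v_2\|_{L^2_y}^2\leq (k_1^2+k_3^2)^{m-2}\|\widehat{\Delta v_2}\|_{L^2_y}^2$ for $m=0,1,2$.

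For \eqref{5.37}, the next step is to apply the symbols $(ik_i)(ik_j)$ to the explicit formulas above. For every $(i,j)\in\{1,3\}^2$, the Fourier symbol of $\partial_i\partial_j v_i$ then splits into a $\partial_y\hat v_2$-piece with multiplier $\dfrac{k_i^2 k_j}{k_1^2+k_3^2}$, which is majorised by $\sqrt{k_1^2+k_3^2}$, and an $\hat\omega_2$-piece with multiplier $\pm\dfrac{k_1 k_j k_\ell}{k_1^2+k_3^2}$, where $\ell\in\{1,3\}\setminus\{i\}$. Crucially, the numerator of this second multiplier always carries the factor $k_1$, and the remaining factor $|k_j k_\ell|$ is bounded by $k_1^2+k_3^2$ (trivially when $j=\ell$; via AM--GM $2|k_1k_3|\leq k_1^2+k_3^2$ when $j\neq\ell$), so the $\hat\omega_2$-multiplier is bounded pointwise by $|k_1|$. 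By Plancherel, the $\partial_y\hat v_2$-piece contributes at most $C\|(\Delta v_2)_{\neq}\|_{L^2}^2$ via the integration-by-parts bound, and the $\hat\omega_2$-piece at most $C\|\partial_x\omega_2\|_{L^2}^2$ via $k_1^2|\hat\omega_2|^2=|\widehat{\partial_x\omega_2}|^2$. The norms $\|\partial_j(v_i)_{\neq}\|_{L^2}$ follow from the same scheme with one fewer $k$-factor, and the $v_2$-component of $\|\partial_x\partial_jV\|_{L^2}$ reduces to $\|\partial_x\partial_jv_2\|_{L^2}\leq C\|(\Delta v_2)_{\neq}\|_{L^2}$ via $|k_1k_j|\leq k_1^2+k_3^2$.

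For \eqref{5.38}, an extra derivative is available on the right-hand side through $\|\nabla\partial_x\omega_2\|_{L^2}$; applying $\partial_y$ to the explicit formulas only replaces $\partial_y\hat v_2$ by $\partial_y^2\hat v_2$ (still controlled by $\|\widehat{\Delta v_2}\|_{L^2_y}$) and $\hat\omega_2$ by $\partial_y\hat\omega_2$ (controlled since $|k_1||\partial_y\hat\omega_2|=|\widehat{\partial_x\partial_y\omega_2}|$ when $k_1\neq 0$), so the same multiplier analysis closes the estimate. For $\|\partial_x\omega\|_{L^2}$, the plan is to decompose $\omega_1=\partial_y v_3-\partial_z v_2$ and $\omega_3=\partial_x v_2-\partial_y v_1$: the pure-$v_2$ pieces are dominated by $\|(\Delta v_2)_{\neq}\|_{L^2}$, and the remaining pieces reduce to $\|\partial_x\partial_y v_1\|_{L^2}$, $\|\partial_x\partial_y v_3\|_{L^2}$, $\|\partial_x\omega_2\|_{L^2}$, all already controlled. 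The main obstacle is purely bookkeeping: to verify in each of the four cases $(i,j)\in\{1,3\}^2$ that the symbol in front of $\hat\omega_2$ indeed carries the factor $|k_1|$ (rather than $|k_3|$); this is exactly the algebraic role of the structural coincidence of the index $i$ in $\partial_i$ and $v_i$.
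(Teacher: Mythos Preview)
Your proof is correct and follows essentially the same route as the paper: both start from the identities $v_1=(\partial_x^2+\partial_z^2)^{-1}(\partial_z\omega_2-\partial_x\partial_yv_2)$ and $v_3=-(\partial_x^2+\partial_z^2)^{-1}(\partial_x\omega_2+\partial_z\partial_yv_2)$, which are exactly your Fourier formulas, and then bound the resulting multipliers using $|k_1|\ge 1$ on the nonzero modes. (Your displayed symbol $\pm k_1k_jk_\ell/(k_1^2+k_3^2)$ should read $\pm k_ik_jk_\ell/(k_1^2+k_3^2)=\pm k_1k_3k_j/(k_1^2+k_3^2)$, but the bound by $|k_1|$ holds either way, so this slip is harmless.)
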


\begin{proof}
First of, we have the following relations
\begin{align*}
v_1=(\partial_x^2+\partial_z^2)^{-1}(\partial_z\omega_2-\partial_x\partial_yv_2),\quad v_3=-(\partial_x^2+\partial_z^2)^{-1}(\partial_x\omega_2+\partial_z\partial_yv_2).
\end{align*}
Using the fact that $ \|\varphi_{\neq}\|_{L^2}\leq \|\partial_x\varphi_{\neq}\|_{L^2}=\|\partial_x\varphi\|_{L^2},$ we infer that
\begin{align*}
&\sum_{i,j\in\{1,3\}}(\|\partial_i\partial_j(v_i)_{\neq}\|_{L^2}+\|\partial_j(v_i)_{\neq}\|_{L^2}+\|\partial_x\partial_jV\|_{L^2})\\
&\lesssim \|\partial_x^2(v_1)_{\neq}\|_{L^2}+\|\partial_1\partial_z(v_1)_{\neq}\|_{L^2}+
\|\partial_z^2(v_3)_{\neq}\|_{L^2}+\|\partial_x\partial_z(v_3)_{\neq}\|_{L^2}\\&\quad+\|\partial_x(v_1)_{\neq}\|_{L^2}
+\|\partial_z(v_1)_{\neq}\|_{L^2}+\|\partial_x(v_3)_{\neq}\|_{L^2}
+\|\partial_z(v_3)_{\neq}\|_{L^2}\\
&\quad+\|\partial_x^2v_1\|_{L^2}+\|\partial_x\partial_zv_1\|_{L^2}
+\|\partial_x^2v_2\|_{L^2}+\|\partial_x\partial_zv_2\|_{L^2}
+\|\partial_x^2v_3\|_{L^2}+\|\partial_x\partial_zv_3\|_{L^2}\\
&\leq \|\partial_x^2(v_1)\|_{L^2}+\|\partial_x\partial_zv_1\|_{L^2}+
\|\partial_z^2(v_3)_{\neq}\|_{L^2}+\|\partial_x\partial_zv_3\|_{L^2}
+\|\partial_x^2v_3\|_{L^2}+\|\partial_x\nabla v_2\|_{L^2}
)\\
&\leq  \|(\partial_x^2+\partial_z^2)^{-1}(\partial_x^2\partial_z\omega_2-\partial_x^3\partial_yv_2) \|_{L^2}+\|(\partial_x^2+\partial_z^2)^{-1}(\partial_z^2\partial_x\omega_2-\partial_x^2\partial_z\partial_yv_2)\|_{L^2}\\&\quad+
\|(\partial_x^2+\partial_z^2)(v_3)_{\neq}\|_{L^2}+\|\partial_x\nabla v_2\|_{L^2}
)\\
&\lesssim  \|\partial_x\omega_2\|_{L^2}+\|\partial_y\partial_xv_2\|_{L^2}+
\|(\partial_x\omega_2+\partial_z\partial_yv_2)_{\neq}\|_{L^2}+\|\partial_x\nabla v_2\|_{L^2}
)\\ &\lesssim \|\partial_x\omega_2\|_{L^2}+\|\partial_x\nabla v_2\|_{L^2}+
\|(\Delta v_2)_{\neq}\|_{L^2}
)\lesssim \|\partial_x\omega_2\|_{L^2}+
\|(\Delta v_2)_{\neq}\|_{L^2},
\end{align*}
which gives \eqref{5.37}.

Similarly, we have
\begin{align*}
&\sum\limits_{i\in \{ 1,3 \}} \|\partial_i\nabla(v_i)_{\neq}\|_{L^2}+\sum\limits_{i\in \{ 1,3 \}} \|\nabla(v_i)_{\neq}\|_{L^2}+\|\partial_x\omega\|_{L^2}\\
&\leq \|\partial_x\nabla(v_1)_{\neq}\|_{L^2}+\|\partial_z\nabla(v_3)_{\neq}\|_{L^2}+\|\nabla(v_1)_{\neq}\|_{L^2}
+\|\nabla(v_3)_{\neq}\|_{L^2}+\|\partial_x\nabla V\|_{L^2}\\
&\lesssim \|\partial_x\nabla(v_1)_{\neq}\|_{L^2}+\|\partial_z\nabla(v_3)_{\neq}\|_{L^2}
+\|\partial_x\nabla(v_3)_{\neq}\|_{L^2}+\|\partial_x\nabla v_2\|_{L^2})\\
&\lesssim \|(\partial_x^2+\partial_z^2)^{-1}(\partial_x\partial_z\nabla\omega_2-\partial_x^2\partial_y\nabla v_2)_{\neq}\|_{L^2}+\|(\partial_x^2+\partial_z^2)^{-1}(\partial_x\partial_z\nabla\omega_2+\partial_z^2\partial_y\nabla v_2)_{\neq}\|_{L^2}\\&\quad
+\|(\partial_x^2+\partial_z^2)^{-1}(\partial_x^2\nabla\omega_2+\partial_z\partial_x\partial_y\nabla v_2)_{\neq}\|_{L^2}
+\|\partial_x\nabla v_2\|_{L^2}\\
&\lesssim  \|(\nabla\omega_2)_{\neq}\|_{L^2}+\|(\partial_y\nabla v_2)_{\neq}\|_{L^2}
+\|\partial_x\nabla v_2\|_{L^2})\lesssim \|(\Delta v_2)_{\neq}\|_{L^2}
+\|\nabla\partial_x\omega_2\|_{L^2},
\end{align*}
which gives \eqref{5.38}.
\end{proof}

For the nonlinear estimates, we need the following lemma.

\begin{lemma}\label{lem2}
It holds that for $j\in\{1,3\}$
\begin{align*}
&\|f_1\partial_jf_2\|_{L^2}\leq C\big(\|\partial_jf_1\|_{L^2}+\|f_1\|_{L^2}\big)\|\Delta f_2\|_{L^2}.
\end{align*}
\end{lemma}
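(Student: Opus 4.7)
By symmetry between the two periodic $2\pi$-directions I treat only $j=1$ (the case $j=3$ is identical after interchanging $x$ and $z$). Since both sides of the inequality are unchanged by adding a constant to $f_2$, I may assume $\int_\Omega f_2=0$, so Poincar\'e gives $\|f_2\|_{L^2}\le C\|\Delta f_2\|_{L^2}$. The plan is anisotropic: put $f_1$ in the mixed norm $L^2_{yz}L^\infty_x$ (the sup being taken first in the direction where one has an extra derivative $\partial_1 f_1$), and show that the complementary norm $\|\partial_1 f_2\|_{L^\infty_{yz}L^2_x}$ is controlled by $\|\Delta f_2\|_{L^2}$.

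The first two steps are routine. By the pointwise bound $|f_1(x,y,z)|\le\|f_1(\cdot,y,z)\|_{L^\infty_x}$ and H\"older in $(y,z)$,
\[
\|f_1\partial_1 f_2\|_{L^2}^2 \le \|f_1\|_{L^2_{yz}L^\infty_x}^2\,\|\partial_1 f_2\|_{L^\infty_{yz}L^2_x}^2,
\]
and the 1D Sobolev embedding on $\mathbb T_{2\pi}$ applied fiberwise in $x$ and integrated in $(y,z)$ yields
\[
\|f_1\|_{L^2_{yz}L^\infty_x}^2 \le C\big(\|f_1\|_{L^2}^2+\|\partial_1 f_1\|_{L^2}^2\big).
\]

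The main obstacle is the bound $\|\partial_1 f_2\|_{L^\infty_{yz}L^2_x}\le C\|\Delta f_2\|_{L^2}$. Since $H^1\not\hookrightarrow L^\infty$ in 2D, applying 2D Sobolev or Agmon to $F(y,z)=\|\partial_1 f_2(\cdot,y,z)\|_{L^2_x}^2$ cannot succeed on its own. My plan is a Fourier argument that exploits the $k_1$-dependence of the operator $k_1^2-\Delta_{yz}$. Expanding $f_2=\sum_k\hat f_2(k)e^{ik\cdot(x,y,z)}$, Parseval in $x$ followed by the triangle inequality in $(k_2,k_3)$ gives
\[
\|\partial_1 f_2\|_{L^\infty_{yz}L^2_x}^2 \le C\sum_{k_1\neq 0}k_1^2\Bigl(\sum_{k_2,k_3}|\hat f_2(k_1,k_2,k_3)|\Bigr)^{\!2},
\]
and Cauchy--Schwarz in $(k_2,k_3)$ with the $k_1$-dependent weight $(k_1^2+k_2^2+k_3^2)^{-2}$ produces, for each $k_1\neq 0$,
\[
\Bigl(\sum_{k_2,k_3}|\hat f_2|\Bigr)^{\!2}\le\Bigl(\sum_{k_2,k_3}\tfrac{1}{(k_1^2+k_2^2+k_3^2)^2}\Bigr)\sum_{k_2,k_3}(k_1^2+k_2^2+k_3^2)^2|\hat f_2|^2.
\]
The decisive gain is $\sum_{(k_2,k_3)\in\mathbb Z^2}(k_1^2+k_2^2+k_3^2)^{-2}\sim\int_0^\infty r(k_1^2+r^2)^{-2}\,dr=\frac{1}{2k_1^2}$, uniformly in $k_1\neq 0$; this factor $k_1^{-2}$ precisely cancels the prefactor $k_1^2$. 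Summing in $k_1$ yields $\|\partial_1 f_2\|_{L^\infty_{yz}L^2_x}^2\lesssim\sum_{k}(k_1^2+k_2^2+k_3^2)^2|\hat f_2(k)|^2\lesssim \|f_2\|_{L^2}^2+\|\Delta f_2\|_{L^2}^2\lesssim\|\Delta f_2\|_{L^2}^2$. Chaining with the H\"older--Sobolev inequality from the previous paragraph completes the proof.
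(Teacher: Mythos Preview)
Your proof is correct. The argument differs from the paper's only in packaging, not in substance. The paper expands both $f_1$ and $f_2$ in $x$-Fourier modes, bounds the contribution of each $f_1$-mode $f_{1,m}$ via the 2D Gagliardo--Nirenberg inequality $k^2\|f_{2,k}\|_{L^\infty_{yz}}^2\le C(k^4\|f_{2,k}\|_{L^2}^2+\|\Delta_{yz}f_{2,k}\|_{L^2}^2)$, and then controls $\sum_m\|f_{1,m}\|_{L^2}$ by Cauchy--Schwarz with weight $(1+m^2)^{-1}$. You instead use the anisotropic H\"older split $\|f_1\|_{L^2_{yz}L^\infty_x}\cdot\|\partial_1 f_2\|_{L^\infty_{yz}L^2_x}$; your 1D Sobolev step on $f_1$ is the physical-space analogue of the paper's weighted $\ell^1$ bound on $\sum_m\|f_{1,m}\|_{L^2}$, and your 3D Fourier computation with weight $|k|^{-4}$ and the key gain $\sum_{k_2,k_3}|k|^{-4}\sim k_1^{-2}$ is precisely the Fourier-side version of the paper's 2D Gagliardo--Nirenberg step. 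Your organization is slightly more modular (the anisotropic H\"older cleanly separates the two factors), while the paper's is more hands-on. Two minor remarks: your mean-zero reduction on $f_2$ is harmless but actually unnecessary, since your final sum runs over $k_1\neq 0$; and the $y$-period is $2\pi/k_f$, so $k_2$ ranges over $k_f\mathbb Z$, but your lattice estimate $\sum_{k_2,k_3}|k|^{-4}\le C(k_f)k_1^{-2}$ goes through unchanged with a $k_f$-dependent constant.
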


\begin{proof}
Let us consider the case of $j=1$. We write
$f_l(x,y,z)=\sum\limits_{k\in\mathbb{Z}}e^{ikx}f_{l,k}(y,z)$ for $l=1,2.$ Then we have
\beno
&&\|f_1\|_{L^2}^2+\|\partial_xf_1\|_{L^2}^2=2\pi\sum\limits_{k\in\mathbb{Z}}(1+k^2)\|f_{1,k}\|_{L^2}^2,\\
&&\|\Delta f_2\|_{L^2}^2=2\pi\sum\limits_{k\in\mathbb{Z}}\big(k^4\|f_{2,k}\|_{L^2}^2+2k^2\|\nabla f_{2,k}\|_{L^2}^2+\|\Delta f_{2,k}\|_{L^2}^2\big).
\eeno
Therefore,
\begin{align}\label{f1}\Big(\sum\limits_{k\in\mathbb{Z}}\|f_{1,k}\|_{L^2}\Big)^2\leq \sum\limits_{k\in\mathbb{Z}}(1+k^2)\|f_{1,k}\|_{L^2}^2\sum\limits_{k\in\mathbb{Z}}\dfrac{1}{1+k^2}\leq C\big(\|{f_1}\|_{L^2}^2+\|\partial_xf_1\|_{L^2}^2\big).
\end{align}
By 2-D Gagliardo-Nirenberg inequality, we have
\begin{align}\label{f2}k^2\|f_{2,k}\|_{L^{\infty}}^2\leq Ck^2\|f_{2,k}\|_{L^{2}}\|f_{2,k}\|_{H^{2}}\leq C\big(k^4\|f_{2,k}\|_{L^2}^2+\|\Delta f_{2,k}\|_{L^2}^2\big). \end{align}
We write $e^{imx}f_{1,m}(y,z)\partial_xf_2(x,y,z)=\sum\limits_{k\in\mathbb{Z}}ike^{i(k+m)x}f_{1,m}(y,z)f_{2,k}(y,z),$ and by \eqref{f2},
\begin{align*}
\|e^{imx}f_{1,m}(y,z)\partial_jf_2\|_{L^2}^2&=2\pi\sum\limits_{k\in\mathbb{Z}}\|ikf_{1,m}f_{2,k}\|_{L^2}^2\leq
2\pi\sum\limits_{k\in\mathbb{Z}}k^2\|f_{1,m}\|_{L^2}^2\|f_{2,k}\|_{L^{\infty}}^2\\ &\leq C\sum\limits_{k\in\mathbb{Z}}\|f_{1,m}\|_{L^2}^2(k^4\|f_{2,k}\|_{L^2}^2+\|\Delta f_{2,k}\|_{L^2}^2)\leq C\|f_{1,m}\|_{L^2}^2\|\Delta f_2\|_{L^2}^2,
\end{align*}
from which and \eqref{f1}, we infer that
\begin{align*}
\|f_1\partial_xf_2\|_{L^2}=&\big\|\sum\limits_{m\in\mathbb{Z}}e^{imx}f_{1,m}(y,z)\partial_xf_2\big\|_{L^2}\leq \sum\limits_{m\in\mathbb{Z}}\|e^{imx}f_{1,m}(y,z)\partial_jf_2\|_{L^2}\\
\leq& \sum\limits_{l\in\mathbb{Z}} C\|f_{1,m}\|_{L^2}\|\Delta f_2\|_{L^2}\leq C\big(\|\partial_xf_1\|_{L^2}+\|f_1\|_{L^2}\big)\|\Delta f_2\|_{L^2}.
\end{align*}

The case of $j=3$ is similar.
\end{proof}

\begin{lemma}
\label{Union}
It holds that
\begin{align}
\label{UNION1}&\|(\Delta p)_{\neq}\|_{L^2}+\|\nabla(V\cdot\nabla v_2)_{\neq}\|_{L^2}+\|\partial_x(\omega_2V)\|_{L^2}+\|\partial_x( v_2\omega)\|_{L^2}\\
\nonumber&\leq C\| v_2\|_{H^2}\|\nabla\partial_x\omega_2\|_{L^2}+
C\big(\|(\Delta v_2)_{\neq}\|_{L^2}+\|\partial_x\omega_2\|_{L^2}\big)
\big(\|\nabla\Delta v_2\|_{L^2}+\| V\|_{H^2}\big),
\end{align}
and
\begin{align}
\label{UNION2}&\|\Delta p\|_{L^2}+\|\nabla(V\cdot\nabla v_2)\|_{L^2}+\|P_0(V\cdot\nabla v_3)\|_{L^2}\nonumber\\&\leq C\big(\| v_2\|_{H^2}+\|P_0v_3\|_{H^1}+\|\partial_x\omega_2\|_{L^2}\big)\big(\|\nabla\Delta v_2\|_{L^2}+\|\nabla\partial_x\omega_2\|_{L^2}+\|\nabla (P_0v_3)\|_{L^2}\big)\\
\nonumber&\quad+C\big(\|(\Delta v_2)_{\neq}\|_{L^2}+\|\partial_x\omega_2\|_{L^2}\big)
\|V\|_{H^2}.
\end{align}
\end{lemma}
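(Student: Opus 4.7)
The plan is to decompose every nonlinear quantity into a sum of products of scalar factors, then systematically apply Lemma \ref{lem2} whenever the differentiated factor carries an $x$- or $z$-derivative, and apply Hölder together with the one-dimensional Sobolev embedding $H^1 \hookrightarrow L^\infty$ (in $y$) for the unavoidable $\partial_y$-derivatives. Throughout, Lemma \ref{lem1} lets me trade $\|(\partial_i v_j)_{\neq}\|$-type norms of the other velocity components for the two master quantities $\|(\Delta v_2)_{\neq}\|_{L^2}$ and $\|\partial_x\omega_2\|_{L^2}$, and for the dissipation factor $\|\nabla\Delta v_2\|_{L^2}+\|\nabla\partial_x\omega_2\|_{L^2}$.

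For the first estimate \eqref{UNION1}, I start with $\Delta p = -\sum_{i,j=1}^{3}\partial_i v_j\,\partial_j v_i$. Since the product of two $P_0$-quantities is independent of $x$, its $\neq$-projection vanishes, so every surviving term in $(\Delta p)_\neq$ carries at least one $\neq$-factor. I split each such product into ``$\partial_j(\cdot)$ with $j\in\{1,3\}$'' (handled by Lemma \ref{lem2}, which consumes a $\|\Delta\cdot\|_{L^2}$ on the derivative factor and $\|\partial_j\cdot\|_{L^2}+\|\cdot\|_{L^2}$ on the other) and the residual $\partial_y$-pieces (handled by $\|f\|_{L^\infty_y L^2_{xz}}\lesssim \|f\|_{L^2}+\|\partial_y f\|_{L^2}$). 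The resulting first-order norms are then converted into $\|(\Delta v_2)_\neq\|_{L^2}+\|\partial_x\omega_2\|_{L^2}$ via Lemma \ref{lem1}, and the top-order norms into $\|\nabla\Delta v_2\|_{L^2}+\|V\|_{H^2}$. The term $\nabla(V\cdot\nabla v_2)_\neq$ is treated identically after writing $V\cdot\nabla v_2 = v_1\partial_x v_2 + v_2\partial_y v_2+v_3\partial_z v_2$ and expanding $\nabla$ by Leibniz; the $\partial_y v_2$ factor is the one on which the $H^2$-norm $\|v_2\|_{H^2}$ (multiplying $\|\nabla\partial_x\omega_2\|_{L^2}$) is spent. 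For $\partial_x(\omega_2 V)$ and $\partial_x(v_2\omega)$ the prefactor $\partial_x$ makes every product automatically purely non-zero in $x$, so I may apply Lemma \ref{lem2} directly with $j=1$ on each summand and close as before.

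For the full (no $\neq$) estimate \eqref{UNION2} I repeat the decomposition but now retain the zero-mode interactions. The new contributions are of the form $P_0 v_2 \cdot \partial_y P_0 v_j$ and $P_0 v_3\cdot\partial_z P_0 v_j$ ($j\in\{1,2,3\}$) together with the symmetric permutations from $\Delta p$ and from $P_0(V\cdot\nabla v_3)$; these involve only $v_2$, $P_0 v_3$ and the derivatives $\partial_y,\partial_z$ of them, so they are bounded by $(\|v_2\|_{H^2}+\|P_0 v_3\|_{H^1})(\|\nabla\Delta v_2\|_{L^2}+\|\nabla P_0 v_3\|_{L^2})$ by Hölder/Sobolev in $(y,z)$. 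The mixed terms where one factor is $P_0$ and the other is $\neq$ are again handled by Lemma \ref{lem2} (the $\neq$-factor always supplies a $\partial_x$ or can be differentiated in $x,z$ at the cost of $\|\nabla\partial_x\omega_2\|_{L^2}$), and the purely non-zero interactions are absorbed into the last line of \eqref{UNION2} using \eqref{UNION1}.

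The main obstacle is the bookkeeping of frequency decompositions: several potentially dangerous quadratic terms (most notably $P_0 v_1\cdot\partial_x(\cdot)$ and $P_0 v_2\cdot\partial_y v_2$) have to be recognised either as vanishing under the relevant projection or as admitting a Lemma \ref{lem2}-type bound where the derivative lands on an $x$- or $z$-direction. A secondary subtlety is the pressure: $(\Delta p)_\neq$ loses one derivative if one treats it naively, and the point is that Lemma \ref{lem2} returns $\|\Delta(\cdot)\|_{L^2}$ on \emph{one} factor only, so the apportionment between the factors must be made so that at most one carries the full second-order norm, which is possible only because every nonvanishing product in $(\Delta p)_\neq$ contains at least one $\partial_j$ with $j\in\{1,3\}$ landing on a factor that is either $v_2$ (absorbed into $\|v_2\|_{H^2}$) or fits into $\|\nabla\Delta v_2\|_{L^2}+\|V\|_{H^2}$ via Lemma \ref{lem1}.
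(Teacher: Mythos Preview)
Your overall strategy---split by $P_0/\neq$, apply Lemma~\ref{lem2} to every factor carrying $\partial_x$ or $\partial_z$, use H\"older/Sobolev for the rest, and convert via Lemma~\ref{lem1}---is exactly the paper's approach. There is, however, one concrete gap. Your tool for the ``residual $\partial_y$-pieces'', the anisotropic bound $\|f\|_{L^\infty_yL^2_{xz}}\lesssim\|f\|_{L^2}+\|\partial_yf\|_{L^2}$, forces the companion factor into $L^2_yL^\infty_{xz}$, and for a first derivative of $V$ this norm is \emph{not} controlled by $\|V\|_{H^2}$ (two-dimensional $H^1$ does not embed into $L^\infty$). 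The paper instead handles these terms---all $i=2$ contributions to $(\Delta p)_\neq$, in particular $(\partial_yv_2)^2$---by the isotropic three-dimensional embedding $H^1\hookrightarrow L^4$:
\[
\|\partial_2v_j\,\partial_j(v_2)_\neq\|_{L^2}\le\|\partial_2v_j\|_{L^4}\|\partial_j(v_2)_\neq\|_{L^4}\le C\|V\|_{H^2}\|(\Delta v_2)_\neq\|_{L^2}.
\]
This also shows that your closing assertion---that every nonvanishing product in $(\Delta p)_\neq$ carries at least one $\partial_j$ with $j\in\{1,3\}$---is false; the diagonal term $(\partial_yv_2)^2$ survives and must be treated by this $L^4$ route rather than by Lemma~\ref{lem2}. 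With that single substitution your plan goes through and coincides with the paper's proof.
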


\begin{proof}

{\bf Step 1.} Estimates of the pressure $p$.\smallskip

Recall that
\begin{align}
\label{p1}&\Delta p=-\sum_{i,j=1}^3\partial_iv_j\partial_jv_i=-\sum_{i,j=1}^3\partial_iv_j\partial_j(v_i)_{\neq}-\sum_{i,j=1}^3\partial_iv_j\partial_jP_0v_i.
\end{align}
For $i,j\in\{1,3\}$, by Lemma \ref{lem2} and Lemma \ref{lem1}, we get
\begin{align*}
\|\partial_iv_j\partial_j(v_i)_{\neq}\|_{L^2}&\leq C\big(\|\partial_i\partial_j(v_i)_{\neq}\|_{L^2}+\|\partial_j(v_i)_{\neq}\|_{L^2}\big)\|\Delta v_j\|_{L^2}\\&\leq C\big(\|(\Delta v_2)_{\neq}\|_{L^2}+\|\partial_x\omega_2\|_{L^2}\big)\| V\|_{H^2},
\end{align*}
and for $i\in\{1,3\},j=2$, we have
\begin{align*}
\|\partial_iv_j\partial_j(v_i)_{\neq}\|_{L^2}&\leq C\big(\|\partial_i\partial_j(v_i)_{\neq}\|_{L^2}+\|\partial_j(v_i)_{\neq}\|_{L^2}\big)\|\Delta v_j\|_{L^2}\\&\leq C\big(\|(\Delta v_2)_{\neq}\|_{L^2}+\|\nabla\partial_x\omega_2\|_{L^2}\big)\|\Delta v_2\|_{L^2},
\end{align*}
and for $i=2$, by Sobolev embedding, we have
\begin{align*}
\|\partial_iv_j\partial_j(v_i)_{\neq}\|_{L^2}&\leq \|\partial_iv_j\|_{L^4}\|\partial_j(v_i)_{\neq}\|_{L^4}\leq C\| V\|_{H^2}\|\Delta(v_2)_{\neq}\|_{L^2}.
\end{align*}
This shows that
\begin{align}
\label{p2}\sum_{i,j=1}^3\|\partial_iv_j\partial_j(v_i)_{\neq}\|_{L^2}\leq C\big(\|(\Delta v_2)_{\neq}\|_{L^2}+\|\partial_x\omega_2\|_{L^2}\big)\| V\|_{H^2}+C\|\nabla\partial_x\omega_2\|_{L^2}\| v_2\|_{H^2}.
\end{align}
Similarly, we can obtain
\begin{align}
\nonumber&\sum_{i,j=1}^3\|(\partial_iv_j\partial_jP_0v_i)_{\neq}\|_{L^2}=\sum_{i,j=1}^3\|(\partial_iv_j)_{\neq}\partial_jP_0v_i\|_{L^2}\\
\label{p3}&\leq C\big(\|(\Delta v_2)_{\neq}\|_{L^2}+\|\partial_x\omega_2\|_{L^2}\big)\|P_0 V\|_{H^2}+C\|\nabla\partial_x\omega_2\|_{L^2}\| P_0v_2\|_{H^2}.
\end{align}

Due to $\partial_xP_0=0 $, we get
\begin{align*}
\sum_{i,j=1}^3\|P_0(\partial_iv_j\partial_jP_0v_i)\|_{L^2}&=\sum_{i,j=1}^3\|\partial_iP_0v_j\partial_jP_0v_i\|_{L^2}=2\|(\partial_2P_0v_2)^2\|_{L^2}+2\|\partial_2P_0v_3\partial_3P_0v_2\|_{L^2}\\
&\leq C\big(\|\partial_2P_0v_2\|_{L^4}^2+\|\partial_2P_0v_3\|_{L^2}\|\partial_3P_0v_2\|_{L^{\infty}}\big),
\end{align*}
which gives
\begin{align}
 \label{p4}\sum_{i,j=1}^3\|P_0(\partial_iv_j\partial_jP_0v_i)\|_{L^2}\leq &C\big(\|\Delta v_2\|_{L^2}\|\nabla\Delta v_2\|_{L^2}+\|P_0v_3\|_{H^1}\|\nabla\Delta v_2\|_{L^2}\big).
\end{align}

Summing up \eqref{p1}-\eqref{p4},  we deduce that
\begin{align}
\label{p5}\|(\Delta p)_{\neq}\|_{L^2}\leq& C\big(\|(\Delta v_2)_{\neq}\|_{L^2}+\|\partial_x\omega_2\|_{L^2}\big)\| V\|_{H^2}+C\|\nabla\partial_x\omega_2\|_{L^2}\| v_2\|_{H^2},\\
 \label{p6}\|\Delta p\|_{L^2}\leq&C\big(\| v_2\|_{H^2}+\|P_0v_3\|_{H^1}\big)\big(\|\nabla\Delta v_2\|_{L^2}+\|\nabla\partial_x\omega_2\|_{L^2}\big)\\
 \nonumber&+C\big(\|(\Delta v_2)_{\neq}\|_{L^2}+\|\partial_x\omega_2\|_{L^2}\big)\| V\|_{H^2} .
\end{align}

{\bf Step 2}. Estimates of  $\nabla(V\cdot\nabla v_2)$.\smallskip

We write
\begin{align}
\label{v1}&\nabla(V\cdot\nabla v_2)=\sum_{j=1}^3\nabla(v_j\partial_jv_2)=\sum_{j=1}^3\nabla((v_j)_{\neq}\partial_jv_2)+\sum_{j=1}^3\nabla(P_0v_j\partial_jv_2).
\end{align}
For $j\in\{1,3\}$, by Lemma \ref{lem2} and Lemma \ref{lem1}, we get
\begin{align*}
&\|\nabla((v_j)_{\neq}\partial_jv_2)\|_{L^2}\leq \|\nabla(v_j)_{\neq}\partial_jv_2\|_{L^2}+\|(v_j)_{\neq}\nabla\partial_j v_2\|_{L^2}\\
&\leq C\big(\|\nabla\partial_j(v_j)_{\neq}\|_{L^2}+\|\nabla(v_j)_{\neq}\|_{L^2}\big)\|\Delta v_2\|_{L^2}+C\big(\|\partial_j(v_j)_{\neq}\|_{L^2}+\|(v_j)_{\neq}\|_{L^2}\big)\|\nabla\Delta v_2\|_{L^2}\\
&\leq C\big(\|(\Delta v_2)_{\neq}\|_{L^2}+\|\nabla\partial_x\omega_2\|_{L^2}\big)\|\Delta v_2\|_{L^2}+C\big(\|(\Delta v_2)_{\neq}\|_{L^2}+\|\partial_x\omega_2\|_{L^2}\big)\|\nabla\Delta v_2\|_{L^2},
\end{align*}
and for $j=2$, we have
\begin{align*}
&\|\nabla((v_j)_{\neq}\partial_jv_2)\|_{L^2}\leq C\|(v_j)_{\neq}\|_{H^2}\|\partial_jv_2\|_{H^1}\leq C\|\Delta(v_2)_{\neq}\|_{L^2}\|\Delta v_2\|_{L^2}.
\end{align*}
This shows that
\begin{align}
\label{v2}\sum_{j=1}^3\|\nabla((v_j)_{\neq}\partial_jv_2)\|_{L^2}\lesssim& \big(\|(\Delta v_2)_{\neq}\|_{L^2}+\|\partial_x\omega_2\|_{L^2}\big)\|\nabla\Delta v_2\|_{L^2}+\|\nabla\partial_x\omega_2\|_{L^2}\| v_2\|_{H^2}.
\end{align}
For $j\in\{1,2,3\}$, we have
\begin{align}
\label{v3}
\|\nabla(P_0v_j\partial_jv_2)_{\neq}\|_{L^2}&=\|\nabla(P_0v_j\partial_j(v_2)_{\neq})\|_{L^2}\\ \nonumber&\leq C\|P_0v_j\|_{H^2}\|\partial_j(v_2)_{\neq}\|_{H^1}\leq C\|V\|_{H^2}\|\Delta(v_2)_{\neq}\|_{L^2}.
\end{align}
Due to $\partial_xP_0=0 $, we have
\begin{align}
\label{v4}
&\sum_{j=1}^3\|P_0\nabla(P_0v_j\partial_jv_2)\|_{L^2}=\sum_{j=1}^3\|\nabla(P_0v_j\partial_jP_0v_2)\|_{L^2}
\leq\sum_{j=2}^3\|P_0v_j\partial_jP_0v_2\|_{H^1}\\ \nonumber &\leq C\sum_{j=2}^3\|P_0v_j\|_{H^1}\|\partial_jP_0v_2\|_{H^2}  \leq C\big(\| v_2\|_{H^2}+\|P_0v_3\|_{H^1}\big)\|\nabla\Delta v_2\|_{L^2}.
\end{align}

Summing up \eqref{v1}-\eqref{v4}, we conclude
\begin{align}
\label{v5}
\|\nabla(V\cdot\nabla v_2)_{\neq}\|_{L^2}\leq& C\big(\|(\Delta v_2)_{\neq}\|_{L^2}+\|\partial_x\omega_2\|_{L^2}\big)\big(\| V\|_{H^2}+\|\nabla\Delta v_2\|_{L^2}\big)\\ \nonumber &+C\|\nabla\partial_x\omega_2\|_{L^2}\| v_2\|_{H^2},\\
\label{v6}\|\nabla(V\cdot\nabla v_2)\|_{L^2}\leq& C\big(\| v_2\|_{H^2}+\|P_0v_3\|_{H^1}+\|\partial_x\omega_2\|_{L^2}\big)\|\nabla\Delta v_2\|_{L^2}\\ \nonumber&+C\|\nabla\partial_x\omega_2\|_{L^2}\| v_2\|_{H^2}.
\end{align}

{\bf Step 3.} Estimate of $\partial_x(V\omega_2)$.\smallskip

We write
\begin{align*}
\partial_x(V\omega_2)=(\partial_xV)(\partial_z v_1-\partial_x v_3)+V\partial_x\omega_2.
\end{align*}
It follows from Lemma \ref{lem2} that
\begin{align*}
&\|\partial_x(V\omega_2)\|_{L^2}\leq \|\partial_xV\partial_z v_1\|_{L^2}+\|\partial_xV\partial_x v_3\|_{L^2}+\|V\partial_x\omega_2\|_{L^2}\\
 &\leq C\big(\|\partial_x\partial_zV\|_{L^2}+\|\partial_xV\|_{L^2}\big) \|\Delta v_1\|_{L^2}+ C\big(\|\partial_x^2V\|_{L^2}+\|\partial_xV\|_{L^2}\big) \|\Delta v_3\|_{L^2}+\|V\|_{L^{\infty}}\|\partial_x\omega_2\|_{L^2}\\
 &\leq C\big(\|\partial_x\partial_zV\|_{L^2}+\|\partial_xV\|_{L^2}+\|\partial_x^2V\|_{L^2}\big) \|\Delta V\|_{L^2}+C\|V\|_{H^{2}}\|\partial_x\omega_2\|_{L^2}\\
 &\leq C\big(\|\partial_x\partial_zV\|_{L^2}+\|\partial_x^2V\|_{L^2}+\|\partial_x\omega_2\|_{L^2}\big)\|V\|_{H^{2}},
\end{align*}
from which and  Lemma \ref{lem1}, we infer that
\begin{align}
\label{om1}&\|\partial_x(V\omega_2)\|_{L^2}\leq C\big(\|(\Delta v_2)_{\neq}\|_{L^2}+\|\partial_x\omega_2\|_{L^2}\big)\|V\|_{H^{2}}.
\end{align}

{\bf Step 4.} Estimate of $ \partial_x(\omega v_2)$.\smallskip

We have
\begin{align*}
 \partial_x(\omega v_2)=(\partial_x\omega)v_2+\omega\partial_xv_2,\quad\partial_xv_2=\partial_x(v_2)_{\neq}.
\end{align*}
It follows from Lemma \ref{lem1} that
\begin{align}
\nonumber\|\partial_x(\omega v_2)\|_{L^2}\leq &\|(\partial_x\omega)v_2\|_{L^2}+\|\omega\partial_xv_2\|_{L^2}\\
\nonumber\leq &\|\partial_x\omega\|_{L^2}\|v_2\|_{L^{\infty}}+C\|\omega\|_{H^1}\|\partial_x(v_2)_{\neq}\|_{H^1}\\
\nonumber \leq& C\big(\|(\Delta v_2)_{\neq}\|_{L^2}+\|\nabla\partial_x\omega_2\|_{L^2}\big) \| v_2\|_{H^2}+ C\|V\|_{H^2}\|(v_2)_{\neq}\|_{H^2}\\
\label{om2}\leq &C\|(\Delta v_2)_{\neq}\|_{L^2}\|V\|_{H^2}+C\|\nabla\partial_x\omega_2\|_{L^2}\| v_2\|_{H^2}.
\end{align}

Then \eqref{UNION1} follows from \eqref{p5},\eqref{v5},\eqref{om1} and \eqref{om2}.\smallskip

{\bf Step 5.} Estimate of  $P_0(V\cdot\nabla v_3)$.\smallskip

We write
\begin{align*}
P_0(V\cdot\nabla v_3)=P_0V\cdot\nabla P_0v_3+P_0(V_{\neq}\cdot\nabla (v_3)_{\neq}).
\end{align*}
Thanks to $\partial_xP_0=0 $ and $\partial_yP_0 v_2+\partial_zP_0 v_3=0 $, we get
\begin{align*}
&P_0v\cdot\nabla P_0v_3=P_0v_2\partial_y P_0v_3+P_0v_3\partial_z P_0v_3=P_0v_2\partial_y P_0v_3-P_0v_3\partial_y P_0v_2,
\end{align*}
which gives
\begin{align}
\nonumber\|P_0v\cdot\nabla P_0v_3\|_{L^2}\leq &\|P_0v_2\|_{L^{\infty}}\|\partial_y P_0v_3\|_{L^2}+\|P_0v_3\|_{L^4}\|\partial_y P_0v_2\|_{L^4}\\
\label{5.60}\leq &C\|v_2\|_{H^{2}}\|\partial_yP_0v_3\|_{L^2}+C\|P_0v_3\|_{H^1}\|\nabla \Delta v_2\|_{L^2}.
\end{align}

For $j\in\{1,3\}$, by Lemma \ref{lem1},  we have
\begin{align*}
&\|(v_j)_{\neq}\partial_j (v_3)_{\neq}\|_{L^2}\leq \|(v_j)_{\neq}\|_{L^{\infty}}\|\partial_j(v_3)_{\neq}\|_{L^2}\leq C\|V\|_{H^2}\big(\|(\Delta v_2)_{\neq}\|_{L^2}+\|\partial_x\omega_2\|_{L^2}\big),
\end{align*}
and for $j=2$, we have
\begin{align*}
&\|(v_j)_{\neq}\partial_j (v_3)_{\neq}\|_{L^2}\leq \|(v_j)_{\neq}\|_{L^{\infty}}\|\partial_j(v_3)_{\neq}\|_{L^2}\leq C\|v_2\|_{H^2}\big(\|(\Delta v_2)_{\neq}\|_{L^2}+\|\nabla\partial_x\omega_2\|_{L^2}\big).
\end{align*}
This shows that
\begin{align*}
&\|V_{\neq}\cdot\nabla (v_3)_{\neq}\|_{L^2}\leq C\|V\|_{H^2}\big(\|(\Delta v_2)_{\neq}\|_{L^2}+\|\partial_x\omega_2\|_{L^2}\big)+C\|v_2\|_{H^2}\|\nabla\partial_x\omega_2\|_{L^2}.
\end{align*}
Thus, we obtain
\begin{align}
\label{v7}&\|P_0(V\cdot\nabla v_3)\|_{L^2}\leq \|P_0V\cdot\nabla P_0v_3\|_{L^2}+\|V_{\neq}\cdot\nabla (v_3)_{\neq}\|_{L^2}\\
\nonumber &\leq C\|v_2\|_{H^{2}}\|\partial_2 P_0v_3\|_{L^2}+\|P_0v_3\|_{H^1}\|\nabla \Delta v_2\|_{L^2}+C\|V\|_{H^2}\big(\|(\Delta v_2)_{\neq}\|_{L^2}+\|\partial_x\omega_2\|_{L^2}\big)\\
\nonumber&\quad+C\|v_2\|_{H^2}\|\nabla\partial_x\omega_2\|_{L^2}.
\end{align}

Then \eqref{UNION1} follows from \eqref{p6}, \eqref{v6} and \eqref{v7}.
\end{proof}

\begin{lemma}
\label{lem3}
It holds that
\begin{align*}
&\|\nabla(V\cdot\nabla V)_{\neq}\|_{L^2}+\|V_{\neq}\cdot\nabla V_{\neq}\|_{H^1}\leq C\|V\|_{H^2}\|\Delta V_{\neq}\|_{L^2}.
\end{align*}
\end{lemma}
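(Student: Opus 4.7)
The plan is to decompose $V=P_0V+V_{\neq}$, expand the product $V\cdot\nabla V$ into four pieces, discard the purely $x$-independent piece, and estimate each remaining piece by an $H^1$-algebra inequality. Throughout we will exploit two basic facts that follow from spectral analysis on the torus: (i) $\|fg\|_{H^1}\leq C\|f\|_{H^2}\|g\|_{H^1}$ in $3$D (from $H^2\hookrightarrow L^\infty$ and $H^1\hookrightarrow L^6$), and (ii) any $V_{\neq}\in H^2$ of zero $x$-average satisfies the Poincar\'e-type elliptic bound $\|V_{\neq}\|_{H^2}\leq C\|\Delta V_{\neq}\|_{L^2}$.

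\textbf{Step 1 (decomposition).} Write
\begin{align*}
V\cdot\nabla V=P_0V\cdot\nabla P_0V+P_0V\cdot\nabla V_{\neq}+V_{\neq}\cdot\nabla P_0V+V_{\neq}\cdot\nabla V_{\neq}.
\end{align*}
Since $P_0V$ depends only on $(y,z)$, the term $P_0V\cdot\nabla P_0V$ is $x$-independent, hence $(P_0V\cdot\nabla P_0V)_{\neq}=0$. It therefore suffices to bound the $H^1$-norm of the remaining three terms, all of which contain at least one factor of $V_{\neq}$.

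\textbf{Step 2 (algebra inequality).} Apply the $3$D product estimate $\|fg\|_{H^1}\leq C\|f\|_{H^2}\|g\|_{H^1}$ to each piece, placing $V_{\neq}$ in the $H^2$-slot whenever possible:
\begin{align*}
\|P_0V\cdot\nabla V_{\neq}\|_{H^1}&\leq C\|P_0V\|_{H^2}\|\nabla V_{\neq}\|_{H^1},\\
\|V_{\neq}\cdot\nabla P_0V\|_{H^1}&\leq C\|V_{\neq}\|_{H^2}\|\nabla P_0V\|_{H^1},\\
\|V_{\neq}\cdot\nabla V_{\neq}\|_{H^1}&\leq C\|V_{\neq}\|_{H^2}\|\nabla V_{\neq}\|_{H^1}.
\end{align*}
Since $\|\nabla(V\cdot\nabla V)_{\neq}\|_{L^2}\leq\|(V\cdot\nabla V)_{\neq}\|_{H^1}$, the first stated inequality is reduced to controlling these three quantities, and the second stated inequality is precisely the third estimate.

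\textbf{Step 3 (reduction to $\|\Delta V_{\neq}\|_{L^2}$).} Use $\|P_0V\|_{H^2}\leq\|V\|_{H^2}$ (orthogonality of $P_0$ and $I-P_0$) and the Poincar\'e bound $\|V_{\neq}\|_{H^2}+\|\nabla V_{\neq}\|_{H^1}\leq C\|\Delta V_{\neq}\|_{L^2}$. Substituting into Step 2 gives each of the three pieces bounded by $C\|V\|_{H^2}\|\Delta V_{\neq}\|_{L^2}$, yielding both inequalities. The only mildly nontrivial point is verifying the Poincar\'e bound for functions of zero $x$-average on $\Omega$, but this is immediate from Fourier series, since the nonzero $x$-frequencies guarantee $k_1^2+k_f^2k_2^2+k_3^2\geq 1$ and elliptic regularity on the torus gives $\|V_{\neq}\|_{H^2}\sim\|\Delta V_{\neq}\|_{L^2}$. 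There is no genuine obstacle here; the lemma is a standard Sobolev-product bookkeeping that merely packages the calculus needed in the global stability argument of the previous subsection.
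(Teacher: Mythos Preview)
Your proof is correct and follows essentially the same approach as the paper. The only cosmetic difference is that the paper groups two of your four pieces together, writing $V\cdot\nabla V=V\cdot\nabla V_{\neq}+V\cdot\nabla P_0V$ and then observing $(V\cdot\nabla P_0V)_{\neq}=V_{\neq}\cdot\nabla P_0V$; otherwise the product estimate $\|fg\|_{H^1}\leq C\|f\|_{H^2}\|g\|_{H^1}$ and the Poincar\'e-type bound $\|V_{\neq}\|_{H^2}\leq C\|\Delta V_{\neq}\|_{L^2}$ are exactly what the paper uses.
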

\begin{proof}
Thanks to $V\cdot\nabla V=V\cdot\nabla V_{\neq}+V\cdot\nabla P_0V$, we have
\begin{align*}
\|\nabla(V\cdot\nabla V)_{\neq}\|_{L^2}\leq&\|V\cdot\nabla V_{\neq}\|_{H^1}+\|V_{\neq}\cdot\nabla P_0V\|_{H^1}\\\leq& C\|V\|_{H^2}\|\nabla V_{\neq}\|_{H^1}+C\|V_{\neq}\|_{H^2}\|\nabla P_0V\|_{H^1}\leq C\|V\|_{H^2}\|\Delta V_{\neq}\|_{L^2}.
\end{align*}
Similarly, we have
\begin{align*}
& \|V_{\neq}\cdot\nabla V_{\neq}\|_{H^1}\leq C\|V_{\neq}\|_{H^2}\|\nabla V_{\neq}\|_{H^1}\leq C\|V_{\neq}\|_{H^2}^2\leq C\|V\|_{H^2}\|\Delta V_{\neq}\|_{L^2}.
\end{align*}
This completes the proof.
\end{proof}

\subsection{Nonlinear stability}

Let us introduce the following notations
\begin{align*}
&M_0(T)=\sup_{0\leq t\leq T}\big(\| v_2(t)\|_{H^2}+e^{c'\sqrt{\gamma}t}\|(\Delta v_2)_{\neq}(t)\|_{L^2}+e^{c'\sqrt{\gamma}t}\|\partial_x\omega_2(t)\|_{L^2}+\| P_0v_3(t)\|_{H^1}\big),\\
&M_1(T)=\sup_{0\leq t\leq T}\| V(t)\|_{H^2},\quad \|f\|_{Y_0}^2:=\|f\|_{L^{\infty}L^2}^2+\nu\|\nabla f\|_{L^{2}L^2}^2.
\end{align*}

\begin{theorem} \label{Thm:non}
Given $k_f\in (0,1),$ there exist constants $a_0,\varepsilon_1\in(0,1),\ C_1>1, $ such that for any $0< |\gamma|<1,\ 0<\nu\leq a_0|\gamma|^{\f12},\ T>0$ , if the solution $U=V+U^{*}\in C([0,T],H^2)\cap L^2([0,T],H^3)$ of \eqref{eq:NS}  satisfies
\begin{align*}
M_0(T)< \varepsilon_1\nu,\quad M_1(T)< \varepsilon_1\nu^{\frac{1}{2}}|\gamma|^{\frac{1}{4}},
\end{align*}
then there hold
\begin{align}
\label{Nonlinear bound1}&\|\nabla P_0v_3\|_{Y_0}^2+\|\Delta v_2\|_{Y_0}^2+\|(\Delta v_2)_{\neq}\|_{X_{c'}}^2+\|\partial_x\omega_2\|_{X_{c'}}^2+\nu^{-1}\|\Delta p\|_{L^2L^2}^2\leq C\|V_0\|_{H^2}^2,\\
\label{Nonlinear bound2}&\|\Delta V_{\neq}\|_{X_{c'}}^2+\|\Delta P_0v_{3}\|_{Y_0}^2\leq  {C|\gamma|^{\frac{1}{2}}}/{\nu^2 }\|V_0\|_{H^2}^2,\\
\label{Nonlinear bound3}&M_0(T)\leq  C_1\|V_0\|_{H^2},\quad M_1(T)\leq  C_1(|\gamma|/\nu^2)\|V_0\|_{H^2}.
\end{align}
\end{theorem}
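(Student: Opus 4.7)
\textbf{Proof plan for Theorem \ref{Thm:non}.} The argument is a continuity/bootstrap scheme. Under the standing hypotheses $M_0(T) < \varepsilon_1\nu$ and $M_1(T) < \varepsilon_1\nu^{1/2}|\gamma|^{1/4}$, I would derive the improved bounds \eqref{Nonlinear bound1}--\eqref{Nonlinear bound3}; since the initial smallness $\|V_0\|_{H^2}\le c_4\nu^{5/2}|\gamma|^{-3/4}$ is precisely tuned so that $C_1\|V_0\|_{H^2}<\varepsilon_1\nu$ and $C_1(|\gamma|/\nu^2)\|V_0\|_{H^2}<\varepsilon_1\nu^{1/2}|\gamma|^{1/4}$ (using $\nu^2\le|\gamma|$), local well-posedness plus standard continuity in $T$ then yields Theorem~\ref{thm:NS}.

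The first step handles the modes enjoying enhanced dissipation. I would feed $f=-\nabla(V\cdot\nabla v_2)-(0,\Delta p,0)$ into Proposition~\ref{Prop5.3} and $(\omega_2,g)$ with $g=-V\omega_2+v_2\omega$ into the inhomogeneous bound \eqref{NLNSK decay2}, and then replace the $L^2L^2$ source norms by the nonlinear control \eqref{UNION1} of Lemma~\ref{Union}. Each product there carries a factor like $\|v_2\|_{H^2}$ or $\|V\|_{H^2}\lesssim M_0+M_1\lesssim\varepsilon_1\nu^{1/2}|\gamma|^{1/4}$, times either $\|\nabla\partial_x\omega_2\|_{L^2}$ or $\|\nabla\Delta v_2\|_{L^2}$. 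The small prefactor combined with the $\nu\|\nabla\cdot\|_{L^2L^2}^2$ dissipation already present on the left-hand side of the $X_{c'}$ norm allows absorption, giving
\[
\|(\Delta v_2)_{\neq}\|_{X_{c'}}^2+\|\partial_x\omega_2\|_{X_{c'}}^2\le C\|V_0\|_{H^2}^2.
\]
Then I would run a straight $L^2$ energy estimate on the full $\Delta v_2$ equation --- the Kolmogorov-shear transport term $\frac{\gamma}{\nu k_f^2}\sin(k_fy)\partial_x\Delta v_2$ drops after integration by parts against $\Delta v_2$, which is the special feature for the Kolmogorov flow noted in Section~2.3 --- combined with \eqref{UNION2}, to produce $\|\Delta v_2\|_{Y_0}^2\le C\|V_0\|_{H^2}^2$. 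Testing the $P_0v_3$ equation against $P_0v_3$ and $-\Delta P_0v_3$ and using the $\|P_0(V\cdot\nabla v_3)\|_{L^2}$ bound in \eqref{UNION2} yields $\|\nabla P_0v_3\|_{Y_0}$; combined with \eqref{p6} this closes \eqref{Nonlinear bound1}.

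The main obstacle, and the origin of the $|\gamma|/\nu^2$ factor in \eqref{Nonlinear bound3}, is controlling $\|P_0v_1\|_{H^2}$: the forcing $\frac{\gamma\cos(k_fy)}{\nu k_f}P_0v_2$ and the transport $P_0v_2\,\partial_yP_0v_1$ together produce the 3D lift-up effect and are not small a priori. Following the strategy of Section~2.3, I would set $P_0v_1=v_1^{(1)}+v_1^{(2)}$ with
\[
v_1^{(1)}=-\frac{\nu k_f\cos(k_fy)+a_2\sin(k_fy)}{(\nu k_f)^2+a_2^2}\cdot\frac{\gamma a_2}{\nu k_f^2},\qquad a_2=\text{average of }P_0v_2,
\]
the exact steady solution of $-\nu\Delta v_1^{(1)}+a_2\partial_y v_1^{(1)}+\frac{\gamma\cos(k_fy)}{\nu k_f}a_2=0$, which absorbs the worst forcing and satisfies $\|v_1^{(1)}\|_{H^2}\le C|\gamma a_2|/\nu^2\le C(|\gamma|/\nu^2)\|V_0\|_{H^2}$ by the $\|v_2\|_{L^\infty_tL^2}$ control from Step 2. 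The remainder $v_1^{(2)}$ solves a heat equation whose source involves only $\frac{\gamma\cos(k_fy)}{\nu k_f}(P_0v_2-a_2)$, $(P_0v_2-a_2)\partial_yv_1^{(1)}$ and $P_0(V_{\neq}\cdot\nabla(v_1)_{\neq})$; Poincar\'e on $P_0v_2-a_2$, the explicit shape of $v_1^{(1)}$, and the enhanced-dissipation control of the non-zero modes via Lemma~\ref{lem3} make these sources controllable at scale $C\|V_0\|_{H^2}$ via parabolic $H^2$ energy estimates.

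Finally, \eqref{Nonlinear bound2} follows by upgrading Lemma~\ref{lem1} one derivative to bound $\|\Delta V_{\neq}\|_{L^2}\lesssim\|(\Delta v_2)_{\neq}\|_{L^2}+\|\nabla\partial_x\omega_2\|_{L^2}$, the latter supplied by the dissipation piece of $\|\partial_x\omega_2\|_{X_{c'}}^2$, and handling $\|\Delta P_0v_3\|_{Y_0}$ by a parabolic $H^2$ estimate on the $P_0v_3$ equation using Lemma~\ref{lem3} for the nonlinearity. Assembling everything and invoking the definitions of $M_0(T),M_1(T)$ together with Lemma~\ref{lem1}, the desired \eqref{Nonlinear bound3} follows after choosing $\varepsilon_1$ small enough that all absorptions carry a factor strictly below $1/2$.
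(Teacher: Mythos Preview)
Your handling of the enhanced-dissipation modes in Step~1 and the splitting $P_0v_1=v_1^{(1)}+v_1^{(2)}$ for the lift-up effect are essentially what the paper does. The gap is in your route to \eqref{Nonlinear bound2}.

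The pointwise inequality $\|\Delta V_{\neq}\|_{L^2}\lesssim\|(\Delta v_2)_{\neq}\|_{L^2}+\|\nabla\partial_x\omega_2\|_{L^2}$ you claim is not available: from $(\partial_x^2+\partial_z^2)(v_1)_{\neq}=\partial_z(\omega_2)_{\neq}-\partial_x\partial_y v_2$ one sees that $\|\Delta(v_1)_{\neq}\|_{L^2}$ needs $\|\partial_y^2(\omega_2)_{\neq}\|_{L^2}$, which is not dominated by $\|\nabla\partial_x\omega_2\|_{L^2}$. Even granting the inequality, the dissipation piece of $\|\partial_x\omega_2\|_{X_{c'}}$ controls $\|\nabla\partial_x\omega_2\|$ only in $L^2_t$, so you cannot recover the $L^\infty_t$ part of $\|\Delta V_{\neq}\|_{X_{c'}}$; that part is exactly what is used later to bound $\|v_i(t)\|_{H^2}$ pointwise and close $M_1(T)$.

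The paper instead writes $(\partial_t+\mathcal H)\Delta v_i=\text{div}\,g_{2,i}$ for $i=1,3$ and applies Proposition~\ref{Prop5.4} directly. The forcing $g_{2,i}$ contains \emph{linear} terms of size $\frac{|\gamma|}{\nu}(\|\partial_x v_i\|_{L^2}+\|(v_2)_{\neq}\|_{H^1})$ arising from the commutator of $\Delta$ with the shear and from $\frac{\gamma\cos(k_fy)}{\nu k_f}v_2$; via Lemma~\ref{lem1} these become $\frac{|\gamma|}{\nu}(\|(\Delta v_2)_{\neq}\|_{L^2}+\|\partial_x\omega_2\|_{L^2})$, and it is precisely this factor that produces the $|\gamma|^{1/2}/\nu^2$ in \eqref{Nonlinear bound2}. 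The nonlinear part $\|\nabla(V\cdot\nabla v_i)_{\neq}\|_{L^2}\le C\|V\|_{H^2}\|\Delta V_{\neq}\|_{L^2}$ (Lemma~\ref{lem3}) is then absorbed using $M_1\le\varepsilon_1\nu^{1/2}|\gamma|^{1/4}$. Two smaller points: for the $Y_0$ bound on $\Delta v_2$, testing only against $\Delta v_2$ does not kill the nonlocal term $\frac{\gamma}{\nu}\sin(k_fy)\partial_x v_2$; one must combine with testing against $-k_f^2 v_2$ (as in the proof of \eqref{NLNSK bound1}) or split into $P_0$ and $(\cdot)_{\neq}$ as the paper does. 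And the step from $\|\Delta v_2\|_{Y_0},\|\nabla P_0v_3\|_{Y_0}$ to $\|v_2\|_{H^2},\|P_0v_3\|_{H^1}$ in $M_0$ uses momentum conservation $\partial_t\langle v_i,1\rangle=0$ to handle the zero Fourier mode.
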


%$\mathbf{Proof\ of\ Theorem\ \ref{Theo: nonlinear pert}}$
\begin{proof}

{\bf Step 1.} Proof of  \eqref{Nonlinear bound1}.\smallskip

Recall  the definitions of $f,g,p$ in \eqref{NLNSK1} and \eqref{NLNSK2}. Then by Lemma \ref{Union},  we have
\begin{align*}
&\|e^{c'\sqrt{\gamma}t}\partial_x g\|_{L^{2}L^2}^2+\|e^{c'\sqrt{\gamma}t} f_{\neq}\|_{L^{2}L^2}^2+\|e^{c'\sqrt{\gamma}t}(\Delta p)_{\neq}\|_{L^2L^2}^2\\
&\le C\big(\|e^{c'\sqrt{\gamma}t}(\Delta p)_{\neq}\|_{L^2L^2}^2+\|e^{c'\sqrt{\gamma}t}\nabla(V\cdot\nabla v_2)_{\neq}\|_{L^2L^2}^2+\|e^{c'\sqrt{\gamma}t}\partial_x(V\omega_2)\|_{L^2L^2}^2\\&\qquad+\|e^{c'\sqrt{\gamma}t}\partial_x(\omega v_2)\|_{L^2L^2}^2\big)
\\
&\leq C\| v_2\|_{L^{\infty}H^2}^2\|e^{c'\sqrt{\gamma}t}\nabla\partial_x\omega_2\|_{L^2L^2}^2+
C\big(\|e^{c'\sqrt{\gamma}t}(\Delta v_2)_{\neq}\|_{L^2L^2}^2+\|e^{c'\sqrt{\gamma}t}\partial_x\omega_2\|_{L^2L^2}^2\big)\|V\|_{L^{\infty}H^2}^2\\
&\qquad+C\big(\|e^{c'\sqrt{\gamma}t}(\Delta v_2)_{\neq}\|_{L^{\infty}L^2}^2+\|e^{c'\sqrt{\gamma}t}\partial_x\omega_2\|_{L^{\infty}L^2}^2\big)
\|\nabla\Delta v_2\|_{L^2L^2}^2,
\end{align*}
which gives
\begin{align}
\label{Nonlinear Key1}&\|e^{c'\sqrt{\gamma}t}\partial_1 g\|_{L^{2}L^2}^2+\|e^{c'\sqrt{\gamma}t} f_{\neq}\|_{L^{2}L^2}^2+\|e^{c'\sqrt{\gamma}t}(\Delta p)_{\neq}\|_{L^2L^2}^2\\
\nonumber&\leq CM_0^2\nu^{-1}\|\partial_x\omega_2\|_{X_{c'}}^2+
C|\gamma|^{-\frac{1}{2}}\big(\|(\Delta v_2)_{\neq}\|_{X_{c'}}^2+\|\partial_x\omega_2\|_{X_{c'}}^2\big)M_1^2+CM_0^2
\nu^{-1}\|\Delta v_2\|_{Y_0}^2.
\end{align}

It follows from Proposition \ref{Prop5.3}, Proposition \ref{Prop5.4} and \eqref{Nonlinear Key1} that
 \begin{align}
 \label{Nonlinear Key3}&\|(\Delta v_2)_{\neq}\|_{X_{c'}}^2+\|\partial_x\omega_2\|_{X_{c'}}^2\\
 \nonumber&\leq C\| \partial_x\omega_2(0)\|_{L^2}^2+C\| (\Delta v_2)_{\neq}(0)\|_{L^2}^2+C\nu^{-1}\|e^{c'\sqrt{\gamma}t}\partial_1 g\|_{L^{2}L^2}^2+C\nu^{-1}\|e^{c'\sqrt{\gamma}t} f_{\neq}\|_{L^{2}L^2}^2\\
 \nonumber&\leq C\Big[\| V(0)\|_{L^2}^2+\nu^{-1}\Big(M_0^2\nu^{-1}\|\partial_x\omega_2\|_{X_{c'}}^2+|\gamma|^{-\frac{1}{2}}(\|(\Delta v_2)_{\neq}\|_{X_{c'}}^2+\|\partial_x\omega_2\|_{X_{c'}}^2)M_1^2\\
\nonumber &\qquad+M_0^2\nu^{-1}\|\Delta v_2\|_{Y_0}^2\Big)\Big].
\end{align}

It is easy to find that
\begin{align*}
&(\partial_t-\nu\Delta)P_0v_3=-P_0(V\cdot\nabla v_3)-P_0\partial_3p,\\
&(\partial_t-\nu\Delta) P_0\Delta v_2=-\Delta P_0(V\cdot\nabla v_2)-\partial_2 P_0\Delta p=\text{div}P_0f.
\end{align*}
Then we can deduce that
\beno
&&\|P_0\Delta v_2\|_{Y_0}^2\leq C\big(\|P_0\Delta v_2(0)\|_{L^2}^2+\nu^{-1}\|P_0f\|_{L^2L^2}^2\big),\\
&&\|\nabla P_0v_3\|_{Y_0}^2\leq C\big(\|\nabla P_0v_3(0)\|_{L^2}^2+\nu^{-1}\|P_0(V\cdot\nabla v_3)\|_{L^2L^2}^2+\nu^{-1}\|P_0\partial_3p\|_{L^2L^2}^2\big),
\eeno
from which and Lemma \ref{Union}, we infer that
\begin{align*}
&\|\nabla P_0v_3\|_{Y_0}^2+\|P_0\Delta v_2\|_{Y_0}^2+\nu^{-1}\|\Delta p\|_{L^2L^2}^2\\
&\leq C\big(\|\nabla P_0v_3(0)\|_{L^2}^2+\nu^{-1}\|P_0\partial_3p\|_{L^2L^2}^2+\nu^{-1}\|P_0(V\cdot\nabla v_3)\|_{L^2L^2}^2+\|\Delta v_2(0)\|_{L^2}^2\\
&\quad+\nu^{-1}\|f\|_{L^2L^2}^2\big)+\nu^{-1}\|\Delta p\|_{L^2L^2}^2\\
&\leq C\|V_0\|_{H^2}^2+C\nu^{-1}\big(\|\Delta p\|_{L^2L^2}^2+\|\nabla(V\cdot\nabla v_2)\|_{L^2L^2}^2+\|P_0(V\cdot\nabla v_3)\|_{L^2L^2}^2\big)\\
&\leq C\|V_0\|_{H^2}^2+C\nu^{-1}\big(\| v_2\|_{L^{\infty}H^2}^2+\|P_0v_3\|_{L^{\infty}H^1}^2+\|\partial_x\omega_2\|_{L^{\infty}L^2}^2\big)\big(\|\nabla\Delta v_2\|_{L^2L^2}^2+\|\nabla\partial_x\omega_2\|_{L^2L^2}^2\\
&\qquad+\|\nabla (P_0v_3)\|_{L^2L^2}^2\big)+C\nu^{-1}\big(\|(\Delta v_2)_{\neq}\|_{L^2L^2}^2+\|\partial_x\omega_2\|_{L^2L^2}^2\big)\| V\|_{L^{\infty}H^2}^2,
\end{align*}
which gives
\begin{align}
\label{Nonlinear Key2}&\|\nabla P_0v_3\|_{Y_0}^2+\|P_0\Delta v_2\|_{Y_0}^2+\nu^{-1}\|\Delta p\|_{L^2L^2}^2\\
\nonumber&\leq C \Big[\|V(0)\|_{H^2}^2+\nu^{-1}M_0^2\nu^{-1}\big(\|\Delta v_2\|_{Y_0}^2+\|\nabla( P_0v_3)\|_{Y_0}^2+\|\partial_x\omega_2\|_{X_{c'}}^2\big)\\
\nonumber&+\nu^{-1}|\gamma|^{-\frac{1}{2}}\big(\|(\Delta v_2)_{\neq}\|_{X_{c'}}^2+\|\partial_x\omega_2\|_{X_{c'}}^2\big)M_1^2\Big].
\end{align}

Using the fact that
\begin{align*}
\|\Delta v_2\|_{Y_0}\leq \|P_0\Delta v_2\|_{Y_0}+\|(\Delta v_2)_{\neq}\|_{Y_0}\leq \|P_0\Delta v_2\|_{Y_0}+\|(\Delta v_2)_{\neq}\|_{X_{c'}},
\end{align*}
and \eqref{Nonlinear Key3}-\eqref{Nonlinear Key2}, we obtain
\begin{align*}
&\|\nabla (P_0v_3)\|_{Y_0}^2+\|\Delta v_2\|_{Y_0}^2+\|(\Delta v_2)_{\neq}\|_{X_{c'}}^2+\|\partial_x\omega_2\|_{X_{c'}}^2+\nu^{-1}\|\Delta p\|_{L^2L^2}^2\\
 &\leq  C\|V_0\|_{H^2}^2+C\nu^{-1}M_0^2\nu^{-1}\big(\|\Delta v_2\|_{Y_0}^2+\|\partial_x\omega_2\|_{X_{c'}}^2+\|\nabla (P_0v_3)\|_{Y_0}^2\big)\\
 &\quad+C\nu^{-1}|\gamma|^{-\frac{1}{2}}\big(\|(\Delta v_2)_{\neq}\|_{X_{c'}}^2+\|\partial_x\omega_2\|_{X_{c'}}^2\big)M_1^2+C\nu^{-1}\Big[M_0^2\nu^{-1}\|\partial_x\omega_2\|_{X_{c'}}^2\\
 &\quad+|\gamma|^{-\frac{1}{2}}\big(\|(\Delta v_2)_{\neq}\|_{X_{c'}}^2+\|\partial_x\omega_2\|_{X_{c'}}^2\big)M_1^2+M_0^2
\nu^{-1}\|\Delta v_2\|_{Y_0}^2\Big]\\
&\leq C\|V_0\|_{H^2}^2+C(M_0^2\nu^{-2}+\nu^{-1}|\gamma|^{-\frac{1}{2}}M_1^2)\Big[\|\Delta v_2\|_{Y_0}^2+\|(\Delta v_2)_{\neq}\|_{X_{c'}}^2\\
&\qquad+\|\partial_x\omega_2\|_{X_{c'}}^2+\|\nabla (P_0v_3)\|_{Y_0}^2\Big].\end{align*}
Therefore,  there exists  $\varepsilon_1\in(0,1)$ so that if
 \begin{align*}
 M_0\leq \varepsilon_1\nu,\quad M_1\leq \varepsilon_1\nu^{\frac{1}{2}}|\gamma|^{\frac{1}{4}},
\end{align*}
then
\begin{align*}
&\|\nabla (P_0v_3)\|_{Y_0}^2+\|\Delta v_2\|_{Y_0}^2+\|(\Delta v_2)_{\neq}\|_{X_{c'}}^2+\|\partial_x\omega_2\|_{X_{c'}}^2+\nu^{-1}\|\Delta p\|_{L^2L^2}^2\leq C\|V_0\|_{H^2}^2.
\end{align*}

{\bf Step 2.} Estimate of $M_0$.\smallskip

In order to estimate $M_0$, we need the momentum conservation. Integrating \eqref{eq:NS-V} in $ \mathbb{T}_{2\pi}\times\mathbb{T}_{2\pi/k_f}\times\mathbb{T}_{2\pi}$, we have
\begin{align*}
\partial_t\langle v_i,1\rangle=0,
\end{align*}
which implies that
\begin{align*}
a_i:=\langle v_i(t),1\rangle/\langle 1,1\rangle
\end{align*}
is a constant and $|a_i|\leq C\|V_0\|_{H^2}$ for $i=1,2,3.$ This along with \eqref{Nonlinear bound1} gives
\begin{align*}
&\|v_2(t)\|_{H^2}\leq C\big(|a_2|+\|\Delta v_2(t)\|_{L^2})\leq C(\|V_0\|_{H^2}+\|\Delta v_2\|_{Y_0}\big)\leq C\|V_0\|_{H^2},\\
&\|P_0v_3(t)\|_{H^1}\leq C\big(|a_3|+\|\nabla (P_0v_3)(t)\|_{L^2}\big)\leq C\big(\|V_0\|_{H^2}+\|\nabla (P_0v_3)\|_{Y_0}\big)\leq C\|V_0\|_{H^2},\\
&e^{c'\sqrt{\gamma}t}\|(\Delta v_2)_{\neq}(t)\|_{L^2}+e^{c'\sqrt{\gamma}t}\|\partial_x\omega_2(t)\|_{L^2}\leq \|(\Delta v_2)_{\neq}\|_{X_{c'}}+\|\partial_x\omega_2\|_{X_{c'}}\leq C\|V_0\|_{H^2},
\end{align*}
which shows
 \begin{align*}
M_0\leq C\|V_0\|_{H^2}.
\end{align*}

{\bf Step 3}. Proof of  \eqref{Nonlinear bound2}.\smallskip

First of all, we find that
\begin{align*}
&\big(\partial_t+\f{\gamma}{\nu k_f^2}\sin(k_fy)\partial_x-\nu\Delta\big)\Delta v_3\\
&=2\partial_3\Big(\f{\gamma}{\nu k_f}\cos(k_fy)\partial_xv_2\Big)-\Delta(V\cdot\nabla v_3)-\partial_z\Delta p-\f{2\gamma}{\nu k_f}\cos(k_fy)\partial_x\partial_yv_3+\f{\gamma}{\nu }\sin(k_fy)\partial_xv_3\\
&:=\text{div}g_{2,3},
\end{align*}
where
\begin{align*}
g_{2,3}=-\nabla(V\cdot\nabla v_3)-\Big(\f{\gamma}{\nu }\sin(k_fy)v_3,\f{2\gamma}{\nu k_f}\cos(k_fy)\partial_xv_3,\Delta p-\f{2\gamma}{\nu k_f}\cos(k_fy)\partial_xv_2\Big),
\end{align*}
and
\begin{align*}
&\big(\partial_t+\f{\gamma\sin(k_fy)}{\nu k_f^2}\partial_x-\nu\Delta\big)\Delta v_1\\
&=-\Delta\big(\f{\gamma\cos(k_fy)}{\nu k_f}v_2\big)+2\partial_x\big(\f{\gamma\cos(k_fy)}{\nu k_f}\partial_xv_2\big)-\partial_y\big(\f{2\gamma}{\nu k_f}\cos(k_fy)\partial_xv_1\big)-\partial_x\f{\gamma}{\nu }\sin(k_fy)v_1\\
&-\Delta(V\cdot\nabla v_1)-\partial_x\Delta p:=\text{div}g_{2,1},
\end{align*}
where
\begin{align*}
 g_{2,1}=&-\nabla\big(\f{\gamma\cos(k_fy)}{\nu k_f}v_2\big)-\Big(\Delta p-\f{2\gamma}{\nu k_f}\cos(k_fy)\partial_xv_2+\f{\gamma}{\nu }\sin(k_fy)v_1,\f{2\gamma}{\nu k_f}\cos(k_fy)\partial_xv_1,0\Big)\\&-\nabla(V\cdot\nabla v_1).
 \end{align*}

Thanks to Lemma \ref{lem1} and Lemma \ref{lem3}, we have
\begin{align}
\nonumber\| (g_{2,3})_{\neq}\|_{L^2}&\leq \| \nabla(V\cdot\nabla v_3)_{\neq}\|_{L^2}+\f{|\gamma|}{\nu }\| (v_3)_{\neq}\|_{L^2}+\f{2|\gamma|}{\nu k_f}\| \partial_xv_3\|_{L^2}+\f{2|\gamma|}{\nu k_f}\| \partial_xv_2\|_{L^2}+\| (\Delta p)_{\neq}\|_{L^2}\\
\nonumber&\leq \| \nabla(V\cdot\nabla v_3)_{\neq}\|_{L^2}+\f{C|\gamma|}{\nu }\| \partial_xv_3\|_{L^2}+\f{2|\gamma|}{\nu k_f}\| \partial_xv_2\|_{L^2}+\| (\Delta p)_{\neq}\|_{L^2}\\
\label{g2,3}&\leq C\|V\|_{H^2}\|\Delta V_{\neq}\|_{L^2}+\f{C|\gamma|}{\nu }\big(\|\big(\Delta v_2)_{\neq}\|_{L^2}+\|\partial_x\omega_2\|_{L^2}\big)+\| (\Delta p)_{\neq}\|_{L^2},
\end{align}
and
\begin{align}
\nonumber\| (g_{2,1})_{\neq}\|_{L^2}\leq &\f{|\gamma|}{\nu k_f}\| \nabla(\cos(k_fy) v_2)_{\neq}\|_{L^2}+\| \nabla(V\cdot\nabla v_1)_{\neq}\|_{L^2}+\f{|\gamma|}{\nu }\| (v_1)_{\neq}\|_{L^2}+\f{2|\gamma|}{\nu k_f}\| \partial_xv_1\|_{L^2}\\
\nonumber&+\f{2|\gamma|}{\nu k_f}\| \partial_xv_2\|_{L^2}+\| (\Delta p)_{\neq}\|_{L^2}\\
\nonumber\leq &\| \nabla(V\cdot\nabla v_1)_{\neq}\|_{L^2}+\f{C|\gamma|}{\nu }\| \partial_xv_1\|_{L^2}+\f{C|\gamma|}{\nu }\|(v_2)_{\neq}\|_{H^1}+\| (\Delta p)_{\neq}\|_{L^2}\\
\label{g2,1}\leq &C\|V\|_{H^2}\|\Delta V_{\neq}\|_{L^2}+\f{C|\gamma|}{\nu }\big(\|(\Delta v_2)_{\neq}\|_{L^2}+\|\partial_x\omega_2\|_{L^2}\big)+\| (\Delta p)_{\neq}\|_{L^2}.
\end{align}
Then it follows from \eqref{NLNSK decay3}, \eqref{g2,1} and \eqref{g2,3} that
\begin{align*}
&\|(\Delta v_1)_{\neq}\|_{X_{c'}}^2+\|(\Delta v_3)_{\neq}\|_{X_{c'}}^2\\
&\leq C\|(\Delta v_1)_{\neq}(0)\|_{L^2}^2+C\| (\Delta v_3)_{\neq}(0)\|_{L^2}^2+C\nu^{-1}\|e^{c'\sqrt{\gamma}t}(g_{2,1})_{\neq}\|_{L^{2}L^2}^2+C\nu^{-1}\|e^{c'\sqrt{\gamma}t}(g_{2,3})_{\neq}\|_{L^{2}L^2}^2\\
&\leq  C\|V_0\|_{H^2}^2+C\nu^{-1}\|V\|_{L^{\infty}H^2}^2\|e^{c'\sqrt{\gamma}t}\Delta V_{\neq}\|_{L^2L^2}^2+C\nu^{-1}\| e^{c'\sqrt{\gamma}t}(\Delta p)_{\neq}\|_{L^2L^2}^2\\
&\qquad+\f{C|\gamma|}{\nu^2 }\big(\|e^{c'\sqrt{\gamma}t}(\Delta v_2)_{\neq}\|_{L^2L^2}^2+\|e^{c'\sqrt{\gamma}t}\partial_x\omega_2\|_{L^2L^2}^2\big)\\
&\leq  C\|V_0\|_{H^2}^2+C\nu^{-1}M_1^2|\gamma|^{-\frac{1}{2}}\|\Delta V_{\neq}\|_{X_{c'}}^2+\f{C|\gamma|^{\frac{1}{2}}}{\nu^2 }\big(\|(\Delta v_2)_{\neq}\|_{X_{c'}}^2+\|\partial_x\omega_2\|_{X_{c'}}^2\big)\\
&\qquad+C\nu^{-1}\Big[M_0^2\nu^{-1}\|\partial_x\omega_2\|_{X_{c'}}^2+
|\gamma|^{-\frac{1}{2}}\big(\|(\Delta v_2)_{\neq}\|_{X_{c'}}^2+\|\partial_x\omega_2\|_{X_{c'}}^2\big)M_1^2+M_0^2
\nu^{-1}\|\Delta v_2\|_{Y_0}^2\Big],
\end{align*}
which gives
\begin{align}
\label{5.66}&
\|(\Delta v_1)_{\neq}\|_{X_{c'}}^2+\|(\Delta v_3)_{\neq}\|_{X_{c'}}^2\\
\nonumber&\leq  C\|V_0\|_{H^2}^2+C\nu^{-1}M_1^2|\gamma|^{-\frac{1}{2}}\|\Delta V_{\neq}\|_{X_{c'}}^2+\f{C|\gamma|^{\frac{1}{2}}+M_0^2}{\nu^2 }\|V_0\|_{H^2}^2.
\end{align}
Thanks to \eqref{Nonlinear bound1}, we know
\begin{align*}
\|(\Delta v_2)_{\neq}\|_{X_{c'}}^2\leq  C\|V_0\|_{H^2}^2,
\end{align*}
which along with \eqref{5.66} ensures that
\begin{align*}
\|\Delta V_{\neq}\|_{X_{c'}}^2\leq& C\big(\|(\Delta v_1)_{\neq}\|_{X_{c'}}^2+\|(\Delta v_2)_{\neq}\|_{X_{c'}}^2+\|(\Delta v_3)_{\neq}\|_{X_{c'}}^2\big)\\
\leq & C\|V_0\|_{H^2}^2+C\nu^{-1}M_1^2|\gamma|^{-\frac{1}{2}}\|\Delta V_{\neq}\|_{X_{c'}}^2+\f{C|\gamma|^{\frac{1}{2}}+M_0^2}{\nu^2 }\|V_0\|_{H^2}^2.
\end{align*}
Hence, if $M_1\leq \varepsilon_1\nu^{\frac{1}{2}}|\gamma|^{\frac{1}{4}}$ for some constant $\varepsilon_1\in(0,1)$, then
\begin{align}
\label{Nonlinear5.67}&\|\Delta V_{\neq}\|_{X_{c'}}^2\leq C\|V_0\|_{H^2}^2+\f{C|\gamma|^{\frac{1}{2}}+M_0^2}{\nu^2 }\|V_0\|_{H^2}^2\leq \f{C|\gamma|^{\frac{1}{2}}}{\nu^2 }\|V_0\|_{H^2}^2.
\end{align}

Recall that
\begin{align*}
(\partial_t-\nu\Delta)\Delta P_0v_3=-\Delta P_0(V\cdot\nabla v_3)-\partial_3 P_0\Delta p.
\end{align*}
A direct calculation gives
\begin{align*}
&P_0(V\cdot\nabla v_3)=P_0V\cdot\nabla P_0v_3+P_0(V_{\neq}\cdot\nabla (v_3)_{\neq}),\\
&P_0V\cdot\nabla P_0v_3=P_0v_2\partial_yP_0v_3+P_0v_3\partial_z P_0v_3.
\end{align*}
It is easy to see that
\begin{align*}
\|P_0V\cdot\nabla P_0v_3\|_{H^1}
\leq C\big(\|v_2\|_{H^{2}}+\|P_0v_3\|_{H^1}\big)\|\nabla \Delta P_0v_3\|_{L^2},
\end{align*}
from which and Lemma \ref{lem3}, we infer that
\begin{align*}
\|\nabla P_0(V\cdot\nabla v_3)\|_{L^2}&\leq \|P_0V\cdot\nabla P_0v_3\|_{H^1}+\|V_{\neq}\cdot\nabla (v_3)_{\neq}\|_{H^1}\\
&\leq  C\big(\|v_2\|_{H^{2}}+\|P_0v_3\|_{H^1}\big)\|\nabla \Delta P_0v_3\|_{L^2}+C\|V\|_{H^2}\|\Delta V_{\neq}\|_{L^2}.
\end{align*}
Then we have
\begin{align*}
\|\Delta P_0v_3\|_{Y_0}^2\leq& C\big(\|\Delta P_0v_3(0)\|_{L^2}^2+\nu^{-1}\|\nabla P_0(V\cdot\nabla v_3)\|_{L^2L^2}^2+\nu^{-1}\|P_0\Delta p\|_{L^2L^2}^2\big)\\
\leq& C\| V_0\|_{H^2}^2+C\nu^{-1}\big(\|v_2\|_{L^{\infty}H^{2}}^2+\|P_0v_3\|_{L^{\infty}H^1}^2\big)\|\nabla \Delta P_0v_3\|_{L^2L^2}^2\\
&+C\nu^{-1}\|V\|_{L^{\infty}H^2}^2\|\Delta V_{\neq}\|_{L^2L^2}^2+C\nu^{-1}\|\Delta p\|_{L^2L^2}^2\\
\leq& C\|V_0\|_{H^2}^2+C\nu^{-1}M_0^2\nu^{-1}\|\Delta P_0v_3\|_{Y_0}^2+C\nu^{-1}M_1^2|\gamma|^{-\frac{1}{2}}\|\Delta V_{\neq}\|_{Y_0}^2+C\| V_0\|_{H^2}^2.
\end{align*}
Hence, if $M_0\leq \varepsilon_1\nu,\ M_1\leq \varepsilon_1\nu^{\frac{1}{2}}|\gamma|^{\frac{1}{4}} $ for some constant $\varepsilon_1\in(0,1), $ then
\begin{align}
\label{Nonlinear5.68}&\|\Delta P_0v_3\|_{Y_0}^2\leq  C\|V_0\|_{H^2}^2+C\|\Delta V_{\neq}\|_{Y_0}^2\leq \f{C|\gamma|^{\frac{1}{2}}}{\nu^2 }\|V_0\|_{H^2}^2,
\end{align}
which along with \eqref{Nonlinear5.67} gives \eqref{Nonlinear bound2}.
\smallskip

{\bf Step 4.} Estimate of $M_1$.\smallskip

By \eqref{Nonlinear bound2} and $|a_3|\le \|V_0\|_{H^2}$, we get
\begin{align}
\nonumber\|v_3(t)\|_{H^2}^2\leq& C\big(|a_3|+\|\Delta P_0v_3(t)\|_{L^2}+\|(\Delta v_3)_{\neq}(t)\|_{L^2}\big)^2\\
\label{v3,h2}\leq& C\|V_0\|_{H^2}^2+\f{C|\gamma|^{\frac{1}{2}}}{\nu^2 }\| V_0\|_{H^2}^2\leq \f{C|\gamma|^{\frac{1}{2}}}{\nu^2 }\|V_0\|_{H^2}^2.
\end{align}

The estimate of $P_0v_1$ is very subtle.  Recall that
\begin{align*}
&(\partial_t-\nu\Delta)P_0v_1+\f{\gamma\cos(k_fy)}{\nu k_f}P_0v_2=-P_0(V\cdot\nabla v_1)\\
&\quad=-P_0v_2\partial_yP_0v_1-P_0v_3\partial_zP_0v_1-P_0(V_{\neq}\cdot\nabla (v_1)_{\neq}).
\end{align*}

Let $v_1^{(1)}=-\dfrac{\nu k_f \cos(k_fy)+a_2\sin(k_fy)}{(\nu k_f)^2+a_2^2}\dfrac{\gamma a_2}{\nu k_f^2}$. Then we find that \begin{align*}
&-\nu\Delta v_1^{(1)}+a_2\partial_yv_1^{(1)}+\f{\gamma\cos(k_fy)}{\nu k_f}a_2=0,
\end{align*}
and
\begin{align}
\label{v11}&\| v_1^{(1)}\|_{H^2}=\dfrac{\| \cos(k_fy)\|_{H^2}}{((\nu k_f)^2+a_2^2)^{\frac{1}{2}}}\dfrac{|\gamma a_2|}{\nu k_f^2}\leq \dfrac{C|\gamma a_2|}{\nu^2}\leq \dfrac{C|\gamma|}{\nu^2}\| V_0\|_{H^2}\leq \dfrac{C|\gamma|}{\nu^2}M_0\leq \dfrac{C|\gamma|}{\nu}.\end{align}
Let $P_0v_1=v_1^{(1)}+v_1^{(2)} $. Due to $\partial_zv_1^{(1)}=0$, we have
\begin{align*}
&(\partial_t-\nu\Delta)v_1^{(2)}+\f{\gamma\cos(k_fy)}{\nu k_f}(P_0v_2-a_2)\\
&=-P_0v_2\partial_yv_1^{(2)}-(P_0v_2-a_2)\partial_yv_1^{(1)}-P_0v_3\partial_zv_1^{(2)}-P_0(V_{\neq}\cdot\nabla (v_1)_{\neq}),\end{align*}
which gives
\begin{align*}
&(\partial_t-\nu\Delta)\Delta v_1^{(2)}=\text{div} f_1,
\end{align*}
where
\begin{align*}
 f_1=&-\nabla\big[(\f{\gamma\cos(k_fy)}{\nu k_f}+\partial_yv_1^{(1)})(P_0v_2-a_2)\big]-\nabla (P_0v_2\partial_y v_1^{(2)}+ P_0v_3\partial_zv_1^{(2)})\\&-\nabla P_0(V_{\neq}\cdot\nabla (v_1)_{\neq}).
\end{align*}
Using the fact that $\| P_0v_2-a_2\|_{H^2}\leq  C\|\nabla\Delta v_2\|_{L^2} $ and Lemma \ref{lem3}, we infer that
\begin{align*}
\|f_1\|_{L^2}\leq& \big\|(\f{\gamma\cos(k_fy)}{\nu k_f}+\partial_yv_1^{(1)})(P_0v_2-a_2)\big\|_{H^1}+\| P_0v_2\partial_y v_1^{(2)}\|_{H^1}+\| P_0v_3\partial_z v_1^{(2)}\|_{H^1}\\
&+\|V_{\neq}\cdot\nabla (v_1)_{\neq}\|_{H^1}\\
\leq& C\Big[\big\|\f{\gamma\cos(k_fy)}{\nu k_f}+\partial_yv_1^{(1)}\big\|_{H^1}\| P_0v_2-a_2\|_{H^2}+\|P_0v_2\|_{H^2}\| \partial_y v_1^{(2)}\|_{H^2}\\
&+\|P_0v_3\|_{H^1}\| \partial_z v_1^{(2)}\|_{H^2}+\|V\|_{H^2}\|\Delta V_{\neq}\|_{L^2}\Big]\\
\leq&  C\Big[\big(\f{|\gamma|}{\nu }+\|v_1^{(1)}\|_{H^{2}}\big)\|\nabla \Delta v_2\|_{L^2}+\big(\|v_2\|_{H^2}+\|P_0v_3\|_{H^1}\big)\| \nabla\Delta v_1^{(2)}\|_{L^2}+\|V\|_{H^2}\|\Delta V_{\neq}\|_{L^2}\Big].
\end{align*}
Then we conclude
\begin{align}
\nonumber&\|\Delta v_1^{(2)}\|_{Y_0}^2\leq C\big(\|\Delta v_1^{(2)}(0)\|_{L^2}^2+\nu^{-1}\|f_1\|_{L^2L^2}^2\big)\\
\nonumber&\leq C\big(\|\Delta P_0 v_1(0)\|_{L^2}^2+\|\Delta v_1^{(1)}\|_{L^2}^2\big)+C\nu^{-1}\big(\f{|\gamma|}{\nu }+\|v_1^{(1)}\|_{H^{2}}\big)^2\|\nabla \Delta v_2\|_{L^2L^2}^2\\
\nonumber&+C\nu^{-1}\big(\|v_2\|_{L^{\infty}H^{2}}^2+\|P_0v_3\|_{L^{\infty}H^1}^2\big)\|\nabla \Delta v_1^{(2)}\|_{L^2L^2}^2+C\nu^{-1}\|V\|_{L^{\infty}H^2}^2\|\Delta V_{\neq}\|_{L^2L^2}^2\\
&\leq C\big(\|V_0\|_{H^2}^2+\|v_1^{(1)}\|_{H^{2}}^2\big)+C\nu^{-1}M_0^2\nu^{-1}\| \Delta v_1^{(2)}\|_{Y_0}^2+C\nu^{-1}M_1^2|\gamma|^{-\frac{1}{2}}\|\Delta V_{\neq}\|_{Y_0}^2\nonumber\\
\nonumber&\quad+C\nu^{-2}(|\gamma|/\nu)^2\| \Delta v_2\|_{Y_0}^2.
\end{align}
Hence, if $M_0\leq \varepsilon_1\nu,\ M_1\leq \varepsilon_1\nu^{\frac{1}{2}}|\gamma|^{\frac{1}{4}}$, then
\begin{align}
\nonumber\|\Delta v_1^{(2)}\|_{Y_0}^2\leq & C(\|V_0\|_{H^2}^2+\|v_1^{(1)}\|_{H^{2}}^2)+C\|\Delta V_{\neq}\|_{Y_0}^2+C\nu^{-2}(|\gamma|/\nu)^2\| \Delta v_2\|_{Y_0}^2\\
\nonumber\leq& C(|\gamma|/\nu^2)^2\|V_0\|_{H^2}^2+C(|\gamma|^{\frac{1}{2}}/\nu^2) \| V_0\|_{H^2}^2+C(|\gamma|/\nu^2)^2\|V_0\|_{H^2}^2\\
\label{v12}\leq& C(|\gamma|/\nu^2)^2\|V_0\|_{H^2}^2,
\end{align}
which along with \eqref{v11} and \eqref{v12} gives
\begin{align*}
\|\Delta P_0v_1(t)\|_{L^2}\leq \|\Delta v_1^{(2)}(t)\|_{L^2}+\|\Delta v_1^{(1)}(t)\|_{L^2}\leq C(|\gamma|/\nu^2)\|V_0\|_{H^2}.
\end{align*}
Therefore,
\begin{align}
\nonumber\|v_1(t)\|_{H^2}^2\leq& C\big(|a_1|+\|\Delta P_0v_1(t)\|_{L^2}+\|(\Delta v_1)_{\neq}(t)\|_{L^2}\big)^2\\
\label{v1,h2}\leq& C\|V_0\|_{H^2}^2+{C|\gamma|^{\frac{1}{2}}}/{\nu^2 }\| V_0\|_{H^2}^2+C(|\gamma|/\nu^2)^2\| V_0\|_{H^2}^2\leq C(|\gamma|/\nu^2)^2\| V_0\|_{H^2}^2.
\end{align}

Summing up \eqref{v1,h2}, \eqref{v3,h2} and \eqref{Nonlinear bound1},
we conclude
\begin{align*}
\nonumber\|V(t)\|_{H^2}^2=&\|v_1(t)\|_{H^2}^2+\|v_2(t)\|_{H^2}^2+\|v_3(t)\|_{H^2}^2 \\
\leq& C(|\gamma|/\nu^2)^2\|V_0\|_{H^2}^2+C\|V_0\|_{H^2}^2+{C|\gamma|^{\frac{1}{2}}}/{\nu^2 }\| V_0\|_{H^2}^2\leq C(|\gamma|/\nu^2)^2\|V_0\|_{H^2}^2,
\end{align*}
which implies that $M_1\leq C(|\gamma|/\nu^2)\|V_0\|_{H^2}$.
\end{proof}

\subsection{Proof of  Theorem \ref{thm:NS}}
Obviously, we have
\beno
M_0(0)+M_1(0)\leq  C_1\|V_0\|_{H^2}.
\eeno
While by the assumption, we know
\begin{align}
\|V_0\|_{H^2}\leq\varepsilon_1\nu^{\frac{5}{2}}|\gamma|^{-\frac{3}{4}}/(2C_1).
\end{align}
Therefore, there exists $T>0$ so that $M_0(T)< \varepsilon_1\nu,\ M_1(T)< \varepsilon_1\nu^{\frac{1}{2}}|\gamma|^{\frac{1}{4}}$, which  imply by Theorem \ref{Thm:non} that
\begin{align*}
 M_0(T)\leq \varepsilon_1\nu/2,\quad M_1(T)\leq \varepsilon_1\nu^{\frac{1}{2}}|\gamma|^{\frac{1}{4}}/2.
 \end{align*}
Then Theorem \ref{thm:NS} follows from a continuous argument.

\section{Appendix}

\subsection{Some basic inequalities}

\begin{lemma}
Let $\lambda\in [0,1]$, $\sin y_1=\sin y_2=\lambda$ and $0\leq y_1\leq \f{\pi}{2}\leq y_2\leq\pi$. Let
\begin{align*}
g(y)=\f{(y-y_1)(\pi-y-y_1)}{2(\sin y-\lambda)},\quad y_1< y< y_2.
\end{align*}
Then $g'(y)\leq 0$ for $y\in(y_1,\pi/2]$ and
\begin{align}
\sup_{y_1< y< y_2}g(y)=\lim_{y\to y_1+}g(y)=\f{\f{\pi}{2}-y_1}{\cos y_1}\label{A.5.1}.
\end{align}
\end{lemma}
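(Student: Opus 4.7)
The plan is to exploit the symmetry forced by $\sin y_1 = \sin y_2 = \la$ with $0 \leq y_1 \leq \pi/2 \leq y_2 \leq \pi$, namely $y_2 = \pi - y_1$. Rewriting $g(y) = \f{(y - y_1)(y_2 - y)}{2(\sin y - \la)}$ makes manifest the functional symmetry $g(y) = g(\pi - y)$, since both numerator and denominator are symmetric about $\pi/2$. Granting the claimed monotonicity $g'(y) \leq 0$ on $(y_1, \pi/2]$, this symmetry gives the mirror statement $g'(y) \geq 0$ on $[\pi/2, y_2)$, so the sup of $g$ over $(y_1, y_2)$ is attained at either endpoint as a limit. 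L'H\^opital at $y_1$ yields
\beno
\lim_{y \to y_1^+} g(y) = \f{y_2 - y_1}{2 \cos y_1} = \f{\pi/2 - y_1}{\cos y_1},
\eeno
matching the claim. So everything reduces to the monotonicity on $(y_1, \pi/2]$.

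For the monotonicity I would compute the logarithmic derivative
\beno
\f{g'(y)}{g(y)} = \f{1}{y - y_1} - \f{1}{y_2 - y} - \f{\cos y}{\sin y - \la}.
\eeno
Since $g(y) > 0$ on $(y_1, \pi/2]$, the condition $g'(y) \leq 0$ is equivalent to
\beno
(y - y_1)(y_2 - y) \cos y \geq (\pi - 2y)(\sin y - \sin y_1),
\eeno
both sides non-negative. To bring this into a clean form I would substitute $b = \pi/2 - y_1 \in [0, \pi/2]$ and $s = \pi/2 - y \in [0, b]$; using $\sin y = \cos s$, $\sin y_1 = \cos b$, $y - y_1 = b - s$, $y_2 - y = b + s$, $\pi - 2y = 2s$, and $\cos y = \sin s$, the inequality becomes the trigonometric statement
\beno
(b^2 - s^2) \sin s \geq 2s (\cos s - \cos b), \qquad 0 \leq s \leq b \leq \pi/2.
\eeno

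To prove this reduced inequality, I would fix $s$ and treat the difference as a function of $b$:
\beno
\phi(b) = (b^2 - s^2) \sin s - 2s (\cos s - \cos b), \qquad b \in [s, \pi/2].
\eeno
Then $\phi(s) = 0$, and $\phi'(b) = 2b \sin s - 2s \sin b$ also vanishes at $b = s$. Differentiating once more, $\phi''(b) = 2 \sin s - 2s \cos b$. Since $b \geq s$ gives $\cos b \leq \cos s$, and since $\tan s \geq s$ on $[0, \pi/2)$ gives $\sin s \geq s \cos s$, we get $\phi''(b) \geq 2 (\sin s - s \cos s) \geq 0$ on $[s, \pi/2]$. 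Integrating twice from $b = s$ yields $\phi(b) \geq 0$, as required. The main obstacle is arranging the derivatives so that the right monotonicity propagates in a single pass: differentiating in $s$ leads to a considerably messier expression, whereas fixing $s$ and varying $b$ upward stacks the two elementary inequalities $\cos b \leq \cos s$ and $\sin s \geq s \cos s$ directly into the sign of $\phi''$, after which the two-fold vanishing of $\phi$ and $\phi'$ at $b = s$ closes the argument.
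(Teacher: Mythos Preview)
Your proof is correct. Both you and the paper exploit the symmetry $g(y)=g(\pi-y)$ and reduce the task to showing $(y-y_1)(y_2-y)\cos y\ge(\pi-2y)(\sin y-\sin y_1)$ on $(y_1,\pi/2]$, and both proofs ultimately rest on the elementary fact $\tan t\ge t$ on $[0,\pi/2)$. The route to this last step is genuinely different, however. The paper expands $\sin y_1$ by Taylor's formula with Lagrange remainder around $y$, which recasts the numerator of $g'$ as
\[
\big[(\tfrac{\pi}{2}-y)\sin y-\cos y\big](y-y_1)^2-(\pi-2y)\,\tfrac{\cos(\theta y_1+(1-\theta)y)}{6}(y-y_1)^3,
\]
and then checks the signs of the two pieces separately (the first via $t\cos t-\sin t\le 0$, the second via the non-negativity of $\cos$ on $[0,\pi/2]$). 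Your substitution $b=\pi/2-y_1$, $s=\pi/2-y$ converts the inequality into $(b^2-s^2)\sin s\ge 2s(\cos s-\cos b)$ and proves it by a clean two-integration convexity argument in $b$ (fixing $s$), with $\phi''(b)\ge 2(\sin s-s\cos s)\ge 0$. Your approach avoids the need to keep track of a remainder term and packages the whole argument into successive derivatives with matching base-point vanishing, which is arguably more streamlined; the paper's approach has the virtue of making the leading-order mechanism $(\pi/2-y)\sin y-\cos y\le 0$ explicit.
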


\begin{proof} For $y\in(y_1,y_2)$, we have
\begin{align*}
g'(y)=\f{(\pi-2y)(\sin y-\lambda)-\cos y(y-y_1)(\pi-y-y_1)}{2(\sin y-\lambda)^2}.
\end{align*}
Let $h(y):=(\pi-2y)(\sin y-\lambda)-\cos y(y-y_1)(\pi-y-y_1)$.
Using Taylor's formula
\begin{align*}
\sin y_1=\sin y+\cos y (y_1-y)-\f{\sin y}{2}(y_1-y)^2-\f{\cos (\theta y_1+(1-\theta)y)}{3!}(y_1-y)^3,
\end{align*}
for some $\theta\in[0,1]$, we get
\begin{align*}
h(y)=&(\pi-2y)\Big[(y-y_1)\cos y+\f{\sin y}{2}(y-y_1)^2-\f{\cos (\theta y_1+(1-\theta)y)}{3!}(y-y_1)^3 \Big]\\
&-\cos y(y-y_1)(\pi-y-y_1)\\
=&\big[(\f{\pi}{2}-y)\sin y-\cos y\big](y-y_1)^2-(\pi-2y)\f{\cos (\theta y_1+(1-\theta)y)}{3!}(y-y_1)^3.
\end{align*}
Let $t=\f{\pi}{2}-y$, then $(\f{\pi}{2}-y)\sin y-\cos y=t\cos t-\sin t=\cos t(t-\tan t)\leq 0$ for $t\le 0$. Hence,
$h(y)\leq 0$ for $y\in(y_1,\f{\pi}{2}]$, which implies that
\begin{align*} \sup_{y_1< y\leq\f{\pi}{2}}g(y)=\lim_{y\to y_1+}g(y)=\f{\f{\pi}{2}-y_1}{\cos y_1}.
\end{align*}
While for $y\in[\f{\pi}{2},y_2)$, we have $g(y)=g(\pi-y)$, therefore,
\begin{align*}
\sup_{y_1< y< y_2}g(y)=\sup_{y_1< y\leq\f{\pi}{2}}g(y)=\lim_{y\to y_1+}g(y)=\f{\f{\pi}{2}-y_1}{\cos y_1}.
\end{align*}
\end{proof}

\begin{lemma}
Let $\lambda\in[0,1]$, $\sin y_1=\sin y_2=\lambda$ and $0\leq y_1\leq\f{\pi}{2}\leq y_2\leq\pi$. Then for any $0<\delta\leq1$,  we have
\begin{align}
&\f{1}{|\sin(y_1-\delta)-\sin y_1|}\sim\f{1}{(y_2-y_1+\delta)\delta},\label{A4.2.1}\\
&\Big\|\f{1}{\sin y-\lambda}\Big\|_{L^2(y_2+\delta,y_1+2\pi-\delta)}\lesssim\f{1}{(y_2-y_1+\delta)\delta^{\f12}},\label{A4.2.2}\\
&\Big\|\f{1}{\sin y-\lambda}\Big\|_{L^1(y_2+\delta,y_1+2\pi-\delta)}\lesssim\f{1+\ln(1+\frac{y_2-y_1}{\delta})}{(y_2-y_1+\delta)},\label{A4.2.4}\\
&\Big\|\f{\cos y}{(\sin y-\lambda)^2}\Big\|_{L^2(y_2+\delta,y_1+2\pi-\delta)}\lesssim \f{1}{(y_2-y_1+\delta)\delta^{\f32}}.\label{A4.2.3}
\end{align}
\end{lemma}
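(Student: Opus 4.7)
The plan is to reduce each of the four inequalities to elementary one-variable integrals via the product-to-sum factorization of $\sin y - \lambda$, using the symmetry $y_2 = \pi - y_1$ and the key scale $c := y_2 - y_1 + \delta$.

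For (A4.2.1), I would start from the identity
\begin{align*}
\sin y_1 - \sin(y_1-\delta) = 2\sin(\delta/2)\cos(y_1 - \delta/2),
\end{align*}
estimate $\sin(\delta/2) \sim \delta$ for $\delta \in (0,1]$, and expand $\cos(y_1-\delta/2) = \cos y_1\cos(\delta/2) + \sin y_1 \sin(\delta/2) \sim \cos y_1 + \lambda\delta$. Since $y_2 = \pi - y_1$, one has $\cos y_1 = \sin((y_2-y_1)/2)$, and because $(y_2-y_1)/2 \in [0,\pi/2]$, this is comparable to $y_2-y_1$. Combining gives $\cos(y_1-\delta/2)\sim y_2-y_1+\delta$, and the claim follows.

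For (A4.2.2)--(A4.2.4) I would use the factorization $\lambda - \sin y = 2|\sin\tfrac{y-y_1}{2}|\,|\sin\tfrac{y-y_2}{2}|$ on $y \in [y_2, y_1+2\pi]$, together with the comparabilities already established earlier in the paper (just above (\ref{4.21.1}) and (\ref{NI2})): on $[y_2+\delta, 3\pi/2]$, $\lambda - \sin y \sim (y-y_1)(y-y_2)$; and on $[3\pi/2, y_1+2\pi-\delta]$, $\lambda - \sin y \sim (y_1+2\pi-y)(y_2+2\pi-y)$. The structural observation is that on the left half, $y-y_1 = (y-y_2)+(y_2-y_1) \sim \max(y-y_2, c)$ whenever $y-y_2 \geq \delta$, and symmetrically on the right half with $y-y_2$ replaced by $y_1+2\pi-y$. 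Hence, writing $t$ for the distance to the nearer endpoint,
\begin{align*}
\lambda - \sin y \gtrsim t\,\max(t,c) \quad\text{on } [y_2+\delta, y_1+2\pi-\delta].
\end{align*}

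With this lower bound each integral is reduced to a rational integral in $t \in [\delta, L]$ with $L=O(1)$. For (A4.2.2), $\int_\delta^L t^{-2}\max(t,c)^{-2}\,dt$ splits into $\int_\delta^c t^{-2}c^{-2}\,dt + \int_c^L t^{-4}\,dt \lesssim c^{-2}\delta^{-1}$ (after checking the trivial case $c \leq \delta$). For (A4.2.4), $\int_\delta^L t^{-1}\max(t,c)^{-1}\,dt \lesssim c^{-1}\log(c/\delta) + c^{-1} \lesssim c^{-1}(1+\log(1+(y_2-y_1)/\delta))$. For (A4.2.3), I would first bound $|\cos y| \lesssim \max(t,c)$ near $y_2$ via $\cos y_2 = -\cos y_1 \sim -(y_2-y_1)$ and $|\cos y - \cos y_2| \leq |y-y_2|$, which reduces the integrand $\cos^2 y/(\lambda-\sin y)^4$ to $t^{-4}\max(t,c)^{-2}$ and yields $\int\sim c^{-2}\delta^{-3}$, so the $L^2$ norm is $\lesssim c^{-1}\delta^{-3/2}$.

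The main obstacle is purely bookkeeping: handling the $\max$/$\min$ dichotomies cleanly across the two halves $[y_2+\delta,3\pi/2]$ and $[3\pi/2,y_1+2\pi-\delta]$, absorbing the degenerate case $y_2-y_1 \lesssim \delta$ (where $c\sim\delta$), and verifying that the constants are uniform in $\lambda \in [0,1]$ (including the endpoints $\lambda=0,1$ where $y_2-y_1$ reaches its extremes). All the underlying calculus inputs---comparability of $\sin z$ with $z$ on $[0,3\pi/4]$, the expansion of $\cos y_1$ above, and the standard product-to-sum identities---are elementary.
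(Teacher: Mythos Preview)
Your proposal is correct and follows essentially the same strategy as the paper: both rely on the product-to-sum factorization $\lambda-\sin y = 2|\sin\tfrac{y-y_1}{2}||\sin\tfrac{y-y_2}{2}|$, the comparability $\sin z\sim z$ on bounded intervals, and reduction to explicit rational integrals in the distance to the nearer endpoint. Your packaging via $\lambda-\sin y\gtrsim t\max(t,c)$ is a clean way to organize (\ref{A4.2.2})--(\ref{A4.2.3}); the paper achieves the same effect by the cruder bound $y-y_1\ge y_2-y_1+\delta=c$ on $[y_2+\delta,3\pi/2]$ and then integrates $1/(c^2 t^2)$, $1/((y-y_1)(y-y_2))$, etc.\ directly. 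For (\ref{A4.2.3}) your bound $|\cos y|\lesssim \max(t,c)$ is slightly slicker than the paper's case split on whether $y_2+\delta\le\pi$ together with $|\cos y|\sim|y-\pi/2|$ or $|y-3\pi/2|$. Conversely, for (\ref{A4.2.1}) the paper is more direct: it writes $\cos(y_1-\delta/2)=\sin(\tfrac{\pi}{2}-y_1+\tfrac{\delta}{2})$ and immediately reads off $\sim\tfrac{y_2-y_1+\delta}{2}$, whereas your addition-formula expansion $\cos y_1+\lambda\delta\sim(y_2-y_1)+\delta$ hides a small case analysis (when $\lambda\ll 1$ the $\delta$ on the right must be absorbed by $y_2-y_1$, which is then $\sim\pi$).
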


\begin{proof}
As $\f{\pi}{2}-y_1+\f{\delta}{2}\in(0,\f{3\pi}{4})$, $\sin(\f{\pi}{2}-y_1+\f{\delta}{2})\sim \f{\pi}{2}-y_1+\f{\delta}{2}$, hence,
\begin{align*}
|\sin (y_1-\delta)-\sin y_1|&=2\big|\cos (y_1-\f{\delta}{2})\sin\f{\delta}{2}\big|=2\big|\sin(\f{\pi}{2}-y_1+\f{\delta}{2})\sin\f{\delta}{2}\big|\\
&\sim \big(\f{\pi}{2}-y_1+\f{\delta}{2}\big)\delta=\f{y_2-y_1+\delta}{2}\delta\sim(y_2-y_1+\delta)\delta,
\end{align*}
which gives (\ref{A4.2.1}).

For $y\in[y_2+\delta,\f{3\pi}{2}]$,  $\f{y-y_1}{2}\in[\f{y_2-y_1+\delta}{2},\f{3\pi}{4}]$, then $|\sin\f{y-y_1}{2}|\sim(y-y_1)$ and
\begin{align*}
|\sin y-\sin y_2|&=2\big|\cos\f{y+y_2}{2}\sin\f{y-y_2}{2}\big|=2\big|\sin\f{y-y_1}{2}\sin\f{y-y_2}{2}\big|\\
&\sim(y-y_1)(y-y_2)\geq(y_2-y_1+\delta)(y-y_2),
\end{align*}
hence,
\begin{align*}
\int_{y_2+\delta}^{y_1+2\pi-\delta}\f{1}{(\sin y-\lambda)^2}dy\lesssim\f{1}{(y_2-y_1+\delta)^2} \int_{y_2+\delta}^{\f{3\pi}{2}}\f{1}{(y-y_2)^2}dy\lesssim\f{1}{(y_2-y_1+\delta)^2\delta},
\end{align*}
which gives (\ref{A4.2.2}).  Similarly, we have
\begin{align*}
\int_{y_2+\delta}^{y_1+2\pi-\delta}\f{1}{|\sin y-\lambda|}dy&=2 \int_{y_2+\delta}^{\f{3\pi}{2}}\f{1}{(\sin y-\lambda)}dy\leq C\int_{y_2+\delta}^{\f{3\pi}{2}}\f{1}{(y-y_1)(y-y_2)}dy\\
&\leq \frac{C}{y_2-y_1+\delta}\int_{y_2+\delta}^{2y_2-y_1+\delta}\f{1}{(y-y_2)}dy+C\int_{2y_2-y_1+\delta}^{+\infty}\f{1}{(y-y_2)^2}dy\\
&\leq C\f{1+\ln(1+\frac{y_2-y_1}{\delta})}{(y_2-y_1+\delta)},
\end{align*}
which gives (\ref{A4.2.4}).

If $y_2+\delta\leq\pi$, then for $y\in[y_2+\delta,\pi]$
\begin{align*}
\f{|\cos y|}{(\sin y-\lambda)^2}&=\f{|\sin(y-\f{\pi}{2})|}{2|\sin\f{y-y_1}{2}\sin\f{y-y_2}{2}|^2}\sim\f{y-\f{\pi}{2}}{(y-y_1)^2(y-y_2)^2}\\
&=\f{1}{2}\Big(\f{1}{(y-y_1)(y-y_2)^2}+\f{1}{(y-y_1)^2(y-y_2)}\Big)\\
&\leq\f{1}{(y-y_1)^2(y-y_2)}\lesssim\f{1}{(y_2-y_1+\delta)(y-y_2)^2},
\end{align*}
which gives
\begin{align*}
\int_{y_2+\delta}^{\pi}\f{\cos^2 y}{(\sin y-\lambda)^4}dy\lesssim\f{1}{(y_2-y_1+\delta)^2}\int_{y_2+\delta}^{\pi}\f{1}{(y-y_2)^4}dy\lesssim\f{1}{(y_2-y_1+\delta)^2\delta^3},\nonumber
\end{align*}
and for $y\in[\pi,\f{3\pi}{2}]$, we have
\begin{align*}
&\f{|\cos y|}{(\sin y-\lambda)^2}=\f{|\sin(\f{3\pi}{2}-y)|}{2|\sin\f{y-y_1}{2}\sin\f{y-y_2}{2}|^2}\sim\f{\f{3\pi}{2}-y}{(y-y_1)^2(y-y_2)^2}\sim \f{\f{3\pi}{2}-y}{y_2^2(y-y_2)^2}\lesssim\f{1}{(y-y_2)^2},
\end{align*}
which gives
\begin{align*}
\int_{\pi}^{\f{3\pi}{2}}\f{\cos^2 y}{(\sin y-\lambda)^4}dy\lesssim\int_{\pi}^{\f{3\pi}{2}}\f{1}{(y-y_2)^4}dy\lesssim\f{1}{\delta^3}.
\end{align*}
This shows (\ref{A4.2.3}) for $y_2+\delta\le pi$. If $y_2+\delta\geq\pi$, then
\begin{align*}
\int_{y_2+\delta}^{\f{3\pi}{2}}\f{\cos^2 y}{(\sin y-\lambda)^4}dy\lesssim\int_{y_2+\delta}^{\f{3\pi}{2}}\f{1}{(y-y_2)^4}dy\lesssim\f{1}{\delta^3}.
\end{align*}
\end{proof}

\begin{lemma}
Let $\lambda\in [0,1]$, $\sin y_1=\sin y_2=\lambda$ and $0\leq y_1\leq \f{\pi}{2}\leq y_2\leq\pi$. Then for any $0< \delta\leq\min(1,\f{y_2-y_1}{4})$, we have
\begin{align}
&{|\sin(y_1+\delta)-\sin y_1|}\sim|y_2-y_1-\delta|\delta,\label{A4.3.1}\\
&\Big\|\f{1}{\sin y-\lambda}\Big\|_{L^2(y_1+\delta,y_2-\delta)}\lesssim\f{1}{(y_2-y_1)\delta^{\f12}},\label{A4.3.2}\\
&\Big\|\f{\cos y}{(\sin y-\lambda)^2}\Big\|_{L^2(y_1+\delta,y_2-\delta)}\lesssim\f{1}{(y_2-y_1)\delta^{\f32}}.\label{A4.3.3}
\end{align}
\end{lemma}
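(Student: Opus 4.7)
The three estimates all rely on the product-to-sum identity together with the normalization $y_1+y_2=\pi$ (since $\sin y_1=\sin y_2$ with $y_1\le\pi/2\le y_2$), so $\pi/2-y_1=(y_2-y_1)/2$ and $\cos y_1=\cos((y_2-y_1)/2)$ are all expressible in terms of $y_2-y_1$.

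For \eqref{A4.3.1}, I would write
\begin{align*}
\sin(y_1+\delta)-\sin y_1=2\cos(y_1+\tfrac{\delta}{2})\sin\tfrac{\delta}{2}=2\sin\tfrac{y_2-y_1-\delta}{2}\sin\tfrac{\delta}{2},
\end{align*}
using $\cos(y_1+\delta/2)=\sin(\pi/2-y_1-\delta/2)=\sin((y_2-y_1-\delta)/2)$. Under the hypothesis $\delta\le (y_2-y_1)/4$, both $(y_2-y_1-\delta)/2$ and $\delta/2$ lie in $(0,\pi/2]$, so $\sin(\cdot)\sim(\cdot)$ on each argument and the product is $\sim (y_2-y_1-\delta)\delta$.

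For \eqref{A4.3.2}, the key identity is $\sin y-\lambda=\sin y-\sin y_1=2\sin\tfrac{y-y_1}{2}\sin\tfrac{y_2-y}{2}$ (using $y_1+y_2=\pi$ so that $\cos\tfrac{y+y_1}{2}=\sin\tfrac{y_2-y}{2}$). For $y\in[y_1+\delta,y_2-\delta]$ both half-angles lie in $(0,\pi/2]$, yielding $\sin y-\lambda\sim(y-y_1)(y_2-y)$. Splitting at the midpoint $(y_1+y_2)/2$, on the left half $y_2-y\ge (y_2-y_1)/2$, so
\[
\int_{y_1+\delta}^{(y_1+y_2)/2}\frac{dy}{(\sin y-\lambda)^2}\lesssim\frac{1}{(y_2-y_1)^2}\int_{y_1+\delta}^{(y_1+y_2)/2}\frac{dy}{(y-y_1)^2}\lesssim\frac{1}{(y_2-y_1)^2\delta},
\]
and symmetrically on the right half. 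Taking square roots gives \eqref{A4.3.2}.

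For \eqref{A4.3.3}, I need a pointwise bound on $\cos y$ that improves when $y$ is near an endpoint. Writing $\pi/2-y=\tfrac{(y_2-y)-(y-y_1)}{2}$ gives
\[
\cos y=\sin\Bigl(\frac{(y_2-y)-(y-y_1)}{2}\Bigr),\qquad |\cos y|\le\frac{\max(y-y_1,\,y_2-y)}{2},
\]
so combining with $\sin y-\lambda\sim(y-y_1)(y_2-y)$,
\[
\frac{\cos^2 y}{(\sin y-\lambda)^4}\lesssim\frac{\max((y-y_1)^2,(y_2-y)^2)}{(y-y_1)^4(y_2-y)^4}=\frac{1}{\min((y-y_1)^4,(y_2-y)^4)\cdot\max((y-y_1)^2,(y_2-y)^2)}.
\]
On the left half $\max\ge (y_2-y_1)/2$, so the integrand is $\lesssim 1/((y-y_1)^4(y_2-y_1)^2)$, and integration from $y_1+\delta$ yields $\lesssim 1/(\delta^3(y_2-y_1)^2)$; the right half is symmetric. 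Taking square roots gives \eqref{A4.3.3}.

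The only mildly subtle point is the $\cos y$ bound in the third inequality. A naive $|\cos y|\le1$ costs a factor $(y_2-y_1)/\delta$ and fails; the identity $\cos y=\sin(((y_2-y)-(y-y_1))/2)$ is precisely what converts the cosine's vanishing at $y=\pi/2$ into a factor compatible with the $\sin y-\lambda\sim(y-y_1)(y_2-y)$ bound. Everything else is the same integration argument as in the preceding lemma in the appendix.
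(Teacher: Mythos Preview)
Your proof is correct and follows essentially the same approach as the paper: the key identity $\sin y-\lambda=2\sin\tfrac{y-y_1}{2}\sin\tfrac{y_2-y}{2}\sim(y-y_1)(y_2-y)$ drives all three estimates, and your treatment of $\cos y$ via $|y-\pi/2|=\tfrac12|(y_2-y)-(y-y_1)|$ matches the paper's. The only cosmetic difference is that the paper uses the partial-fraction identity $\tfrac{1}{(y-y_1)(y_2-y)}=\tfrac{1}{y_2-y_1}\bigl(\tfrac{1}{y-y_1}+\tfrac{1}{y_2-y}\bigr)$ rather than a midpoint split, but the two are equivalent.
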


\begin{proof}
For any $y\in[y_1,y_2]$, we have
\begin{align}
|\sin y-\sin y_1|=2\Big|\sin\f{y_2-y}{2}\sin\f{y-y_1}{2}\Big|\sim(y_2-y)(y-y_1)\label{F4.3},
\end{align}
which gives (\ref{A4.3.1}). Similarly, we have
\begin{align*}
\int_{y_1+\delta}^{y_2-\delta}\f{1}{(\sin y-\lambda)^2}dy\sim &\int_{y_1+\delta}^{y_2-\delta}\f{1}{(y_2-y)^2( y-y_1)^2}dy\\
=&\f{1}{(y_2-y_1)^2} \int_{y_1+\delta}^{y_2-\delta}(\f{1}{y_2-y}+\f{1}{y-y_1})^2dy\lesssim\f{1}{(y_2-y_1)^2\delta},
\end{align*}
which gives (\ref{A4.3.2}). For $y\in[y_1+\delta,y_2-\delta]$, we have
\begin{align*}
\f{|\cos y|}{(\sin y-\lambda)^2}\sim\f{|y-\f{\pi}{2}|}{(y-y_1)^2(y_2-y)^2}=\f{1}{2}\Big|\f{1}{(y-y_1)(y_2-y)^2}-\f{1}{(y_2-y)(y-y_1)^2}\Big|,
\end{align*}
hence,
\begin{align*}
&\int_{y_1+\delta}^{y_2-\delta}\f{|\cos y|^2}{(\sin y-\lambda)^4}dy\sim \int_{y_1+\delta}^{y_2-\delta}\Big|\f{1}{(y-y_1)(y_2-y)^2}-\f{1}{(y_2-y)(y-y_1)^2}\Big|^2dy\\
&\lesssim \f{1}{(y_2-y_1)^2}\Big (\int_{y_1+\delta}^{\f{\pi}{2}}\f{1}{(y-y_1)^4}dy+\int_{\f{\pi}{2}}^{y_2-\delta}\f{1}{(y_2-y)^4}dy\Big)\lesssim\f{1}{(y_2-y_1)^2\delta^3},
\end{align*}
which gives (\ref{A4.3.3}).
\end{proof}

\subsection{Wave operator}

The construction of wave operator is similar to \cite{WZZ3}.

\subsubsection{Notations and some estimates}

We first introduce some notations from \cite{WZZ3}.  \smallskip

Let $\al>1$, $u(y)=-\cos y$ and let $\phi_1(y,c)$ be a function constructed in  Proposition 3.6, which satisfies the properties in Lemma 3.7 and Proposition 3.8.
For $c=u(y_c)\in D_0=(-1,1)$ and $j=0,1$, $k=1,2,$ let
\begin{align*}
&\mathrm{II}(c)=\int_0^{\pi}\frac{1}{(u(y)-c)^2}\Big(\frac{1}{\phi_{1}(y,c)^2}-1\Big)dy,\\
&\rmA(c)=\sin^3 y_c\mathrm{II}(c),\quad \rmB(c)=\pi\cos y_c,\\
&J_j^k(c)=\f{-u'(y_c)(u(j\pi)-c)^k}{\phi_1((1-j)\pi,c)^{k-1}\phi_1'((1-j)\pi,c)},\ \ J_j(c)=J_j^2(c),\\
&\rmA_1(c)= J_1(c)-J_0(c)+\sin^2 y_c\rmA(c),\quad \rmB_1(c)=\sin^2 y_c \rmB,\\
&\rho(c)=(u(\pi)-c)(c-u(0))=1-c^2=\sin^2y_c.
\end{align*}
We define $\Int(\varphi) $ to be a $2\pi$ periodic function so that
\begin{align*}
\Int(\varphi)(y)=\int_{0}^y\varphi(y')\,dy' \quad\textrm{for}\, y\in [0,\pi],\quad \Int(\varphi)(y)=\Int(\varphi)(-y)\quad\textrm{for}\, y\in [-\pi,0].
\end{align*}
We denote
\begin{align*}
&\textrm{II}_{1,1}(\varphi)(y_c)
=p.v.\int_0^{\pi}\f{\Int(\varphi)(y)-\Int(\varphi)(y_c)}{(u(y)-u(y_c))^2}\,dy,\\
&\mathrm{II}_1(\varphi)(y_c)=p.v.\int_{0}^{\pi}\f{\int_{y_c}^{y}\varphi(y')\phi_1(y',c)dy'}{\phi(y,c)^2}dy,\\
&\rmE_j(\varphi)(y_c)=\int_{y_c}^{j\pi}\varphi(y)\phi_1(y,c)dy\quad \textrm{for}\ j=0,1,\\
&\mathcal{L}_0(\varphi)(y_c)=\int_0^{\pi}\int_{y_c}^{z}{\varphi}(y)
\f{1}{(u(z)-c)^2}\left(\f{\phi_1(y,c)\,}{\phi_1(z,c)^2}-1\right)\,dydz.
\end{align*}
It is easy to see that
\beno
\mathrm{II}_{1}(\varphi)=\mathrm{II}_{1,1}(\varphi)+\mathcal{L}_{0}
(\varphi).
\eeno
We denote
\beno
&&\Lambda_{1}(\varphi)(y_c)=\Lambda_{1,1}(\varphi)(y_c)+\Lambda_{1,2}(\varphi)(y_c),\\
&&\Lambda_3(\varphi)(y_c)=\rho(c)\Lambda_{1}(\varphi)(y_c)+\Lambda_{3,1}(\varphi)(y_c),
\eeno
where
\begin{align*}
&\Lambda_{1,1}(\varphi)(y_c)=\rho(c)u''(y_c)\mathrm{II}_{1,1}(\varphi)(c),\\
&\Lambda_{1,2}(\varphi)(y_c)=\rho(c)u''(y_c)\mathcal{L}_0(\varphi)(c)+u'(y_c)\rho(c)\mathrm{II}(c)\varphi(y_c),\\
&\Lambda_{3,1}(\varphi)(y_c)=J_j\left(\frac{u''(y_c)}{u'(y_c)}\rmE_{1-j}(\varphi)
+{\varphi(y_c)}\right)\bigg|_{j=0}^{1}.
\end{align*}
It holds that  if $(\al^2-\partial_y^2)\psi=\om$ with $\psi(0)=\psi(\pi)=0,$ then
\ben
\rho\mathrm{II}_1\big(u''\psi+u\om\big)
=\rho u\mathrm{II}_1(\om)-\psi(y_c)\rmA,
\een
if $(\al^2-\partial_y^2)\psi=\om$ with $\psi'(0)=\psi'(\pi)=0,$  then
\ben
\rho\mathrm{II}_1\big(u''\psi+u\om\big)
=\rho u\mathrm{II}_1(\om)-\psi(y_c)\rmA.
\een
See the proof of Proposition 10.2 in \cite{WZZ3}.\smallskip

Let us recall the following estimates for $\rmA, \rmB, \rmA_1,\rmB_1, J_j$:
\begin{align*}
&C^{-1}(1+\al\sin y_c)^2\leq (\mathrm{A}(c)^2+\mathrm{B}(c)^2) \leq C(1+\al\sin y_c)^2,\\
&|\mathrm{A}(c)|\leq C\al \sin y_c,\quad
|\pa_{c}\mathrm{A}(c)|
\leq C\min(\al^2, \al/\sin y_c),\\
&|\pa_{c}^2\mathrm{A}(c)|
\leq C\min(\al^2/\sin^2y_c,\al/\sin^3y_c),\\
&\pa_c\rmB(c)=-\pi,\quad \pa_c^2\rmB(c)=0,
\end{align*}
see Proposition 4.3 in \cite{WZZ3}, and
\begin{align*}
&\f{(1+\al\sin  y_c)^6}{C\al^4}\leq \rmA_1(c)^2+\rmB_1(c)^2\leq \f{C(1+\al\sin  y_c)^6}{\al^4},\\
&|\mathrm{A_1}(c)|\leq C(1+\al \sin y_c)^3/\al^2,\quad
|\pa_{c}\mathrm{A}_1(c)|\leq C(1+\al \sin y_c),\\
&|\pa_{c}^2\mathrm{A}_1(c)|\leq C\min(\al^2,\f{\al}{\sin y_c}),\\
&|\rmB_1(c)|\leq C\sin^2 y_c,\ |\pa_c\rmB_1(c)|\leq C,\ |\pa_c^2\rmB_1(c)|\leq C,
\end{align*}
see Proposition 4.5 in \cite{WZZ3}, and for $m=0,1,2,$
\begin{align*}
&|\partial_c^m J_{1-j}(c)|
\leq C\min\left(\f{|(1-j)\pi-y_c|^{5}}{\al^2|\sin y_c|^{2m}\phi_1(j\pi,c)},\al^{2m-2}\right),
\end{align*}
which implies\begin{align}\label{5.2}
&|\partial_{y_c}^m J_{1-j}(c)|
\leq C\min\left(\f{|(1-j)\pi-y_c|^{5}}{\al^2|\sin y_c|^{m}\phi_1(j\pi,c)},\al^{m-2}\right),
\end{align}
see Lemma 4.8 in \cite{WZZ3}. As $\partial_{y_c}=u'(y_c)\partial_{c} ,$  we can deduce that for $k=0,1,2,$
\begin{align}\label{5.1}
\left|\partial_{y_c}^k\Big(\f{1}{\rmA+i\rmB}\Big)\right|
\leq \f{C\al^k}{(1+\al\sin y_c)^{1+k}},\ \ \ \ \left|\partial_{y_c}^k\Big(\f{1}{\rmA_1+i\rmB_1}\Big)\right|
\leq \f{C\al^{2+k}}{(1+\al\sin y_c)^{3+k}}.
\end{align}

Now we recall some estimates of  $\mathrm{II}_{1,1}. $ See Lemma 10.4, Lemma 7.4 and  Lemma 7.5 in \cite{WZZ3}.
\begin{lemma}\label{lem: commu_II_1,1}
It holds that
\beno
\|\sin y_c[\pa_y^2,\rho\mathrm{II}_{1,1}]\om\|_{L^2}\leq C\|\om\|_{H^1}.
\eeno
For $k=0,1,2$, we have
\beno
\|\rho\mathrm{II}_{1,1}(\varphi)\|_{H_{ y_c}^k}
\leq C\|\varphi\|_{H^k}.
\eeno
If $\varphi\in H_0^1(0,\pi)$, then
\beno
\|\sin y_c\mathrm{II}_{1,1}(\varphi)\|_{L_{ y_c}^2}\leq C\|\varphi\|_{H^1}.
\eeno
\end{lemma}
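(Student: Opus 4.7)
The three bounds are Sobolev-type estimates for the singular integral operator
\begin{equation*}
\mathrm{II}_{1,1}(\varphi)(y_c)=\mathrm{p.v.}\!\int_{0}^{\pi}\frac{F(y)-F(y_c)}{(u(y)-u(y_c))^{2}}\,dy,\qquad F=\mathrm{Int}(\varphi),
\end{equation*}
so the overall strategy is to reduce it, modulo regular remainders, to a Hilbert-transform-type operator and then invoke Calder\'on-commutator arguments. The first step is the factorization $u(y)-u(y_c)=2\sin\frac{y+y_c}{2}\sin\frac{y-y_c}{2}$, which writes the singular kernel as $(y-y_c)^{-2}$ times a smooth symbol in $(y-y_c)/2$ and a boundary-singular factor $\sin^{-2}\frac{y+y_c}{2}$. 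The weight $\rho(c)=\sin^{2}y_c$ is precisely what is needed to compensate the latter factor at $y_c\to 0,\pi$.

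\textbf{Sobolev estimates.} Using the Taylor expansion $F(y)-F(y_c)=\varphi(y_c)(y-y_c)+\int_{y_c}^{y}(y-s)\varphi'(s)\,ds$, the first term produces a principal-value kernel of Hilbert-transform type acting on $\varphi$, while the second has a merely logarithmic kernel acting on $\varphi'$. Both pieces are bounded $L^{2}\to L^{2}$, which yields the $k=0$ case of $\|\rho\mathrm{II}_{1,1}(\varphi)\|_{H^{k}_{y_c}}\leq C\|\varphi\|_{H^{k}}$. For $k=1,2$ I would differentiate under the integral: each $\partial_{y_c}$ either hits $\rho$ (producing a smoother factor), hits $F(y_c)$ (producing $\varphi(y_c)$, then $\varphi'(y_c)$, \dots), or hits the kernel. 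A derivative in $y_c$ applied to the kernel equals, modulo smoother terms, $-\partial_y$ of the same kernel; integration by parts in $y$ then transfers the derivative onto $\varphi$, with boundary contributions at $y=0,\pi$ absorbed by the $\rho$ prefactor.

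\textbf{Commutator and weighted estimate.} For $[\partial_y^{2},\rho\mathrm{II}_{1,1}]\omega$ I would Leibniz-expand into $[\partial_{y_c}^{2},\rho]\cdot\mathrm{II}_{1,1}\omega+2(\partial_{y_c}\rho)\partial_{y_c}\mathrm{II}_{1,1}\omega+\rho\,[\partial_{y_c}^{2},\mathrm{II}_{1,1}]\omega$; the first two pieces are controlled by the $H^k$ estimates above. For the last piece, two applications of the identity ``$\partial_{y_c}$-kernel $=-\partial_y$-kernel modulo smoother terms'' allow $\partial_{y_c}^{2}\mathrm{II}_{1,1}(\omega)$ to cancel $\mathrm{II}_{1,1}(\partial_y^{2}\omega)$ at top order, leaving only boundary contributions at $y=0,\pi$ (controlled by $\|\omega\|_{H^{1}}$ via the trace theorem on $(0,\pi)$) and lower-order kernel corrections; the extra $\sin y_c$ weight outside the commutator compensates the $1/\sin y_c$-type endpoint behaviour that survives in those corrections. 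For the weighted $L^{2}$ bound when $\varphi\in H^{1}_{0}(0,\pi)$, the vanishing of $\varphi$ at the endpoints lets $F$ be extended odd-symmetrically across $0$ and $\pi$; after this extension $\mathrm{II}_{1,1}$ becomes an essentially standard Hilbert transform on the enlarged interval, and the weight $\sin y_c$ directly controls the remaining endpoint factor.

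\textbf{Main obstacle.} The delicate step is the commutator: one must track the second-order cancellation between $\partial_{y_c}^{2}$ and $\partial_y^{2}$ across a principal-value kernel, ensuring that the various correction terms and boundary pieces line up so that the net loss is only one derivative. Retaining the outside factor $\sin y_c$ together with $\rho=\sin^{2}y_c$ inside the operator is exactly what keeps every boundary term at $y_c,y\in\{0,\pi\}$ square-integrable; losing either weight would immediately break the estimate.
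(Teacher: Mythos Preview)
The paper does not give its own proof of this lemma: it simply records the three estimates and writes ``See Lemma 10.4, Lemma 7.4 and Lemma 7.5 in \cite{WZZ3}.'' So there is no in-paper argument to compare against; the result is quoted as a black box from the companion paper on the 2D Kolmogorov flow.

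Your sketch is along the right lines and is essentially what one expects the cited proofs to do: factor $u(y)-u(y_c)=2\sin\tfrac{y+y_c}{2}\sin\tfrac{y-y_c}{2}$, use $\rho=\sin^2 y_c$ to tame the endpoint factor $\sin^{-2}\tfrac{y+y_c}{2}$, and reduce to a Hilbert--transform kernel in $(y-y_c)$ plus smoother remainders. The integration-by-parts mechanism ``$\partial_{y_c}$ on the kernel $=-\partial_y$ on the kernel modulo lower order'' is also the standard way to propagate derivatives and to see the commutator cancellation. One point worth being careful about: in your Leibniz expansion of $[\partial_{y_c}^2,\rho\,\mathrm{II}_{1,1}]$, the pieces $[\partial_{y_c}^2,\rho]\mathrm{II}_{1,1}\omega$ and $2(\partial_{y_c}\rho)\partial_{y_c}\mathrm{II}_{1,1}\omega$ involve $\mathrm{II}_{1,1}\omega$ \emph{without} the $\rho$ prefactor, so you cannot bound them directly by the $H^k$ estimates for $\rho\,\mathrm{II}_{1,1}$; you must instead keep the extra outside weight $\sin y_c$ and check that $\sin y_c\cdot\partial_{y_c}^k\rho$ still supplies enough vanishing at $0,\pi$. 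This is exactly where the extra $\sin y_c$ in the statement is used, as you note in your last paragraph, but the bookkeeping deserves an explicit line rather than a gesture.
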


Next we recall some estimates of $\mathcal{L}_0. $ See  Lemma 10.5, Lemma 7.6 and  Lemma 7.7 in \cite{WZZ3}.
\begin{lemma}\label{lem: commL_0}
It holds that
\beno
&&|\sin^3 y_c[\pa_{y_c}^2,\mathcal{L}_0]\varphi|\leq C\al^{\f12}(\|\varphi\|_{H^1}+\al\|\varphi\|_{L^2}),\\
&&\big\|u'(y_c)^3\pa_c\mathcal{L}_{0}(\varphi)\big\|_{L^{\infty}}\leq C\al\|\varphi\|_{L^{\infty}(0,\pi)}+C\al^{\f12}\|\varphi'\|_{L^2(0,\pi)},\\
&&\|u'(y_c)^2\mathcal{L}_{0}(\varphi)\|_{L^{\infty}}\leq C\al^{\frac{1}{2}}\|\varphi\|_{L^{2}(0,\pi)}.
\eeno
For $p\in (1,\infty)$, if ${\varphi}/{u'}\in L^p$, then there exists a constant $C$ such that,
\begin{align*}
&\|\sin y_c\mathcal{L}_{0}(\varphi)\|_{L^{\infty}(0,\pi)}
\leq C\al^{\f1p}\|\varphi\|_{W^{1,p}(0,\pi)},\\
&\|\mathcal{L}_{0}(\varphi)\|_{L^{\infty}(0,\pi)}
\leq C\al\|\varphi\|_{W^{1,\infty}(0,\pi)},\\
&\|\sin^3 y_c\pa_c\mathcal{L}_{0}(\varphi)\|_{L^{\infty}(0,\pi)}\leq C\al^{\f12}\|\varphi\|_{H^1(0,\pi)}.
\end{align*}
\end{lemma}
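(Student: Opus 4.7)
The plan is to mimic the structure of the corresponding lemmas in \cite{WZZ3} (Lemmas 7.6, 7.7, 10.5), which control exactly the same kernel but in the context of the 2D Kolmogorov flow. The starting observation is that $\phi_1(y_c,c)=1$ and $\phi_1(y,c)$ is smooth and comparable to $1$ on bounded sets, so the factor
\[
K(y,z,c):=\frac{1}{(u(z)-c)^2}\Big(\frac{\phi_1(y,c)}{\phi_1(z,c)^2}-1\Big)
\]
is only mildly singular at the critical point $y=z=y_c$. Indeed Taylor expansion of $\phi_1$ around $y_c$ combined with $u(z)-c=\cos y_c-\cos z\sim(z-y_c)\sin y_c$ shows that the bracket vanishes to first order in each of $y-y_c$ and $z-y_c$, which just compensates the double pole. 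Using this, together with the pointwise and Sobolev estimates for $\phi_1$ and $\pa_c\phi_1$ inherited from Proposition 3.6, I would first establish the three undifferentiated bounds $\|u'(y_c)^2\mathcal L_0(\varphi)\|_{L^\infty}\le C\al^{1/2}\|\varphi\|_{L^2}$, $\|\sin y_c\mathcal L_0(\varphi)\|_{L^\infty}\le C\al^{1/p}\|\varphi\|_{W^{1,p}}$, and $\|\mathcal L_0(\varphi)\|_{L^\infty}\le C\al\|\varphi\|_{W^{1,\infty}}$ by splitting the inner $y$-integral into $|y-y_c|\le \al^{-1}$ and $|y-y_c|\ge\al^{-1}$, estimating the short piece by Cauchy-Schwarz (this is where the $\al^{1/2}$ comes from) and the long piece using the full decay of $\phi_1(y,c)$ for $|y-y_c|\gg \al^{-1}$.

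For the $\pa_c$ bounds I would differentiate under the integral. The dangerous new factors are $\pa_c K\sim (u(z)-c)^{-3}(\cdots)$ and the $\pa_c\phi_1$ terms; both blow up by an additional power of $(u(z)-c)^{-1}$. I would rewrite
\[
\pa_c\mathcal L_0(\varphi)(y_c)=-\frac{1}{u'(y_c)}\,\varphi(y_c)\int_0^\pi K(y_c,z,c)\,dz+\iint_{[0,\pi]\times[y_c,z]}\pa_c K\cdot\varphi\,dy\,dz,
\]
so that the explicit boundary term (from moving the lower limit $y_c$) is handled by the already-proved $L^\infty$ bound, while the remaining double integral is treated by the same short/long split with one extra power of $(u-c)^{-1}\sim(\sin y_c)^{-1}$; this accounts for the weights $u'(y_c)^3=\sin^3 y_c$ and $u'(y_c)^2=\sin^2 y_c$ in the stated inequalities.

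The central step is the commutator estimate $|\sin^3y_c[\pa_{y_c}^2,\mathcal L_0]\varphi|\le C\al^{1/2}(\|\varphi\|_{H^1}+\al\|\varphi\|_{L^2})$. Writing $\pa_{y_c}=u'(y_c)\pa_c$ and computing $\pa_{y_c}^2\mathcal L_0(\varphi)$ by the chain rule, the terms where $\pa_{y_c}^2$ falls on $\varphi(y)$ (after an integration by parts in $y$ using $\pa_{y}\Int$) produce $\mathcal L_0(\pa_{y_c}^2\varphi)$ plus boundary contributions at $y=y_c$ and $y=0,\pi$, while the terms where derivatives fall on $K$ or on the limit $y_c$ produce the actual commutator. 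I would separate these into (i) genuine boundary terms at $y=y_c$, which after multiplication by $\sin^3y_c$ are controlled by $\|\varphi\|_{H^1}+\al\|\varphi\|_{L^2}$ via Sobolev embedding and the previously established $L^\infty$ bounds for $\mathcal L_0$ and $\pa_c\mathcal L_0$, (ii) terms where one or both $\pa_c$ fall on $\phi_1$ and $\phi_1^{-2}$, which I would estimate by the derivative bounds on $\phi_1$ from \cite{WZZ3} combined with the same short/long $y$-split as above, and (iii) terms involving $u''(y_c)$ appearing from $\pa_{y_c}u'(y_c)$, which produce lower-order contributions.

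The main obstacle, as in \cite{WZZ3}, will be the piece in (ii) where both derivatives fall on the $c$-dependence of $\phi_1$: after the split, the short-range $L^2$ estimate loses a factor $\al^{1/2}$ and the singularity order at $y_c$ becomes exactly $\sin^{-3}y_c$, which is the reason the weight $\sin^3 y_c$ appears on the left-hand side. Getting exactly this weight and no worse will require exploiting the cancellation encoded in $\phi_1(y_c,c)=1$ and $\pa_y\phi_1(y_c,c)=0$ (up to lower order), which one sees by expanding $\phi_1(y,c)\phi_1(z,c)^{-2}-1$ to second order and checking that each problematic monomial, after integration against $\varphi$, can be absorbed into $\|\varphi\|_{H^1}+\al\|\varphi\|_{L^2}$ via Hardy's inequality with the weight $(y-y_c)$. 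Once this most delicate commutator estimate is in place, all remaining bounds in the lemma follow by interpolation, duality, and the pointwise estimates for $\phi_1$ listed above.
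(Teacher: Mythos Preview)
The paper does not actually prove this lemma: it is stated as a collection of recalled estimates, with the sentence ``See Lemma 10.5, Lemma 7.6 and Lemma 7.7 in \cite{WZZ3}'' serving as the entire justification. So there is no proof in the present paper to compare your proposal against.

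Your instinct to trace the argument back to \cite{WZZ3} is therefore exactly right, and the structural outline you give --- exploiting the cancellation in $\phi_1(y,c)/\phi_1(z,c)^2-1$ near $y=z=y_c$, the short/long split at scale $\al^{-1}$, and differentiating under the integral with careful bookkeeping of the extra powers of $(u-c)^{-1}$ --- matches the strategy of those lemmas. A few of your details are somewhat loose (for instance, the decomposition of $\pa_c\mathcal L_0$ and the precise use of $\pa_c\phi_1(y_c,c)=0$ versus $\pa_y\phi_1(y_c,c)$ need to be tracked more carefully, and the commutator in \cite{WZZ3} is handled via an explicit formula for $[\pa_{y_c},\mathcal L_0]$ in terms of auxiliary operators $I_{0,j}$ and $\mathcal L_1$ rather than a direct second-order Taylor expansion), but since the paper itself defers entirely to \cite{WZZ3}, a fully worked-out proof here would simply reproduce that reference rather than add anything new.
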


Finally, we present some estimates of $E_j$. First of all, by
the proof of Lemma 4.8 in \cite{WZZ3}, we know that for $\om\in H^2(0,\pi),\ \om'(0)=\om'(\pi)=0,$
\begin{align*}
&-E_j(\pa_y^2\om)=\om'(y_c)
+\int_{y_c}^{j\pi}\pa_y\om(y)\pa_y\phi_1(y,c)dy.
\end{align*}
On the other hand, we have
\begin{align*}
&\big\|J_{j}\pa_y\om\big\|_{L^2}\leq C\al^{-2}\|\pa_y\om\|_{L^2},\\
&\Big|J_{j}\int_{y_c}^{(1-j)\pi}\pa_y\phi_1(y,c)\pa_y\om dy\Big|
\leq C\f{\sin y_c}{\al^{\f12}}\|\pa_y\om\|_{L^2},
\end{align*}
from which and \eqref{5.1}, we deduce that
\begin{align*}
\left\|\frac{J_{j}E_{1-j}(\om'')}{\rmA_1+i\rmB_1} \right\|_{L^2}
\leq C\left\|\frac{\al^{-2}}{\rmA_1+i\rmB_1} \right\|_{L^{\infty}}\|\pa_y\om\|_{L^2}+\left\|\frac{\al^{-\frac{1}{2}}\sin y_c}{\rmA_1+i\rmB_1} \right\|_{L^2}\|\pa_y\om\|_{L^2}\leq C\|\pa_y\om\|_{L^2}.
\end{align*}
The following estimates come from (8.1), (8.9), (8.10), (8.12) and (8.13) in \cite{WZZ3}:
\begin{align}\label{7.1}
&\Big\|\f{\rmE_j(\varphi)}{\phi_1(j\pi,c)(j\pi-y_c)}\Big\|_{L^p}\leq C\|\varphi\|_{L^p},\ \ \ 1<p\leq+\infty,\\
\label{7.9}
&\left\|\f{(\pi-y_c)}{\min(\al y_c,1)}\int_{y_c}^0\varphi(y)\f{\partial_c\phi_1(y,c)}{\phi_1(0,c)}dy\right\|_{L^p}\leq C\al \|\varphi\|_{L^{p}},\ \ 1<p\leq \infty,\\
\label{7.10}
&\left\|\f{y_c}{\min(\al (\pi-y_c),1)}\int_{y_c}^{\pi}\varphi(y)\f{\partial_c\phi_1(y,c)}{\phi_1(\pi,c)}dy\right\|_{L^p}\leq C\al \|\varphi\|_{L^{p}},\ \ 1<p\leq \infty,\\
\label{7.11}
&\left\|y_c(\pi-y_c)^2\int_{y_c}^0\varphi(y)\f{\partial_c^2\phi_1(y,c)}{\phi_1(0,c)}dy\right\|_{L^p}\leq C\al^2\|\varphi\|_{L^{p}},\ \ 1<p\leq \infty,\\ \label{7.12}
&\left\|y_c^2(\pi-y_c)\int_{y_c}^{\pi}\varphi(y)\f{\partial_c^2\phi_1(y,c)}{\phi_1(\pi,c)}dy\right\|_{L^p}\leq C\al^2\|\varphi\|_{L^{p}},\ \ 1<p\leq \infty.
\end{align}

To deal with the even part, we introduce
\beno
\Lambda_{3,2}(\varphi)=J_1(c)\rmE_0(\varphi)-J_0(c)\rmE_1(\varphi)=J_j(c)\rmE_{1-j}(\varphi)|_{j=0}^1.
\eeno
Using the fact that $\phi_1(y,c)\geq 1,\ \phi_1(y_c,c)= 1,\ \partial_c\phi_1(y_c,c)=0,$  we infer that
\begin{align*}
\pa_{y_c}\Lambda_{3,2}(\om)
&=(\pa_{y_c}{J_{j}}\rmE_{1-j}(\om)-J_j\om(y_c))\bigg|_{j=0}^1
+{J_{j}}u'\int_{y_c}^{(1-j)\pi}\pa_c\phi_1(y,c)\om(y) dy-\om(y_c)\bigg|_{j=0}^1
\\&=T_1+T_2,
\end{align*}
and
\begin{align*}
\pa_{y_c}^2\Lambda_{3,2}(\om)
&=(\pa_{y_c}^2{J_{j}})\rmE_{1-j}(\om)\bigg|_{j=0}^1
+(2\pa_{y_c}{J_{j}}u'+{J_{j}}u'')\int_{y_c}^{(1-j)\pi}\pa_c\phi_1(y,c)\om(y) dy\bigg|_{j=0}^1\\
&\quad+J_{j}u'^2\int_{y_c}^{(1-j)\pi}\pa_c^2\phi_1(y,c)\om(y) dy\bigg|_{j=0}^1-(J_j\om'(y_c)+(\partial_{y_c}J_j)\om(y_c))\bigg|_{j=0}^1\\
&=T_3+T_4+T_5+T_6.
\end{align*}
By \eqref{5.2} and \eqref{7.1}, we have
\beno
&&\|\Lambda_{3,2}(\om)/\sin y_c\|_{L^2}\leq C\al^{-2}\|\om\|_{L^2},\quad \|T_1\|_{L^2}\leq C\al^{-2}\|\om\|_{L^2},\\
&& \|T_3\|_{L^2}\leq C\al^{-1}\|\om\|_{L^2},\quad \|T_6\|_{L^2}\leq C\al^{-1}\|\partial_y\om\|_{L^2}+C\al^{-2}\|\om\|_{L^2}.
\eeno
By \eqref{5.2}, \eqref{7.9} and \eqref{7.10}, we have
\beno
\|T_2/\sin y_c\|_{L^2}\leq C\al^{-1}\|\om\|_{L^2},\quad \|T_4\|_{L^2}\leq C\al^{-1}\|\om\|_{L^2}.
\eeno
By \eqref{5.2}, \eqref{7.11} and \eqref{7.12}, we have
\beno
 \|T_5/\sin y_c\|_{L^2}\leq C\|\om\|_{L^2}.
\eeno
By \eqref{5.1}, we have
\begin{align}\label{5.8}
&\left\|\frac{\Lambda_{3,2}(\om)}{1+\al\sin y_c}\right\|_{L^2}\leq C\al^{-3}\|\om\|_{L^2},\\ \label{5.9}&\left\|\frac{\pa_{y_c}\Lambda_{3,2}(\om)}{1+\al\sin y_c}\right\|_{L^2}\leq \|T_1\|_{L^2}+\al^{-1}\|T_2/\sin y_c\|_{L^2}\leq C\al^{-2}\|\om\|_{L^2},\\\label{5.10}&\left\|\frac{\pa_{y_c}^2\Lambda_{3,2}(\om)}{\rmA_1+i\rmB_1}\right\|_{L^2}
\leq C\al^2\left\|\frac{\pa_{y_c}^2\Lambda_{3,2}(\om)}{1+\al\sin y_c}\right\|_{L^2}\\ \nonumber
&\quad\leq C\al^2(\|T_3\|_{L^2}+\|T_4\|_{L^2}+\|T_6\|_{L^2})+C\al\left\|T_5/\sin y_c\right\|_{L^2}\leq C(\al\|\om\|_{L^2}+\|\partial_y\om\|_{L^2}).
\end{align}
In summary, we conclude
\begin{align}\label{5.11}
\left\|\frac{[\pa_{y_c}^2,\Lambda_{3,2}](\om)}{\rmA_1+i\rmB_1} \right\|_{L^2}
\leq C(\|\pa_y\om\|_{L^2}+\al\|\om\|_{L^2}).
\end{align}

\subsubsection{Construction of wave operator}

For any function $\varphi(y)$ defined on $[-\pi,\pi]$, we define \begin{align*}
-\bbD_1({\varphi}_{o})(-y_c)=\bbD_1({\varphi}_{o})(y_c)=&\frac{1}{u''}\left(\frac{\Lambda_1(\varphi_{o})(y_c)}{\rmA+i\rmB}-\varphi_{o}(y_c)\right)\\
=&\frac{\rho\mathrm{II}_1(\varphi_{o})(y_c)-\pi i\varphi_{o}(y_c) }{\rmA+i\rmB},\\
\bbD_1({\varphi}_{e})(-y_c)=\bbD_1({\varphi}_{e})(y_c)=&\frac{1}{u''}\left(\frac{\Lambda_3(\varphi_{e})(y_c)}{\rmA_1+i\rmB_1}-\varphi_{e}(y_c)\right)\\
=&\frac{\rho u'(y_c)(\rho\mathrm{II}_1(\varphi_{e})-\pi i\varphi_{e})+J_1(c)\rmE_0(\varphi_{e})-J_0(c)\rmE_1(\varphi_{e}) }{u'(y_c)(\rmA_1+i\rmB_1)},
\end{align*}
where $\varphi_{o}=\frac{\varphi(y)-\varphi(-y)}{2}$ and $\varphi_{e}=\frac{\varphi(y)+\varphi(-y)}{2}$.\smallskip

Let us first prove the property \eqref{oper,cosyD1}.\smallskip

Let $ \psi=-(\pa_y^2-\al^2)^{-1}\om.$ If $\om$ is odd, then $\psi$ is odd and $\psi(0)=\psi(\pi)=0$. Then we have
\begin{align*}
\bbD_1(u''\psi+u\om)=&\frac{\rho\mathrm{II}_1(u''\psi+u\om)-\pi i(u''\psi+u\om) }{\rmA+i\rmB}\\
=&\frac{\rho u\mathrm{II}_1(\om)-\psi(y_c)\rmA-\pi i(u''\psi+u\om) }{\rmA+i\rmB}=u\bbD_1(\om)-\psi,
\end{align*}
which shows that for $\om$ odd,
\beno
-\bbD_1(\cos y(\om-\psi))=-\cos y\bbD_1(\om)-\psi.
\eeno
If $\om$ is even, then $\psi$ is also even and  $\pa_y\psi(0)=\pa_y\psi(\pi)=0$. Then we have
\begin{align*}
u''\bbD_1(u''\psi+u\om)=&\frac{\Lambda_3(u''\psi+u\om)(y_c)}{\rmA_1+i\rmB_1}-(u''\psi+u\om)(y_c)\\
=&\frac{u\Lambda_3(\om)(y_c)}{\rmA_1+i\rmB_1}-(u''\psi+u\om)(y_c)=uu''\bbD_1(\om)-u''\psi,
\end{align*} and $ -\bbD_1(u''\psi+u\om)=-u\bbD_1(\om)+\psi$. This shows that for $\om$ even,
\beno
\bbD_1(\cos y(\om-\psi))=\cos y\bbD_1(\om)+\psi.
\eeno

\subsubsection{Commutator estimate of wave operator}
Here we prove \eqref{est of D1} and consider two cases.\smallskip

\no{\bf 1. The odd case.} \smallskip

Let $\om\in H^2$ be odd and $\om(0)=\om(\pi)=0$. Direct calculation gives
\begin{align*}
&\pa_{y_c}^2(\sin y_c\bbD_1(\om))-\sin y_c\bbD_1(\pa_y^2\om)\\
&=\rho\mathrm{II}_{1,1}(\om)\pa_{y_c}^2\Big(\f{\sin y_c}{\rmA+i\rmB}\Big)+2\pa_{y_c}\big(\rho\mathrm{II}_{1,1}(\om)\big)\pa_{y_c}\Big(\f{\sin y_c}{\rmA+i\rmB}\Big)+\f{ \sin y_c[\pa_{y_c}^2,\rho\mathrm{II}_{1,1}]\om}{\rmA+i\rmB}\\
&\quad+\mathcal{L}_{0}(\om)\pa_{y_c}^2\Big(\rho\f{\sin y_c}{\rmA+i\rmB}\Big)+2\pa_{y_c}\mathcal{L}_{0}(\om)\pa_{y_c}\Big(\rho\f{\sin y_c}{\rmA+i\rmB}\Big)+\f{\rho \sin y_c[\pa_{y_c}^2,\mathcal{L}_{0}]\om}{\rmA+i\rmB}\\
&\quad-\om\pa_{y_c}^2\Big(\f{\pi i\sin y_c}{\rmA+i\rmB}\Big)-2\pa_{y_c}\om\pa_{y_c}\Big(\f{\pi i\sin y_c}{\rmA+i\rmB}\Big)\\
&=I_1+\cdots+I_8.
\end{align*}
By Lemma \ref{lem: commu_II_1,1} and \eqref{5.1}, we get
\begin{align*}
&\|I_1\|_{L^2}\leq C\|\sin y_c\mathrm{II}_{1,1}\|_{L^2}\Big\|\sin y_c\pa_{y_c}^2\Big(\f{\sin y_c}{\rmA+i\rmB}\Big)\Big\|_{L^{\infty}}\leq C\|\om\|_{H^1},\\
&\|I_2\|_{L^2}\leq C\|\rho\mathrm{II}_{1,1}\|_{H^1}\Big\|\pa_{y_c}\Big(\f{\sin y_c}{\rmA+i\rmB}\Big)\Big\|_{L^{\infty}}\leq C\|\om\|_{H^1},\\
&\|I_3\|_{L^2}\leq C\|\sin y_c[\pa_y^2,\rho\mathrm{II}_{1,1}]\om\|_{L^2}\leq C\|\om\|_{H^1}.
\end{align*}
By Lemma \ref{lem: commL_0} and \eqref{5.1}, we get
\begin{align*}
\|I_4\|_{L^2}&\leq C\al^{-\f12}\|\sin y_c\mathcal{L}_0(\om)\|_{L^{\infty}}\Big\|\frac{\al^{\f12}}{\sin y_c}\pa_{y_c}^2\Big(\f{\rho\sin y_c}{\rmA+i\rmB}\Big)\Big\|_{L^2}\\
&\leq C\|\om\|_{H^1}\Big\|\f{\al^{\f12}}{(1+\al\sin y_c)}\Big\|_{L^2}\leq C\|\om\|_{H^1},\\
\|I_5\|_{L^2}&\leq C\al^{-\f12}\|\sin^2 y_c\pa_{y_c}\mathcal{L}_0(\om)\|_{L^{\infty}}\Big\|\f{\al^{\f12}}{\sin^2 y_c}\pa_{y_c}\Big(\f{\rho\sin y_c}{\rmA+i\rmB}\Big)\Big\|_{L^2}\\
&\leq C\|\om\|_{H^1}\Big\|\f{\al^{\f12}}{(1+\al\sin y_c)}\Big\|_{L^2}\leq C\|\om\|_{H^1},
\end{align*}
and by Lemma \ref{lem: commL_0} again,
\begin{align*}
\|I_6\|_{L^2}&\leq C\al^{-\f12}\Big\|\f{\al^{\f12}}{(1+\al\sin y_c)}\Big\|_{L^2}\|\sin^3 y_c[\pa_{y_c}^2,\mathcal{L}_0]\om\|_{L^{\infty}}\\
&\leq C(\|\om\|_{H^1}+\al\|\om\|_{L^2}).
\end{align*}
By \eqref{5.1}, we have
\begin{align*}
\|I_7\|_{L^2}&\leq C\|\om/u'\|_{L^2}\Big\|\sin y_c\pa_{y_c}^2\Big(\f{\pi i\sin y_c}{\rmA+i\rmB}\Big)\Big\|_{L^{\infty}}\leq C\|\om\|_{H^1},\\
\|I_8\|_{L^2}&\leq C\|\pa_y\om\|_{L^2}\Big\|\pa_{y_c}\Big(\f{\pi i\sin y_c}{\rmA+i\rmB}\Big)\Big\|_{L^{\infty}}\leq C\|\pa_y\om\|_{L^2}.
\end{align*}

Summing up, we conclude the odd case.\smallskip

\no{\bf 2. The even case.} \smallskip

Let $\om\in H^2$ be even and $\om'(0)=\om'(\pi)=0 $. Direct calculation gives
\begin{align*}
&\pa_{y_c}^2(\sin y_c\bbD_1(\om))-\sin y_c\bbD_1(\pa_y^2\om)\\
&=\f{\rho u'(y_c)}{\rmA_1+i\rmB_1}[\pa_{y_c}^2,\rho\mathrm{II}_{1,1}]\om
+2\pa_{y_c}\Big(\f{\rho u'(y_c)}{\rmA_1+i\rmB_1}\Big)\pa_{y_c}\big(\rho\mathrm{II}_{1,1}(\om)\big)
+\pa_{y_c}^2\Big(\f{\rho u'(y_c)}{\rmA_1+i\rmB_1}\Big)\rho\mathrm{II}_{1,1}(\om)\\
&\quad+\f{\rho^2 u'(y_c)}{\rmA_1+i\rmB_1}[\pa_{y_c}^2,\mathcal{L}_{0}]\om
+\pa_{y_c}\Big(\f{\rho^2 u'(y_c)}{\rmA_1+i\rmB_1}\Big)\pa_{y_c}\big(\mathcal{L}_{0}(\om)\big)
+\pa_{y_c}^2\Big(\f{\rho^2 u'(y_c)}{\rmA_1+i\rmB_1}\Big)\mathcal{L}_{0}(\om)\\
&\quad-\pi i\om\pa_{y_c}^2\Big(\f{\rho u'(y_c)}{\rmA_1+i\rmB_1}\Big)-2\pi i\pa_{y_c}\Big(\f{\rho u'(y_c)}{\rmA_1+i\rmB_1}\Big)\pa_{y_c}\om\\
&\quad+\pa_{y_c}^2\Big(\f{1}{\rmA_1+i\rmB_1}\Big)\Lambda_{3,2}(\om)+2\pa_{y_c}\Big(\f{1}{\rmA_1+i\rmB_1}\Big)\pa_{y_c}\Lambda_{3,2}(\om)
+\f{[\pa_{y_c}^2,\Lambda_{3,2}]\om}{\rmA_1+i\rmB_1}\\
&=I_1'+\cdots+I_{11}'.
\end{align*}

By \eqref{5.1} and Lemma \ref{lem: commu_II_1,1}, we get
\beno
&&\|I_1'\|_{L^2}\leq C\left\|\f{\rho }{\rmA_1+i\rmB_1}\right\|_{L^{\infty}}\big\|\sin y_c[\pa_{y_c}^2,\rho\mathrm{II}_{1,1}]\om\big\|_{L^2}\leq C(\al\|\om\|_{L^2}+\|\pa_y\om\|_{L^2}),\\
&&\|I_2'\|_{L^2}\leq C\left\|\pa_{y_c}\Big(\f{\rho u'(y_c)}{\rmA_1+i\rmB_1}\Big)\right\|_{L^{\infty}}\big\|\pa_{y_c}\big(\rho\mathrm{II}_{1,1}(\om)\big)\big\|_{L^2}\leq C\|\om\|_{H^1},\\
&&\|I_3'\|_{L^2}\leq C\left\|\pa_{y_c}^2\Big(\f{\rho u'(y_c)}{\rmA_1+i\rmB_1}\Big)\right\|_{L^{\infty}}\big\|\rho\mathrm{II}_{1,1}(\om)\big\|_{L^2}\leq C\al\|\om\|_{L^2}.
\eeno
By \eqref{5.1} and Lemma \ref{lem: commL_0}, we get
\beno
\|I_4'\|_{L^2}\leq C\left\|\f{\rho}{\rmA_1+i\rmB_1}\right\|_{L^{2}}\|\sin^3 y_c[\pa_{y_c}^2,\mathcal{L}_{0}]\om\|_{L^{\infty}}\leq C(\al\|\om\|_{L^2}+\|\pa_y\om\|_{L^2}),
\eeno
and
\begin{align*}
\|I_5'\|_{L^2}&\leq C\left\|\f{1}{\sin^2 y_c}\pa_{y_c}\Big(\f{\rho^2 u'(y_c)}{\rmA_1+i\rmB_1}\Big)\right\|_{L^2}\big\|\sin^2 y_c\pa_{y_c}\big(\mathcal{L}_{0}(\om)\big)\|_{L^{\infty}}\\
&\leq C(\al^{\frac{1}{2}}\|\om\|_{L^{\infty}}+\|\om'\|_{L^{2}})\leq C(\al\|\om\|_{L^{2}}+\|\om'\|_{L^{2}}),
\end{align*}
and
\begin{align*}
\|I_6'\|_{L^2}&\leq C\left\|\f{1}{\sin^2 y_c}\pa_{y_c}^2\Big(\f{\rho^2 u'(y_c)}{\rmA_1+i\rmB_1}\Big)\right\|_{L^2}\big\|\rho\mathcal{L}_{0}(\om)\big\|_{L^{\infty}}\\
&\leq C\al\|\om\|_{L^2}.
\end{align*}
We get by \eqref{5.1} that
\beno
&&\|I_7'\|_{L^2}\leq C\|\om\|_{L^2}\left\|\pa_{y_c}^2\Big(\f{\rho u'(y_c)}{\rmA_1+i\rmB_1}\Big)\right\|_{L^{\infty}}\leq C\al\|\om\|_{L^2},\\
&&\|I_8'\|_{L^2}\leq C\|\pa_y\om\|_{L^2}\left\|\pa_{y_c}\Big(\f{\rho u'(y_c)}{\rmA_1+i\rmB_1}\Big)\right\|_{L^{\infty}}\leq C\|\pa_y\om\|_{L^2}.
\eeno
By \eqref{5.1}, \eqref{5.8} and \eqref{5.9}, we get
\beno
&&\|I_9'\|_{L^2}\leq C\al^4\left\|\frac{\Lambda_{3,2}(\om)}{1+\al\sin y_c}\right\|_{L^2}\leq C\al\|\om\|_{L^2},\\
&&\|I_{10}'\|_{L^2}\leq C\al^3\left\|\frac{\pa_{y_c}\Lambda_{3,2}(\om)}{1+\al\sin y_c}\right\|_{L^2}\leq C\al\|\om\|_{L^2}.
\eeno
And by \eqref{5.11}, we have
\beno
\|I_{11}'\|_{L^2}\leq C(\|\pa_y\om\|_{L^2}+\al\|\om\|_{L^2}).
\eeno

Summing up, we conclude the even case.

\subsubsection{$H^1$ estimate of wave operator}

We need the following lemmas.

\begin{lemma}\label{lem4}
It holds that
\beno
\|\sin^3 y_c[\partial_{y_c},\mathcal{L}_0](\varphi)\|_{L^{\infty}}\leq C(\|\varphi\|_{L^{\infty}}+\al^{\f12}\|\varphi\|_{L^{2}}).
\eeno
\end{lemma}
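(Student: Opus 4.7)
The plan is to compute the commutator $[\partial_{y_c},\mathcal{L}_0]\varphi = \partial_{y_c}\mathcal{L}_0(\varphi) - \mathcal{L}_0(\partial_y\varphi)$ explicitly and then bound each of the resulting pieces. Writing $\mathcal{L}_0(\varphi)(y_c)=\int_0^\pi\int_{y_c}^z \varphi(y)K(y,z,c)\,dy\,dz$ with kernel $K(y,z,c)=(u(z)-c)^{-2}\big(\phi_1(y,c)/\phi_1(z,c)^2-1\big)$, and using $\partial_{y_c}c=u'(y_c)$, differentiation in $y_c$ produces the lower-limit boundary term $-\varphi(y_c)\int_0^\pi K(y_c,z,c)\,dz = -\varphi(y_c)\mathrm{II}(c)$ (since $\phi_1(y_c,c)=1$) plus a double integral involving $u'(y_c)\partial_cK$. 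On the other hand, integrating by parts in $y$ inside $\mathcal{L}_0(\partial_y\varphi)$ moves $\partial_y$ onto $\phi_1(y,c)$ and generates boundary contributions at $y=y_c$ (also equal to $-\varphi(y_c)\mathrm{II}(c)$) and at $y=z$. The two $\varphi(y_c)\mathrm{II}(c)$ pieces cancel, leaving three residual terms: $T_{\mathrm I}=u'(y_c)\int_0^\pi\!\int_{y_c}^{z}\varphi(y)\partial_cK(y,z,c)\,dy\,dz$, $T_{\mathrm{II}}=-\int_0^\pi \varphi(z)(u(z)-c)^{-2}\big(\phi_1(z,c)^{-1}-1\big)dz$, and $T_{\mathrm{III}}=\int_0^\pi\!\int_{y_c}^z\varphi(y)\partial_y\phi_1(y,c)(u(z)-c)^{-2}\phi_1(z,c)^{-2}\,dy\,dz$.

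Next I would bound each residual term separately. For $T_{\mathrm{II}}$, I would use that $\phi_1(y_c,c)=1$ together with the Lipschitz-in-$y$ control on $\phi_1(\cdot,c)$ from Proposition 3.8 to obtain $|\phi_1(z,c)^{-1}-1|\lesssim |u(z)-c|$ near $z=y_c$ and $|\phi_1(z,c)^{-1}-1|\lesssim 1$ away from it; this absorbs one power of $(u(z)-c)^{-1}$, so that after multiplying by $\sin^3 y_c$ we get $|\sin^3 y_c\,T_{\mathrm{II}}|\lesssim \|\varphi\|_{L^\infty}$. For $T_{\mathrm{III}}$, I would split the $z$-integral into $z<y_c$ and $z>y_c$, use the pointwise bound on $\partial_y\phi_1/\phi_1^2$, and apply Cauchy--Schwarz in $y$; the $L^2_y$-norm of the kernel on each side of $y_c$ scales like $\al^{1/2}/\sin^{3/2}y_c$, which combined with the weight $\sin^3 y_c$ produces exactly the $\al^{1/2}\|\varphi\|_{L^2}$ contribution. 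For $T_{\mathrm I}$, the $\partial_c$ derivative lands either on $(u(z)-c)^{-2}$, producing a kernel of the same form as $\mathcal{L}_0$ itself that is controlled by the third estimate in Lemma~\ref{lem: commL_0}, or on $\phi_1(y,c)/\phi_1(z,c)^2$, controlled by the integral bounds \eqref{7.9}--\eqref{7.10} on $\partial_c\phi_1$ (applied with $p=\infty$ and $p=2$ respectively).

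The main obstacle will be the uniform control near the endpoints $y_c\to 0$ and $y_c\to \pi$, where the kernel estimates for $\phi_1$ and its $c$- and $y$-derivatives degenerate and the prefactor $\mathrm{II}(c)$ grows like $\al/\sin^3 y_c$. The weight $\sin^3 y_c$ on the left-hand side is designed precisely to cancel these degeneracies, but the cancellation is delicate: one has to track the powers of $\sin y_c$, $y_c$, and $\pi-y_c$ appearing in each sub-estimate and verify that no endpoint-loss survives. The split of the right-hand side into an $\|\varphi\|_{L^\infty}$-piece and an $\al^{1/2}\|\varphi\|_{L^2}$-piece reflects the two mechanisms in the argument: pointwise absorption of one $(u(z)-c)^{-1}$ for the boundary-type term $T_{\mathrm{II}}$, versus singular $y$-integration handled by Cauchy--Schwarz for the genuine double integrals $T_{\mathrm I}$ and $T_{\mathrm{III}}$.
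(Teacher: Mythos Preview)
Your decomposition $[\partial_{y_c},\mathcal{L}_0]\varphi=T_{\mathrm I}+T_{\mathrm{II}}+T_{\mathrm{III}}$ is correct, and it is essentially the same as what the paper does: the paper simply quotes formula (7.6) from \cite{WZZ3}, which says $\partial_{y_c}\mathcal{L}_0(\varphi)=\mathcal{L}_0(\varphi')-I_{0,1}(\varphi)+I_{0,0}(\varphi)+u'(y_c)\mathcal{L}_1(\varphi)$, so that the commutator equals $-I_{0,1}+I_{0,0}+u'\mathcal{L}_1$, and then invokes the ready-made bounds $|\sin^3 y_c\,I_{0,j}(\varphi)|\le C\|\varphi\|_{L^\infty}$ and $|u'(y_c)^4\mathcal{L}_1(\varphi)|\le C\al^{1/2}\|\varphi\|_{L^2}$ from \cite{WZZ2}. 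Your $T_{\mathrm I}$ is precisely $u'(y_c)\mathcal{L}_1(\varphi)$ (the pure $c$-derivative of the double integral with the lower limit frozen), and $T_{\mathrm{II}}+T_{\mathrm{III}}$ regroups into $-I_{0,1}+I_{0,0}$.

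There is, however, a genuine gap in your treatment of $T_{\mathrm I}$. You write that when $\partial_c$ hits $(u(z)-c)^{-2}$ one gets ``a kernel of the same form as $\mathcal{L}_0$ itself,'' bounded by the third estimate in Lemma~\ref{lem: commL_0}. This is not right: $\partial_c(u(z)-c)^{-2}=2(u(z)-c)^{-3}$ is strictly \emph{more} singular at $z=y_c$, so the $\|u'^{2}\mathcal{L}_0(\varphi)\|_{L^\infty}$ estimate does not apply. What saves the day is that the inner integral $\int_{y_c}^{z}\varphi(y)\big(\phi_1(y,c)/\phi_1(z,c)^2-1\big)dy$ vanishes to sufficiently high order at $z=y_c$ to absorb the extra $(u(z)-c)^{-1}$; carrying this out carefully is exactly the content of the bound $|u'^{4}\mathcal{L}_1(\varphi)|\le C\al^{1/2}\|\varphi\|_{L^2}$ from \cite{WZZ2} that the paper invokes. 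Likewise, \eqref{7.9}--\eqref{7.10} concern single integrals $\int_{y_c}^{j\pi}\varphi\,\partial_c\phi_1/\phi_1(j\pi,c)$ against endpoint values of $\phi_1$, not the double integral with $\partial_c\big(\phi_1(y,c)/\phi_1(z,c)^2\big)$ that appears in $T_{\mathrm I}$, so they do not directly cover the other half of $T_{\mathrm I}$ either. Your plan for $T_{\mathrm{II}}$ and $T_{\mathrm{III}}$ is reasonable, but for $T_{\mathrm I}$ you either need to reproduce the $\mathcal{L}_1$-estimate from \cite{WZZ2} or cite it, as the paper does.
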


\begin{proof}
By (7.6) in \cite{WZZ3}, we get
\beno
\partial_{y_c}\mathcal{L}_0(\varphi)
=\left(\mathcal{L}_0(\varphi')-I_{0,1}(\varphi)+I_{0,0}(\varphi)\right)+u'(y_c)\mathcal{L}_{1}(\varphi),
\eeno
which gives
\beno
[\partial_{y_c},\mathcal{L}_0](\varphi)
=-I_{0,1}(\varphi)+I_{0,0}(\varphi)+u'(y_c)\mathcal{L}_{1}(\varphi).
\eeno
We get by (7.7) in \cite{WZZ2} that
\beno
|\sin^3 y_cI_{0,j}(\varphi)|\leq C\|\varphi\|_{L^{\infty}},
\eeno
and by Lemma 7.6 in \cite{WZZ2}, we get
\beno
|u'(y_c)^4\mathcal{L}_{1}(\varphi)|\leq C\al^{\f12}\|\varphi\|_{L^{2}}.
\eeno
This proves the lemma.
\end{proof}

\begin{lemma}\label{lem5}It holds that
\beno
\|\rho\mathrm{II}_{1}(\varphi)\|_{L^2}
\leq C\|\varphi\|_{L^2},\ \ \|\rho\mathcal{L}_0(\varphi)\|_{L^2}
\leq C\|\varphi\|_{L^2}.
\eeno
\end{lemma}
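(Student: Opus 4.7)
The strategy is to use the decomposition $\mathrm{II}_{1}=\mathrm{II}_{1,1}+\mathcal{L}_{0}$ already recorded in the excerpt. The first piece $\|\rho\,\mathrm{II}_{1,1}(\varphi)\|_{L^2}\le C\|\varphi\|_{L^2}$ is exactly the $k=0$ instance of Lemma~\ref{lem: commu_II_1,1}, so both inequalities of the lemma will follow (the first by the triangle inequality) once $\|\rho\mathcal{L}_0(\varphi)\|_{L^2}\le C\|\varphi\|_{L^2}$ is established. So the whole task reduces to the $L^2\to L^2$ bound for $\rho\mathcal{L}_0$.

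For the $\mathcal{L}_0$ piece, I would first swap the order of integration in its defining double integral. After Fubini one gets
\[
\mathcal{L}_{0}(\varphi)(y_c)=\int_0^\pi \tilde{K}(y_c,y)\,\varphi(y)\,dy,
\]
with
\[
\tilde{K}(y_c,y)=\begin{cases} \int_y^\pi\frac{1}{(u(z)-c)^2}\Big(\frac{\phi_1(y,c)}{\phi_1(z,c)^2}-1\Big)\,dz, & y>y_c,\\ -\int_0^y\frac{1}{(u(z)-c)^2}\Big(\frac{\phi_1(y,c)}{\phi_1(z,c)^2}-1\Big)\,dz, & y<y_c.\end{cases}
\]
I would then apply Schur's test, i.e., bound both $\sup_{y_c}\int_0^\pi|\rho(c)\tilde{K}(y_c,y)|\,dy$ and $\sup_{y}\int_0^\pi|\rho(c)\tilde{K}(y_c,y)|\,dy_c$ by a constant independent of $\alpha$. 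The weight $\rho(c)=\sin^2 y_c$ is essential: together with the factorisation $u(z)-c\sim\sin((z+y_c)/2)\,\sin((z-y_c)/2)$, it neutralises the $(u(z)-c)^{-2}$ singularity at the critical layer $z=y_c$, while the normalisations $\phi_1(y_c,c)=1$ and $\partial_c\phi_1(y_c,c)=0$ make the bracketed factor vanish quadratically there, so that the kernel is integrable uniformly in $y_c$.

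The main obstacle is proving the Schur bounds \emph{uniformly in $\alpha$}. The estimates available from Lemma~\ref{lem: commL_0} are $L^\infty$ bounds that all carry an $\alpha^{1/2}$ loss, so they cannot be invoked directly. Instead one has to split $\tilde{K}$ into a near-diagonal piece $|z-y_c|\lesssim 1/\alpha$, where the quadratic vanishing of the bracket combined with the factor $\rho$ yields an $O(1/\alpha)$ contribution, and a far piece $|z-y_c|\gtrsim 1/\alpha$, where the exponential-type control on $1/\phi_1$ away from the critical layer (inherited from the Sturm--Liouville equation for $\phi_1$) again produces an $O(1/\alpha)$ contribution after integration in $z$. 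The most delicate subcase is when $y_c$ approaches the boundary $\{0,\pi\}$, where $\sin y_c$ is small; here one must invoke the sharper pointwise bounds on $\phi_1$ and $\partial_c\phi_1$ that underlie the estimates \eqref{7.9}--\eqref{7.12}, with the $\rho=\sin^2 y_c$ factor compensating for the degeneration of $u(z)-c$ as $y_c\to 0$ or $y_c\to\pi$.
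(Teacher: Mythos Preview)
Your reduction to bounding $\rho\mathcal{L}_0$ via $\mathrm{II}_1=\mathrm{II}_{1,1}+\mathcal{L}_0$ and Lemma~\ref{lem: commu_II_1,1} is correct, and the near/far split at scale $1/\alpha$ is the right scale. However, a direct Schur test on the kernel $\rho\tilde K$ cannot close, and the obstruction is the constant $-1$ in the bracket $\frac{\phi_1(y,c)}{\phi_1(z,c)^2}-1$. In the far region $|z-y_c|\gtrsim 1/\alpha$ the ratio $\phi_1(y,c)/\phi_1(z,c)^2$ does decay exponentially, but the $-1$ survives, so the bracket is $\approx -1$ there rather than small. Carrying out the $z$-integral, this ``$-1$'' portion of $\rho\tilde K(y_c,y)$ behaves like $1/|y-y_c|$ for $|y-y_c|\gtrsim 1/\alpha$ (using $u(z)-c\sim\sin y_c\,(z-y_c)$ near the critical layer), and $\int_0^\pi|\rho\tilde K(y_c,y)|\,dy$ picks up a $\log\alpha$ which is not uniform in $\alpha$. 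This is a Hilbert-transform-type kernel, and no absolute (or weighted) Schur bound can handle it.

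The paper's argument treats exactly this piece differently. It localises in $y$ (not $z$) to intervals of width $\sim 1/\alpha$ and splits $\mathcal{L}_0=\mathcal{L}_{0,1}+\mathcal{L}_{0,2}+\mathcal{L}_{0,3}$, separating in the far-in-$y$ piece the $\phi_1(y,c)/\phi_1(z,c)^2$ contribution ($\mathcal{L}_{0,2}$) from the $-1$ contribution ($\mathcal{L}_{0,3}$). The near piece $\mathcal{L}_{0,1}$ and the far positive piece $\mathcal{L}_{0,2}$ are estimated essentially along the lines you describe: the former by applying the $L^\infty$ bound of Lemma~\ref{lem: commL_0} on each thin interval and integrating, the latter via the exponential decay of $1/\phi_1(z,c)$ and an $L^1$--$L^\infty$ interpolation. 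The crux is $\mathcal{L}_{0,3}$: on each interval it is recognised as $-\mathrm{II}_{1,1}$ applied to the far part of $\varphi$, so that $\mathcal{L}_{0,3}(\varphi)+\mathrm{II}_{1,1}(\varphi)$ equals $\mathrm{II}_{1,1}$ applied to the near part of $\varphi$; one then invokes the $L^2$ bound for $\rho\,\mathrm{II}_{1,1}$ from Lemma~\ref{lem: commu_II_1,1} (a singular-integral estimate, not Schur) and sums in $\ell^2$ over the intervals. In short, the $-1$ piece has to be fed back into the Calder\'on--Zygmund machinery behind $\mathrm{II}_{1,1}$ rather than treated by pointwise kernel bounds.
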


\begin{proof}
We can write $\mathcal{L}_0(\varphi)=\mathcal{L}_{0,1}(\varphi)+\mathcal{L}_{0,2}(\varphi)+\mathcal{L}_{0,3}(\varphi)$ so that for $y_c\in I_k,$
\beno
&&\mathcal{L}_{0,1}(\varphi)(y_c)=\mathcal{L}_{0,1}(\varphi\chi_{J_k})(y_c),\\
&&\mathcal{L}_{0,2}(\varphi)(y_c)=\int_0^{\pi}\int_{y_c}^{z}{\varphi}(y)(1-\chi_{J_k}(y))
\f{1}{(u(z)-c)^2}\f{\phi_1(y,c)\,}{\phi_1(z,c)^2}\,dydz ,\\
&&\mathcal{L}_{0,3}(\varphi)(y_c)=-\int_0^{\pi}\int_{y_c}^{z}{\varphi}(y)(1-\chi_{J_k}(y))
\f{1}{(u(z)-c)^2}\,dydz,
\eeno
Here $I_k=((k-1)\pi/n,k\pi/n]$ for $1\leq k\leq n,\ k\in\mathbb{Z}$ and $I_k=\emptyset$ otherwise, and $J_k=I_{k-1}\cup I_{k}\cup I_{k+1},$ and $n$ is the integer such that $n\leq \pi\al<n+1.$

 Due to $\al>1,$ we have $n\geq 3,\ \pi\al<n+1<3n/2$ and $|y-y_c|\leq 2\pi/n<3/\al$ for $y_c\in I_k,\ y\in J_k.$
 By Lemma \ref{lem: commL_0}, we have
 \beno
 |u'(y_c)^2\mathcal{L}_{0,1}(\varphi)(y_c)|^2\leq C\al\int_0^{\pi}|\varphi(y)|^2\chi_{\{|y-y_c|\leq3/\al\}}dy,
 \eeno
 which gives
 \begin{align}\label{L0,1}
\|(u')^2\mathcal{L}_{0,1}(\varphi)\|_{L^2}^2\leq C\al\int_0^{\pi}|\varphi(y)|^2\int_0^{\pi}\chi_{\{|y-y_c|\leq3/\al\}}d{y_c}dy\leq C\|\varphi\|_{L^2}^2.
\end{align}
By Lemma 3.7 and Proposition 3.8 in \cite{WZZ3}, we know that
 $1\leq\phi_1(y,c)\leq \phi_1(z,c)$ and $0< \f{\phi_1(y,c)\,}{\phi_1(z,c)^2}\leq \f{1}{\phi_1(z,c)}\leq Ce^{-C^{-1}\al|z-y_c|}$ for $y_c\leq y\leq z\leq \pi$ or $0\leq z\leq y_c\leq y.$ On the other hand, $u'(y_c)|z-y_c|\leq C|u(z)-c|$ and $|y-y_c|\geq \pi/n\geq1/\al$ for $y_c\in I_k,\ y\not\in J_k$. Thus, we obtain
 \begin{align*}
|u'(y_c)^2\mathcal{L}_{0,2}(\varphi)(y_c)|&\leq C\|\varphi\|_{L^{\infty}}\int_{\{|z-y_c|\geq1/\al\}\cap[0,\pi]}
\f{|z-y_c|u'(y_c)^2}{(u(z)-c)^2}e^{-C^{-1}\al|z-y_c|}dz\\ &\leq C\|\varphi\|_{L^{\infty}}\int_{\{|z-y_c|\geq1/\al\}\cap[0,\pi]}
\f{1}{|z-y_c|}e^{-C^{-1}\al|z-y_c|}dz\\ &\leq C\|\varphi\|_{L^{\infty}}\int_{\{|z|\geq1\}}
\f{1}{|z|}e^{-C^{-1}|z|}dz\leq C\|\varphi\|_{L^{\infty}},
\end{align*}
and
\begin{align*}
\|(u')^2\mathcal{L}_{0,2}(\varphi)\|_{L^1}&\leq C\int_{1/\al}^{\pi}\int_0^{y_c-1/\al}\int_{z}^{y_c-1/\al}|{\varphi}(y)|
\f{u'(y_c)^2}{(u(z)-c)^2}e^{-C^{-1}\al|z-y_c|}\,dydzdy_c
\\ &\quad+C\int_0^{\pi-1/\al}\int_{y_c+1/\al}^{\pi}\int_{y_c+1/\al}^{z}|{\varphi}(y)|
\f{u'(y_c)^2}{(u(z)-c)^2}e^{-C^{-1}\al|z-y_c|}\,dydzdy_c\\ &\leq C\al^2\int_0^{\pi-1/\al}|{\varphi}(y)|\int_{0}^y\int_{y+1/\al}^{\pi}
e^{-C^{-1}\al|z-y_c|}dzdy_cdy\\ &\quad +C\al^2\int_{\al}^{\pi}|{\varphi}(y)|\int_{0}^{y-1/\al}\int_{y}^{\pi}
e^{-C^{-1}\al|z-y_c|}dy_cdzdy\\ &\leq C\al\int_0^{\pi-1/\al}|{\varphi}(y)|\int_{0}^y
e^{-C^{-1}\al|y-y_c|}dy_cdy\\ & \quad+C\al\int_{1/\al}^{\pi}|{\varphi}(y)|\int_{0}^{y-1/\al}
e^{-C^{-1}\al|z-y|}dzdy\leq C\int_0^{\pi}|{\varphi}(y)|dy,
\end{align*}
from which and the interpolation, we deduce that
\begin{align}\label{L0,2}
\|(u')^2\mathcal{L}_{0,2}(\varphi)\|_{L^2}\leq C\|\varphi\|_{L^2}.
\end{align}

Using the fact that
\begin{align*}
\textrm{II}_{1,1}(\varphi)(y_c)
=p.v.\int_0^{\pi}\f{\Int(\varphi)(y)-\Int(\varphi)(y_c)}{(u(y)-u(y_c))^2}\,dy
=p.v.\int_0^{\pi}\f{\int_{y_c}^{z}{\varphi}(y)dy}{(u(z)-c)^2}\,dz,\end{align*}
we find that $\mathcal{L}_{0,3}(\varphi)(y_c)=-\textrm{II}_{1,1}((1-\chi_{J_k})\varphi)(y_c) $, thus,
\beno
\mathcal{L}_{0,3}(\varphi)(y_c)+\textrm{II}_{1,1}(\varphi)(y_c)=\textrm{II}_{1,1}(\chi_{J_k}\varphi)(y_c):=\mathcal{L}_{0,4}(\varphi)(y_c),
\eeno
 for $y_c\in I_k.$ By Lemma \ref{lem: commu_II_1,1}, we have \begin{align}\label{L0,3}
\|\rho\textrm{II}_{1,1}(\varphi)\|_{L^2}\leq C\|\varphi\|_{L^2},
\end{align}
which implies
\begin{align*}
\|\rho\mathcal{L}_{0,4}(\varphi)\|_{L^2(I_k)}\leq \|\rho\textrm{II}_{1,1}(\chi_{J_k}\varphi)\|_{L^2}
\leq C\|\chi_{J_k}\varphi\|_{L^2}.
\end{align*}
Thus, we have
\begin{align}\label{L0,4}
\|\rho\mathcal{L}_{0,4}(\varphi)\|_{L^2}^2=&\sum_{k=1}^n\|\rho\mathcal{L}_{0,4}(\varphi)\|_{L^2(I_k)}^2\leq C\sum_{k=1}^n\|\chi_{J_k}\varphi\|_{L^2}^2\\ \nonumber=&C\sum_{k=1}^n
\big(\|\varphi\|_{L^2(I_{k-1})}^2+\|\varphi\|_{L^2(I_{k-1})}^2+\|\varphi\|_{L^2(I_{k+1})}^2\big)\leq C\|\varphi\|_{L^2}^2.
\end{align}

Notice that $\rho(c)=(\sin y_c)^2=u'(y_c)^2,$ and $ \mathcal{L}_0(\varphi)=\mathcal{L}_{0,1}(\varphi)+\mathcal{L}_{0,2}(\varphi)+\mathcal{L}_{0,3}(\varphi),$ and $ \textrm{II}_{1}(\varphi)=\mathcal{L}_0(\varphi)+\textrm{II}_{1,1}(\varphi)
=\mathcal{L}_{0,1}(\varphi)+\mathcal{L}_{0,2}(\varphi)+\mathcal{L}_{0,4}(\varphi)$. Then the result is a consequence of \eqref{L0,1},\eqref{L0,2},\eqref{L0,3} and \eqref{L0,4}.
\end{proof}

Now we are in a position to prove \eqref{D1.1} and \eqref{D1.2}.

\begin{proof}
Let $\om=\om_o+\om_e$ with $\om_o=\f{\om(y)-\om(-y)}{2}$ and $\om_e=\f{\om(y)+\om(-y)}{2}$. Then by \eqref{5.1}, \eqref{5.8} and Lemma \ref{lem5}, we have
\begin{align*}
\|\sin y\bbD_1(\om_o)\|_{L^2}\leq\left\|\frac{\sin y_c}{\rmA+i\rmB} \right\|_{L^{\infty}}(\|\rho\mathrm{II}_{1}(\om_o)\|_{L^2}+\pi\|\om_o\|_{L^2})\leq C\al^{-1}\|\om_o\|_{L^2},
\end{align*}
and
\begin{align*}
\|\sin y\bbD_1(\om_e)\|_{L^2}\leq& C\left\|\frac{\al^2\sin^3 y_c(\rho\mathrm{II}_1(\om_{e})-\pi i\om_{e})+\al^2\Lambda_{3,2}(\om_e)}{(1+\al\sin y_c)^3} \right\|_{L^{2}}\\ \leq& C\al^{-1}(\|\rho\mathrm{II}_{1}(\om_e)\|_{L^2}+\pi\|\om_e\|_{L^2})+C\al^2\left\|\frac{\Lambda_{3,2}(\om_e)}{1+\al\sin y_c}\right\|_{L^2}\\
\leq& C\al^{-1}\|\om_e\|_{L^2},
\end{align*}
which show the inequality \eqref{D1.1}.\smallskip

 If $\om\in H^2( \mathbb{T})$, by  \eqref{est of D1} , we have
 \beno
 \sin y\bbD_1(\om)\in  H^2( (-\pi,0)\cup(0,\pi)),\quad \bbD_1(\om)\in  H^2( (-5\pi/6,-\pi/6)\cup(\pi/6,5\pi/6)).
 \eeno
 By the proof of Proposition 6.1 in \cite{WZZ3}, we have $ \|\frac{\partial_{y_c}^k\Lambda_{1}(\om_o)}{(\sin y_c)^{2-k}(1+\al\sin y_c)}\|_{L^2}\leq C\al^{\frac{1}{2}}\|\om_o\|_{H^2}$ for $k=0,1,2,$ using \eqref{5.1} and $ -u''\bbD_1({\om}_{o})(-y_c)=u''\bbD_1({\om}_{o})(y_c)=\frac{\Lambda_1(\om_{o})(y_c)}{\rmA+i\rmB}-\om_{o}(y_c)$, we deduce that  $u''\bbD_1({\om}_{o})\in H^2( \mathbb{T}).$  By the proof of Proposition 6.2 in \cite{WZZ3}, we have $ \|\frac{\partial_{y_c}^k\Lambda_{1}(\om_e)}{(\sin y_c)^{2-k}(1+\al\sin y_c)^3}\|_{L^2}\leq C\al^{-\frac{1}{2}}\|\om_e\|_{H^2}$ for $k=0,1,2,$ using \eqref{5.1} and $ u''\bbD_1({\om}_{e})(-y_c)=u''\bbD_1({\om}_{e})(y_c)=\frac{\Lambda_3(\om_{e})(y_c)}{\rmA_1+i\rmB_1}-\om_{e}(y_c)$, we deduce that $u''\bbD_1({\om}_{e})\in H^2( \mathbb{T}).$ As $u''(y)=\cos y,$ and $u''\bbD_1({\om})\in H^2( \mathbb{T})$, we have $\bbD_1({\om})\in H^2( (-\pi/3,\pi/3)\cup(2\pi/3,4\pi/3)).$ Therefore, $\bbD_1({\om})\in H^2( \mathbb{T})$  if $\om H^2(\T)$.\smallskip

 Now we prove \eqref{D1.2}. Direct calculation gives
\begin{align*}
\pa_{y_c}(\sin y_c\bbD_1(\om_o))
&=\rho\mathrm{II}_{1,1}(\om_o)\pa_{y_c}\Big(\f{\sin y_c}{\rmA+i\rmB}\Big)+\pa_{y_c}\big(\rho\mathrm{II}_{1,1}(\om_o)\big)\Big(\f{\sin y_c}{\rmA+i\rmB}\Big)\\
&\quad+\mathcal{L}_{0}(\om_o)\pa_{y_c}\Big(\rho\f{\sin y_c}{\rmA+i\rmB}\Big)+\pa_{y_c}\mathcal{L}_{0}(\om_o)\Big(\rho\f{\sin y_c}{\rmA+i\rmB}\Big)\\
&\quad-\om_o\pa_{y_c}\Big(\f{\pi i\sin y_c}{\rmA+i\rmB}\Big)-\pa_{y_c}\om_o\Big(\f{\pi i\sin y_c}{\rmA+i\rmB}\Big)\\
&=I_1+\cdots+I_6.
\end{align*}

By Lemma \ref{lem: commu_II_1,1}  and \eqref{5.1}, we get
\begin{align*}
&\|I_1\|_{L^2}\leq C\|\sin y_c\mathrm{II}_{1,1}\|_{L^2}\Big\|\sin y_c\pa_{y_c}\Big(\f{\sin y_c}{\rmA+i\rmB}\Big)\Big\|_{L^{\infty}}\leq C\al^{-1}\|\om_o\|_{H^1},\\
&\|I_2\|_{L^2}\leq C\|\rho\mathrm{II}_{1,1}\|_{H^1}\Big\|\Big(\f{\sin y_c}{\rmA+i\rmB}\Big)\Big\|_{L^{\infty}}\leq C\al^{-1}\|\om_o\|_{H^1}.
\end{align*}

By Lemma \ref{lem: commL_0}, Lemma \ref{lem4}, Lemma \ref{lem5} and \eqref{5.1}, we get
\begin{align*}
\|I_3\|_{L^2}&\leq C\al^{-\f12}\|\sin^2 y_c\mathcal{L}_0(\om_o)\|_{L^{\infty}}\Big\|\frac{\al^{\f12}}{\sin^2 y_c}\pa_{y_c}\Big(\f{\rho\sin y_c}{\rmA+i\rmB}\Big)\Big\|_{L^2}\\
&\leq C\|\om_o\|_{L^2}\Big\|\f{\al^{\f12}}{(1+\al\sin y_c)}\Big\|_{L^2}\leq C\|\om_o\|_{L^2},\\
\|I_4\|_{L^2}&\leq C\|\sin^2 y_c\mathcal{L}_0(\om_o')\|_{L^{2}}\Big\|\f{1}{\sin^2 y_c}\Big(\f{\rho\sin y_c}{\rmA+i\rmB}\Big)\Big\|_{L^{\infty}}\\&\quad+C\Big\|\sin^3 y_c[\partial_{y_c},\mathcal{L}_0](\om_o)\|_{L^{\infty}}\Big\|\f{1}{\sin^3 y_c}\Big(\f{\rho\sin y_c}{\rmA+i\rmB}\Big)\Big\|_{L^{2}}\\
&\leq C\al^{-1}\|\om_o'\|_{L^2}+C\al^{-\f12}(\|\om_o\|_{L^{\infty}}+\al^{\f12}\|\om_o\|_{L^{2}})\leq C\al^{-1}\|\om_o'\|_{L^2}+C\|\om_o\|_{L^{2}}.
\end{align*}By \eqref{5.1}, we get
\begin{align*}
\|I_5\|_{L^2}&\leq C\|\om_o\|_{L^2}\Big\|\sin y_c\pa_{y_c}\Big(\f{\pi i\sin y_c}{\rmA+i\rmB}\Big)\Big\|_{L^{\infty}}\leq C\|\om_o\|_{H^1},\\
\|I_6\|_{L^2}&\leq C\|\pa_y\om_o\|_{L^2}\Big\|\Big(\f{\pi i\sin y_c}{\rmA+i\rmB}\Big)\Big\|_{L^{\infty}}\leq C\al^{-1}\|\pa_y\om\|_{L^2}.
\end{align*} Summing up, we conclude the odd case. For the even case, we have\begin{align*}
&\pa_{y_c}(\sin y_c\bbD_1(\om))\\
&=\Big(\f{\rho u'(y_c)}{\rmA_1+i\rmB_1}\Big)\pa_{y_c}\big(\rho\mathrm{II}_{1,1}(\om_e)\big)
+\pa_{y_c}\Big(\f{\rho u'(y_c)}{\rmA_1+i\rmB_1}\Big)\rho\mathrm{II}_{1,1}(\om_e)\\
&\quad
+\Big(\f{\rho^2 u'(y_c)}{\rmA_1+i\rmB_1}\Big)\pa_{y_c}\big(\mathcal{L}_{0}(\om_e)\big)
+\pa_{y_c}\Big(\f{\rho^2 u'(y_c)}{\rmA_1+i\rmB_1}\Big)\mathcal{L}_{0}(\om_e)\\
&\quad-\pi i\om_e\pa_{y_c}\Big(\f{\rho u'(y_c)}{\rmA_1+i\rmB_1}\Big)-\pi i\Big(\f{\rho u'(y_c)}{\rmA_1+i\rmB_1}\Big)\pa_{y_c}\om_e\\
&\quad+\pa_{y_c}\Big(\f{1}{\rmA_1+i\rmB_1}\Big)\Lambda_{3,2}(\om_e)+\Big(\f{1}{\rmA_1+i\rmB_1}\Big)\pa_{y_c}\Lambda_{3,2}(\om_e)\\
&=I_1'+\cdots+I_8'.
\end{align*}

By \eqref{5.1} and Lemma \ref{lem: commu_II_1,1}, we get
\beno
&&\|I_1'\|_{L^2}\leq C\left\|\Big(\f{\rho u'(y_c)}{\rmA_1+i\rmB_1}\Big)\right\|_{L^{\infty}}\big\|\pa_{y_c}\big(\rho\mathrm{II}_{1,1}(\om_e)\big)\big\|_{L^2}\leq C\al^{-1}\|\om_e\|_{H^1},\\
&&\|I_2'\|_{L^2}\leq C\left\|\pa_{y_c}\Big(\f{\rho u'(y_c)}{\rmA_1+i\rmB_1}\Big)\right\|_{L^{\infty}}\big\|\rho\mathrm{II}_{1,1}(\om_e)\big\|_{L^2}\leq C\|\om_e\|_{L^2}.
\eeno
By \eqref{5.1}, Lemma \ref{lem4} and Lemma \ref{lem5}, we get
\begin{align*}
\|I_3'\|_{L^2}&\leq C\left\|\f{1}{\sin^2 y_c}\Big(\f{\rho^2 u'(y_c)}{\rmA_1+i\rmB_1}\Big)\right\|_{L^{\infty}}\big\|\sin^2 y_c\mathcal{L}_{0}(\om_e')\|_{L^2}\\&+C\left\|\f{1}{\sin^3 y_c}\Big(\f{\rho^2 u'(y_c)}{\rmA_1+i\rmB_1}\Big)\right\|_{L^2}\big\|\sin^3 y_c[\partial_{y_c},\mathcal{L}_0](\om_e)\|_{L^{\infty}}\\
&\leq C\al^{-1}\|\om_e'\|_{L^{2}}+\al^{-\frac{1}{2}}(\|\om_e\|_{L^{\infty}}+\al^{\frac{1}{2}}\|\om_e\|_{L^{2}})\leq C(\|\om_e\|_{L^{2}}+\al^{-1}\|\om_e'\|_{L^{2}}).
\end{align*}
By \eqref{5.1} and Lemma \ref{lem: commL_0}, we get
\begin{align*}
\|I_4'\|_{L^2}&\leq C\left\|\f{1}{\sin^2 y_c}\pa_{y_c}\Big(\f{\rho^2 u'(y_c)}{\rmA_1+i\rmB_1}\Big)\right\|_{L^2}\big\|\rho\mathcal{L}_{0}(\om_e)\big\|_{L^{\infty}}\leq C\|\om_e\|_{L^2}.
\end{align*}
By \eqref{5.1}, we get
\beno
&&\|I_5'\|_{L^2}\leq C\|\om_e\|_{L^2}\left\|\pa_{y_c}\Big(\f{\rho u'(y_c)}{\rmA_1+i\rmB_1}\Big)\right\|_{L^{\infty}}\leq C\|\om_e\|_{L^2},\\
&&\|I_6'\|_{L^2}\leq C\|\pa_y\om_e\|_{L^2}\left\|\Big(\f{\rho u'(y_c)}{\rmA_1+i\rmB_1}\Big)\right\|_{L^{\infty}}\leq C\al^{-1}\|\pa_y\om_e\|_{L^2}.
\eeno
By \eqref{5.1}, \eqref{5.8} and \eqref{5.9}, we get
\beno
&&\|I_7'\|_{L^2}\leq C\al^3\left\|\frac{\Lambda_{3,2}(\om_e)}{1+\al\sin y_c}\right\|_{L^2}\leq C\|\om_e\|_{L^2},\\
&&\|I_{8}'\|_{L^2}\leq C\al^2\left\|\frac{\pa_{y_c}\Lambda_{3,2}(\om_e)}{1+\al\sin y_c}\right\|_{L^2}\leq C\|\om_e\|_{L^2}.
\eeno
Summing up, we conclude the even case.
\end{proof}

\section*{Acknowledgement}
Z. Zhang is partially supported by NSF of China under Grant 11425103.

%\end{CJK*}
\end{document}